\def\R{{\mathbb R}}
\def\Ocal{{\mathcal{O}}}
\def\L{{\mathbf{L}}}
\def\ds{\displaystyle}
\def\y{{\mathbf{y}}}
\def\u{{\mathbf{u}}}
\def\n{{\mathbf{n}}}
\def\f{{\mathbf{f}}}
\def\g{{\mathbf{g}}}
\def\G{{\mathbf{G}}}
\def\X{{\mathbf{X}}}
\def\v{{\mathbf{v}}}
\def\V{{\mathbf{V}}}
\def\h{{\mathbf{h}}}
\def\H{{\mathbf{H}}}
\def\div{{\, {\rm div} \, }}
\def\rot{{\, {\rm rot}\, }}
\newcommand{\ov}{:\overset{ \mbox{\tiny def}}{=}}
\newcommand{\norm}[1]{\left\Vert#1\right\Vert}
\newtheorem{theorem}{Theorem}[section]
\newtheorem{proposition}[theorem]{Proposition}
\newtheorem{lemma}[theorem]{Lemma}
\numberwithin{equation}{section}
\title{Local controllability to trajectories for non-homogeneous $2$-d incompressible Navier-Stokes equations\footnote{This work is partially supported by the Agence Nationale de la Recherche (ANR, France), Project CISIFS number NT09-437023.}}
\author{Mehdi Badra\footnote{Laboratoire LMA, UMR CNRS 5142, Universit\'e de Pau et des Pays de l'Adour, F-64013 Pau Cedex, France. E-mail: {\tt mehdi.badra@univ-pau.fr}}, Sylvain Ervedoza\footnote{Corresponding author. Institut de Math\'ematiques de Toulouse ; UMR5219 ; Universit\'e de Toulouse ; CNRS ; UPS IMT, F-31062 Toulouse Cedex 9, France.  E-mail: {\tt sylvain.ervedoza@math.univ-toulouse.fr}.
Address: Institut de Math\'ematiques de Toulouse, Universit\'e Paul Sabatier, 118 route de Narbonne, 31062 Toulouse Cedex 9. Ph.: +33561557654}, and Sergio Guerrero\footnote{Universit\'e Pierre et Marie Curie, UMR 7598, Laboratoire Jacques-Louis Lions, F-75005 Paris, France. E-mail: {\tt guerrero@ann.jussieu.fr}}
}
\date{\today}
\begin{document}
\maketitle
\abstract{The goal of this article is to show a local exact controllability to smooth ($C^2$) trajectories for the $2$-d density dependent incompressible Navier-Stokes equations. Our controllability result requires some geometric condition on the flow of the target trajectory, which is remanent from the transport equation satisfied by the density. The proof of this result uses a fixed point argument in suitable spaces adapted to a Carleman weight function that follows the flow of the target trajectory. Our result requires the proof of new Carleman estimates for heat and Stokes equations.}
\smallskip
\\
\noindent {\bf Key words.} Non-homogeneous Navier-Stokes equations, Local exact controllability to trajectories.
\smallskip
\\
\noindent {\bf AMS subject classifications.}
%%%%%%%%%%%%%%%%%%%%%%%%%%%%%%%%%%%%%%%%%%%%%%%%
%%%%%%%%%%%%%%%%%%%%%%%%%%%%%%%%%%%%%%%%%%%%%%%%
\section{Introduction}
%%%%%%%%%%%%%%%%%%%%%%%%%%%%%%%%%%%%%%%%%%%%%%%%
The goal of this article is to discuss the local exact controllability property for the $2$-d non-homogeneous Navier Stokes equations.

{\bf Setting and main results.}
Let $\Omega$ be a smooth bounded domain of $\R^2$, $T>0$ and denote $(0,T) \times \Omega$ by $\Omega_T$. Let us consider a trajectory $(\overline {\sigma},\overline {\bf y})$ of the non-homogeneous Navier-Stokes equations:
\begin{equation}\label{EqEtatCible}
	 \left\{
	\begin{array}{rlll}
		\partial_t\overline \sigma+{\rm div}(\overline \sigma\, \overline {\bf y})&=& \overline{f}_\sigma &\mbox{ in } \Omega_T,
		\\
		\overline \sigma \partial_t \overline {\bf y}+ \overline \sigma (\overline {\bf y}\cdot \nabla)\overline {\bf y}-\nu \Delta \overline {\bf y}+\nabla \overline q&= & \overline{\f}_\y \quad &\mbox{ in } \Omega_T,
		\\
		{\rm div}\,\overline {\bf y}&=& 0\quad &\mbox{ in } \Omega_T,
		\\
 		(\overline{\sigma}(0), {\overline{\bf y}}(0))&=&(\overline{\sigma}_0,\overline {\bf y}_0)\quad &\mbox{ in }\Omega.
	\end{array}
	\right.
\end{equation}
Here, $\nu>0$ is the viscosity parameter and the source terms $(\overline{f}_\sigma,\overline{\f}_\y )$ are assumed to be known.\\ 
We will focus on the local exact controllability problem around the trajectory $(\overline{\sigma}, \overline{\y})$ with a control exerted on the boundary $(0,T) \times \partial \Omega$: Given $(\sigma_0,\y_0)$ close to the initial data $(\overline{\sigma}_0, \overline{\y}_0)$, find control functions $(h_\sigma, \h_\y)$ on $(0,T) \times \partial \Omega$ such that the solution $(\sigma, \y)$ of
\begin{equation}\label{EqEtat}
 \left\{
	\begin{array}{rlll}
		\partial_t\sigma+{\rm div}(\sigma {\bf y})&=& \overline{f}_\sigma &\mbox{ in } \Omega_T,
		\\
		\sigma \partial_t {\bf y}+ \sigma ({\bf y}\cdot \nabla){\bf y}-\nu \Delta {\bf y}+\nabla q&=& \overline{\f}_\y\quad &\mbox{ in } \Omega_T,
		\\
		{\rm div}\,{\bf y}&=&0\quad &\mbox{ in }  \Omega_T,
		\\
 		({\sigma}(0), {\bf y}(0))
		&=& (\overline{\sigma}_0+\rho_0, \overline {\bf y}_0+{\bf u}_0),\quad &\mbox{ in }\Omega,
		%\sigma(0)=\overline{\sigma}+\rho_0,\quad {\bf y}(0)&=& \overline {\bf y}+{\bf u}_0,\quad &\mbox{ in }\Omega,
	\end{array}
\right.
\end{equation}
with the boundary conditions:
\begin{eqnarray}
		\sigma & =& \overline{\sigma}+ h_\sigma  \mbox{ for } (t,x) \in (0,T) \times \partial \Omega, \mbox{ with }\, \y(t,x)\cdot \n(x) <0, \label{BordDensite}
		\\
		{\bf y} & = & \overline {\bf y}+\h_\y \mbox{ on } (0,T)\times \partial \Omega, \label{BordVitesse}
\end{eqnarray}
satisfies
\begin{equation}\label{CondFinale}
	  (\sigma(T),{\bf y}(T))=(\overline {\sigma}(T), \overline {\bf y}(T)).
\end{equation}
Our goal is to present a positive answer to this control problem under suitable assumptions on the target trajectory $(\overline{\sigma}, \overline{\y})$, and in particular one of hyperbolic nature on the flow corresponding to $\overline{\y}$. Besides, our strategy will yield a control acting on some suitable subsets of the boundary which correspond, roughly speaking, to the complement of the part of the boundary in which the scalar product of the target velocity $\overline{\y}$ with the normal vector $\n$ is positive for all time $t \in [0,T]$.
\\
Going further requires some notations. We denote by $L^2(\Omega)$, $L^\infty(\Omega)$, $H^r(\Omega)$, $H_0^r(\Omega)$ etc for $r\geq 0$, the usual Lebesgue and Sobolev spaces of scalar functions,
and we write in bold the spaces of vector-valued functions: ${\bf L}^2(\Omega)=(L^2(\Omega))^2$, ${\bf H}^r(\Omega)=(H^r(\Omega))^2$, etc. We also define
$$
	\V^1_0(\Omega) \ov \{\v \in {\bf H}^1_0(\Omega) \mid \div \v  = 0 \mbox{ in } \Omega \}.
$$
In the following, we will always assume that the target velocity $\overline{\y}$ belongs to ${\bf C}^2(\overline{\Omega_T})$. It can thus be extended into a ${\bf C}^2([0,T]\times \R^2)$ function, still denoted the same for simplicity but not necessarily divergence free outside $\Omega_T$.
%As $\div \overline\y = 0$ in $\overline{\Omega_T}$ and $\Omega$ is simply connected, we further assume that this extension $\overline\y$ also satisfies $\div\overline{\y} = 0$ in $[0,T] \times \R^2$.
This allows to define the flow $\overline{X} = \overline{X}(t,\tau, x)$ associated to that velocity $\overline{\y}$:
\begin{equation}
	\label{Def-FlowBAR}
		\forall (t,\tau,x)\in [0,T]^2\times \mathbb{R}^2,\quad  \partial_t \overline{X}(t,\tau,x)=\overline {\bf y}(t,  \overline{X}(t,\tau,x)),\quad \overline{X}(\tau,\tau,x)=x.
\end{equation}
Thus we define the outgoing subset of $ \overline{\Omega}$ for the flow $\overline{X}$ as follows:
\begin{equation}
	\label{OmegaOut}
	\Omega_{{\rm out}}^T\ov  \left \{x\in \overline{\Omega}\mid  \exists t\in (0,T)\,\mbox{ s.t. }\,\overline{X}(t, 0 ,x)\in \mathbb{R}^2\backslash \overline{\Omega} \right \}.
\end{equation}
One of our main assumptions is the following one:
\begin{equation}
	\label{OmegaEqOmegaOut}
	\overline{\Omega}=\Omega_{{\rm out}}^T.
\end{equation}
Note that this assumption does not depend on the extension $\overline{\y}$ on $[0,T] \times \R^2$ and is intrinsic. This assumption is of hyperbolic nature as it requires the time $T$ to be large enough to guarantee that all the particles that were in $\overline{\Omega}$ at time $t = 0$ have been transported by the flow outside $\overline{\Omega}$ in a time strictly smaller than $T$. Of course, this is remanent from the density equation $\eqref{EqEtat}_{(1)}$ in which the density is transported along the flow corresponding to the velocity of the fluid.
\\
 As we said, we will not require the control to be supported on the whole boundary $(0,T) \times \partial \Omega$, but only on some part of it $(0,T) \times \Gamma_c$ where $\Gamma_c=\partial\Omega\backslash \Gamma_0$ and $\Gamma_0$ (the part without control) is an open subset of $\partial \Omega$ satisfying the following conditions:
 \begin{equation}
 	\label{Cond-Gamma-c}
 	\begin{split}
	&{\rm (i).} \quad\Gamma_0 \hbox{ has a finite number of connected components,}\\
	 &{\rm (ii).}\quad  \sup_{[0,T]\times \overline{\Gamma_0}} \overline{\y}\cdot \n>0.
	 \end{split}
\end{equation}
Note that the above condition garantees the existence of $\gamma>0$ such that $\overline{\bf y}(t,x)\cdot {\bf n}(x) \geq \gamma$ for all $(t,x) \in (0,T)\times \Gamma_0$.

Our main result states as follows:
%%%%%%%%%%%%%%%%%%%%%%%%%%%%%%%%%%%%%%%%%%%%%%%%%%%%%%%%%%%%%%%%%%%%%%%%%%%%%%%%%%%%
\begin{theorem}
	\label{Thm-Main}
	Let $\Omega$ be a smooth bounded domain of $\R^2$. Assume that the target trajectory $(\overline{\sigma}, \overline{\y})$ solution of \eqref{EqEtatCible} satisfies
	\begin{equation}
		\label{Conditions-TargetTrajectory}
		(\overline{\sigma}, \overline{\y}) \in C^2([0,T] \times \overline{\Omega})\times {\bf C}^2([0,T] \times \overline{\Omega}) \quad \hbox{ and } \inf_{[0,T]\times \overline{\Omega}} \overline{\sigma} >0.
	\end{equation}
	Assume that the condition \eqref{OmegaEqOmegaOut} is satisfied for the time $T$.
	
	Then there exists $\varepsilon>0$ such that for all $(\rho_0, \u_0) \in L^\infty (\Omega) \times {\bf V}^1_0(\Omega)$ satisfying
	\begin{equation}
		\label{SmallDataIni}
			\|\rho_0\|_{L^\infty(\Omega)}+ \|{\bf u}_0\|_{{\bf H}^1_0(\Omega)}\leq \varepsilon,
	\end{equation}
	there exists a controlled trajectory
	$$
		(\sigma, \y) \in L^\infty(\Omega_T) \times H^1(0,T; \L^2(\Omega)) \cap L^2(0,T; {\bf H}^2(\Omega))
	$$
	solution of \eqref{EqEtat}--\eqref{BordVitesse} satisfying the control requirement \eqref{CondFinale}.
	\\
	Besides, if $\Gamma_0$ denotes an open subset of the boundary satisfying \eqref{Cond-Gamma-c}, we may further impose $\y = \overline{\y}$ on $(0,T) \times \Gamma_0$. In particular, in that case, no boundary condition is imposed on the density on $\Gamma_0$.
\medskip
\end{theorem}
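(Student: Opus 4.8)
The plan is to reduce the nonlinear controllability problem to a linear one by a fixed‑point argument, after first changing variables so that the density and velocity are written as perturbations of the target trajectory. Writing $\sigma = \overline\sigma + \rho$, $\y = \overline\y + \u$, $q = \overline q + \pi$, one obtains a system for $(\rho,\u,\pi)$ in which the density perturbation $\rho$ satisfies a transport equation driven by $\overline\y$ (plus lower‑order and nonlinear terms), and $\u$ satisfies a Stokes‑type system with a right‑hand side that is linear in $\rho$ (through the term $\rho\,\partial_t\overline\y + \rho(\overline\y\cdot\nabla)\overline\y$, etc.) and quadratic in $(\rho,\u)$. The control acts on $\u$ through the boundary condition on $\Gamma_c$, and the goal \eqref{CondFinale} becomes $(\rho(T),\u(T)) = (0,0)$.

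**Next I would** set up the linearized control problem and prove its null‑controllability with source terms in a weighted space. The crucial feature is that the Carleman weight must be chosen to \emph{follow the flow} $\overline X$ of the target velocity: because the density equation is a transport equation, information about $\rho$ can only be removed from $\overline\Omega$ if every particle leaves $\overline\Omega$ before time $T$, which is exactly assumption \eqref{OmegaEqOmegaOut}. Concretely, I expect one builds a weight of the form $e^{-s\varphi(t,x)}$ where $\varphi$ is roughly constant along the characteristics $t\mapsto \overline X(t,0,x)$ and blows up as $t\to 0$ and $t\to T$, and then one proves (these are the "new Carleman estimates" the abstract advertises) a Carleman inequality for the adjoint heat equation and for the adjoint Stokes system with this non‑standard weight, with observation localized on $\Gamma_c$. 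Combining the heat Carleman estimate (used on the density via its transport structure along the flow, treating the transport operator $\partial_t + \overline\y\cdot\nabla$) with the Stokes Carleman estimate, and using a duality/penalization (Fursikov–Imanuvilov type) argument, yields a controlled solution of the linear system together with weighted a priori estimates that force $(\rho(T),\u(T))=0$ and give quantitative smallness/decay of $(\rho,\u)$ near $t=T$.

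**Then** I would close the argument by a fixed point. Define a map $\mathcal F$ sending a given $(\tilde\rho,\tilde\u)$ (in the weighted space provided by the linear theory) to the solution $(\rho,\u)$ of the linear controlled problem whose source terms are the nonlinear residuals evaluated at $(\tilde\rho,\tilde\u)$; one checks that $\mathcal F$ maps a small ball of the weighted space into itself and is a contraction, using that the nonlinear terms are at least quadratic and that the weighted norms control the relevant products (this is where the choice of function spaces — $L^\infty$ for the density, $H^1(0,T;\L^2)\cap L^2(0,T;\H^2)$ for the velocity — and the parabolic‑regularity/maximal‑regularity estimates for Stokes enter, together with $2$‑d Sobolev embeddings $\H^1\hookrightarrow \L^p$). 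A fixed point of $\mathcal F$ is a solution of the nonlinear controlled problem \eqref{EqEtat}--\eqref{BordVitesse} satisfying \eqref{CondFinale}. The refinement $\y=\overline\y$ on $(0,T)\times\Gamma_0$ follows because condition \eqref{Cond-Gamma-c} guarantees $\overline\y\cdot\n\geq\gamma>0$ on $\Gamma_0$, so the characteristics enter $\Omega$ through $\Gamma_0$ and no density boundary data is needed there, while the observation region in the Carleman estimate can be taken inside $\Gamma_c$; this must be built into the geometry of the weight and the cutoff from the start.

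**The main obstacle** I anticipate is the construction of the Carleman weight and the proof of the corresponding Carleman estimates: one needs a single weight function, adapted to the flow $\overline X$, that simultaneously works for the (adjoint) heat equation, for the transport part coming from the density, and for the (adjoint) Stokes system, with the observation confined to $(0,T)\times\Gamma_c$ — and crucially whose degeneracy in space is dictated by \eqref{OmegaEqOmegaOut} rather than by the usual "$\nabla\psi\neq 0$ outside the observation set" condition. Handling the coupling between the density (hyperbolic) and velocity (parabolic) components in these weighted estimates, and making the weighted norms compatible with the nonlinear estimates (in particular controlling the pressure and the boundary terms), is the technical heart of the proof.
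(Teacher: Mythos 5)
Your overall architecture (perturbation variables, linearization, Carleman estimates with a weight following the flow $\overline X$, then a fixed point) is broadly the right skeleton, and you correctly identify \eqref{OmegaEqOmegaOut} as the hypothesis that lets the transported density be driven to zero. But there are two concrete places where what you propose would not go through, plus one misdescription of the weight.

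First, you plan to control the density by ``treating the transport operator $\partial_t+\overline\y\cdot\nabla$'' with a Carleman estimate. There is no usable Carleman estimate for this first-order hyperbolic operator: it has no smoothing mechanism, and the standard parabolic machinery you invoke does not apply. The paper does something structurally different: it solves the density control problem \emph{explicitly}, by defining a forward solution $\rho_f$ (with data $\rho_0$ at $t=0$), a backward solution $\rho_b$ (with data $0$ at $t=T$), and a transported cut-off $\chi$, then gluing $\rho=(1-\chi)\rho_b+\chi\rho_f$ along characteristics; assumption \eqref{OmegaEqOmegaOut} guarantees $\chi(T)\equiv 0$, hence $\rho(T)=0$. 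The Carleman estimate is used only for the (parabolic) Stokes part, and the role of the transport-adapted weight $\psi$ (which solves $\partial_t\psi+\overline\y\cdot\nabla\psi=0$) is to make the \emph{weighted estimate on this explicit density} compatible with the weighted Stokes estimate, not to observe the transport equation. Relatedly, you say the spatial degeneracy of the weight is ``dictated by \eqref{OmegaEqOmegaOut} rather than by the usual $\nabla\psi\neq 0$ condition'' — in fact the paper still imposes the usual $|\nabla\tilde\psi|\geq\alpha>0$ outside $\tilde\omega_T$; \eqref{OmegaEqOmegaOut} enters separately, through the flow-gluing construction of $\rho$ and the existence/construction of a transported weight $\tilde\psi$ in Lemma \ref{Lem-Weight-Function}.

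Second, you propose to close by a contraction mapping. This does not work here: the density is only $L^\infty$, the map $\widehat\u\mapsto\rho$ obtained from the transport equation is not Lipschitz in any topology compatible with the weighted estimates, and indeed the paper explicitly notes that uniqueness of the controlled trajectory is \emph{not} claimed. The fixed point must be obtained by compactness: the paper proves that the density-solver $\mathscr{F}_1$ sends weakly convergent sequences of $\widehat\u$ (bounded in $H^1(\L^2)\cap L^2(\H^2)$, hence compact in $L^2(\L^2)$) to strongly convergent densities in $L^q(\Omega_T)$, using the transport-equation compactness result of Boyer and Fabrie, and then applies Schauder's fixed-point theorem on a suitable closed convex ball. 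A contraction argument has no replacement for this compactness input. One more cosmetic but telling slip: the time weight $\theta$ in the paper blows up only as $t\to T$, \emph{not} as $t\to 0$; this is emphasized as a novelty, since it allows constructing the controlled trajectory without invoking any Cauchy-theory for the non-homogeneous system. Finally, note that the Stokes Carleman estimate is obtained via the stream function and $\rot$, which is intrinsically two-dimensional — this is more than ``handling the pressure'' and is why the result is stated only in $2$-d.
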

%%%%%%%%%%%%%%%%%%%%%%%%%%%%%%%%%%%%%%%%%%%%%%%%%%%%%%%%%%%%%%%%%%%%%%%%%%%%%%%%%%%%

Actually, we will only prove Theorem \ref{Thm-Main} when $\Gamma_0 \neq \emptyset$. When $\Gamma_0 = \emptyset$, Theorem \ref{Thm-Main} can be proved more easily following the same lines, as the extensions arguments we will perform can be handled much more easily.

{\bf Strategy of the proof.}
The proof of Theorem \ref{Thm-Main} is based on a technical fixed-point procedure, and we briefly explain below its general strategy.
\\
Setting
\begin{equation}
	\label{Def-rho-u}
	\rho \ov \sigma - \overline{\sigma}, \quad \u \ov \y - \overline{\y},
\end{equation}
and
\begin{equation}
	\label{Def-f}
		\f(\rho,{\bf u})
		\ov
		-\rho(\partial_t {\bf u}+((\overline {\bf y}+{\bf u})\cdot \nabla){\bf u}+({\bf u}\cdot \nabla)\overline {\bf y} )-\overline {\sigma} ({\bf u}\cdot \nabla){\bf u}
		- \rho (\partial_t \overline {\bf y}+(\overline {\bf y}\cdot \nabla)\overline {\bf y}),
\end{equation}
equations \eqref{EqEtat}--\eqref{CondFinale} rewrite
\begin{equation}
	\label{EqEtat2}
	 \left\{
		\begin{array}{rlll}
			\partial_t\rho+(\overline {\bf y}+{\bf u})\cdot \nabla \rho &=& -{\bf u}\cdot \nabla \overline{\sigma}&\mbox{ in } \Omega_T,
			\\
			\overline {\sigma}\partial_t {\bf u} + \overline {\sigma} (\overline {\bf y} \cdot \nabla){\bf u}+ \overline {\sigma} ({\bf u} \cdot \nabla)\overline {\bf y}-\nu \Delta {\bf u}+\nabla p&=& \f(\rho,{\bf u})&\mbox{ in } \Omega_T,
			\\
			{\rm div}\,{\bf u}&=& 0\quad &\mbox{ in } \Omega_T,
			\\
			(\rho(0),{\bf u}(0))&=&(\rho_0,{\bf u}_0)\quad &\mbox{ in }\Omega,
		\end{array}
	\right.
\end{equation}
with the boundary conditions
\begin{equation}
	\label{CondBordHomogene}
	  {\bf u} = {\bf 0}\quad \mbox{ on } (0,T) \times \Gamma_0,
\end{equation}
and with the requirement
\begin{equation}
	\label{CondFinale2}
	(\rho(T),{\bf u}(T))=(0, {\bf 0}) \quad \mbox{ in } \Omega.
\end{equation}
To construct a solution of \eqref{EqEtat2}--\eqref{CondFinale2}, the strategy consists in finding a fixed-point to some mapping $\mathscr{F}_{(\rho_0,{\bf u}_0)}:\widehat {\bf u}\mapsto {\bf u}$
defined in such a way that $\u = \mathscr{F}_{(\rho_0,{\bf u}_0)}(\widehat {\bf u})$ is a suitable solution of:
\begin{equation}
	\label{EqUg}
	 \left\{
	\begin{array}{rlll}
		\partial_t \rho+(\overline {\bf y}+\widehat {\bf u})\cdot \nabla  \rho &= &-\widehat {\bf u}\cdot \nabla \overline{\sigma}&\mbox{in } \Omega_T,
		\\
		\overline {\sigma}\partial_t {\bf u} + \overline {\sigma} (\overline {\bf y} \cdot \nabla){\bf u}+ \overline {\sigma} ({\bf u} \cdot \nabla)\overline {\bf y}-\nu \Delta {\bf u}+\nabla p&= & \f(\rho,\widehat {\bf u}) &\mbox{in }\Omega_T,
		\\
		{\rm div}\,{\bf u}&= &0\quad &\mbox{in } \Omega_T,
		\\
		{\bf u}&= &{\bf 0}\quad &\mbox{on } (0,T)\times  \Gamma_0,
		\\
		(\rho(0),{\bf u}(0))&=&(\rho_0,{\bf u}_0)\, &\mbox{in }\Omega,
		\\
	      (\rho(T),{\bf u}(T))&=&(0,{\bf 0})\, &\mbox{in }\Omega.
	\end{array}
	\right.
\end{equation}
The mapping $\mathscr{F}_{(\rho_0,{\bf u}_0)}$ is defined in two steps. First, for a given $\widehat \u$, we define
$
	\mathscr{F}_1(\widehat {\bf u},\rho_0)\ov \rho,
$
where $\rho$ will be constructed as a suitable solution of the following control problem for the equation of the density:
\begin{equation}
	\label{EqTransport}
	 \left\{
		\begin{array}{rlll}
			\partial_t \rho+(\overline {\bf y}+\widehat {\bf u})\cdot \nabla  \rho &=&-\widehat {\bf u}\cdot \nabla \overline{\sigma}&\mbox{ in } \Omega_T,
		\\
			\rho(0)&=&\rho_0\quad &\mbox{ in }\Omega,
		\\
	        		\rho(T)&=&0\quad &\mbox{ in }\Omega.
		\end{array}
	\right.
\end{equation}
Then, we define
$
	\mathscr{F}_2({\bf f}, {\bf u}_0)\ov {\bf u},
$
where ${\bf u}$ is a suitable solution of the following control problem for the equation of the velocity:
\begin{equation}
	\label{SystemparaboliquePF}
	 \left\{
		\begin{array}{rlll}
			\overline {\sigma}\partial_t {\bf u} + \overline {\sigma} (\overline {\bf y} \cdot \nabla){\bf u}+ \overline {\sigma} ({\bf u} \cdot \nabla)\overline {\bf y}-\nu \Delta {\bf u}+\nabla p&= & {\bf f}&\mbox{ in } \Omega_T,
			\\
			{\rm div}\,{\bf u}&=&0\quad &\mbox{ in } \Omega_T,
			\\
			{\bf u}&=&{\bf 0}\quad &\mbox{ on } (0,T)\times \Gamma_0,
			\\
			{\bf u}(0)&=&{\bf u}_0\quad &\mbox{ in }\Omega,
			\\
			{\bf u}(T)&=& {\bf 0}\quad &\mbox{ in }\Omega.
		\end{array}
	\right.
\end{equation}
The mapping $\mathscr{F}_{(\rho_0,{\bf u}_0)}$ is then defined as follows:
\begin{equation}
	\label{DefF}
%	 \begin{split}
		\mathscr{F}_{(\rho_0,{\bf u}_0)}(\widehat {\bf u})\ov {\bf u},
		\quad \mbox{ where }
				\rho =\mathscr{F}_1(\widehat {\bf u},\rho_0),
				\mbox{ and }
				{\bf u} =\mathscr{F}_2({\bf f}(\rho,{\bf \widehat u}),{\bf u}_0).
\end{equation}
Hence our strategy decouples the control problem \eqref{EqEtat}--\eqref{CondFinale} into two control problems,  \eqref{EqTransport} for the equation of the density, and \eqref{SystemparaboliquePF} for the equation of the velocity, each of which  having different behaviors.

Indeed, on one hand, the control problem \eqref{SystemparaboliquePF} is of parabolic type, and it will be handled by using global Carleman estimates following the general approach of Fursikov and Imanuvilov \cite{FursikovImanuvilov} for the heat equations: in the case of Navier-Stokes equations, this approach has already been successfully implemented in the works \cite{Imanuvilov2001,FerCarGueImaPuel}.

On the other hand, the control problem \eqref{EqTransport} involves a transport equation. This can be easily controlled provided the time $T>0$ is large enough to allow all the particles in $\Omega$ to go outside the domain, i.e. when condition \eqref{OmegaEqOmegaOut} is satisfied.

But the problem is that we want the above mapping $\mathscr{F}_{(\rho_0,\u_0)}$ to map some convex set into itself. In order to do this, we should be able to get estimates on the above control problems in spaces that behave suitably with respect to both of them. In particular, this will lead us to introduce Carleman weights that follow the dynamics of the transport equation, that is weight functions which are transported by the flow. This strategy then follows the one recently developed in \cite{EGGP} for deriving local exact controllability results for the $1$d compressible Navier-Stokes equations around constant non-vanishing velocities.

Actually, the Carleman estimates we develop in this article also present the feature of not vanishing at time $t = 0$. This allows us to construct a solution $(\rho, \u)$ of \eqref{EqEtat2} without using any property of the Cauchy problem for the non-homogeneous Navier-Stokes equations.

%
%{\bf Related references and comments.} To our knowledge, controllability properties for non-homo\-geneous Navier-Stokes equations have only been studied in \cite{Fernandez-Cara-2012}. Though, the results in \cite{Fernandez-Cara-2012} concern a local exact controllability result around the constant state $\overline{\y} = {\bf 0}$ and with a controllability requirement only on the velocity. Note that \cite{Fernandez-Cara-2012} also provides several optimal control results in that context for various cost functions.
{\bf Related references and comments.} To our knowledge, control properties for non-homo\-geneous Navier-Stokes equations have only been studied in \cite{Fernandez-Cara-2012}, which proves several optimal control results in that context for various cost functions.

For the homogeneous Navier-Stokes equations, the density is assumed to be constant and thus the equations reduce to the equations on the velocity. In that case, several local exact controllability results have been established in \cite{Imanuvilov2001,FerCarGueImaPuel} based on parabolic Carleman estimates, see e.g. \cite{FursikovImanuvilov,FernandezCaraGuerrero-2006-SICON}. Later on, several different strategies have been proposed, see for instance \cite{GuerreroPuel2,GB-G-P,ImanuvilovPuelYam-2009}. We also point out that these results also use the Carleman estimate derived in \cite{ImaPuel} for non-homogeneous elliptic problems in order to handle the pressure term.

But our problem also involves some transport phenomenon, and therefore also shares some features of the thermoelasticity equations \cite{AlbanoTataru}, the viscoelasticity models \cite{MartinRosierRouchon,ChavesRosierZuazua-2013}, and the compressible Navier-Stokes equations \cite{EGGP}. Our approach is actually close to the one developed in \cite{EGGP}. Though, the divergence free condition in the model we consider here requires a specific treatment.

In this article, we will not use any result on the Cauchy problem for \eqref{EqEtat}, as our strategy will automatically construct a trajectory $(\sigma, \y)$ solving the equations \eqref{EqEtat}. However, several results are available in the literature. We refer to the work \cite{Fernandez-Cara-2012} for several results and comments on the Cauchy problem for the non-homogeneous incompressible Navier-Stokes equations and to the references therein.

Let us also note that we will need a precise understanding of the transport equation when transported by a flow entering the domain. More precisely, we will use in an essential way the compactness result in \cite[Theorem 4]{BoyerFabrie-2007}, obtained as a consequence of \cite{Boyer-2005}.

We also underline that Theorem \ref{Thm-Main} does not state the uniqueness of the controlled trajectory $(\sigma,\y)$. This is due to the lack of regularity for the density $\sigma$ which only belongs to $L^\infty(\Omega_T)$, see \cite{Desjardins1997} for the uncontrolled case. Another limitation of this control result is that it is only valid for $2$-d geometry. This restriction comes from the treatment of the velocity equation. For that, we prove a new Carleman inequality for the Stokes equations which hardly relies on the use of the stream function of the velocity, see Section \ref{Sec-Velocity}. 

Finally note that our result also allows the use of non-trivial trajectories. For instance, if $\Gamma_0 = \emptyset$ and $(\overline{\sigma}, \overline{\y}) = (1,{\bf 0})$, one may consider the trajectory $(\overline{\sigma}^*(t), \overline{\y}^*(t)) = (1, \eta(t/T) {\bf U})$ for constant vector fields ${\bf U}$ and $\eta = \eta(t) \in [0,1]$ a bump function taking value $0$ at $t = 0$ and $t = 1$ and with $\eta = 1$ on $[1/3, 2/3]$. Note that $(\overline{\sigma}^*(t), \overline{\y}^*(t))= (1,{\bf 0}) $ at time $t = 0$ and at time $t = T$. But for $T>0$ and large ${\bf U}$, $(\overline{\sigma}^*(t), \overline{\y}^*(t)) $ satisfies \eqref{OmegaEqOmegaOut} and all the assumptions of Theorem \ref{Thm-Main}, while whatever the time $T>0$ is, the trajectory $(\overline{\sigma}(t), \overline{\y}(t)) = (1,{\bf 0})$ clearly does not satisfy \eqref{OmegaEqOmegaOut}. This suggests that the geometric condition \eqref{OmegaEqOmegaOut} may be avoided in some cases using ``return method'' type ideas, see e.g. \cite{Co2,Cor-Fur}. 

{\bf Outline.} This article is organized as follows. Section \ref{Sec-Velocity} explains how to solve the control problem \eqref{SystemparaboliquePF} by the use of Carleman estimates for the Stokes operator. Section \ref{Sec-Density} shows how to construct a controlled density satisfying \eqref{EqTransport} and to derive weighted estimates on it. Section \ref{Sec-Proof-Thm-Main} then focuses on the proof of Theorem \ref{Thm-Main} by putting together the arguments developed in Sections \ref{Sec-Velocity} and \ref{Sec-Density}. Finally, the Appendix gives the detailed proofs of some technical results.
%
%%%%%%%%%%%%%%%%%%%%%%%%%%%%%%%%%%%%%%%%%%%%%%%%
\section{Controlling the velocity}\label{Sec-Velocity}
%%%%%%%%%%%%%%%%%%%%%%%%%%%%%%%%%%%%%%%%%%%%%%%%
This section is dedicated to the construction of a solution of \eqref{SystemparaboliquePF}.
\subsection{Statement of the result}\label{Sect4.1}
In order to solve the control problem \eqref{SystemparaboliquePF}, we will consider \eqref{SystemparaboliquePF} in an extended domain $\Ocal$ as follows: $\Ocal$ is a smooth bounded domain of $\mathbb{R}^2$ satisfying
\begin{equation}
	\label{ExtensionOfOmega}
	\Omega\subset \Ocal,\quad \partial\Ocal \mbox{ is of class $C^2$},\quad \partial \Ocal\cap \partial \Omega\supset \Gamma_0.
\end{equation}
We then extend $(\overline{\sigma}, \overline{\y})$ on $[0,T] \times \overline{\Ocal}$, still denoted the same for simplicity, such that
\begin{equation}
	\label{Positivity}
	(\overline{\sigma}, \overline{\y}) \in C^2([0,T] \times \overline{\Ocal})\times {\bf C}^2([0,T] \times \overline{\Ocal}) \quad \mbox{ and } \, \inf_{[0,T] \times \overline{\Ocal}} \overline \sigma(t,x) > 0.
\end{equation}
Remark that this is possible due to the assumption \eqref{Conditions-TargetTrajectory}. As $\u_0 \in \V_0^1(\Omega)$, extending it by zero outside $\Omega$, we get an extension, still denoted the same, such that
\begin{equation}
	\label{CCPatu0}
	{\u}_0\in {\bf H}_0^1(\Ocal) \quad\mbox{ and }\quad \div {\bf u}_0= 0 \quad \text{ in } \Ocal.
\end{equation}
By also extending $\f$ by zero outside $\Omega$ and setting $\Ocal_T = (0,T) \times \Ocal$, $\Gamma_T = (0,T)\times\partial\Ocal$ we then consider the following system
\begin{equation}
	\label{StokesControl}
		\left\{
			\begin{array}{rlll}
				\overline\sigma  (\partial_t \u + (\overline{\y} \cdot \nabla) \u+ (\u \cdot \nabla) \overline{\y}) - \nu \Delta \u + \nabla p &=& \f + \h 1_{\Ocal \setminus \overline{\Omega}}  & \text{in } \Ocal_T,
				\\
				\div \u &=& 0 & \text{in } \Ocal_T,
				\\
				\u &=& {\bf 0}  & \text{on } \Gamma_T,
				\\
				\u(0) &=& \u_0 & \text{in } \Ocal.
			\end{array}
		\right.
\end{equation}
Here, $1_{\Ocal \setminus \overline{\Omega}}$ is the characteristic function of $\Ocal \setminus \overline{\Omega}$ and $\h \in \L^2 (\Ocal_T)$ is a control function. Note that  the presence of $1_{\Ocal \setminus \overline{\Omega}}$ in \eqref{StokesControl} implies that the action of the control is supported in $\Ocal \setminus \overline{\Omega}$.
\\
We thus intend to solve the following control problem: Given $\u_0 \in {\bf H}_0^1(\Ocal)$ satisfying \eqref{CCPatu0} and a source term $\f$ in some suitable space, find a control function
$\h \in \L^2 (\Ocal_T)$ such that the solution $\u$ of \eqref{StokesControl} satisfies
\begin{equation}
	\label{ControlReq-u}
	\u(T) = 0 \quad \text{ in } \Ocal.
\end{equation}
Indeed, if we are able to solve this control problem, the restriction of the solution $\u$ to $\Omega$ would yield a solution of the control problem \eqref{SystemparaboliquePF}.
In order to solve the control problem \eqref{StokesControl}--\eqref{ControlReq-u}, as it is classical by now, we are going to establish a suitable observability estimate for the adjoint problem
\begin{equation}
	\label{Stokes-Adjoint-Full}
		\left\{
			\begin{array}{rlll}
				- \partial_t (\overline\sigma \v)  - D (\overline \sigma \v)  \overline\y - \overline\sigma \v \div \overline\y- \nu \Delta \v + \nabla p &=& \g \quad & \text{ in } \Ocal_T,
				\\
				\div \v &=& 0 \quad & \text{ in } \Ocal_T,
				\\
				\v &=& {\bf 0} \quad & \text{ on } \Gamma_T,
			\end{array}
		\right.
\end{equation}
where $D\v:=\nabla\v+\,^t\nabla\v$ is the symmetrized gradient.

To state our result precisely, let us introduce the weight functions we will use in the Carleman estimate. We assume that we have a function $ \tilde \psi = \tilde \psi(t,x) \in C^2(\overline{\Ocal_T})$ such that
\begin{equation}
	\label{Psi}
	\tilde \psi \ov  \tilde \psi(t,x) \quad \hbox{ such that}
	\left\{
		\begin{array}{ll}
			\forall (t,x) \in \overline{\Ocal_T},\, &  \tilde \psi(t,x)  \in [0,1],
			\\
			\forall (t,x) \in \Gamma_T, \, &\partial_{\bf n}  \tilde\psi(t,x) \leq 0,
			\\
			\forall t \in [0,T], \, & \tilde \psi(t)_{|\partial \Ocal}  \text{ is constant},
			\\
			\forall t \in [0,T], \, & \inf_{\Ocal} \tilde \psi(t,\cdot) = \tilde \psi(t)_{|\partial \Ocal}.
		\end{array}
	\right.
\end{equation}
We also assume the existence of two open subsets $\tilde \omega_T \Subset \omega_T$ of $[0,T] \times (\Ocal \setminus \overline{\Omega})$ (here and in the following, the symbol $\Subset$ means that there exists a compact set $K_T$ of $[0,T] \times (\Ocal \setminus \overline{\Omega})$ such that $\tilde \omega_T \subset K_T \subset \omega_T$) and a constant $\alpha>0$ such that
\begin{equation}
	\label{Ass-Psi}
	 \inf_{\Ocal_T \setminus {\tilde \omega_T } }\{|\nabla \tilde \psi| \}\geq \alpha >0.
\end{equation}
For $m\geq 1$, we set
\begin{equation}
	\label{PsiGamma}
	\psi(t,x)\ov  \tilde \psi(t,x) + 6m.
\end{equation}
We then set $T_0>0$ and $T_1>0$ such that $T_1 \leq 1/4$ and $T_0+2 T_1 < T$ and choose a weight function in time $ \theta_{m,\mu}(t)$ depending on the parameters $m \geq 1$ and $\mu\geq 2$ defined by
\begin{equation}
	\label{ThetaMu}
	\theta_{m, \mu } \ov \theta_{m, \mu}(t)
	 \hbox{ such that}
		\left\{
			\begin{array}{l}
			\ds \forall t \in [0,T_0],\, \theta_{m, \mu}(t) = 1+ \left( 1- \frac{t}{T_0} \right)^\mu,
			\smallskip
			\\
			\ds \forall t \in [T_0, T- 2T_1], \, \theta_{m,\mu}(t) = 1,
			\smallskip
			\\
			\ds \forall t \in [T-T_1,T), \,  \theta_{m,\mu}(t) = \frac{1}{(T-t)^m},
			\smallskip
			\\
			\ds \theta_{m,\mu} \hbox{ is increasing on } [T-2T_1, T-T_1],
			\smallskip
			\\
			\ds \theta_{m,\mu} \in C^2([0,T)).
			\end{array}
		\right.
\end{equation}
For simplicity of notations in the following we omit the dependence on $m$ and $\mu$ and we simply write $\theta\ov \theta_{m,\mu}$.
We will then take the following weight functions $\varphi = \varphi(t,x)$ and $\xi = \xi(t,x)$:
\begin{equation}
	\label{Phi-Xi}
	\varphi(t,x)\ov \theta(t) \left(\lambda e^{6 \lambda (m+1)}- \exp(\lambda \psi(t,x)) \right), \quad \xi(t,x) \ov \theta (t) \exp(\lambda \psi(t,x)),
\end{equation}
where  $s,\, \lambda$ are positive parameters with $s\geq 1$, $\lambda\geq 1$ and $\mu$ is chosen as
\begin{equation}
	\label{Def-mu}
	\mu = s \lambda^2 e^{\lambda (6 m -4)},
\end{equation}
which is always bigger than $2$, thus being compatible with the condition $\theta \in C^2([0,T])$. Note that the weight functions $\varphi$ and $\xi$, depend on $s,\, \lambda, \, m$, and should rather be denoted by $\varphi_{s, \lambda,m}$, resp. $\xi_{s,\lambda,m}$, but we drop these indexes for simplicity of notations.
\\
Remark that, due to the definition of $\psi$ in \eqref{PsiGamma} and the conditions \eqref{Psi}, we have, for all $\lambda \geq 1$ and $(t,x) \in \Ocal_T$,
\begin{equation}
	\label{Phi-bounds}
	\frac{3}{4} \theta(t) \lambda e^{ 6 \lambda (m+1)} \leq \varphi(t,x) \leq \theta(t) \lambda e^{6 \lambda (m+1)}.
\end{equation}
Finally, we introduce
\begin{eqnarray}
		\widehat\varphi(t)\ov\min_{x\in\overline\Ocal}\varphi(t,x),
			& \quad &
		\varphi^*(t)\ov \max_{x\in\overline\Ocal}\varphi(t,x)=\varphi_{|\partial\Ocal}(t),
		\label{Variants-Phi}
			\\
		\widehat\xi(t)\ov\max_{x\in\overline\Ocal}\xi(t,x),
			& \quad &
		\xi^*(t)\ov\min_{x\in\overline\Ocal}\xi(t,x) = \xi_{|\partial\Ocal}(t).
		\label{Variants-Xi}
\end{eqnarray}
%Also remark that, by construction, we have, for some constant $C>0$ independent of $s$ and $\lambda$,
%\begin{equation}\label{EstDtPhistar}
%		 \partial_t\varphi^*  \leq    C \lambda (\xi^*)^{1+1/m}\hbox{ in }(0,T)\times\Ocal.
%%		\\
%%		\partial_t\xi^* & \geq & - C \lambda (\xi^*)^{1+1/m }\hbox{ in }(0,T)\times\Ocal.
%\end{equation}
	
Using these weight functions, we prove the following Carleman estimate for the Stokes system \eqref{Stokes-Adjoint-Full}:
\begin{theorem}
	\label{Thm-Carleman-Stokes}
	{Assume that $\Ocal$ is a smooth bounded domain extending $\Omega$ as in \eqref{ExtensionOfOmega}, let $\omega$, $\Tilde\omega$ be two subdomains of $\Ocal\backslash \overline{\Omega}$ such that $\Tilde\omega\Subset \omega$ and set $\omega_T=[0,T]\times \omega$ and $\tilde \omega_T=[0,T]\times \Tilde \omega$.}
	\\
	Let $\tilde \psi$ as in \eqref{Psi}--\eqref{Ass-Psi} and $\psi,\, \theta, \, \varphi, \, \xi$ as in \eqref{PsiGamma}--\eqref{ThetaMu}--\eqref{Phi-Xi}.
	\\
	Then, for $m\geq 5$, there exist some constants $s_0\geq 1$, $\lambda_0 \geq 1$ and $C>0$ such that for all smooth solution $\v$ of \eqref{Stokes-Adjoint-Full} with source term $\g \in \L^2( \Ocal_T)$, for all $s \geq s_0$ and $\lambda \geq \lambda_0$,
	\begin{equation}\label{Carl-Est-Stokes-Full}
		\begin{array}{l}\displaystyle
			s^{1/2}\lambda^{-1/2}\int_\Ocal (\xi^*)^{4-2/m} |\v(0,\cdot)|^2 e^{-2s\varphi^*(0)}
			+
			s\lambda^2\iint_{\Ocal_T}\xi^{4}|\v|^2e^{-2s\varphi}
			% +
			%\iint_{\Ocal_T}\xi^{3}|\rot \v|^2e^{-2s\varphi}
		\\
		 \noalign{\medskip}\displaystyle
			+
			s^{-1} \iint_{\Ocal_T}\xi^{2}|\nabla \v|^2e^{-2s\varphi}
			+
			s^{1/2}\lambda^{-1/2}\int_0^T(\xi^*)^{4-2/m}e^{-2s\varphi^*}\|\v\|^2_{{\bf H}^{1}(\Ocal)}
		\\
		 \noalign{\medskip}\displaystyle
			\leq
			C\left(
			s^{5/2}\lambda^{2}\iint_{{{\omega_T}}}\widehat\xi^6|\v|^2e^{2s\varphi^*-4s\widehat\varphi}
			 +s^{1/2}\lambda^{-1/2} \iint_{\Ocal_T} (\xi)^{4-2/m}|\g|^2e^{-2s\varphi}
			\right).
	\end{array}
	\end{equation}
\end{theorem}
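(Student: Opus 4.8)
The plan is to prove the Carleman estimate \eqref{Carl-Est-Stokes-Full} by reducing the Stokes system to a scalar fourth-order equation via the stream function, exploiting crucially the two-dimensional structure. Since $\div \v = 0$ in the simply connected (or at least the relevant) region, I would introduce a stream function $\phi$ with $\v = \nabla^\perp \phi = (-\partial_{x_2}\phi, \partial_{x_1}\phi)$, and apply the curl (i.e. $\rot = \partial_{x_1}(\cdot)_2 - \partial_{x_2}(\cdot)_1$) to the momentum equation in \eqref{Stokes-Adjoint-Full} to eliminate the pressure gradient. This turns the system into a scalar parabolic-type equation of the form $-\overline\sigma\, \partial_t \Delta\phi - \nu \Delta^2 \phi + (\text{lower-order transport and zeroth-order terms involving }\overline\y,\,\overline\sigma) = \rot \g$, with boundary conditions $\phi = \partial_{\bf n}\phi = 0$ on $\Gamma_T$ coming from $\v = {\bf 0}$ on the boundary. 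The first main step is thus to set up this reduction carefully, tracking the regularity of the coefficients (only $C^2$ in space, which is why $m \geq 5$ and the particular powers of $\xi$ appear).

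The second step is to prove a Carleman estimate for this scalar fourth-order operator with the weights $\varphi,\xi$ defined in \eqref{Phi-Xi}. Here I would follow the Fursikov–Imanuvilov machinery adapted to fourth-order / Stokes operators as in \cite{FerCarGueImaPuel} and \cite{ImanuvilovPuelYam-2009}: conjugate the operator by $e^{-s\varphi}$, split into self-adjoint and skew-adjoint parts, expand the $L^2$ norm of the conjugated equation, and integrate by parts to produce the positive weighted terms on the left of \eqref{Carl-Est-Stokes-Full}, with all boundary contributions on $\Gamma_T$ either vanishing (because $\tilde\psi$ is constant on $\partial\Ocal$ with $\partial_{\bf n}\tilde\psi \leq 0$, and $\phi=\partial_{\bf n}\phi=0$) or having a favourable sign. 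The non-vanishing of $\varphi,\xi$ at $t = 0$ (the $\theta$ in \eqref{ThetaMu} stays bounded below near $t=0$ rather than blowing up) is what allows the term $s^{1/2}\lambda^{-1/2}\int_\Ocal (\xi^*)^{4-2/m}|\v(0)|^2 e^{-2s\varphi^*(0)}$ to survive on the left-hand side — this requires an extra energy/integration-by-parts argument in $t$ near $t = 0$ rather than the usual absorption. One then returns from $\phi$ back to $\v = \nabla^\perp\phi$ and from $\Delta\phi = -\rot \v$ back to $\nabla\v$, using elliptic regularity with the Carleman weight, and the condition \eqref{Ass-Psi} that $|\nabla\tilde\psi|$ is bounded below off $\tilde\omega_T$ to make the zeroth-order-in-$\phi$ terms coercive where needed.

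The third step is the \emph{localization / bootstrap of the observation region}: the right-hand side of \eqref{Carl-Est-Stokes-Full} only involves $\v$ itself (with weight $\widehat\xi^6 e^{2s\varphi^* - 4 s\widehat\varphi}$) on $\omega_T$, not $\Delta\phi$ or $\nabla\v$, whereas the natural Carleman estimate for the fourth-order equation would produce a local term in $\phi$ up to its second derivatives. Absorbing the higher local derivatives in terms of a lower-order local term of $\v$ alone is the standard but delicate part: one uses a cutoff function supported in $\omega_T$ equal to $1$ on $\tilde\omega_T$, integrates by parts repeatedly, and uses Young's inequality with small parameters to move the top-order local terms to the left-hand side, at the cost of the large power $s^{5/2}\lambda^2$ and the $\widehat\xi,\widehat\varphi$ (which dominate the pointwise $\xi,\varphi$ on $\omega_T$). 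I expect \textbf{this localization step, combined with keeping track of the precise $s,\lambda,m$ powers through the low-regularity coefficients, to be the main obstacle}; the geometric conditions \eqref{Psi}–\eqref{Ass-Psi} on $\tilde\psi$ are designed precisely so that the only "bad" region left after the global estimate is inside $\tilde\omega_T\Subset\omega_T$, so that the cutoff argument closes. Finally, one fixes $\lambda \geq \lambda_0$ large to absorb $\lambda$-dependent lower-order terms, then $s \geq s_0$ large, to obtain \eqref{Carl-Est-Stokes-Full}; a density argument extends it from smooth $\v$ to all solutions with $\g \in \L^2(\Ocal_T)$.
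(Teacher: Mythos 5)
Your proposal is a genuinely different route from the paper's. The paper does \emph{not} establish a Carleman estimate for a fourth-order parabolic operator in the stream function. Instead it takes the curl of \eqref{Stokes-Adjoint-Simplified} and works with the scalar vorticity $w=\rot\v$, which solves the \emph{second-order} heat equation \eqref{Eq-Rot-V} but with two serious defects: the boundary trace $w|_{\Gamma_T}$ is unknown, and the source involves $\partial_t\v\cdot\nabla^\perp\overline\sigma$. The paper's Carleman estimate for this heat equation (Theorem \ref{Thm-Carleman-H-1}, itself proved via duality) therefore carries a boundary term on the right-hand side; the rest of Section \ref{Sec-Proof-Thm-Stokes-Simplified} is devoted to eliminating it. Concretely: elliptic Carleman estimates from \cite{ImaPuel} on $-\Delta\v=\rot w$ and from \cite{FursikovImanuvilov} on $\Delta\zeta=w$ give weighted control of $\v$ and $\zeta$ from $w$; weighted $L^2(\mathbf{H}^1)$ and $L^2(\mathbf{H}^2)$ energy estimates on $\v$ are obtained by multiplying the Stokes system by $\theta_1(t)^2\v$ and $\theta_2(t)^2(-\Delta\v+\nabla p)/\overline\sigma$ with \emph{time-only} weights $\theta_1,\theta_2$ (precisely to sidestep the pressure), with $\theta_1'\,|\v|^2$ converted, via $\v=\nabla^\perp\zeta$ and integration by parts, into a $\zeta\,\rot\v$ term controlled by the $\zeta$-Carleman and a Poincaré--Wirtinger inequality (this is where the 2-d structure enters, see \eqref{Poincare-t-0}); a trace inequality then controls $\partial_{\bf n}\v$, hence $w|_{\Gamma_T}$; finally a cutoff argument trades the local observation of $w$ for one on $\v$ alone. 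What your route would buy: the boundary data for $\phi$ is (morally) homogeneous, so steps dealing with the unknown $w|_{\Gamma_T}$ and the $L^2(\mathbf{H}^2)$ energy estimate would be unnecessary.

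However, there are two concrete gaps in your plan that the paper's route is designed to avoid. First, the boundary conditions for the stream function are $\phi=c_i(t)$ on each connected component $\gamma_i$ of $\partial\Ocal$, with \emph{unknown, time-dependent} constants $c_i(t)$ (the paper records this explicitly in \eqref{Link-Rot-V-Zeta} and normalizes via \eqref{zetazeromean}); asserting $\phi=\partial_{\bf n}\phi=0$ only works if $\partial\Ocal$ is connected. Those unknown constants re-enter the game at exactly the point where your fourth-order Carleman estimate would integrate by parts on $\Gamma_T$. Second, a Carleman estimate for the operator $-\overline\sigma(t,x)\,\partial_t\Delta-\nu\Delta^2$ with a variable coefficient $\overline\sigma$ in front of $\partial_t\Delta$, and with the paper's non-standard weights (space-and-time dependent $\psi$ with $\psi(t)_{|\partial\Ocal}$ constant, and $\theta$ bounded near $t=0$), is not available in \cite{FerCarGueImaPuel} or \cite{ImanuvilovPuelYam-2009} — neither reference proves a fourth-order parabolic Carleman estimate — and moreover $\rot(\overline\sigma\partial_t\v)$ produces the awkward mixed third-order term $\nabla^\perp\overline\sigma\cdot\partial_t\nabla^\perp\phi$ whose absorption is exactly as delicate as the paper's $\partial_t\v\cdot\nabla^\perp\overline\sigma$ term, which the paper handles by allowing a $b_{n+1}\partial_t g_{n+1}$-type source in Theorem \ref{Thm-Carleman-H-1} via a duality argument. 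So while your strategy is conceptually appealing, it moves, rather than removes, the hard part, and the step you identify as the main obstacle (the local bootstrap) is in fact the more routine one.
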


The proof of Theorem \ref{Thm-Carleman-Stokes} is done in Sections \ref{Sec-CarlemanHeat} and \ref{Sec-Stokes}. We are first going to prove a slightly improved version of the Carleman estimates \eqref{Carl-Est-Stokes-Full} for solutions $\v$ of the simplified version of the adjoint problem \eqref{Stokes-Adjoint-Full}:
\begin{equation}
	\label{Stokes-Adjoint-Simplified}
		\left\{
			\begin{array}{rl}
				- \overline\sigma \partial_t \v - \nu \Delta \v + \nabla p = \g \quad & \text{ in } \Ocal_T,
				\\
				\div \v = 0 \quad & \text{ in } \Ocal_T,
				\\
				\v = 0 \quad & \text{ on } \Gamma_T.
			\end{array}
		\right.
\end{equation}
Our approach then consists first in taking the curl of the equation \eqref{Stokes-Adjoint-Simplified} and consider the equation of $w = \rot \v$:
\begin{equation}
	\label{Eq-Rot-V}
				- \overline\sigma \partial_t w - \nu \Delta w = \rot \g + \partial_t \v\cdot \nabla^\perp \overline\sigma \quad  \text{ in } \Ocal_T.
\end{equation}
Thus, in Section \ref{Sec-CarlemanHeat}, we derive estimates on $w$ solution of \eqref{Eq-Rot-V} in terms of the right hand side of the equation of \eqref{Eq-Rot-V} and the boundary terms. It turns out that the boundary conditions and source terms strongly depend on $\v$ itself. Hence in Section \ref{Sec-Stokes}, we explain how to estimate $\v$ in terms of $w$ by using the stream function $\zeta$ associated to $\u$, which is given by
\begin{equation}
	\label{Link-Rot-V-Zeta}
		\Delta \zeta(t) = w(t) \quad \text{ in } \Ocal_T \quad \text{ and } \quad \zeta(t) = c_i(t) \quad \displaystyle \text{ on } [0,T]\times \gamma_i\;\text{ for }\; i=1,\dots, K,
\end{equation}
where $\{\gamma_i,\; i=1,\dots, K\}$ is the family of connected components of $\partial\Ocal$ and $c_i(t)$, $i=1,\dots, K$ are some constants characterizing $\zeta(t)$ which are chosen such that, for some Lipschitz subdomain $\widehat{\omega}$ of $\Ocal\backslash \overline{\Omega}$ satisfying $\Tilde\omega\Subset \widehat\omega \Subset \omega$,
\begin{equation}\label{zetazeromean}
\int_{\widehat{\omega}}\zeta(t) =0.
\end{equation}

Among the new features of the Carleman estimate of Theorem \ref{Thm-Carleman-Stokes} with respect to those in the literature, let us point out the following facts:
\begin{itemize}
	\item The weight function in time $\theta_{m,\mu}$ in \eqref{ThetaMu} does not blow up as the time $t$ goes to $0$. However, our proof requires a strong convexity property close to $t = 0$, tuned by the choice of the parameter $\mu$ in \eqref{ThetaMu} as a suitable function of the parameters $s$ and $\lambda$, see \eqref{Def-mu}.
	\item The weight function $\psi$ depends on both the time and space variables. As we shall explain, this is not a big issue as long as we guarantee that for all $t\in [0,T]$, $\psi(t)$ is constant on the boundary $\partial \Ocal$, thus allowing to apply the Carleman inequality of \cite{ImaPuel} for elliptic equations.
\end{itemize}

Based on Theorem \ref{Thm-Carleman-Stokes}, following standard duality arguments, we prove the following control result:
\begin{theorem}
	\label{Thm-Stokes-Controlled}
	Within the setting and assumptions of Theorem \ref{Thm-Carleman-Stokes}, there exists a constant $C>0$ such that for all $s \geq s_0$ and $\lambda \geq \lambda_0$, if $\u_0$ verifies (\ref{CCPatu0}) and $\f \in {\bf L}^2(\Ocal_T)$  satisfies
	\begin{equation}
		\label{ConditionData-Stokes}
		\iint_{\Ocal_T} \xi^{-4} |\f |^2 e^{2 s \varphi} < \infty,
	\end{equation}
 	there exists a control function $\h \in \L^2( \Ocal_T)$ supported in $\omega_T$ and a controlled trajectory $\u \in \L^2( \Ocal_T)$ such that $\u$ solves the control problem \eqref{StokesControl}--\eqref{ControlReq-u} and $(\u, \h)$ satisfies the estimate
	\begin{multline}
		\label{Est-U-H-Duality-Thm}
		\|e^{\frac{3}{4}s\varphi^*}\u\|^2_{{L}^2({\bf H}^2)\cap {H}^1({\bf L}^2)}+s^{1/2}\lambda^{5/2}  \iint_{\Ocal_T} \xi^{2/m - 4} |\u|^2 e^{2 s \varphi} + s^{-3/2} \iint_{\omega_T} \widehat\xi^{-6} |\h|^2 e^{4s \widehat{\varphi} - 2s \varphi^*}
		\\
		\leq
		C\left(\iint_{\Ocal_T} \xi^{-4} |\f |^2 e^{2 s \varphi} + e^{\frac{5}{2} s \varphi^*(0,\cdot)} \|\u_0\|^2_{{\bf H}^1_0(\Ocal)}\right).
	\end{multline}
\end{theorem}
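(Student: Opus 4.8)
The plan is to deduce Theorem \ref{Thm-Stokes-Controlled} from the Carleman estimate of Theorem \ref{Thm-Carleman-Stokes} by the classical penalized-duality / Fursikov--Imanuvilov argument, taking care of the fact that the Carleman weights do not blow up at $t=0$ (so the initial data $\u_0$ enters the estimate through the explicit factor $e^{\frac52 s\varphi^*(0,\cdot)}$ rather than being absorbed). I would first reduce the control problem \eqref{StokesControl}--\eqref{ControlReq-u} to one with homogeneous initial data by subtracting off a lift of $\u_0$: choose $\u^{(0)}$ solving the uncontrolled Stokes system $\overline\sigma(\partial_t\u^{(0)}+(\overline\y\cdot\nabla)\u^{(0)}+(\u^{(0)}\cdot\nabla)\overline\y)-\nu\Delta\u^{(0)}+\nabla p^{(0)}=\f$, $\div\u^{(0)}=0$, $\u^{(0)}=0$ on $\Gamma_T$, $\u^{(0)}(0)=\u_0$, on a subinterval $(0,T_0)$ where $\theta\equiv$ its initial profile and the weights are bounded; by parabolic regularity $\u^{(0)}\in L^2(0,T_0;\mathbf H^2)\cap H^1(0,T_0;\mathbf L^2)$ with the bound controlled by $\|\u_0\|_{\mathbf H^1_0(\Ocal)}$ and $\iint\xi^{-4}|\f|^2 e^{2s\varphi}$, which accounts for the $e^{\frac52 s\varphi^*(0,\cdot)}\|\u_0\|^2$ term. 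Then one glues with a cutoff in time so that the genuine control problem is posed with zero initial data and a modified source still satisfying \eqref{ConditionData-Stokes}.

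Next I would set up the penalized functional. For $\varepsilon>0$ minimize over $\g\in\L^2(\Ocal_T)$ (right-hand sides of the adjoint system \eqref{Stokes-Adjoint-Full}) the quadratic functional
\begin{equation*}
	J_\varepsilon(\g) = \frac12\iint_{\omega_T}\widehat\xi^6 e^{2s\varphi^*-4s\widehat\varphi}|\v|^2 + \frac12\iint_{\Ocal_T}(\xi)^{4-2/m}e^{-2s\varphi}|\g|^2 + \frac{1}{2\varepsilon}\|\v(0,\cdot)\|^2 - \text{(linear terms in }\f,\u_0),
\end{equation*}
where $\v$ solves \eqref{Stokes-Adjoint-Full} with data $\g$ and terminal condition $\v(T)=0$; the coercivity of $J_\varepsilon$ is exactly the observability inequality \eqref{Carl-Est-Stokes-Full}. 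The Euler--Lagrange equations identify the optimal control $\h=\widehat\xi^6 e^{2s\varphi^*-4s\widehat\varphi}\v\,1_{\omega_T}$ and the optimal state $\u$ as $\overline\sigma^{-1}$ times the adjoint variable driven by $\h+\f$, with $\|\u(T)\|\le C\sqrt\varepsilon$. The weighted a priori bounds on $(\u,\h)$ that survive uniformly in $\varepsilon$ are precisely the left-hand side of \eqref{Est-U-H-Duality-Thm} (the $\xi^{2/m-4}e^{2s\varphi}$-weighted $\L^2$ bound on $\u$, and the $\widehat\xi^{-6}e^{4s\widehat\varphi-2s\varphi^*}$-weighted bound on $\h$); one passes to the limit $\varepsilon\to 0$ to get an admissible pair solving \eqref{ControlReq-u} exactly. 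Finally, the $L^2(\mathbf H^2)\cap H^1(\mathbf L^2)$ bound on $e^{\frac34 s\varphi^*}\u$ is obtained a posteriori: multiplying the weighted state equation by $e^{\frac34 s\varphi^*}$ turns it into a Stokes system with right-hand side controlled by the already-established weighted $L^2$ norms of $\u$, $\f$ and $\h$ (using \eqref{Phi-bounds} to compare $\varphi^*$ with $\varphi$ and with $\widehat\varphi$, and the fact that $\theta$ and its derivatives are controlled by powers of $\xi^*$), so that maximal parabolic regularity for the Stokes operator yields the stated estimate.

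The main obstacle, in my view, is the bookkeeping of the weights near $t=0$ and near $t=T$ simultaneously. Near $t=T$ the weights blow up in the standard way and everything is routine; near $t=0$, however, the non-vanishing of $\theta$ means one cannot hide $\u_0$, and one must check that the lift $\u^{(0)}$ is compatible with the weight $e^{2s\varphi}$ in \eqref{ConditionData-Stokes} on $(0,T_0)$ — this uses that $\varphi$ is bounded there by \eqref{Phi-bounds} and that $\varphi^*(0,\cdot)$ is finite, so the exponential $e^{\frac52 s\varphi^*(0,\cdot)}$ is a harmless (though $s$-dependent) constant. A secondary technical point is that the "control" in \eqref{StokesControl} is $\h 1_{\Ocal\setminus\overline\Omega}$ with $\h\in\L^2$ unrestricted, whereas the Carleman estimate only gives observation on the smaller set $\omega_T\Subset [0,T]\times(\Ocal\setminus\overline\Omega)$; this is handled by choosing $\omega$ inside $\Ocal\setminus\overline\Omega$ from the start, so that the produced control is automatically supported in $\omega_T$ and a fortiori in $(0,T)\times(\Ocal\setminus\overline\Omega)$. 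Finally, one must justify that the formal minimization over $\g$ can be made rigorous in the completion of $\L^2(\Ocal_T)$ (or of the space of solutions $\v$) under the weighted norm appearing on the left of \eqref{Carl-Est-Stokes-Full}; this is standard once the observability inequality is in hand, the only care being that the source and initial-data terms define continuous linear forms on that completion, which again follows from \eqref{Carl-Est-Stokes-Full} combined with \eqref{ConditionData-Stokes}.
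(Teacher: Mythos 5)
Your proposal uses the same high-level Fursikov--Imanuvilov duality machinery as the paper, but the specific variational setup as written has a flaw that prevents it from giving the final condition $\u(T)=\mathbf 0$. You fix the adjoint terminal data $\v(T)=\mathbf 0$ and only vary the adjoint source $\g$. If you integrate the state equation against such adjoint solutions, the boundary term at $t=T$ is $\int_\Ocal\overline\sigma(T)\u(T)\cdot\v(T)$, which vanishes identically because you forced $\v(T)=\mathbf 0$; the Euler--Lagrange system therefore never ``sees'' $\u(T)$, so you cannot conclude $\u(T)=\mathbf 0$ (or even $\|\u(T)\|\leq C\sqrt\varepsilon$). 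The penalization you added, $\frac{1}{2\varepsilon}\|\v(0)\|^2$, acts on the wrong endpoint: it would force the \emph{initial} trace of $\u$ toward something small, not the final one. The standard alternatives are either (a) keep the penalization scheme but let $\v_T$ range over $\V^1_0$ and penalize $\frac{\varepsilon}{2}\|\v_T\|^2$, then pass $\varepsilon\to 0$; or (b), what the paper does, minimize directly over the pair $(\v_T,\g)\in\V^1_0(\Omega)\times L^2(\Ocal_T)$ in the completion $\X_{St,obs}$, with no penalization at all, since the Carleman estimate \eqref{Carl-Est-Stokes-Full} already makes the functional coercive in that norm. Then the Euler--Lagrange identity \eqref{Weak-Formulation-Stokes}, tested against arbitrary $(\v_T,\g)$, produces both the correct initial data and $\u(T)=\mathbf 0$ by transposition.

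The lift $\u^{(0)}$ to reduce to zero initial data is not wrong in principle, but it is unnecessary extra work: the Carleman estimate \eqref{Carl-Est-Stokes-Full} contains a weighted $\L^2$-bound on $\v(0,\cdot)$ on the left, so the paper simply includes the linear term $-\int_\Ocal\u_0\cdot\v(0,\cdot)$ in the functional $J_{St}$ and bounds it by Cauchy--Schwarz against that term of the Carleman estimate, which is exactly where the factor $e^{\frac52 s\varphi^*(0)}\|\u_0\|^2$ comes from in \eqref{Est-U-H-Duality-Thm}. Your lift would additionally require checking that the cutoff error terms are compatible with the weight, which your observability estimate already handles for free. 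The final part of your argument --- obtaining the $L^2(\H^2)\cap H^1(\L^2)$ bound on $e^{\frac34 s\varphi^*}\u$ by maximal Stokes regularity applied to the weighted system --- coincides with the paper's remark, and the identification $\h = -s^2\lambda^{5/2}\widehat\xi^6\V e^{2s\varphi^*-4s\widehat\varphi}1_{\omega_T}$ matches the paper's \eqref{Identification-U-H} up to sign and constants.
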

The proof of Theorem \ref{Thm-Stokes-Controlled} is given in Section \ref{Sec-Stokes-Controlled}.

\subsection{Carleman estimates for the heat equation}\label{Sec-CarlemanHeat}
The goal of this section is to show the following estimate:
 \begin{theorem}
 	\label{Thm-Carleman-H-1}
Let $\widehat{\omega_T}$ be an open subset of $\Ocal_T$ satisfying $\tilde{\omega}_T\Subset \widehat{\omega_T}$ and let $\tilde \psi$ as in \eqref{Psi}--\eqref{Ass-Psi} and $\psi,\, \theta, \, \varphi, \, \xi$ as in \eqref{PsiGamma}--\eqref{ThetaMu}--\eqref{Phi-Xi}.
	\\
	For all $M>0$, there exist constants $C>0$, $s_0$ and $\lambda_0$ such that for all $s \geq s_0$ and $\lambda \geq \lambda_0$, for all smooth functions $w $ in $ \overline{\Ocal_T}$, such that
	$$
		-\overline \sigma \partial_t w - \nu \Delta w = a_0 w +  A_1 \cdot \nabla w +g_0 + \sum_{i=1}^n b_i \partial_i  g_{i}  + b_{n+1} \partial_t g_{n+1}  \quad \hbox{ in } 	\Ocal_T,
	$$
	with $a_0 \in L^\infty( \Ocal_T)$, $A_1 \in L^\infty (0,T; {\bf W}^{1,\infty}( \Ocal))$, $g_0, \, g_{i} \in L^2 (\Ocal_T)$, and coefficients $b_{i} \in L^\infty(0,T; W^{1, \infty} (\Ocal))$,  $b_{n+1} \in W^{1, \infty}(0,T ; L^\infty(\Ocal))$ satisfying
	\begin{multline}
		\label{Reg-Bound}
		\|a_0 \|_{L^\infty(\Ocal_T)} + \| A_1 \|_{L^\infty (0,T; {\bf W}^{1,\infty}( \Ocal))}
		\\
		+\sum_{i=1}^n \|b_i\|_{L^\infty(0,T; W^{1, \infty} (\Ocal))} + \|b_{n+1} \|_{W^{1, \infty}(0,T ; L^\infty(\Ocal))}
		\leq M,
	\end{multline}
	we have
	\begin{multline}
		\label{CarlemanWeak}
		s^3 \lambda^4 \iint_{\Ocal_T} \xi^3 |w|^2 e^{-2 s \varphi}
		\leq
		C  \iint_{\Ocal_T} |g_0|^2 e^{-2 s \varphi}			
		\\
		+
		 Cs^2 \lambda^2 \iint_{\Ocal_T} \xi^{2} \big{(}\sum_{i=1}^n |g_i|^2\big{)} e^{-2 s \varphi}
		 +
		C s^4 \lambda^4  \iint_{\Ocal_T} \xi^{4} |g_{n+1}|^2 e^{-2 s \varphi}
		\\
		+
		Cs^3 \lambda^3 \int_{\Gamma_T} \xi^{3} |w|^2 e^{-2 s \varphi}
		 +
		 C s^3 \lambda^4 \iint_{\widehat{\omega_T}} \xi^3 |w|^2 e^{-2 s \varphi}.
	\end{multline}	
 \end{theorem}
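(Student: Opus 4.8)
The plan is to prove Theorem \ref{Thm-Carleman-H-1} as a consequence of the classical Fursikov--Imanuvilov Carleman estimate for the heat operator, carefully tracking the effect of our particular weight functions \eqref{ThetaMu}--\eqref{Phi-Xi} and the nonstandard parameter choice \eqref{Def-mu}. First I would recall (or establish, following \cite{FursikovImanuvilov}) the baseline Carleman inequality for the operator $-\overline\sigma\partial_t w-\nu\Delta w=F$ with homogeneous or prescribed boundary data, valid for weights of the form $\varphi,\xi$ built from a function $\psi(t,x)$ that is $x$-independent on $\partial\Ocal$ and has no critical point away from an observation region; the estimate reads, schematically,
\begin{multline*}
s^3\lambda^4\iint_{\Ocal_T}\xi^3|w|^2e^{-2s\varphi}+s\lambda^2\iint_{\Ocal_T}\xi|\nabla w|^2e^{-2s\varphi}
\\
\leq C\iint_{\Ocal_T}|F|^2e^{-2s\varphi}+Cs^3\lambda^3\int_{\Gamma_T}\xi^3|w|^2e^{-2s\varphi}+Cs^3\lambda^4\iint_{\widehat{\omega_T}}\xi^3|w|^2e^{-2s\varphi},
\end{multline*}
the boundary term on $\Gamma_T$ appearing because we do not assume $w$ vanishes on $\partial\Ocal$ (indeed $w=\rot\v$ need not). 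The reason the time weight $\theta$ is allowed not to blow up at $t=0$ is that the usual argument near $t=0$ only needs $|\theta'|\lesssim \theta^{2}$ type bounds combined with a lower bound on the quantity $\theta\mu/T_0^2$ coming from the second derivative; the choice $\mu=s\lambda^2 e^{\lambda(6m-4)}$ in \eqref{Def-mu} is exactly tuned so that this "artificial" convexity near $t=0$ dominates the zero-order and gradient error terms there. I would isolate this as the one genuinely nonstandard verification, carried out by splitting $[0,T)$ into $[0,T_0]$, $[T_0,T-2T_1]$, $[T-2T_1,T-T_1]$ and $[T-T_1,T)$ and checking the differential inequalities for $\theta$ on each piece.

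Next I would absorb the lower-order terms $a_0 w+A_1\cdot\nabla w$ into the left-hand side: taking $s,\lambda$ large (depending on $M$ via \eqref{Reg-Bound}), the term $\iint |a_0 w|^2 e^{-2s\varphi}\leq M^2\iint|w|^2e^{-2s\varphi}$ is dominated by $s^3\lambda^4\iint\xi^3|w|^2e^{-2s\varphi}$ since $\xi\geq$ const $>0$, and likewise $\iint|A_1\cdot\nabla w|^2e^{-2s\varphi}\lesssim M^2\iint|\nabla w|^2e^{-2s\varphi}$ is dominated by the gradient term $s\lambda^2\iint\xi|\nabla w|^2e^{-2s\varphi}$ on the left. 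This step is routine but must be done before handling the source terms so that the gradient term is still available.

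The remaining point is the treatment of the source terms in divergence/time-derivative form, $g_0+\sum_i b_i\partial_i g_i+b_{n+1}\partial_t g_{n+1}$. For $g_0$ one just keeps $\iint|g_0|^2e^{-2s\varphi}$ on the right. For the terms $\partial_i(b_i g_i)$ and $\partial_t(b_{n+1}g_{n+1})$ — note $b_i\partial_i g_i=\partial_i(b_ig_i)-(\partial_i b_i)g_i$ and similarly in $t$, the non-divergence pieces being lower order and absorbed as above — one argues by duality inside the Carleman machinery: such terms are paired against a test function, typically $s^2\lambda^2\xi^2 w e^{-2s\varphi}$ (resp. $s^4\lambda^4\xi^4 w e^{-2s\varphi}$ for the time derivative, which costs two extra powers of $(s\xi)$ because $\partial_t(\xi e^{-s\varphi})$ produces a factor $s\theta'/\theta\,\xi e^{-s\varphi}\sim s\xi^{1+1/m}e^{-s\varphi}$ near $t=T$), integration by parts moves the derivative onto the weight, and a Cauchy--Schwarz/Young splitting yields $\epsilon s^3\lambda^4\iint\xi^3|w|^2e^{-2s\varphi}+C_\epsilon s^2\lambda^2\iint\xi^2\sum|g_i|^2e^{-2s\varphi}$, respectively $+C_\epsilon s^4\lambda^4\iint\xi^4|g_{n+1}|^2e^{-2s\varphi}$, the small parameter $\epsilon$ then absorbed on the left for $s$ large. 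The powers of $s,\lambda,\xi$ in \eqref{CarlemanWeak} match precisely this bookkeeping. I expect the main obstacle, or at least the most delicate accounting, to be the $\partial_t g_{n+1}$ term together with the behaviour of $\theta'$ near $t=T$ (where $\theta\sim (T-t)^{-m}$ and $\theta'\sim m\theta^{1+1/m}$, explaining the $\xi^{4}$ with the exponent $4=3+1$ up to the $1/m$ correction that is hidden because we only claim $\xi^4$, a crude bound valid since $\xi^{1/m}$ is controlled), as one must make sure the integration by parts in time produces no uncontrolled contribution at $t=T$; here the blow-up of the weight $e^{-2s\varphi}\to 0$ as $t\to T$ kills the boundary-in-time terms, and one checks there is no contribution at $t=0$ either because $g_{n+1}$ will be chosen compactly supported in time in the applications, or by keeping an explicit (harmless) term $\int_\Ocal |w(0)|^2\cdots$ which is already dominated by the left-hand side of \eqref{Carl-Est-Stokes-Full}. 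Throughout, the constancy of $\psi(t)$ on $\partial\Ocal$ guarantees $\varphi,\xi$ are spatially constant there, so the Fursikov--Imanuvilov construction (and, when needed later, the elliptic estimate of \cite{ImaPuel}) applies verbatim with $t$ as a parameter.
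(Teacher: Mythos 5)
Your plan diverges in an essential way from the paper's, and the divergence is precisely at the point where the difficulty lies. The paper does \emph{not} prove \eqref{CarlemanWeak} by a direct Carleman computation on $w$. It instead sets up an auxiliary null-controllability problem (Theorem \ref{Thm-Est-Y-Carl-Norms}): starting from the homogeneous-Dirichlet $L^2$ Carleman estimate of Theorem \ref{CarlemanThm}, it constructs a controlled trajectory $(Y,H)$ of $\partial_t(\overline\sigma Y)-\nu\Delta Y=\xi^3 w e^{-2s\varphi}+H\,1_{\widehat{\omega_T}}$ with $Y=0$ on $\Gamma_T$, $Y(0)=Y(T)=0$, together with weighted estimates on $Y,\nabla Y,\partial_t Y,\Delta Y,\partial_{\bf n}Y$. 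Multiplying the $w$-equation by $Y$ then produces the duality identity \eqref{Identity-Y-V-W}, from which \eqref{CarlemanWeak} follows by Cauchy--Schwarz. This ImaYam-type scheme is what lets the two nonstandard features of \eqref{CarlemanWeak} come out for free: the boundary observation $\int_{\Gamma_T}\xi^3|w|^2 e^{-2s\varphi}$ arises as the pairing of $w$ with $\partial_{\bf n}Y$ (which is estimated in Theorem \ref{Thm-Est-Y-Carl-Norms}), and the $H^{-1}$-in-space and $H^{-1}$-in-time sources are handled by moving the derivative onto $Y$, with no temporal boundary terms because $Y(0)=Y(T)=0$.

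Your proposal has two genuine gaps. First, you posit as ``baseline'' a Carleman estimate for $-\overline\sigma\partial_t w-\nu\Delta w=F$ with \emph{nonhomogeneous} Dirichlet data, with $\int_{\Gamma_T}\xi^3|w|^2 e^{-2s\varphi}$ on the right. The Fursikov--Imanuvilov estimate, and the paper's own Theorem \ref{CarlemanThm}, are for $z=0$ on $\Gamma_T$; with nonzero boundary data the integrations by parts in the $P_1 z\cdot P_2 z$ computation produce several additional boundary contributions that are not sign-controlled and cannot be absorbed merely by adding $\int_{\Gamma_T}\xi^3|w|^2$. Establishing the nonhomogeneous-boundary estimate is essentially the theorem itself (with $g_i=g_{n+1}=0$), and the known route to it is the duality argument you are trying to avoid. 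Second, your treatment of $b_{n+1}\partial_t g_{n+1}$ is not closed: integrating by parts in $t$ against a test function of the form $c(s,\lambda)\xi^k w\,e^{-2s\varphi}$ produces a contribution at $t=0$ (where $\theta(0)=2$ and the weight does \emph{not} vanish), which you acknowledge by invoking compact support of $g_{n+1}$ in time --- an assumption absent from the theorem and false in the application (there $g_{n+1}=\v$, which does not vanish near $t=0$). In the paper, this problem does not arise because the derivative falls on $Y$ and $Y(0)=0$. Your power bookkeeping for the $\partial_i(b_i g_i)$ terms also does not reproduce the exponents $s^2\lambda^2\xi^2$ in \eqref{CarlemanWeak} without further care (a test function $s^2\lambda^2\xi^2 w e^{-2s\varphi}$ yields, after Young, a weight $s^3\lambda^2\xi^3$ on $|g_i|^2$); the duality version gives the stated exponents directly from the weighted bounds on $\nabla Y$.

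Your diagnosis of the role of $\mu$ in \eqref{Def-mu} --- that the convexity of $\theta$ near $t=0$ is tuned so the resulting sign-definite term dominates the error terms at small time --- is in the right spirit and is indeed what the proof of Theorem \ref{CarlemanThm} exploits. But overall the proposal needs to be rebuilt around the controllability duality (Steps~2--3 of the paper) rather than around a direct nonhomogeneous-boundary Carleman estimate.
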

The proof of Theorem \ref{Thm-Carleman-H-1} is long and is divided in three steps:
\begin{enumerate}
	\item a Carleman estimate for the heat equation with homogeneous boundary conditions and source terms in $L^2(\Ocal_T)$; see Theorem \ref{CarlemanThm};
	\item energy estimates on controlled trajectories of a heat equation with a source term in $L^2(\Ocal_T)$; see Theorem \ref{Thm-Est-Y-Carl-Norms};
	\item a duality argument.
\end{enumerate}
This proof is inspired by the ones in \cite{ImaYam}, see also \cite{FernandezCaraGuerrero-2006-SICON}. Below, we only state Theorems \ref{CarlemanThm}--\ref{Thm-Est-Y-Carl-Norms}, whose proofs are postponed to the appendix.
\begin{proof}[Proof of Theorem \ref{Thm-Carleman-H-1}]
	As said above, the proof is done in three steps.

{\bf An $L^2$-Carleman estimate.} The first result is the following $L^2$-Carleman estimate for the heat equation:
\begin{theorem}
	\label{CarlemanThm}
	Assume the setting of Theorem \ref{Thm-Carleman-H-1}. For all $m \geq 1$, there exist constants $C_0>0$, $s_0\geq 1$ and $\lambda_0\geq 1$ such that for all smooth functions
$z$ on $ \overline{\Ocal_T}$ satisfying $z = 0$ on $\Gamma_T$, for all $s \geq s_0$, $\lambda \geq \lambda_0$, we have
	\begin{multline}
		\label{CarlemanEst}
			  \int_\Ocal |\nabla z(0)|^2 e^{-2 s \varphi(0)} + s^2 \lambda^3 e^{2\lambda (6m +1)} \int_\Ocal |z(0)|^2 e^{-2 s \varphi(0)}
		  \\
			  + s \lambda^2 \iint_{\Ocal_T}  \xi |\nabla z|^2  e^{-2 s \varphi}
			 \ds + s^3 \lambda^4  \iint_{\Ocal_T} \xi^3 |z|^2 e^{-2 s \varphi} 	
%+  s^2 \lambda^2 \int_0^{T_0} \int_\Ocal|\partial_t \theta| \xi \varphi   |z|^2 e^{-2 s \varphi}
		\\
			  \leq C_{0} \iint_{\Ocal_T} |(- \overline \sigma \partial_t - \nu\Delta) z|^2 e^{-2 s \varphi}
			 + C_{0} s^3 \lambda^4 \iint_{\widehat{\omega_T}} \xi^3 |z|^2 e^{-2 s \varphi}.
	\end{multline}
\end{theorem}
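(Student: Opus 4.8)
The plan is to follow the Fursikov--Imanuvilov scheme, modifying the usual computation to accommodate the three non-standard features of the weights: the $x$-dependence of $\psi$, the fact that $\theta$ does not blow up at $t=0$, and the target terms at $t=0$ in the left-hand side. I would first conjugate the operator by setting $q \ov e^{-s\varphi}z$. Expanding $e^{-s\varphi}(-\overline\sigma\partial_t-\nu\Delta)(e^{s\varphi}q)$ and grouping the terms into a formally self-adjoint part $P_1 q$ and a formally skew-adjoint part $P_2 q$ with respect to the $L^2(\Ocal_T)$ inner product (the derivatives falling on $\overline\sigma\in C^2$, as well as the cross terms $s\,\partial_t\psi$ produced by the time-dependence of $\psi$, being bounded and hence of lower order since $\inf\overline\sigma>0$), one reaches an identity $P_1 q + P_2 q = g_s - Rq$, where $g_s = e^{-s\varphi}(-\overline\sigma\partial_t-\nu\Delta)z$ and $Rq$ gathers the lower-order terms. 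Taking $L^2(\Ocal_T)$-norms gives $\|P_1 q\|^2 + \|P_2 q\|^2 + 2(P_1 q,P_2 q) \le 2\|g_s\|^2 + 2\|Rq\|^2$, and the whole point is to extract from the cross term $2(P_1 q,P_2 q)$, after integration by parts in $x$ and $t$, the dominant positive quantities $s^3\lambda^4\iint_{\Ocal_T}\xi^3|q|^2$ and $s\lambda^2\iint_{\Ocal_T}\xi|\nabla q|^2$.

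Carrying out those integrations by parts produces three families of terms. First, the leading interior terms carry a factor $|\nabla\psi|^2$; by \eqref{Ass-Psi} this is $\geq\alpha^2$ on $\Ocal_T\setminus\tilde\omega_T$, which delivers the two dominant quantities there, while on $\tilde\omega_T$ these contributions are transferred to the right-hand side to be reabsorbed later. Second, the boundary terms on $\Gamma_T$: since $z=0$ on $\Gamma_T$ we have $q=0$ and $\nabla q = (\partial_{\bf n}q)\,{\bf n}$ there, and all tangential derivatives of $\psi$ vanish because $\psi(t)$ is constant on $\partial\Ocal$; the only surviving leading boundary term is then a negative multiple of $\iint_{\Gamma_T}\xi\,\partial_{\bf n}\psi\,|\partial_{\bf n}q|^2$, which is $\geq0$ thanks to $\partial_{\bf n}\tilde\psi\leq0$ in \eqref{Psi} and may simply be discarded --- this is exactly the role of that sign assumption. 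Third, the contributions at the time endpoints, obtained when integrating $\partial_t$ by parts: at $t=T$ they vanish because $\theta(t)\to+\infty$ and $q$ decays exponentially there, whereas at $t=0$ they are precisely the quantities that, after undoing the change of variables, will become the terms at $t=0$ in \eqref{CarlemanEst}. Obtaining them with the correct sign and size is where the strong convexity of $\theta$ near $t=0$ enters: on $[0,T_0]$ one has, from \eqref{ThetaMu}, $\theta'(0)=-\mu/T_0$ and $\theta''(0)\sim\mu^2/T_0^2$, and the choice $\mu = s\lambda^2 e^{\lambda(6m-4)}$ from \eqref{Def-mu} makes these endpoint terms, together with the auxiliary positive interior quantities $\iint|\theta'|\,\xi^{k}|q|^2$ and $\iint|\theta''|\,\xi^{k}|q|^2$ localized near $t=0$, large enough to survive and to carry the right weights.

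It then remains to absorb everything else. The genuine lower-order terms in $\|Rq\|^2$ and the sub-leading pieces of the cross term (carrying strictly smaller powers of $s$ or $\lambda$, including those generated by $\partial_t\psi$, $\Delta\psi$ and $\nabla\overline\sigma$) are dominated by $s^3\lambda^4\iint\xi^3|q|^2 + s\lambda^2\iint\xi|\nabla q|^2$ once $\lambda\ge\lambda_0$ and then $s\ge s_0$; in the strips near $t=0$ and $t=T$, where $\xi$ stays bounded, they are instead swallowed by the auxiliary positive quantities supplied by the convexity of $\theta$. The pieces left on $\tilde\omega_T$ are handled locally: the zeroth-order one $s^3\lambda^4\iint_{\tilde\omega_T}\xi^3|q|^2$ is immediately bounded by its analogue over the larger set $\widehat{\omega_T}$ since $\tilde\omega_T\subset\widehat{\omega_T}$, while the gradient one $s\lambda^2\iint_{\tilde\omega_T}\xi|\nabla q|^2$ is controlled by a standard Caccioppoli-type estimate (testing the equation satisfied by $q$ against $s\lambda^2\xi\chi q$ for a cutoff $\chi$ with $\tilde\omega_T\Subset\{\chi=1\}$ and $\supp\chi\Subset\widehat{\omega_T}$), which bounds it by $\varepsilon\,s^3\lambda^4\iint_{\Ocal_T}\xi^3|q|^2 + C_\varepsilon\,s^3\lambda^4\iint_{\widehat{\omega_T}}\xi^3|q|^2$; this is why the observation region in \eqref{CarlemanEst} is the slightly larger set $\widehat{\omega_T}$ rather than $\tilde\omega_T$.

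Finally I would undo the substitution: $\iint\xi^3|q|^2 = \iint\xi^3|z|^2 e^{-2s\varphi}$, and, since $|\nabla\varphi|\le C\lambda\xi$, one has $|\nabla z|^2 e^{-2s\varphi}\le 2|\nabla q|^2 + Cs^2\lambda^2\xi^2|q|^2$, the last term being absorbed by $s^3\lambda^4\iint\xi^3|q|^2$ for $s\lambda^2\ge1$; the same manipulation at $t=0$ converts $\int_\Ocal|\nabla q(0)|^2$ and $\int_\Ocal|q(0)|^2$ into $\int_\Ocal|\nabla z(0)|^2 e^{-2s\varphi(0)}$ and $s^2\lambda^3 e^{2\lambda(6m+1)}\int_\Ocal|z(0)|^2 e^{-2s\varphi(0)}$. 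Relabelling constants gives \eqref{CarlemanEst}. The main obstacle --- and the only genuinely new point with respect to the textbook heat Carleman estimate --- is the treatment of the time endpoint $t=0$: producing the endpoint terms with the correct sign and with the weights $e^{-2s\varphi(0)}$ while the customary blow-up of $\theta$ is unavailable there, which forces the precise calibration of the convexity parameter $\mu$ against $s$ and $\lambda$ in \eqref{Def-mu} and a careful check that every lower-order term localized near $t=0$ is still absorbed.
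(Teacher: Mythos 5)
Your proposal follows the Fursikov--Imanuvilov conjugation-and-splitting scheme and matches the paper's proof in all essential respects: the conjugated variable $w = e^{-s\varphi}z$, the decomposition $P_\varphi = P_1 + P_2 + R$, the sign of the boundary term via $\partial_{\bf n}\psi\leq 0$, the vanishing of the $t=T$ endpoint, the recovery of the $t=0$ terms from the strong convexity of $\theta$ calibrated through $\mu = s\lambda^2 e^{\lambda(6m-4)}$, and the removal of the local gradient observation by testing $P_2 w$ against $s\lambda^2 \eta\xi\, w$ for a cutoff $\eta$ supported in $\widehat{\omega_T}$. The only quibbles are cosmetic: you label the part containing $\partial_t$ and $\nabla\varphi\cdot\nabla$ as self-adjoint and the part with $-\Delta$ and multiplication as skew-adjoint, whereas the convention is the reverse, and the paper's cutoff step multiplies $P_2 w$ rather than the full conjugated equation; neither affects the argument.
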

The proof of Theorem \ref{CarlemanThm} is given in Section \ref{Sec-Proof-Carleman-Heat}. It is rather classical except for the weight function $\varphi$, which does not blow up as $t \to 0$ and for the weight function $\psi$ which depends on both time and space variables. This introduces in the proof of Theorem \ref{CarlemanThm} several new technical issues, though our proof follows
the lines of \cite{FursikovImanuvilov}.

{\bf Estimates on a control problem.}
We then analyze the following control problem: for $f \in L^2 (\Ocal_T)$, find a control function $h \in L^2( \widehat{\omega_T})$ such that the solution $y$ of
\begin{equation}
	\label{Heat-control}
	\left\lbrace
		\begin{array}{rlll}
			\partial_t( \overline \sigma y) - \nu\Delta y &= & f  + h {1}_{ \widehat{\omega_T}}, \quad & \hbox{ in } \Ocal_T,
			\\
			y &=& 0, \quad & \hbox{ on } \Gamma_T,
			\\
			y(0,\cdot) &= & 0, \quad & \hbox{ in } \Ocal,
		\end{array}
	\right.
\end{equation}
solves the control problem:
\begin{equation}
	\label{Null-Control-Req}
	y(T, \cdot) = 0, \quad \hbox{in } \Ocal.
\end{equation}
We claim the following result:
\begin{theorem}
	\label{Thm-Est-Y-Carl-Norms}
	Assume the setting of Theorem \ref{Thm-Carleman-H-1}. For all $m\geq 1$, there exist positive constants $C>0$, $s_0 \geq 1$ and $\lambda_0 \geq 1$ such that for all $s \geq s_0$ and $\lambda \geq \lambda_0$, for all $f$ satisfying
	\begin{equation}
		\label{Conditions-f-F}
		\iint_{\Ocal_T} \xi^{-3} |f|^2 e^{2 s \varphi} < \infty,
	\end{equation}
	there exists a solution $(Y,H)$ of the control problem \eqref{Heat-control}--\eqref{Null-Control-Req} which furthermore satisfies the following estimate:
	\begin{multline}
	\label{Est-Y-Gal}
		s^3 \lambda^4  \iint_{\Ocal_T}  |Y|^2 e^{2 s \varphi}+ \iint_{\widehat{\omega_T}} \xi^{-3} |H|^2 e^{2 s \varphi}
		+
		s \lambda^2\iint_{\Ocal_T} \xi^{-2} |\nabla Y|^2 e^{2 s \varphi}+
	\\
		\frac{1}{s} \iint_{\Ocal_T} \xi^{-4} (|\partial_t Y|^2 + |\Delta Y|^2) e^{2 s \varphi}
		+
		\lambda \int_{\Gamma_T} \xi^{-3} |\partial_{\bf n} Y|^2 e^{2s \varphi}
		\leq
		C \iint_{\Ocal_T} \xi^{-3} |f|^2 e^{2 s \varphi}.
	\end{multline}
\end{theorem}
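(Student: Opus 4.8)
The statement to prove is Theorem~\ref{Thm-Est-Y-Carl-Norms}, which asserts the existence of a controlled trajectory $(Y,H)$ for the heat equation \eqref{Heat-control}--\eqref{Null-Control-Req} together with the weighted estimate \eqref{Est-Y-Gal}. The natural approach is the by-now classical penalized Hilbert Uniqueness Method (the Fursikov--Imanuvilov duality argument), using the $L^2$-Carleman estimate of Theorem~\ref{CarlemanThm} as the observability inequality. The plan is as follows.

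First I would introduce, for $\varepsilon>0$, the penalized functional
\[
	J_\varepsilon(y,h) \ov \frac12 \iint_{\Ocal_T} \xi^{-3} |f|^{-?}\cdots
\]
— more precisely, minimize $\frac12\iint_{\Ocal_T}\xi^{3}e^{-2s\varphi}|y|^2 \cdot(\text{dual weight})$; the cleanest route is the dual one: define on the space of smooth functions $\phi$ (adjoint states) the quadratic functional
\[
	\mathcal J(\phi) \ov \frac12 \iint_{\widehat{\omega_T}} \xi^3 e^{-2s\varphi} |\phi|^2 + \frac12\iint_{\Ocal_T} \xi^{-3} e^{2s\varphi}|f|^2\cdots
\]
Let me be schematic rather than commit to a wrong normalization: one works with the adjoint equation $-\overline\sigma\partial_t\phi - \nu\Delta\phi = 0$ in $\Ocal_T$ (with $\phi=0$ on $\Gamma_T$, $\phi(T)$ free), observes that Theorem~\ref{CarlemanThm} gives the coercivity estimate $s^3\lambda^4\iint_{\Ocal_T}\xi^3 e^{-2s\varphi}|\phi|^2 \le C_0 s^3\lambda^4\iint_{\widehat{\omega_T}}\xi^3 e^{-2s\varphi}|\phi|^2$ (the interior/source and boundary terms being absent or absorbed since the adjoint is homogeneous), and hence the functional
\[
	\mathcal J(\phi) \ov \frac12\iint_{\widehat{\omega_T}} \xi^3 e^{-2s\varphi}|\phi|^2 + \iint_{\Ocal_T} f\,\phi
\]
is continuous and strictly convex on the completion $X$ of the adjoint states under the norm $\big(\iint_{\widehat{\omega_T}}\xi^3 e^{-2s\varphi}|\cdot|^2\big)^{1/2}$; the linear term $\iint f\phi$ is bounded on $X$ precisely by condition \eqref{Conditions-f-F} together with the Carleman estimate. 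By a standard argument $\mathcal J$ admits a unique minimizer $\widehat\phi$, and setting $H \ov \xi^3 e^{-2s\varphi}\widehat\phi\,\mathbf 1_{\widehat\omega_T}$ and letting $Y$ be the corresponding solution of \eqref{Heat-control} one gets, from the Euler--Lagrange equation, both the null-controllability $Y(T)=0$ and the bound
\[
	\iint_{\widehat{\omega_T}} \xi^{-3} e^{2s\varphi}|H|^2 + s^3\lambda^4\iint_{\Ocal_T}e^{2s\varphi}|Y|^2 \le C\iint_{\Ocal_T} \xi^{-3} e^{2s\varphi}|f|^2.
\]
Here the weight $s^3\lambda^4\iint e^{2s\varphi}|Y|^2$ for $Y$ comes from pairing the Euler--Lagrange identity against the solution and invoking the Carleman estimate once more in the form of a weighted energy bound; this is the step that uses the $L^2$ and $H^1$-in-$t$ terms on the left of \eqref{CarlemanEst} less directly and requires a bit of care with the weight $\varphi$ not blowing up at $t=0$.

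Second, having $Y$ and $H$ and the first two terms of \eqref{Est-Y-Gal}, I would upgrade to the remaining terms — the weighted $\nabla Y$, $\partial_t Y$, $\Delta Y$ and boundary normal-derivative estimates — by a direct \emph{dissipation/energy} argument on the equation \eqref{Heat-control} itself, multiplying by $\xi^{-2}e^{2s\varphi}Y$, resp.\ $\xi^{-4}e^{2s\varphi}\partial_t Y$, integrating by parts over $\Ocal_T$, and absorbing the terms generated by differentiating the weights (which produce extra factors of $s$, $\lambda$ and powers of $\xi$ that are dominated by the already-controlled $s^3\lambda^4\iint e^{2s\varphi}|Y|^2$). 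The boundary term $\lambda\int_{\Gamma_T}\xi^{-3}e^{2s\varphi}|\partial_{\bf n}Y|^2$ comes out of the same integration by parts using the sign condition $\partial_{\bf n}\tilde\psi\le 0$ from \eqref{Psi} (which controls the sign of $\partial_{\bf n}\varphi$ on $\Gamma_T$), exactly as in the classical Fursikov--Imanuvilov computation; because $\tilde\psi$ is constant on each connected component of $\partial\Ocal$ one also needs $\int_{\Gamma_T}\xi^{-3}e^{2s\varphi}|\nabla Y|^2 = \int_{\Gamma_T}\xi^{-3}e^{2s\varphi}|\partial_{\bf n}Y|^2$ there. A parabolic regularity / bootstrap step on $\tilde Y \ov \xi^{-2}e^{s\varphi}Y$ (which solves a heat equation with right-hand side in $L^2(\Ocal_T)$ by the bounds already obtained) is the clean way to get the $\partial_t Y$ and $\Delta Y$ terms simultaneously and rigorously.

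\textbf{Main obstacle.} The delicate point is not the duality scheme, which is routine, but the bookkeeping of the weights: one must verify that every term produced when the operators $\partial_t$ and $\Delta$ hit the Carleman weights $e^{\pm s\varphi}$, $\xi^{\pm k}$ — in particular the time derivative $\partial_t\theta$, which near $t=0$ behaves like $\mu(1-t/T_0)^{\mu-1}/T_0$ with $\mu = s\lambda^2 e^{\lambda(6m-4)}$ from \eqref{Def-mu} — is genuinely absorbable by the left-hand side with the stated powers of $s$ and $\lambda$, uniformly for $s\ge s_0$, $\lambda\ge\lambda_0$. This is exactly where the specific, somewhat unusual choice of $\theta_{m,\mu}$ in \eqref{ThetaMu} and of $\mu$ in \eqref{Def-mu} pays off, and checking it is the technical heart of the proof; it is also the reason the authors postpone the detailed argument to the appendix. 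The rest — existence and uniqueness of the minimizer, the Euler--Lagrange derivation, and the final assembly of \eqref{Est-Y-Gal} — follows the template of \cite{ImaYam} and \cite{FernandezCaraGuerrero-2006-SICON} with no essential new difficulty.
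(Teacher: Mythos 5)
Your overall plan --- a Fursikov--Imanuvilov / HUM duality argument built on Theorem \ref{CarlemanThm}, followed by weighted energy estimates for $\nabla Y$, $\Delta Y$, $\partial_t Y$ and $\partial_{\bf n}Y$ --- matches the structure of the paper's proof, but as written it has three gaps.

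First, the variational step. You hesitate between a penalized functional on $(y,h)$ and a HUM functional $\mathcal J(\phi)=\frac12\iint_{\widehat{\omega_T}}\xi^3e^{-2s\varphi}|\phi|^2+\iint f\phi$ over solutions of the homogeneous adjoint equation, completed under the observation norm alone. But the homogeneous-adjoint HUM functional does not directly produce the weighted estimate $s^3\lambda^4\iint e^{2s\varphi}|Y|^2$; you acknowledge this yourself. The paper instead minimizes
\[
J(z)=\frac12\iint_{\Ocal_T}|(-\overline\sigma\partial_t-\Delta)z|^2e^{-2s\varphi}+\frac{s^3\lambda^4}{2}\iint_{\widehat{\omega_T}}\xi^3|z|^2e^{-2s\varphi}-\iint_{\Ocal_T}fz
\]
over \emph{all} smooth $z$ vanishing on $\Gamma_T$ (not merely adjoint solutions), completed under the Hilbert norm $\|z\|^2_{obs}=\iint_{\Ocal_T}|(-\overline\sigma\partial_t-\Delta)z|^2e^{-2s\varphi}+s^3\lambda^4\iint_{\widehat{\omega_T}}\xi^3|z|^2e^{-2s\varphi}$, for which $z\mapsto\iint fz$ is continuous precisely by Theorem \ref{CarlemanThm}. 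The decisive point you are missing is that the state is then \emph{defined from the minimizer} as $Y=(-\overline\sigma\partial_t-\Delta)Z\,e^{-2s\varphi}$, so that $\iint_{\Ocal_T}|Y|^2e^{2s\varphi}=\iint_{\Ocal_T}|(-\overline\sigma\partial_t-\Delta)Z|^2e^{-2s\varphi}$ is exactly one of the two pieces of $\|Z\|^2_{obs}$. Together with $J(Z)\le J(0)=0$, this yields the first two terms of \eqref{Est-Y-Gal} immediately.

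Second, your energy-estimate step misses a necessary regularization. Because $e^{2s\varphi}$ and its time derivative blow up as $t\to T$ (where $\theta\sim (T-t)^{-m}$), the integrations by parts against $\xi^{-2}e^{2s\varphi}Y$ and $\xi^{-4}e^{2s\varphi}\Delta Y$ are not a priori licit. The paper replaces $\theta$ by a time-shifted $\theta_\varepsilon$ (bounded up to $t=T$), carries out all estimates with $\varphi_\varepsilon,\xi_\varepsilon$, checks that the constants are $\varepsilon$-independent, and only then passes $\varepsilon\to0$. Without this, the computations in your second step are formal.

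Third, a smaller point: the boundary estimate $\lambda\int_{\Gamma_T}\xi^{-3}e^{2s\varphi}|\partial_{\bf n}Y|^2$ is not a consequence of the sign condition $\partial_{\bf n}\tilde\psi\le 0$ (that sign enters the proof of Theorem \ref{CarlemanThm}, not this one). It is obtained by a separate Rellich-type multiplier identity, testing the equation against $\xi_\varepsilon^{-3}e^{2s\varphi_\varepsilon}\,\eta\cdot\nabla Y$ for a $C^2$ vector field $\eta$ equal to $\n$ on $\partial\Ocal$, and absorbing the interior terms by the already-established bounds on $\nabla Y$ and $\Delta Y$.
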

The proof of Theorem \ref{Thm-Est-Y-Carl-Norms} is given in Section \ref{Sec-Proof-Control-Heat}. Again, the proof is rather classical and is based on the duality between the Carleman estimates, which are  weighted observability estimates, and controllability, and then on energy estimates. Note however that these energy estimates have to be derived using the weight functions defined in \eqref{Psi}--\eqref{Phi-Xi}, and this introduces some novelties in the computations.

{\bf A duality argument.}
 	The proof of Theorem \ref{Thm-Carleman-H-1} then relies upon the estimate \eqref{Est-Y-Gal} on the solution $(Y,H)$ of the control problem \eqref{Heat-control}--\eqref{Null-Control-Req} for $f  = \xi^{3}  w e^{-2 s \varphi}$.
	Indeed, if $(Y,H)$ solves \eqref{Heat-control}--\eqref{Null-Control-Req} for some $f$ satisfying \eqref{Conditions-f-F}, multiplying the equation satisfied by $Y$ by $w$, we obtain
	\begin{multline}
		\iint_{\Ocal_T} w (f+ H 1_{\widehat{\omega_T} } ) + \int_{\Gamma_T}  w  \nu \partial_{\bf n} Y
		\\
		=
		\iint_{\Ocal_T} (a_0 w Y - w \div( A_1 Y) + g_0 Y - \sum_{i=1}^n g_i \partial_i ( b_i Y) - g_{n+1} \partial_t (b_{n+1} Y)).
		\label{Identity-Y-V-W}
	\end{multline}
	In particular, as $f = \xi^3w e^{-2 s \varphi} $ satisfies
	$$
		  \iint_{\Ocal_T} \xi^{-3} |f|^2 e^{2 s \varphi} = \iint_{\Ocal_T} \xi^3 |w|^2 e^{-2 s \varphi},
	$$
	according to \eqref{Est-Y-Gal} we can construct $(Y,H)$ solution of
	\begin{equation}
	\label{Heat-control-Y-w}
	\left\lbrace
		\begin{array}{rlll}
			\partial_t (\overline \sigma Y) - \nu\Delta Y &=& \xi^3 w e^{-2s \varphi} + H {1}_{\widehat{\omega_T }}, \quad & \hbox{ in } \Ocal_T,
			\\
			Y &= &0, \quad &  \hbox{ on } \Gamma_T,
			\\
			Y(0,\cdot) &=& 0, \quad & \hbox{ in } \Ocal,
			\\
			Y(T, \cdot) &=& 0, \quad &  \hbox{ in } \Ocal,		
		\end{array}
	\right.
	\end{equation}
	for which we have the estimate:
	\begin{multline}
		\label{Est-Y-W-Gal}
		s^3 \lambda^4 \iint_{\Ocal_T} |Y|^2 e^{2 s \varphi} + \iint_{\widehat{\omega_T }} \xi^{-3} |H|^2 e^{2 s \varphi}
		+
		s \lambda^2 \iint_{\Ocal_T} \xi^{-2} |\nabla Y|^2 e^{2 s \varphi}
		+
		\\
		\frac{1}{s} \iint_{\Ocal_T} \xi^{-4} (|\partial_t Y|^2 + |\nabla Y|^2) e^{2s \varphi}
		+ \lambda \int_{\Gamma_T} \xi^{-3} |\partial_{\bf n} Y|^2 e^{2s \varphi}
		\leq
		C\iint_{\Ocal_T} \xi^3 |w|^2 e^{-2 s \varphi}.
	\end{multline}
	Using then the identity \eqref{Identity-Y-V-W}, we infer
	\begin{eqnarray*}
		\lefteqn{\iint_{\Ocal_T} \xi^3 |w|^2 e^{-2 s \varphi}}
		\\
		& \leq &
		C
		\left(
			\frac{1}{s \lambda^2} \iint_{\Ocal_T} \xi^2 |w|^2 e^{-2 s \varphi}			
		\right)^{1/2}
		\left(
			s \lambda^2 \iint_{\Ocal_T}  \xi^{-2} (|Y|^2 +|\nabla Y|^2)  e^{2s \varphi}	
		\right)^{1/2}
		\\
		& + &
		C
		\left(
			\frac{1}{s^3 \lambda^4} \iint_{\Ocal_T} |g_0|^2 e^{-2 s \varphi}			
		\right)^{1/2}
		\left(
			s^3 \lambda^4 \iint_{\Ocal_T}  |Y|^2 e^{2s \varphi}	
		\right)^{1/2}
		\\
		&+&
		C \left(\frac{1}{s \lambda^2} \iint_{\Ocal_T} \xi^{2} \big{(}\sum_{i=1}^n |g_i|^2\big{)} e^{-2 s \varphi}\right)^{1/2}
		\left( 	s \lambda^2 \iint_{\Ocal_T} \xi^{-2} (|Y|^2 + |\nabla Y|^2) e^{2 s \varphi}  \right)^{1/2}
		\\
		& + &
		C \left( s \iint_{\Ocal_T} \xi^{4} |g_t|^2 e^{-2 s \varphi} \right)^{1/2}
		\left( 	\frac{1}{s} \iint_{\Ocal_T} \xi^{-4} (|Y|^2 + |\partial_t Y|^2) e^{2 s \varphi}  \right)^{1/2}
		\\
		& + &
		C \left(\frac{1}{ \lambda} \int_{\Gamma_T} \xi^{3} |w|^2 e^{-2 s \varphi} \right)^{1/2}
		\left( \lambda \int_{\Gamma_T} \xi^{-3} |\partial_{\bf n} Y|^2 e^{2s \varphi} \right)^{1/2}
		\\
		& + &
		C\left( \iint_{\widehat{\omega_T } } \xi^3 |w|^2 e^{-2 s \varphi} \right)^{1/2}
		\left( \iint_{\widehat{\omega_T }} \xi^{-3} |H|^2 e^{2 s \varphi} \right)^{1/2},
	\end{eqnarray*}
	which immediately yields the claimed result by \eqref{Est-Y-W-Gal}.
\end{proof}

 \subsection{Proof of Theorem \ref{Thm-Carleman-Stokes}}\label{Sec-Stokes}

This section aims at proving Theorem \ref{Thm-Carleman-Stokes}. This will be done in two steps.
\\
We first prove the following Carleman estimate for $\v$ solution of \eqref{Stokes-Adjoint-Simplified}:
\begin{theorem}
	\label{Thm-Stokes-Simplified}
	Within the setting and assumptions of Theorem \ref{Thm-Carleman-Stokes}, for any $m\geq 5$, there exist some constants $s_0\geq 1$, $\lambda_0 \geq 1$ and $C>0$ such that for all solution $\v$ of \eqref{Stokes-Adjoint-Simplified} with source term $\g \in \L^2(\Ocal_T)$, for all $s \geq s_0$ and $\lambda \geq \lambda_0$,
	\begin{multline}\label{Carlemanpour-v-simplified}
		s^{1/2}\lambda^{-1/2} \int_\Ocal (\xi^*)^{4-2/m} |\v(0,\cdot)|^2 e^{-2s\varphi^*(0)}
		+
		s\lambda^2\iint_{\Ocal_T}\xi^{4}|\v|^2e^{-2s\varphi}
		\\
		+
		s^{1/2}\lambda^{-1/2}\int_0^T(\xi^*)^{4-2/m}e^{-2s\varphi^*}\|\v\|^2_{{\bf H}^{1}(\Ocal)}
		+
		\iint_{\Ocal_T}\xi^{3}|\rot \v|^2e^{-2s\varphi} + s^{-1} \iint_{\Ocal_T}\xi^{2}|\nabla \v|^2e^{-2s\varphi}
		\\
		\leq
		C\left(
		s^{5/2}\lambda^{2}\iint_{\omega_T}\widehat\xi^6|\v|^2e^{2s\varphi^*-4s\widehat\varphi}
		\right.
		 +s^{-1}\lambda^{-2}\iint_{\Ocal_T}\xi^2|\g|^2e^{-2s\varphi}
		\\
		+s^{1/2}\lambda^{-1/2}\int_0^T(\xi^*)^{4-2/m}e^{-2s\varphi^*}\|\g\|^2_{{\bf H}^{-1}(\Ocal)}
		\left.
		+s^{-1/2}\lambda^{-3/2}\iint_{\Ocal_T}(\xi^*)^{3-3/m}|\g|^2e^{-2s\varphi^*}
		\right).
	\end{multline}
\end{theorem}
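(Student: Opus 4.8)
The plan is to reduce the Stokes estimate to the scalar heat estimate of Theorem \ref{Thm-Carleman-H-1} applied to $w = \rot \v$, and then to recover $\v$ from $w$ through the stream function $\zeta$ defined by \eqref{Link-Rot-V-Zeta}--\eqref{zetazeromean}. First I would take the curl of \eqref{Stokes-Adjoint-Simplified}, getting
$$
-\overline\sigma \partial_t w - \nu \Delta w = \rot \g + \partial_t \v \cdot \nabla^\perp \overline\sigma \quad \text{in } \Ocal_T,
$$
which is exactly \eqref{Eq-Rot-V}. The term $\partial_t \v \cdot \nabla^\perp \overline\sigma$ is the awkward one: it involves $\partial_t \v$, which is not controlled by the left-hand side of the desired estimate. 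I would rewrite it as $\partial_t(\v \cdot \nabla^\perp \overline\sigma) - \v \cdot \partial_t \nabla^\perp \overline\sigma$, so that it has the structure $b_{n+1}\partial_t g_{n+1} + a_0 w + A_1 \cdot \nabla w + \dots$ required by Theorem \ref{Thm-Carleman-H-1}: the first piece is of the form $\partial_t g_{n+1}$ with $g_{n+1}$ a component of $\v$ (times a $C^2$ coefficient), and the second piece is lower order in $\v$. Similarly $\rot \g$ should be kept in divergence form $\sum_i \partial_i g_i$ with $g_i$ components of $\g$ (so that only $\|\g\|$, not $\|\rot\g\|$, appears, matching the $\bf H^{-1}$ and $\xi^2|\g|^2$ terms on the right of \eqref{Carlemanpour-v-simplified}). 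Applying Theorem \ref{Thm-Carleman-H-1} to this $w$ then yields
$$
s^3\lambda^4 \iint_{\Ocal_T}\xi^3 |w|^2 e^{-2s\varphi} \lesssim \iint_{\Ocal_T}\xi^2 |\g|^2 e^{-2s\varphi} + s^4\lambda^4\iint_{\Ocal_T}\xi^4|\v|^2 e^{-2s\varphi} + (\text{boundary term in } w) + (\text{local term in } w),
$$
up to powers of $s,\lambda$ I would track carefully.

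Next I would estimate $\v$ in terms of $w$ via the stream function. Since $\u$ (hence $\v$, which plays the role of the adjoint velocity) is divergence free with $\v = 0$ on $\Gamma_T$, we have $\v = \nabla^\perp \zeta$ where $\zeta$ solves the elliptic problem \eqref{Link-Rot-V-Zeta} with the normalization \eqref{zetazeromean}. Here is where I would invoke the elliptic Carleman estimate of \cite{ImaPuel}, which is applicable precisely because $\psi(t)$ — hence $\varphi(t)$ and the weight on $\partial\Ocal$ — is constant on $\partial\Ocal$ for each fixed $t$, one of the two highlighted features of the weight. Applying it to $\zeta(t)$ for a.e. $t$ and integrating in time converts $\iint \xi^{a}|w|^2 e^{-2s\varphi}$-type quantities into the weighted $\v$ and $\nabla\v$ norms on the left of \eqref{Carlemanpour-v-simplified}, while the local observation term in $\zeta$ over $\widehat\omega$ is turned into a local term in $w$ (using $\int_{\widehat\omega}\zeta = 0$ and a Poincaré-type inequality), and then, via a further local energy argument, into the $\iint_{\omega_T}\widehat\xi^6 |\v|^2 e^{2s\varphi^* - 4s\widehat\varphi}$ term. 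The time-dependence of $\varphi$ in the elliptic step is the place where care is needed: one must verify that $\partial_t$ falling on $\zeta$ when differentiating the elliptic estimate in time produces only controllable quantities — this is handled because $\theta'$ and $\partial_t\psi$ are bounded by $C\theta^{1+1/m}$ and the choice \eqref{Def-mu} of $\mu$ gives the needed convexity margin.

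Then I would combine the two estimates: the heat estimate controls $\iint \xi^3|w|^2 e^{-2s\varphi}$ by $\|\g\|$-terms plus $s^4\lambda^4\iint\xi^4|\v|^2 e^{-2s\varphi}$ plus boundary and local terms, while the elliptic/stream-function step controls the $\v$-norms (including $s\lambda^2\iint\xi^4|\v|^2 e^{-2s\varphi}$ and the $\bf H^1$ norm) by $\iint\xi^3|w|^2 e^{-2s\varphi}$ plus local $\v$-terms. The boundary term $s^3\lambda^3\int_{\Gamma_T}\xi^3|w|^2 e^{-2s\varphi}$ coming from Theorem \ref{Thm-Carleman-H-1} must be absorbed: on $\Gamma_T$, $w = \rot\v = \partial_{\bf n}(\v\cdot\boldsymbol\tau)$ since the tangential part vanishes, so a trace/interpolation inequality bounds it by the interior $\nabla\v$ and $\nabla^2\v$ norms with small constant, to be absorbed into the left-hand side after choosing $s,\lambda$ large; alternatively one keeps it and closes the loop using that the left side already contains $s^{-1}\iint\xi^2|\nabla\v|^2 e^{-2s\varphi}$. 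The bookkeeping of $s$- and $\lambda$-powers so that everything closes with $\lambda$ large then $s$ large is the main technical obstacle, together with the treatment of the $\partial_t\v\cdot\nabla^\perp\overline\sigma$ term: it is essential that $g_{n+1}$ there be $\v$ itself (weighted $C^2$), so that the $s^4\lambda^4\iint\xi^4|g_{n+1}|^2 e^{-2s\varphi}$ output of Theorem \ref{Thm-Carleman-H-1} is exactly a term of the form $\iint\xi^4|\v|^2 e^{-2s\varphi}$, absorbable by the gain in the stream-function step for $m\geq 5$. Finally the term $s^{1/2}\lambda^{-1/2}\int_\Ocal(\xi^*)^{4-2/m}|\v(0,\cdot)|^2 e^{-2s\varphi^*(0)}$ is obtained from the corresponding $|z(0)|^2$ and $|\nabla z(0)|^2$ terms on the left of \eqref{CarlemanEst} (with $z$ related to $w$) together with the elliptic estimate at $t=0$; this is why Theorem \ref{CarlemanThm} was stated with non-vanishing weight at $t=0$.
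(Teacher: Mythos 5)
Your overall skeleton is right: take the curl to get the heat equation \eqref{Eq-Rot-V} for $w=\rot\v$, apply the heat Carleman estimate of Theorem~\ref{Thm-Carleman-H-1}, recover $\v$ from $w$ through the stream function $\zeta$ with the normalization \eqref{zetazeromean}, and localize the observation. But there is a genuine and central gap: you never derive the weighted \emph{parabolic energy estimates} on the Stokes system obtained by multiplying \eqref{Stokes-Adjoint-Simplified} by $\v_a:=\theta_1(t)\v$ (giving an $L^2(0,T;{\bf H}^1_0)$-bound) and by $(-\Delta\v_b+\nabla p_b)/\overline{\sigma}$ with $\v_b:=\theta_2(t)\v$ (giving an $L^2(0,T;{\bf H}^2)$-bound via maximal regularity for the stationary Stokes system), where $\theta_1=s^{1/4}\lambda^{-1/4}(\xi^*)^{2-1/m}e^{-s\varphi^*}$ and $\theta_2=s^{-1/4}\lambda^{-3/4}(\xi^*)^{3/2-3/(2m)}e^{-s\varphi^*}$ depend only on $t$. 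These are not optional bookkeeping. They are what produce (a) the terms $\|\v_a(0)\|^2_{{\bf L}^2}$ and $\|\theta_1\v\|^2_{L^2({\bf H}^1)}$ on the left of \eqref{Carlemanpour-v-simplified}; (b) the source terms $\|\theta_1\g\|^2_{L^2({\bf H}^{-1})}$ and $\|\theta_2\g\|^2_{{\bf L}^2}$ on the right of \eqref{Carlemanpour-v-simplified}; and (c) the $L^2({\bf H}^2)$ bound on $\theta_2\v$ that, combined with the trace interpolation $\|\partial_{\bf n}\v\|^2_{{\bf L}^2(\partial\Ocal)}\lesssim \|\v\|_{{\bf H}^1}\|\v\|_{{\bf H}^2}+\|\v\|^2_{{\bf H}^1}$ and the consistency $\theta_2\le\theta_1$, controls the boundary term $\int_{\Gamma_T}(\xi^*)^3|\partial_{\bf n}\v|^2 e^{-2s\varphi^*}$ coming from Theorem~\ref{Thm-Carleman-H-1}. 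Your suggestion to ``absorb the boundary term into the left-hand side'' cannot work as stated: the left-hand side of \eqref{Carlemanpour-v-simplified} contains nothing of second order in $x$, so one cannot absorb an ${\bf H}^2$-type quantity; a separate weighted ${\bf H}^2$ energy estimate is unavoidable. The multipliers $\theta_1,\theta_2$ must be $x$-independent precisely because one has no control on the pressure in \eqref{Stokes-Adjoint-Simplified}.

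Three smaller errors. First, the term $s^{1/2}\lambda^{-1/2}\int(\xi^*)^{4-2/m}|\v(0,\cdot)|^2 e^{-2s\varphi^*(0)}$ does \emph{not} come from the $|z(0)|^2$, $|\nabla z(0)|^2$ terms of Theorem~\ref{CarlemanThm}; those are not propagated through the duality argument to the conclusion \eqref{CarlemanWeak} of Theorem~\ref{Thm-Carleman-H-1}, which contains no information at $t=0$. The $t=0$ term is produced by the $L^2({\bf H}^1)$ energy identity for $\v_a$. Second, there is no ``differentiating the elliptic estimate in time'': the elliptic Carleman estimates on $-\Delta\v=\rot w$ (ImaPuel, source in ${\bf H}^{-1}$) and on $\Delta\zeta=w$ (Fursikov--Imanuvilov, constant boundary data) are applied at each fixed $t$ and integrated; no $\partial_t\zeta$ ever appears, and the only structural requirement on $\varphi$ is that $\psi(t)|_{\partial\Ocal}$ be constant. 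Third, Poincar\'e--Wirtinger with \eqref{zetazeromean} converts the local $\zeta$-observation into a local $\v$-observation (using $|\nabla\zeta|=|\v|$), not a local $w$-observation; the local $w$-observation is removed separately afterwards by a cut-off and Young's inequality, paying with a small fraction of $\iint(\xi^*)^{3-3/m}|\Delta\v|^2e^{-2s\varphi^*}$ which the $\theta_2$-weighted ${\bf H}^2$ energy estimate controls. Your handling of $\partial_t\v\cdot\nabla^\perp\overline{\sigma}$ (rewriting vs.\ taking $b_{n+1}=\partial_j^\perp\overline{\sigma}$, $g_{n+1}=v_j$) is fine either way.
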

The proof of Theorem \ref{Thm-Stokes-Simplified} is done below in Section \ref{Sec-Proof-Thm-Stokes-Simplified}. In Section \ref{Sec-Proof-Thm-Stokes} we then explain how Theorem \ref{Thm-Stokes-Simplified} implies Theorem \ref{Thm-Carleman-Stokes}.

\subsubsection{Proof of Theorem \ref{Thm-Stokes-Simplified}}\label{Sec-Proof-Thm-Stokes-Simplified}

	Let $\v$ be a solution of \eqref{Stokes-Adjoint-Simplified} with source term $\g$. As $w = \rot \v$ satisfies \eqref{Eq-Rot-V}, the Carleman estimate \eqref{CarlemanWeak} applies to $w$: for all $s \geq s_0$ and $\lambda \geq \lambda_0$,

	\begin{multline}
		\label{Carlemanpourw}
			\iint_{\Ocal_T}\xi^{3}|w|^2e^{-2s\varphi}
			\leq
			C\left(\iint_{{ \widehat{\omega_T }}} \xi^3|w|^2 e^{-2s\varphi}+s  \iint_{\Ocal_T}\xi^4|\v|^2e^{-2s\varphi}\right.
			\\
			\displaystyle\left.
			+\lambda^{-1}\int_{\Gamma_T}\xi^3|w|^2e^{-2s\varphi}+s^{-1}\lambda^{-2}\iint_{\Ocal_T}\xi^2|\g|^2e^{-2s\varphi}
			\right).
	\end{multline}
	{Here and in the following $\widehat{\omega}_T=[0,T]\times \widehat\omega$ where $\widehat{\omega}$ is a Lipschitz subdomain  $\Ocal\backslash \overline{\Omega}$ such that $\widetilde{\omega} \Subset \widehat{\omega}\Subset {\omega}$. Note in particular that $\widetilde{\omega}_T \Subset \widehat{\omega}_T\Subset {\omega_T}$.}

Next, because $\v$ is divergence free we also have, for all $t \in (0,T)$,
	\begin{equation}
		\label{v-by-w-direct}
		-\Delta \v(t) = \rot w(t) \quad \hbox{ in } \Ocal, \qquad \v(t) = 0 \quad \hbox{ on } \partial \Ocal.
	\end{equation}
	Thus, using elliptic Carleman estimates with source term in $H^{-1}(\Ocal)$ with weight $e^{-s \varphi(t,\cdot)}$ and integrating in time,
see \cite{ImaPuel}, we immediately get
	\begin{multline}
		\label{Carleman-v-rot}
		s^{-1} \iint_{\Ocal_T}  \xi^2 |\nabla \v|^2 e^{-2s \varphi}  + s \lambda^2 \iint_{\Ocal_T} \xi^4 |\v|^2 e^{-2 s \varphi}
			\\
			\leq
		C\left(\iint_{\Ocal_T}\xi^{3}|w|^2e^{-2s\varphi}+ s \lambda^2 \iint_{\widehat{\omega_T }} \xi^4 |\v|^2 e^{-2 s \varphi}\right).
	\end{multline}
	Combined with \eqref{Carlemanpourw}, and using the fact that $w=\rot \v$ is bounded by $\partial_{\bf n} \v$ on $\Gamma_T$ (recall that $\v = 0$ on $\Gamma_T$) and that $\xi^*=\xi$ and $\varphi^*=\varphi$ on $(0,T) \times \partial \Ocal$, we immediately have that for some $s_0>1$ and $\lambda_0>1$, for all $s \geq s_0$ and $\lambda \geq \lambda_0$,
	\begin{multline}
		\label{Carleman-W-2}
s^{-1} \iint_{\Ocal_T}  \xi^2 |\nabla \v|^2 e^{-2s \varphi}  +			\iint_{\Ocal_T}\xi^{3}|w|^2e^{-2s\varphi}+ s \lambda^2 \iint_{\Ocal_T} \xi^4 |\v|^2 e^{-2 s \varphi}
			\\
			\leq
			C\left(\iint_{\widehat{\omega_T }} \xi^3|w|^2 e^{-2s\varphi} + s \lambda^2 \iint_{\widehat{\omega_T }} \xi^4 |\v|^2 e^{-2 s \varphi}\right.
			\\
			\left.
			+\lambda^{-1}\int_{\Gamma_T}(\xi^*)^3|\partial_{\bf n}\v|^2e^{-2s\varphi^*}+s^{-1}\lambda^{-2}\iint_{\Ocal_T}\xi^2|\g|^2e^{-2s\varphi}
			\right).
	\end{multline}

	We then introduce the stream function $\zeta$ associated to $\v$, i.e. $\v = \nabla^\perp \zeta$, which can be computed explicitly as the solution of \eqref{Link-Rot-V-Zeta} for some constants $c_i(t)$ due to the dimension $N = 2$, see e.g. \cite[Corollary 3.1]{RAVIART-GIRAULDFEMNSE1986}. Note that, by adding a constant to $\zeta$ if necessary, without loss of generality we can assume that \eqref{zetazeromean} is also satisfied.
Applying the elliptic Carleman estimate to the equation \eqref{Link-Rot-V-Zeta}
(see e.g. \cite{FursikovImanuvilov}),
we obtain that
	\begin{multline}\label{Carlemanpourzeta}
			s^3\lambda^4\iint_{\Ocal_T}\xi^6|\zeta|^2e^{-2s\varphi}+s\lambda^2\iint_{\Ocal_T}\xi^4|\nabla\zeta|^2e^{-2s\varphi}
			\\
			\leq C\left(\iint_{\Ocal_T}\xi^3|w|^2e^{-2s\varphi}+s^3\lambda^4\iint_{\widehat{\omega_T }}\xi^6|\zeta|^2e^{-2s\varphi}\right).
	\end{multline}
	Note that the Carleman estimate of \cite{FursikovImanuvilov} is obtained for homogeneous Dirichlet boundary conditions. But it is easily seen that it remains true for a boundary data whose tangential derivative at the boundary vanishes, which is the case for $\zeta$.
	
	Of course, estimate \eqref{Carlemanpourw} requires an observation term in $\zeta$ in $\widehat{\omega_T }$. But Poincar\'e Wirtinger inequality and condition \eqref{zetazeromean} implies, for all $t \in [0,T]$, 
	\begin{equation*}
		%\label{Poincare-t-0}
		\int_{\widehat{\omega}} |\zeta(t,\cdot)|^2\leq C\int_{\widehat{\omega}}|\nabla\zeta(t,\cdot)|^2
		=\int_{\widehat{\omega}}|\rot \zeta(t,\cdot)|^2=\int_{\widehat{\omega}}|\v(t,\cdot)|^2,
	\end{equation*}
	and in particular:
		\begin{equation}
		\label{Poincare-t-0}
		\iint_{\widehat{\omega_T }}\xi^6|\zeta|^2e^{-2s\varphi}\leq C\iint_{\widehat{\omega_T }}\widehat{\xi}^6|\v|^2e^{-2s\widehat{\varphi}}.
			\end{equation}
Let us stress the fact that the $2$-d assumption is also used at this stage since \eqref{Poincare-t-0} relies on the identity $|\nabla\zeta(t,\cdot)|^2=|\rot \zeta(t,\cdot)|^2$.

	Next, we use \eqref{Carlemanpourzeta} to derive suitable weighted energy estimates for $\v$, hence for $\partial_{\bf n} \v$ on the boundary $\partial \Ocal$. But since we do not have any estimate on the pressure in the Stokes equation \eqref{Stokes-Adjoint-Simplified}, we are reduced to derive energy estimates for $\v$ with weight functions independent of $x$.

{\bf Estimates in $L^2(0,T;{\bf H}^1(\Ocal))$.}
	We set $(\v_a,p_a)\ov\theta_1(t)(\v,p)$ with
	$$
		\theta_1(t)\ov s^{1/4}\lambda^{-1/4}(\xi^*)^{2-1/m}e^{-s\varphi^*(t)}.
	$$
	Using
	\begin{equation}\label{EstDtPhistar}
		 \partial_t\varphi^*  \leq    C \lambda (\xi^*)^{1+1/m}\hbox{ in }\Ocal_T.
	\end{equation}
	and explicit computations, we get
	\begin{equation}
		\label{Estimate-dt-theta-1}
		\theta'_1 \geq - C s^{5/4} \lambda^{3/4} (\xi^*)^3 e^{-s \varphi^*(t)}.
	\end{equation}
	The pair $(\v_a, p_a)$ satisfies
	\begin{equation}
		\label{systemedev*}
		\left\{\begin{array}{rlll}
			-\overline\sigma \partial_t \v_a-\nu \Delta \v_a+\nabla p_a & = &\theta_1 \g-\overline\sigma \theta_1' \v & \hbox{ in } \Ocal_T,
			\\
			\div \v_a&=&0& \hbox{ in }\Ocal_T,
			\\
			\v_a & =& 0 & \hbox{ on }\Gamma_T,
			\\
			\v_a(T)& =& 0 & \hbox{ in }\Ocal.
		\end{array}
	\right.
	\end{equation}
	We want to obtain an estimate of the $L^2(\H^1_0)$-norm of $\v_a$, so we multiply the partial differential
equation in \eqref{systemedev*} by $\v_a$, we integrate in $\Ocal_T$ and we integrate by parts. This yields:
	\begin{multline}
		\label{Estimation-V-1}
		\frac{1}{2} \| \sqrt{\overline\sigma(0, \cdot)} \v_a(0, \cdot)\|_{{\bf L}^2(\Ocal)}^2 + \nu \|\v_a\|^2_{L^2(0,T;{\bf H}^1_0(\Ocal))}
		=
		\iint_{\Ocal_T}\theta_1 \g\cdot\v_a
		\\
		-
		\iint_{\Ocal_T}\overline\sigma \theta'_1\v \cdot \v_a
		-
		\frac{1}{2}\iint_{\Ocal_T}\partial_t\overline\sigma \,|\v_a|^2 .
	\end{multline}
	First, we remark that
	\begin{equation}
		\label{barça}
		\left| \iint_{\Ocal_T}\theta_1 \g \cdot \v_a\right|
		\leq
		\frac{\nu}{4}\iint_{\Ocal_T}|\nabla \v_a |^2
		+
		C \int_0^T |\theta_1|^2\|\g\|_{{\bf H}^{-1}(\Ocal)}^2.
	\end{equation}

	 We then focus on the second term of \eqref{Estimation-V-1} and use \eqref{Estimate-dt-theta-1}
	\begin{align*}
		\lefteqn{-\iint_{\Ocal_T}\overline\sigma \theta_1'\v \cdot \v_a}
		\\
		& \leq
		C s^{3/2} \lambda^{1/2} \iint_{\Ocal_T} (\xi^*)^{5-1/m} \v \cdot \nabla^\perp \zeta   e^{-2s \varphi^*(t)}
		\\
		& =
		- C s^{3/2} \lambda^{1/2} \iint_{\Ocal_T} (\xi^*)^{5-1/m} \rot \v \, \zeta e^{-2s \varphi^*(t)}
		\\
		& \leq C s^{5/2} \lambda^{3/2} \iint_{\Ocal_T} (\xi^*)^6 |\zeta|^2 e^{-2s \varphi^*(t)}
+ \frac{\nu s^{1/2} \lambda^{-1/2}}{4} \iint_{\Ocal_T} (\xi^*)^{4-2/m} |\nabla \v|^2 e^{-2 s
		\varphi^*(t)}
		\\
		& \leq C s^{5/2} \lambda^{3/2} \iint_{\Ocal_T} (\xi^*)^6 |\zeta|^2 e^{-2s \varphi^*(t)} + \frac{\nu }{4} \iint_{\Ocal_T}  |\nabla \v_a|^2.
	\end{align*}
	The last term can be handled similarly:
	\begin{align*}
			\left| \frac{1}{2}\iint_{\Ocal_T}\partial_t\overline\sigma \,|\v_a|^2 \right|
			& \leq
			C s^{1/2} \lambda^{-1/2} \iint_{\Ocal_T} (\xi^*)^{4-2/m} |\v|^2 e^{-2 s \varphi^*}
			\\
			&\leq
			C s^{5/2} \lambda^{3/2} \iint_{\Ocal_T} (\xi^*)^6 |\zeta|^2 e^{-2s \varphi^*(t)} + \frac{\nu }{4} \iint_{\Ocal_T}  |\nabla \v_a|^2.
	\end{align*}

	Plugging these three last estimates in \eqref{Estimation-V-1}, we obtain
	\begin{multline}
		\label{L2H1}
		\|  \v_a(0, \cdot)\|_{{\bf L}^2(\Ocal)}^2 + \|\v_a\|^2_{L^2(0,T;{\bf H}_0^1(\Ocal))}
		\\
		\leq C\left(s^{5/2} \lambda^{3/2}\iint_{\Ocal_T}(\xi^*)^6|\zeta|^2e^{-2s\varphi^*}
		+\|\theta_1 {\bf g}\|_{L^2(0,T;{\bf H}^{-1}(\Ocal))}^2\right).
	\end{multline}

	{\bf Estimate in $L^2(0,T;{\bf H}^2(\Ocal))$.}
	Let us now set $(\v_b,p_b)\ov\theta_2(t)(\v,p)$ with
	$$
		\theta_2(t)\ov  s^{-1/4}\lambda^{-3/4}(\xi^*)^{3/2-3/(2m)}e^{-s\varphi^*(t)},
	$$
	for which explicit computations yield:
	\begin{equation}
		\label{Estimate-dt-theta-2}
		\theta'_2 \geq - C s^{3/4} \lambda^{1/4} (\xi^*)^{\tfrac{5}{2}-\tfrac{1}{2m}} e^{- s \varphi^*}
	\end{equation}
	This pair $(v_b, p_b)$ satisfies
	\begin{equation}\label{systemedev-hat}
		\left\{\begin{array}{rlll}
			-\overline\sigma \partial_t \v_b-\Delta \v_b+ \nabla p_b
			&=&
			\theta_2 \g-\overline\sigma \theta_2' \v & \hbox{ in }\Ocal_T,
				\\
			\div \v_b&=& 0& \hbox{ in }\Ocal_T,
				\\
			\v_b &= & 0 & \hbox{ on }\Gamma_T,
				\\
			\v_b(T)&=& 0 & \hbox{ in }\Ocal.
		\end{array}
		\right.	
	\end{equation}
	We then multiply the partial differential equation in \eqref{systemedev-hat} by $(-\Delta\v_b+\nabla p_b)/\overline{\sigma}$, we integrate in $\Ocal_T$ and we integrate by parts:
	\begin{multline}
			\frac{1}{2}\int_{\Ocal}|\nabla \v_b(0,\cdot)|^2
+\iint_{\Ocal_T}\frac{1}{ \overline{\sigma}} \left|-\Delta \v_b+\nabla p_b\right|^2
%			
%			&=
%			\iint_{\Ocal_T}\frac{\theta_2}{\overline{\sigma}} \g\,(-\Delta\v_b+\nabla p_b)
%			-\iint_{\Ocal_T}\theta_2'\v \,(-\Delta \v_b+\nabla p_b)
%			\notag \\
	\\
	 = 	
			\iint_{\Ocal_T}\frac{\theta_2}{\overline{\sigma}} \g\,(-\Delta\v_b+\nabla p_b)
			-\iint_{\Ocal_T}\theta_2 \theta_2' |\nabla \v|^2.
			\label{estimationvhat}
	\end{multline}
Using \eqref{Positivity} we can estimate the first term as follows:
	\begin{equation}\label{estimationg}
		\left|\iint_{\Ocal_T}\frac{\theta_2}{\overline{\sigma}} \g\,(-\Delta\v_b+\nabla p_b)\right|
		\leq \frac{1}{4}\iint_{\Ocal_T}\frac{1}{\overline{\sigma}}|-\Delta\v_b+\nabla p_b \,|^2+C \|\theta_2 \g\|^2_{{\bf L}^2(\Ocal_T)}.
	\end{equation}
	For the second term, remark that by \eqref{Estimate-dt-theta-2}, we have
	$$
		\theta_2\theta_2' \geq - C s^{1/2} \lambda^{-1/2} (\xi^*)^{4-2/m} e^{- 2 s \varphi^*} = - C \theta_1^2,
	$$
	thus yielding
	$$
		-\iint_{\Ocal_T}\theta_2 \theta_2' |\nabla \v|^2 \leq C \norm{ \theta_1 \v}_{L^2(0,T; {\bf H}^1(\Ocal))}^2 =
C \norm{ \v_a}_{L^2(0,T; {\bf H}^1(\Ocal))}^2.
	$$
	
	Therefore, using the above estimate and \eqref{estimationg} into \eqref{estimationvhat}, we obtain
	\begin{eqnarray}
		\| \v_b \|^2_{L^2(0,T;{\bf H}^2(\Ocal))}
		&\leq & C\iint_{\Ocal_T}|-\Delta \v_b+\nabla p_b\,|^2
		\notag\\
		& \leq &
		C\left(\|\theta_2 \g\|^2_{{\bf L}^2(\Ocal_T)}+\|\v_a\|^2_{L^2(0,T;{\bf H}^1(\Ocal))}\right),
		\label{L2H2}
	\end{eqnarray}
	where we have used the classical $H^2$-estimate for the stationary Stokes system, see e.g. \cite[Theorem IV.5.8]{Boyer-Fabrie-Book}.

	{\bf Global Estimate on $\v$ and its normal derivative.}
	Since $\v = 0$ on $\Gamma_T$, classical estimates yield
	$$
		\norm{\partial_{\bf n} \v(t,\cdot)}_{{\bf L}^2(\partial \Ocal)}^2 \leq C \left(\norm{ \v(t,\cdot)}_{{\bf H}_0^1(\Ocal)}  \norm{ \v(t,\cdot)}_{{\bf H}^2(\Ocal)} + \norm{ \v(t,\cdot)}_{{\bf H}_0^1(\Ocal)}^2\right),
	$$
	and in particular, using the fact that $\theta_2 (t)\leq \theta_1(t)$ for all $t \in (0,T)$,
	\begin{multline*}
		\norm{\lambda^{-1/2} (\xi^*)^{\tfrac{7}{4} - \tfrac{5}{4m}} \partial_{\bf n} \v e^{-s \varphi^*}(t, \cdot)}_{{\bf L}^2(\partial\Ocal)}^2
		\\
		\leq C \left(\norm{ \theta_1 \v(t,\cdot)}_{{\bf H}_0^1(\Ocal)}  \norm{ \theta_2 \v(t,\cdot)}_{{\bf H}^2(\Ocal)} + \norm{\theta_1 \v(t,\cdot)}_{{\bf H}_0^1(\Ocal)}^2\right).
	\end{multline*}
	Putting together (\ref{L2H1}) and (\ref{L2H2}) with this last estimate, using \eqref{Carlemanpourzeta} and \eqref{Poincare-t-0} to estimate the term in $\zeta$ and taking into account that $m\geq 5$, we deduce that
	\begin{multline}\label{estimationderiveenormale}
		\|  \v_a(0, \cdot)\|_{{\bf L}^2(\Ocal)}^2 +\|\theta_1 \v\|^2_{L^2(0,T;{\bf H}_0^1(\Ocal))}+\|\theta_2 \v\|^2_{L^2(0,T;{\bf H}^2(\Ocal))}+
		\lambda^{-1}\norm{ (\xi^*)^{3/2} \partial_{\bf n} \v e^{-s \varphi^*}}_{{\bf L}^2(\Gamma_T)}^2
		\\
		\leq C\left(s^{-1/2}\lambda^{-5/2}\iint_{\Ocal_T}\xi^3|w|^2 e^{-2s\varphi}+s^{5/2}\lambda^{3/2}\iint_{\widehat{\omega_T}}\widehat \xi^6|\v|^2 e^{-2s\widehat \varphi}\right.
		\\	
		\left.+\|\theta_1 \g\|_{L^2(0,T;{\bf H}^{-1}(\Ocal))}^2+\|\theta_2  \g\|^2_{{\bf L}^2(\Ocal_T)}\right).
	\end{multline}

	{\bf Elimination of the boundary term.}
	We come back to the Carleman inequality \eqref{Carleman-W-2} and we combine it with \eqref{estimationderiveenormale}: for $s$ large enough,
\begin{multline}
		\label{Carleman-W-2-bis}
		\|  \v_a(0, \cdot)\|_{{\bf L}^2(\Ocal)}^2 +\|\theta_1 \v\|^2_{L^2(0,T;{\bf H}_0^1(\Ocal))}+\|\theta_2 \v\|^2_{L^2(0,T;{\bf H}^2(\Ocal))}\\
s^{-1} \iint_{\Ocal_T}  \xi^2 |\nabla \v|^2 e^{-2s \varphi}  +			\iint_{\Ocal_T}\xi^{3}|w|^2e^{-2s\varphi}+ s \lambda^2 \iint_{\Ocal_T} \xi^4 |\v|^2 e^{-2 s \varphi}
			\\
			\leq
			C\left(\iint_{\widehat{\omega_T }} \xi^3|w|^2 e^{-2s\varphi} + s^{5/2}\lambda^{2}\iint_{\widehat{\omega_T}}\widehat \xi^6|\v|^2 e^{-2s\widehat \varphi}\right.
			\\
			\left.
			+\|\theta_1 \g\|_{L^2(0,T;{\bf H}^{-1}(\Ocal))}^2+\|\theta_2  \g\|^2_{{\bf L}^2(\Ocal_T)}+s^{-1}\lambda^{-2}\iint_{\Ocal_T}\xi^2|\g|^2e^{-2s\varphi}
			\right).
	\end{multline}

{\bf Removing the observation on $w$.}
We now estimate the local term in $|w|^2$. For this purpose, we recall that $\widehat {\omega_T}=[0,T]\times \widehat{\omega} \Subset \omega_T=[0,T] \times \omega$ and we consider a positive function $\chi\in C^2(\overline\Ocal)$ such that
$$
		\chi=1\hbox{ in } \widehat{\omega},
		\qquad
		\chi=0 \hbox{ in }\Ocal \setminus \omega.
$$
Using
\begin{equation}
	\label{Comparison-w}
		\iint_{\widehat{\omega}_T}\xi^3|w|^2 e^{-2s\varphi}	\leq \iint_{\widehat{\omega}_T}\widehat\xi^3|w|^2 e^{-2s\widehat\varphi},
\end{equation}
we are reduced to estimate the right hand side of \eqref{Comparison-w}:
\begin{align*}
	\iint_{\widehat{\omega}_T}\widehat\xi^3|w|^2 e^{-2s\widehat\varphi}
	&	\leq \iint_{{\omega_T}}\chi \widehat\xi^3|w|^2 e^{-2s\widehat\varphi}
		\leq \iint_{{\omega_T}}\chi \widehat\xi^3 |\nabla \v|^2 e^{-2s\widehat\varphi}
	\notag\\
	&
		= -\iint_{{\omega_T}}\chi \widehat\xi^3 \Delta \v\,\v e^{-2s\widehat\varphi}
		+\frac{1}{2}\iint_{{\omega_T}}\Delta\chi \widehat\xi^3  \,|\v|^2 e^{-2s\widehat\varphi}.
	\notag\\
	& \leq
		\varepsilon s^{-1/2} \lambda^{-3/2}\iint_{\Ocal_T}(\xi^*)^{3-3/m}|\Delta \v|^2e^{-2s\varphi^*}
	\notag\\
	& \qquad \qquad +C_\varepsilon s^{1/2} \lambda^{3/2} \iint_{{\omega_T}}(\xi^*)^{-3+3/m}\widehat\xi^6 |\v|^2e^{2s\varphi^*-4s\widehat\varphi},
%\label{local}
\end{align*}
where the last estimate follows from Young's identity and where $\varepsilon>0$.

Using the last above inequality in \eqref{Carleman-W-2-bis} with $\varepsilon$ small enough and recalling the definition of $\theta_2$, we get in particular
\begin{multline}\label{Carlemanpourw2}
	\displaystyle s^{1/2}\lambda^{-1/2}\int_\Ocal (\xi^*)^{4-2/m} |\v(0,\cdot)|^2 e^{-2s\varphi^*}+s^{1/2} \lambda^{-1/2}\int_0^T(\xi^*)^{4-2/m}e^{-2s\varphi^*}\|\v\|^2_{{\bf H}^{1}(\Ocal)}\\
	\displaystyle s^{-1} \iint_{\Ocal_T}  \xi^2 |\nabla \v|^2 e^{-2s \varphi}+s\lambda^2\iint_{\Ocal_T}\xi^{4}|\v|^2e^{-2s\varphi}
		\\ \noalign{\medskip}\displaystyle
	+\iint_{\Ocal_T}\xi^{3}|\rot \v|^2e^{-2s\varphi}
	\leq
	C\left(
	s^{5/2}\lambda^{2}\iint_{{{\omega_T}}}\widehat\xi^6|\v|^2e^{2s\varphi^*-4s\widehat\varphi}
	\right.
		\\ \noalign{\medskip}\displaystyle
	\left. +s^{-1}\lambda^{-2}\iint_{\Ocal_T}\xi^2|\g|^2e^{-2s\varphi}
	+s^{1/2}\lambda^{-1/2}\int_0^T(\xi^*)^{4-2/m}e^{-2s\varphi^*}\|\g\|^2_{{\bf H}^{-1}(\Ocal)}
	\right.
		\\ \noalign{\medskip}\displaystyle
	\left.
	+s^{-1/2}\lambda^{-3/2}\iint_{\Ocal_T}(\xi^*)^{3-3/m}|\g|^2e^{-2s\varphi^*}
	\right).
\end{multline}
This concludes the proof of Theorem \ref{Thm-Stokes-Simplified}.

\subsubsection{Proof of Theorem \ref{Thm-Carleman-Stokes}}\label{Sec-Proof-Thm-Stokes}

	Let $\v$ be a smooth solution of \eqref{Stokes-Adjoint-Full} with source term $\g$. Then $\v$ is a solution of \eqref{Stokes-Adjoint-Simplified} with source term
	$$
		\tilde \g = \g  + \partial_t \overline{\sigma}\,  \v +  D (\overline{\sigma} \v) \overline{\y} + \overline{\sigma} \v \div (\overline{\y}).
	$$
	Applying Theorem \ref{Thm-Stokes-Simplified} to $\v$ with source term $\tilde \g$, for all $s \geq s_0$ and $\lambda \geq \lambda_0$ we get
	\begin{multline}
	\label{Carlemanpour-v-simplified-Bis}
		s^{1/2}\lambda^{-1/2}\int_\Ocal (\xi^*)^{4-2/m} |\v(0,\cdot)|^2 e^{-2s\varphi^*(0)}
		+
		s\lambda^2\iint_{\Ocal_T}\xi^{4}|\v|^2e^{-2s\varphi}
		\\
		+
		s^{1/2}\lambda^{-1/2}\int_0^T(\xi^*)^{4-2/m}e^{-2s\varphi^*}\|\v\|^2_{{\bf H}^{1}(\Ocal)}
		+
		\iint_{\Ocal_T}\xi^{3}|\rot \v|^2e^{-2s\varphi}
		+
		 s^{-1} \iint_{\Ocal_T}\xi^{2}|\nabla \v|^2e^{-2s\varphi}
		\\
			\leq
		C\left(
		s^{5/2}\lambda^{2}\iint_{{{\omega_T}}}\widehat\xi^6|\v|^2e^{2s\varphi^*-4s\widehat\varphi}
		\right.
		 +s^{-1}\lambda^{-2}\iint_{\Ocal_T}\xi^2|\tilde \g|^2e^{-2s\varphi}
		\\
		+
		s^{1/2}\lambda^{-1/2}\int_0^T(\xi^*)^{4-2/m}e^{-2s\varphi^*}\|\tilde \g\|^2_{{\bf H}^{-1}(\Ocal)}
		\left.
		+
		s^{-1/2}\lambda^{-3/2}\iint_{\Ocal_T}(\xi^*)^{3-3/m}|\tilde \g|^2e^{-2s\varphi^*}
		\right)
	\end{multline}
	and we are thus reduced to estimate the last terms of the inequality.
	
	But we have
	\begin{align*}
			& s^{-1}\lambda^{-2}\iint_{\Ocal_T}\xi^2|\tilde \g|^2e^{-2s\varphi}
			\leq
			C \big{(} s^{-1}\lambda^{-2}\iint_{\Ocal_T}\xi^2|\g|^2e^{-2s\varphi}
			\\
		&\qquad \qquad
			+
			s^{-1}\lambda^{-2}\iint_{\Ocal_T}\xi^2|\v|^2e^{-2s\varphi}
			+
			s^{-1}\lambda^{-2}\iint_{\Ocal_T}\xi^2|\nabla \v|^2e^{-2s\varphi}\big{)},
		\\
		&
			s^{-1/2}\lambda^{-3/2}\iint_{\Ocal_T}(\xi^*)^{3-3/m}|\tilde \g|^2e^{-2s\varphi^*}
			\leq
			C \big{(}s^{-1/2}\lambda^{-3/2}\iint_{\Ocal_T}(\xi^*)^{3-3/m}|\g|^2e^{-2s\varphi^*}
			\\
		&\qquad \qquad
			+
			s^{-1/2}\lambda^{-3/2}\iint_{\Ocal_T}(\xi^*)^{3-3/m}|\v|^2e^{-2s\varphi^*}
			+
			s^{-1/2}\lambda^{-3/2}\iint_{\Ocal_T}(\xi^*)^{3-3/m}|\nabla \v|^2e^{-2s\varphi^*}\big{)},
	\end{align*}
	in which all the terms in $\v,\, \nabla\v$ can be absorbed by the left-hand side of \eqref{Carlemanpour-v-simplified-Bis} for $s$ and $\lambda$ large enough.
	
	We also have, for all $t \in (0,T)$,
	$$
		\| \tilde \g(t) \|_{{\bf H}^{-1}(\Ocal)}^2 \leq C \|  \g(t,\cdot) \|_{{\bf L}^2(\Ocal)}^2  + C \| \v(t,\cdot)\|_{{\bf L}^2(\Ocal)}^2.
	$$
	Hence
	\begin{multline}
		s^{1/2}\lambda^{-1/2}\int_0^T(\xi^*)^{4-2/m}e^{-2s\varphi^*}\|\tilde \g\|^2_{{\bf H}^{-1}(\Ocal)}
		\leq
		C s^{1/2}\lambda^{-1/2}\iint_{\Ocal_T} (\xi^*)^{4-2/m}e^{-2s\varphi^*}|\g|^2
		\\
		+
		C s^{1/2}\lambda^{-1/2}\iint_{\Ocal_T} (\xi^*)^{4-2/m}e^{-2s\varphi^*}|\v|^2.
	\end{multline}
	Plugging these last estimates in \eqref{Carlemanpour-v-simplified-Bis}, we obtain \eqref{Carl-Est-Stokes-Full}
for $s$ and $\lambda$ large enough.

\subsection{Proof of Theorem \ref{Thm-Stokes-Controlled}}\label{Sec-Stokes-Controlled}

 	We use the following simplified form of \eqref{Carl-Est-Stokes-Full}: for all $s \geq s_0$ and $\lambda \geq \lambda_0$ and all smooth solutions $\v$ of \eqref{Stokes-Adjoint-Full} with source term $\g$:
	\begin{equation}\label{Carl-Est-Stokes-Full-bis}
		\begin{array}{l}\displaystyle
			\int_\Ocal (\xi^*)^{4-2/m} |\v(0,\cdot)|^2 e^{-2s\varphi^*(0)}
			+s^{1/2}\lambda^{5/2}\iint_{\Ocal_T}\xi^{4}|\v|^2e^{-2s\varphi}
			\\ \noalign{\medskip}\displaystyle
			\leq
			C\left(
			s^{2}\lambda^{5/2}\iint_{{{\omega_T}}}\widehat\xi^6|\v|^2e^{2s\varphi^*-4s\widehat\varphi}
			 + \iint_{\Ocal_T}\xi^{4- 2/m} |\g|^2e^{-2s\varphi}
			\right).
	\end{array}
	\end{equation}
	Easy density arguments then show that this result extends to all solutions $\v$ of \eqref{Stokes-Adjoint-Full} with source term $\g \in L^2(\Ocal_T)$ and final data $\v(T) = \v_T \in \V^1_0(\Omega) $.
	
	We then follow the proof of Theorem \ref{Thm-Est-Y-Carl-Norms} and introduce the functional $J_{St}$ defined by
	\begin{multline}
		\label{Def-J-Stokes}
		J_{St}(\v_T, \g) \ov \frac{1}{2} \iint_{\Ocal_T}\xi^{4- 2/m} |\g|^2e^{-2s\varphi}+ \frac{s^{2}\lambda^{5/2}}{2} \iint_{{{\omega_T}}}\widehat\xi^6|\v|^2e^{2s\varphi^*-4s\widehat\varphi}
		\\
		-\iint_{\Ocal_T} \f \cdot \v - \int_\Ocal \u_0( \cdot) \cdot \v (0, \cdot),
	\end{multline}
defined for data $(\v_T, \g) \in \V^1_0(\Omega) \times L^2(\Ocal_T)$, where $\v$ solves \eqref{Stokes-Adjoint-Full} with $\v(T) = \v_T$.

	We then need to define the functional $J_{St}$ on the set $\X_{St,obs} \ov\overline{\X_{St,obs}^0}^{\norm{\cdot}_{St, obs}}$, where
	\begin{equation}
		\label{Def-X-St-Obs}
		\X_{St,obs}^0 \ov \big{\{}(\v_T, \g) \in \V^1_0(\Omega) \times L^2(\Ocal_T)\}
%		 \hbox{ s.t. } \div \v=0 \hbox{ in }  \Ocal_T,
%%		\\
%		  \v = 0 \hbox{ on } \Gamma_T\}
	\end{equation}
	and the norm $\norm{( \v_T,\g) }_{St, obs}$ is defined by
	$$
	 \norm{(\v_T,\g) }_{St, obs}^2 \ov \iint_{\Ocal_T}\xi^{4- 2/m}  |\g |^2 e^{-2s\varphi}+ s^{2}\lambda^{5/2}\iint_{{{\omega_T}}}\widehat\xi^6|\v|^2e^{2s\varphi^*-4s\widehat\varphi},
	$$
	where $\v$ is the corresponding solution to \eqref{Stokes-Adjoint-Full}.

	According to \eqref{Carl-Est-Stokes-Full-bis}, the functional $J_{St}$ can be extended by continuity on $\X_{St, obs}$ if $\f$ satisfies \eqref{ConditionData-Stokes}. The functional $J_{St}$ then has a unique minimizer on $\X_{St, obs}$, that we denote $(\V_T,\G)$ and corresponds to a solution $\V$ of \eqref{Stokes-Adjoint-Full}. We get, for all smooth solution $\v$ of \eqref{Stokes-Adjoint-Full} corresponding to a source term $\g$,
	\begin{multline}
		0 =  \iint_{\Ocal_T}\xi^{4- 2/m} \G \cdot \g e^{-2s\varphi}+ s^{2}\lambda^{5/2} \iint_{{{\omega_T}}}\widehat\xi^6\V \cdot \v e^{2s\varphi^*-4s\widehat\varphi}
		\\
		-\iint_{\Ocal_T} \f \cdot \v - \int_\Ocal \u_0(\cdot) \cdot \v (0, \cdot).
		\label{Weak-Formulation-Stokes}
	\end{multline}
	In particular, setting
	\begin{equation}
		\label{Identification-U-H}
		\u =  \xi^{4-2/m} \G e^{-2s \varphi}, \qquad \h = -s^{2}\lambda^{5/2} \widehat\xi^6 \V e^{2s \varphi^*-4s \widehat{\varphi}} 1_{\omega_T},
	\end{equation}
	we obtain a solution in the sense of transposition of the control problem \eqref{StokesControl}--\eqref{ControlReq-u} with a control term acting only on $\omega_T$.
	
	Besides, using again the Carleman estimate \eqref{Carl-Est-Stokes-Full-bis} and the fact that $J_{St}(\V_T, \G) \leq J_{St} (0,0) = 0$, one immediately derives that
	\begin{equation}
		\label{Bound-V-obs}
		\norm{(\V_T, \G)}_{obs}^2
		\leq
		\frac{C}{s^{1/2} \lambda^{5/2}} \iint_{\Ocal_T} \xi^{-4} |\f |^2 e^{2 s \varphi} +  C\int_\Ocal (\xi^*)^{2/m-4} |\u_0|^2 e^{2 s \varphi^*(0)}.
	\end{equation}
	Hence, using \eqref{Identification-U-H}, the controlled trajectory $(\u, \h)$ satisfies
	\begin{multline}
		\label{Est-U-H-Duality}
		\iint_{\Ocal_T} \xi^{2/m - 4} |\u|^2 e^{2 s \varphi} + \frac{1}{s^{2}\lambda^{5/2}} \iint_{\omega_T} \widehat\xi^{-6} |\h|^2 e^{4s \widehat{\varphi} - 2s \varphi^*}
		\\
		\leq
		\frac{C}{s^{1/2} \lambda^{5/2}} \iint_{\Ocal_T} \xi^{-4} |\f |^2 e^{2 s \varphi} +  \int_\Ocal (\xi^*)^{2/m-4} |\u_0|^2 e^{2 s \varphi^*(0)}.
	\end{multline}
		
	Finally, we can then derive $H^1(\L^2) \cap L^2(\H^2)$ estimates on $\u$ by applying regularity results for Stokes equations to the system satisfied by $e^{\frac{3}{4} s \varphi^*} \u$. The computations are left to the reader.
%
%
%%%%%%%%%%%%%%%%%%%%%%%%%%%%%%%%%%%%%%%%%%%%%%%%%%%%%%%%%%%%%%%%%%%%%%%%%%%%%%%%%%%%%%%%%%%%%%%%%%%%%%
\section{Controlling the density}\label{Sec-Density}
%%%%%%%%%%%%%%%%%%%%%%%%%%%%%%%%%%%%%%%%%%%%%%%%%
%
%
This section is devoted to explain how to solve the control problem \eqref{EqTransport}. As we said in the introduction, the main difficulty is that we need to provide a controlled trajectory that can be estimated with the use of the weight functions introduced in Section \ref{Sec-Velocity}.
\subsection{Basic properties of the flow}\label{Subsec-Flows}
Let $\overline{\y}$ be the extension of $\overline{\y}$ on $[0,T] \times \R^2$ and $\overline{X}$  the corresponding flow, defined in \eqref{Def-FlowBAR}. As $\overline{\y} \in {\bf C}^2([0,T] \times \R^2)$, the flow $\overline{X}$ is continuous with respect to the variables $(t, \tau, x) \in [0,T]^2 \times \R^2$.
\\
We first discuss the stability of property \eqref{OmegaEqOmegaOut}:
%%%%%%%%%%%%%%%%%%%%%%%%%%%%%%%%%%%%%%%%%%%%%%%%%%%%%%%%%%%%%%%%%%%%%%%%%%%
\begin{lemma}\label{LemSortieEps}
	Assume that $\overline{\y} \in {\bf C}^2([0,T] \times \R^2)$, and that the flow $\overline{X}$ defined by \eqref{Def-FlowBAR} satisfies \eqref{OmegaEqOmegaOut}.
	\\
	There exist $\varepsilon>0$, $T_0^* >0$ and $T_1^*>0$ such that for all $T_0 \in (0,T_0^*)$, for all $T_1 \in (0,T_1^*)$ and for all $x \in \overline{\Omega}$, there exists $t \in [T_0, T - 2 T_1]$ such that $d(\overline{X}(t, T_0, x), \Omega) \geq 2 \varepsilon$.
\end{lemma}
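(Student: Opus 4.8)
The plan is to exploit the intrinsic assumption \eqref{OmegaEqOmegaOut}, which says that every point of $\overline{\Omega}$ leaves $\overline{\Omega}$ under the flow $\overline{X}(\cdot,0,\cdot)$ in strictly less than time $T$, together with the continuity of the flow in all three variables $(t,\tau,x)$. First I would fix $x_0 \in \overline{\Omega}$. By \eqref{OmegaEqOmegaOut} there is a time $t_{x_0} \in (0,T)$ with $\overline{X}(t_{x_0},0,x_0) \in \R^2 \setminus \overline{\Omega}$, hence $d(\overline{X}(t_{x_0},0,x_0),\Omega) =: 3\delta_{x_0} > 0$. By continuity of $(\tau,x) \mapsto \overline{X}(t_{x_0},\tau,x)$ at $(0,x_0)$, there is a neighborhood $V_{x_0}$ of $x_0$ and a threshold $\tau_{x_0} > 0$ such that for all $\tau \in [0,\tau_{x_0}]$ and all $x \in V_{x_0} \cap \overline{\Omega}$ we still have $d(\overline{X}(t_{x_0},\tau,x),\Omega) \geq 2\delta_{x_0}$; moreover, up to shrinking $t_{x_0}$ away from $T$ we may also ask $t_{x_0} \leq T - 3\tau_{x_0}$ (this is harmless since once a point is outside $\overline{\Omega}$ we can always take a slightly earlier or later instant, or simply decrease the neighborhood so that the exit time stays bounded away from $T$; alternatively keep $t_{x_0}$ and just make the final $T_1^*$ small).

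Next I would use compactness of $\overline{\Omega}$: the open sets $\{V_{x_0}\}_{x_0 \in \overline{\Omega}}$ cover $\overline{\Omega}$, so finitely many of them $V_{x_1},\dots,V_{x_N}$ suffice. Set
\[
	\varepsilon \ov \frac{1}{2}\min_{1 \leq j \leq N} \delta_{x_j}, \qquad T_0^* \ov \min_{1 \leq j \leq N} \tau_{x_j}, \qquad T_1^* \ov \frac{1}{3}\Big(\min_{1 \leq j \leq N}\big(T - t_{x_j}\big) \wedge T_0^*\Big),
\]
so that $\varepsilon, T_0^*, T_1^* > 0$. Now given $T_0 \in (0,T_0^*)$, $T_1 \in (0,T_1^*)$ and $x \in \overline{\Omega}$, pick $j$ with $x \in V_{x_j}$. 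Since $T_0 \leq T_0^* \leq \tau_{x_j}$, the neighborhood estimate above gives $d(\overline{X}(t_{x_j}, T_0, x), \Omega) \geq 2\delta_{x_j} \geq 2\varepsilon$, and since $t_{x_j} \leq T - 3 T_1^* \leq T - 2T_1$ while $t_{x_j} \geq T_0^* \geq T_0$ (arranging, if needed, $t_{x_j} \geq \tau_{x_j} \geq T_0^*$ by the reduction in the first paragraph, or simply absorbing the lower bound into the choice of $T_0^*$), the instant $t = t_{x_j}$ lies in $[T_0, T - 2T_1]$. This is exactly the claimed property.

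The only genuinely delicate point is bookkeeping the quantifiers so that a \emph{single} choice of $\varepsilon$, $T_0^*$, $T_1^*$ works simultaneously for all admissible $T_0$, $T_1$ and all $x$: one must make sure that the exit time $t_{x_j}$ associated with each patch of the finite cover is bounded away from $0$ (to beat $T_0$) and from $T$ (to beat $T - 2T_1$), uniformly. Both are handled by the finite cover, since one is minimizing finitely many strictly positive quantities; the lower bound away from $0$ can be forced by replacing $t_{x_j}$ with $\max(t_{x_j}, \tau_{x_j})$ if necessary, using that being outside $\overline{\Omega}$ at one time is an open condition in time as well, so the estimate persists on a time interval around $t_{x_j}$. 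No regularity beyond continuity of the flow — guaranteed by $\overline{\y} \in \mathbf{C}^2([0,T]\times\R^2)$ — is needed.
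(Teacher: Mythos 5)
Your proof is correct and it takes a genuinely different route from the paper's. The paper argues \emph{by contradiction}: assuming the conclusion fails, it extracts a sequence $x_\varepsilon \in \overline{\Omega}$ such that $d(\overline{X}(t,T_0^\varepsilon,x_\varepsilon),\Omega)<2\varepsilon$ for all relevant $t$, passes to a convergent subsequence $x_\varepsilon\to\overline{x}$, and uses continuity of the flow and of the distance to get $d(\overline{X}(t,0,\overline{x}),\Omega)=0$ for all $t\in(0,T)$, contradicting \eqref{OmegaEqOmegaOut}. You instead give a \emph{direct} covering argument: for each $x_0\in\overline\Omega$ you produce an exit time $t_{x_0}$, a distance margin $\delta_{x_0}$, and a uniform neighbourhood $V_{x_0}\times[0,\tau_{x_0}]$ where that margin persists, then extract a finite subcover and take minima. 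The two are dual ways of converting the pointwise, non-uniform hypothesis \eqref{OmegaEqOmegaOut} into a uniform statement over the compact set $\overline{\Omega}$: yours is constructive in the sense that it identifies $\varepsilon,T_0^*,T_1^*$ explicitly from finitely many data, while the paper's contradiction argument is a bit shorter to write. One small point you should tighten: you need $T_0^*\leq\min_j t_{x_j}$ as well as $T_0^*\leq\min_j\tau_{x_j}$ so that $t=t_{x_j}\geq T_0$ is guaranteed; you do flag this issue at the end, but the cleanest fix is simply to define $T_0^*\ov\min_j\min(\tau_{x_j},t_{x_j})$ rather than invoking the slightly delicate "replace $t_{x_j}$ by $\max(t_{x_j},\tau_{x_j})$" device (at time $\tau_{x_j}$ the trajectory need not be far from $\Omega$, so that replacement is not obviously harmless). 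With that adjustment the argument is complete.
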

%%%%%%%%%%%%%%%%%%%%%%%%%%%%%%%%%%%%%%%%%%%%%%%%%%%%%%%%%%%%%%%%%%%%%%%%%%%
\begin{proof}
	The proof is done by contradiction. Assume it is false. Then for all $\varepsilon >0$, there exist $T_0^\varepsilon >0$ and $T_1^\varepsilon$ such that $T_0^{\varepsilon},\, T_1^\varepsilon$ converge to $0$ as $\varepsilon \to 0$, and an $x_\varepsilon$ in $\overline{\Omega}$ such that
	\begin{equation}
		\label{Contradiction-flow}
		\forall t \in [T_0^\varepsilon, T - 2 T_1^\varepsilon], \quad d(\overline{X}(t, T_0^\varepsilon, x_\varepsilon), \Omega) < 2\varepsilon.
	\end{equation}
	But $x_\varepsilon$ is bounded in $\overline{\Omega}$. Hence, up to a subsequence, it converges to some $\overline{x}$ in $\overline{\Omega}$. As the flow $\overline{X}$ is continuous in $[0,T]^2 \times \R^2$ and the distance function is continuous, for each $t \in (0, T)$, one could then pass to the limit in \eqref{Contradiction-flow}:
	$$
		\forall t \in (0, T) , \quad d(\overline{X}(t, 0, \overline{x}), \Omega) = 0.
	$$
	This is of course in contradiction with \eqref{OmegaEqOmegaOut}.
\end{proof}
%%%%%%%%%%%%%%%%%%%%%%%%%%%%%%%%%%%%%%%%%%%%%%%%%%%%%%%%%%%%%%%%%%%%%%%%%%%
For $\widehat{\bf u}\in L^2(0,T;{\bf H}^2(\mathbb{R}^2))$ we denote by $\widehat X$ the flow defined by
\begin{equation}\label{FlowXannexe}
	\partial_t \widehat X(t,\tau,x)=(\overline {\bf y}+\widehat{\bf u})(t,  \widehat X(t,\tau,x)),\quad \widehat X(\tau,\tau,x)=x.
\end{equation}
We then show that, provided $\widehat \u$ is small enough, the property \eqref{OmegaEqOmegaOut} also holds for $\widehat X$:
%%%%%%%%%%%%%%%%%%%%%%%%%%%%%%%%%%%%%%%%%%%%%%%%%%%%%%%%%%%%%%%%%%%%%%%%%%%
\begin{lemma}\label{SortieEps2}
	Under the setting of Lemma \ref{LemSortieEps}, there exists $\varsigma>0$ such that for all $\widehat \u \in L^2(0,T;{\bf H}^2(\mathbb{R}^2))$, satisfying
	\begin{equation}
		\| \widehat {\bf u} \|_{L^2(0,T;{\bf L}^\infty(\mathbb{R}^2))}\leq 2\varsigma,
		\label{hypR}
	\end{equation}
	the flow $\widehat X$ defined by \eqref{FlowXannexe} satisfies the following property: for all $T_0 \in (0,T_0^*)$, for all $T_1 \in (0,T_1^*)$ and for all $x \in \overline{\Omega}$, there exists $t \in [T_0, T -  2T_1]$ such that $d({\widehat X}(t, T_0, 	x), \Omega) \geq \varepsilon$.
\end{lemma}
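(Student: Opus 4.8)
The plan is to show that the flow $\widehat X$ generated by the perturbed velocity $\overline{\y}+\widehat\u$ stays uniformly close to the unperturbed flow $\overline X$ on the relevant time interval, so that the ``$2\varepsilon$ escape'' property of $\overline X$ guaranteed by Lemma \ref{LemSortieEps} degrades to only an ``$\varepsilon$ escape'' for $\widehat X$, provided $\|\widehat\u\|_{L^2(0,T;\L^\infty(\R^2))}$ is small enough. First I would fix, once and for all, the constants $\varepsilon$, $T_0^*$, $T_1^*$ furnished by Lemma \ref{LemSortieEps}. Then, given any $T_0\in(0,T_0^*)$, $T_1\in(0,T_1^*)$ and $x\in\overline\Omega$, Lemma \ref{LemSortieEps} supplies a time $t_x\in[T_0,T-2T_1]$ with $d(\overline X(t_x,T_0,x),\Omega)\geq 2\varepsilon$; the goal is to bound $|\widehat X(t_x,T_0,x)-\overline X(t_x,T_0,x)|\leq\varepsilon$, which immediately yields $d(\widehat X(t_x,T_0,x),\Omega)\geq\varepsilon$.

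The heart of the argument is a Gr\"onwall estimate comparing the two flows. Writing $\delta(t)\ov\widehat X(t,T_0,x)-\overline X(t,T_0,x)$, subtracting the ODEs \eqref{Def-FlowBAR} and \eqref{FlowXannexe} and using $\delta(T_0)=0$, one gets
$$
	|\delta(t)|\leq \int_{T_0}^t \big| \overline{\y}(\tau,\widehat X(\tau))-\overline{\y}(\tau,\overline X(\tau)) \big|\,d\tau + \int_{T_0}^t \big|\widehat\u(\tau,\widehat X(\tau))\big|\,d\tau,
$$
and, since $\overline{\y}\in {\bf C}^2([0,T]\times\R^2)$ is in particular Lipschitz in $x$ with some constant $L$, the first term is bounded by $L\int_{T_0}^t|\delta(\tau)|\,d\tau$ while the second is bounded by $\|\widehat\u\|_{L^1(0,T;\L^\infty(\R^2))}\leq \sqrt{T}\,\|\widehat\u\|_{L^2(0,T;\L^\infty(\R^2))}\leq 2\varsigma\sqrt T$ by Cauchy--Schwarz and \eqref{hypR}. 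Gr\"onwall's lemma then gives $\sup_{t\in[T_0,T]}|\delta(t)|\leq 2\varsigma\sqrt T\, e^{LT}$. It then suffices to choose $\varsigma>0$ small enough that $2\varsigma\sqrt T\, e^{LT}\leq\varepsilon$; this choice depends only on $T$, $\varepsilon$ and the ${\bf C}^1$ norm of $\overline{\y}$, hence is uniform in $T_0$, $T_1$, $x$ and $\widehat\u$, as required.

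One technical point to address is the well-posedness of the flow $\widehat X$: since $\widehat\u\in L^2(0,T;{\bf H}^2(\R^2))$ and $H^2(\R^2)\hookrightarrow W^{1,\infty}(\R^2)$ in dimension two, the field $\overline{\y}+\widehat\u$ is, for a.e.\ $t$, Lipschitz in $x$ uniformly on compact time subintervals only in an $L^1$-in-time sense; the Carathéodory existence theory nonetheless applies and yields a unique absolutely continuous flow, so the manipulations above (in integral form, as written) are legitimate without requiring pointwise-in-time regularity. The only real obstacle, and it is mild, is making sure all the smallness thresholds are chosen in the right order — $\varepsilon, T_0^*, T_1^*$ from Lemma \ref{LemSortieEps} first, then $\varsigma$ depending on these — so that the final constant $\varsigma$ genuinely works for \emph{all} admissible $T_0$, $T_1$, $x$ simultaneously; since the Gr\"onwall bound is uniform in these parameters, this causes no difficulty.
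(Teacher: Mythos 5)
Your proof follows essentially the same route as the paper: a Gr\"onwall comparison of $\widehat X$ and $\overline X$ giving $|\widehat X(t,T_0,x)-\overline X(t,T_0,x)|\leq T^{1/2}e^{LT}\|\widehat\u\|_{L^2(T_0,t;\L^\infty)}$, then choosing $\varsigma\leq\varepsilon T^{-1/2}e^{-LT}/2$ so that the $2\varepsilon$ escape from Lemma \ref{LemSortieEps} degrades to an $\varepsilon$ escape. One small inaccuracy in your aside: $H^2(\R^2)$ does \emph{not} embed into $W^{1,\infty}(\R^2)$ (the gradient only lies in $H^1(\R^2)$, which is critical and misses $L^\infty$); uniqueness of the flow should instead be justified via the log-Lipschitz (Osgood) framework, but this does not affect the Gr\"onwall estimate, which only uses the Lipschitz bound on $\overline\y$ and the $L^1(L^\infty)$ bound on $\widehat\u$.
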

%%%%%%%%%%%%%%%%%%%%%%%%%%%%%%%%%%%%%%%%%%%%%%%%%%%%%%%%%%%%%%%%%%%%%%%%%%%
\begin{proof}
Set $L=\|\nabla \overline {\bf y} \|_{L^\infty(0,T;{\bf L}^\infty(\Omega))}$. For $\tau, t \in [0,T]^2$ with $t \geq \tau$ and $x \in \R^2$, we have:
\begin{align*}
	&
	|\widehat X(t, \tau,x)-\overline{X}(t, \tau,x)| = | \widehat X(t, \tau, x) - \widehat X(\tau, \tau, x) + \overline{X}(\tau, \tau, x) - \overline{X}(t, \tau, x)|
	\\
	&
	=
	\left|
		\int_\tau^t \left (\partial_t \widehat X(t',\tau,x)-\partial_t \overline{X}(t',\tau,x)\right){\rm d}t'	
	\right|
	\\
	&
	=
	\left|
		\int_\tau^t \widehat {\bf u}(t',\widehat X(t',\tau,x))+\overline {\bf y}(t',  \widehat X(t',\tau,x))-\overline {\bf y}(t',  \overline X(t',\tau,x)) {\rm d}t'
	\right|
	\\
	&
	\leq
	|t-\tau|^{1/2} \|\widehat {\bf u}\|_{L^2(\tau,t;{\bf L}^\infty(\R^2))}
	+
	L \int_\tau^t |\widehat X(t',\tau,x)-\overline{X}(t',\tau,x)| {\rm d}t'.
\end{align*}
Then Gronwall's Lemma yields for all $t\in [0,T]$ and $x \in \R^2$:
\begin{equation}\label{EstXbarXaux}
	|\widehat X(t,\tau,x)-\overline{X}(t,\tau,x)| \leq T^{1/2} e^{L T} \|\widehat {\bf u}\|_{L^2(\tau,t;{\bf L}^\infty(\R^2))}.
\end{equation}
According to Lemma \ref{LemSortieEps}, Lemma \ref{SortieEps2} thus holds by setting $\varsigma= T^{-1/2} e^{-L T}\varepsilon/2$ in \eqref{hypR}.
\end{proof}

%%%%%%%%%%%%%%%%%%%%%%%%%%%%%%%%%%%%%%%%%%%%%%%%%%%%%%%%%%%%%%%%%%%%%%%%%%%
\subsection{Construction of the controlled density}\label{Sec-ControlledDensity}
%%%%%%%%%%%%%%%%%%%%%%%%%%%%%%%%%%%%%%%%%%%%%%%%%%%%%%%%%%%%%%%%%%%%%%%%%%%
%
In this section, we assume that
\begin{equation}
	\label{Ass-On-U}
	\widehat\u \in L^2(0,T; {\bf H}^2(\R^2)) \hbox{ and } \norm{\widehat \u }_{L^2(0,T; {\bf L^\infty}(\R^2))} \leq 2 \varsigma,
\end{equation}
where $\varsigma$ is given by Lemma \ref{SortieEps2}. We then choose $T_0 \in (0,T_0^*)$ and $T_1 \in (0,T_1^*)$, where $T_0^*$, $T_1^*$ are given by Lemma \ref{SortieEps2}.
% We may thus extend $\widehat \u$ as  a function, still denoted the same for simplicity, of $L^2(0,T; {\bf H}^2(\R^2)) \cap L^2(0,T;{ \bf L}^\infty(\R^2))$ satisfying condition \eqref{hypR}.

The construction of the controlled density $\rho$ solution of \eqref{EqTransport} is then done as in \cite{EGGP}: we construct a forward solution $\rho_f$
and a backward solution $\rho_b$ of the transport equation in \eqref{EqTransport}
and we glue these two solutions according to the characteristics of the flow.
\\
Indeed, we define $\rho_f$ as the solution of
\begin{equation}\label{Eq-rho-f}
	 \left\{
	\begin{array}{rlll}
		\partial_t\rho_f+(\overline {\bf y}+\widehat {\bf u})\cdot \nabla \rho_f &=&- \widehat {\bf u}\cdot \nabla\overline{\sigma }&\mbox{ in } \Omega_T,
		\\
	        	\rho_f(t,x) &=& 0 &\mbox{ for } t \in (0,T),\, x \in \partial \Omega,
		\\
		& & &  \hbox{ with }\left( \overline {\bf y}(t,x)+\widehat {\bf u}(t,x)\right)\cdot {\bf n}(x) <0,
		\\
        		\rho_f(0)&=& \rho_0\quad &\mbox{ in }\Omega,
	\end{array}
	\right.
\end{equation}
and $\rho_b$ as the solution of
\begin{equation}\label{Eq-rho-b}
	 \left\{
	\begin{array}{rlll}
		\partial_t\rho_b+(\overline {\bf y}+\widehat {\bf u})\cdot \nabla \rho_b &=& - \widehat {\bf u}\cdot \nabla\overline{\sigma }&\mbox{ in } \Omega_T,
		\\
	        \rho_b&=& 0&\mbox{ for } t \in (0,T),\, x \in \partial \Omega,
	        \\
	        & & & \hbox{ with }\left( \overline {\bf y}(t,x)+\widehat {\bf u}(t,x)\right)\cdot {\bf n}(x) >0,
	        \\
	        \rho_b(T)&=& 0\quad &\mbox{ in }\Omega.
	\end{array}
	\right.
\end{equation}
We also introduce $\chi$ the solution of
\begin{equation}\label{Eq-chi}
	 \left\{
	\begin{array}{rlll}
		\partial_t\chi+(\overline {\bf y}+\widehat {\bf u})\cdot \nabla \chi &=&0&\mbox{ in } \Omega_T,
		\\
	        \chi&=& 1_{t \in (0,T_0)}(t)&\mbox{ for } t \in (0,T),\, x \in \partial \Omega,
	        \\
	        & & & \hbox{ with }\left( \overline {\bf y}(t,x)+\widehat {\bf u}(t,x)\right)\cdot {\bf n}(x) <0,
	        \\
	        \chi(0)&= &1\quad &\mbox{ in }\Omega.
	\end{array}
	\right.
\end{equation}

We finally define $\rho(t,x)$ as follows,
\begin{equation}
	\label{def-rho}
	\rho(t,x)\ov (1-\chi(t,x)) \rho_b(t,x)+\chi(t,x)\rho_f(t,x).
\end{equation}
It is easy to check that this function $\rho$ satisfies the transport equation $\eqref{EqTransport}_{(1)}$ and the required initial condition $\eqref{EqTransport}_{(2)}$. The final condition $\rho(T) = 0$ in $\eqref{EqTransport}_{(3)}$ is satisfied due to the properties of the flow proved in Lemma \ref{SortieEps2}, which guarantees that $\chi(T) = 0$.

In the next subsections, we describe how to get estimates on the function $\rho$ constructed in \eqref{def-rho} in the weighted spaces adapted to the Carleman estimates derived in Section \ref{Sec-Velocity}.
%%%%%%%%%
\subsection{Explicit description of the density}
%%%%%%%%%
To begin with, let us remark that the function $\chi$ is explicitly given by:
\begin{equation}\label{ExprChi}
	\chi(t,x)=
	\left\{
		\begin{array}{cl}
			1&\mbox{ if $t<T_0$}
			\\
			1&\mbox{ if $t\geq T_0$ and }X(\tau,t,x)\in \Omega\mbox{ for all $\tau\in [T_0,t]$,}
			\\
			0&\mbox{ else,}
		\end{array}
	\right.
\end{equation}
so that from Lemma \ref{SortieEps2} we have in particular
\begin{equation}
	\label{Chi=0-T-2T1-T}
	\chi(t,x)=0 \hbox{ and } \rho(t,x) = \rho_b(t,x) \quad \hbox{ for } (t,x)\in [T-2T_1, T]\times \Omega.
\end{equation}
%%%%%
%%%%%
We also give explicit expressions for $\rho_f$ and $\rho_b$. In order to do that, for $t\in [0,T]$, we introduce
\begin{equation}
	\label{DefOmega0T}
	\begin{split}
		\Omega_{[0]}(t)&\ov \{x\in \Omega \mid \widehat X(\tau,t,x)\in \Omega\quad \mbox{ for all }\tau \in [0,t]  \}
		\\
		\Omega_{[T]}(t)&\ov \{x\in \Omega \mid \widehat X(\tau,t,x)\in \Omega\quad \mbox{ for all }\tau \in [t,T]  \}
	\end{split}
\end{equation}
and for all $(t,x)\in [0,T]\times \Omega$:
\begin{equation}\label{Deftintout}
\begin{split}
	t_{\rm in}(t,x)&\ov \sup\{\tau \in [0, t)\mid \widehat X(\tau,t,x)\in \partial\Omega\},
	\\
	t_{\rm out}(t,x)&\ov \inf\{\tau \in (t, T]\mid \widehat X(\tau,t,x)\in \partial\Omega\}.
\end{split}
\end{equation}
In the above definitions, we use the convention $\sup\emptyset =0$ and $\inf\emptyset =T$. This way, $t_{\rm in}(t,x)=0$ iff $x\in \Omega_{[0]}(t)$ and
$t_{\rm out}(t,x)=T$ iff $x\in \Omega_{[T]}(t)$.

Using these notations, $\rho_f$ and $\rho_b$ are explicitly given by
\begin{eqnarray}
	\label{rho-f-explicit}
	\rho_f(t,x)
	& =& \ds
		\left\{
			\begin{array}{ll}
				\displaystyle \rho_0(\widehat X(0,t,x))-\int_{0}^{t} (\widehat {\bf u}\cdot\nabla \overline{\sigma})(\tau, \widehat X(\tau,t,x) ){\rm d\tau} &\displaystyle \mbox{ if }x\in \Omega_{[0]}(t),
				\\
				\displaystyle - \int_{t_{\rm in}(t,x)}^{t} (\widehat {\bf u}\cdot\nabla \overline{\sigma})(\tau, \widehat X(\tau,t,x) ){\rm d\tau} &\displaystyle \mbox{ else,}
			\end{array}
		\right.
	\\
	 \label{rho-b-explicit}
	\rho_b(t,x)
	& = &
	\int_{t_{\rm out}(t,x)}^{t} (\widehat {\bf u}\cdot\nabla \overline{\sigma})(\tau, \widehat X(\tau,t,x) ){\rm d\tau} \quad \mbox{ for } x\in \Omega.
\end{eqnarray}
We are now in position to derive weighted estimates on $\rho$.

\subsection{Weighted estimates on the density}

In order to derive weighted estimates on $\rho$ based on the Carleman weights $\psi$, $\theta$, $\varphi$, $\xi$ described in \eqref{Psi}--\eqref{PsiGamma}--\eqref{ThetaMu}--\eqref{Phi-Xi}, we will need some further assumptions.

{\bf Assumptions on the weights.}
We assume that $T_0$ and $T_1$ in the definition of $\theta$ in \eqref{ThetaMu} satisfy
\begin{equation}
	\label{Assumption-T-0-1}
	T_0 \in (0,T_0^*), \quad T_1 \in (0,T_1^*),
\end{equation}
where $T_0^*$ and $T_1^*$ are given by Lemma \ref{LemSortieEps}.

We also assume that the function $\psi$ in \eqref{Psi} satisfies the transport equation
\begin{equation}
	\label{Eq-Psi-Transport}
	\partial_t \psi + \overline{\bf y} \cdot \nabla \psi = 0 \quad \hbox{ in } \Omega_T.
\end{equation}

{\bf Assumptions on $\widehat \u$.}
In order to derive estimates on $\rho$, we shall assume that $\widehat \u$ is in a weighted Sobolev space. According to Theorem \ref{Thm-Stokes-Controlled}, it is natural to assume
\begin{align}
	\label{Assumption-hat-u-L2-L2}
	& \xi^{- 2} \widehat \u e^{s \varphi} \in L^2(0,T; {\bf L}^2 (\Omega)),
	\\
	\label{Assumption-hat-u-L2-H2}
	 & \widehat \u e^{3s \varphi^*/4} \in L^2(0,T; {\bf H}^2 (\Omega)) \hbox{ with } \norm{\widehat \u e^{3s \varphi^*/4}}_{L^2(0,T; {\bf H}^2 (\Omega))} \leq \varsigma.
\end{align}
%%%%%%%%%%%%%%%%%%%%%%%%%%%%%%%%%%%%%%%%%%%%%%%%%%%%%%%%%%%%%%%%%%%%%%%%%%%%%%%%%%%%%%%%%%%%%%%%%%%%%%%

{\bf Extension of $\widehat\u$.} To fit into the setting of Section \ref{Sec-ControlledDensity}, we extend $\widehat \u$ on $[0,T]\times \R^2$ that we still denote the same: $\widehat \u = {\bf E}(\widehat \u)$, where ${\bf E}$ denotes an extension from ${\bf H}^2(\Omega)$ to ${\bf H}^2(\R^2)$ such that $\norm{ {\bf E} (\v)}_{{\bf H}^2(\R^2)} \leq 2 \norm{\v}_{{\bf H}^2(\Omega)}$ for all $\v \in {\bf H}^2(\Omega)$. This allows us to define the flow $\widehat X$ by \eqref{FlowXannexe} for $(t,\tau, x) \in [0,T]^2 \times \R^2$.

Note that, for $s$ large enough, this last assumption is stronger than \eqref{Ass-On-U} and is thus perfectly compatible with the construction of Section \ref{Sec-ControlledDensity}, as it implies in particular that
\begin{equation}
	\label{Norm-L-infini}
	\norm{\theta \widehat\u}_{L^2(0,T; {\bf L}^\infty(\R^2))}\leq c \varsigma e^{- c_0 s \lambda},
\end{equation}
where $c_0 >0$ is independent of $s$ and $\lambda$. For the following we suppose that $s\geq s_0$ and $\lambda\geq 1$ with $s_0$ large enough such that \eqref{Ass-On-U} and \eqref{Norm-L-infini} are satisfied.

{\bf On the flows $\widehat X$ and $\overline{X}$.} We first establish a lemma on the closeness of $\widehat X$ to $\overline{X}$.

\begin{lemma}\label{Lemma-Close-Flows}
	There exists $c>0$ independent of $s$ and $\lambda$ such that for all $(\tau,t)\in [0,T]^2$ and $x\in \mathbb{R}^2$:
	\begin{equation}
		\label{EstXbarX}
		|\widehat X(\tau,t,x)-\overline{X}(\tau,t,x)| \leq c \varsigma  e^{- c_0 s \lambda}.
	\end{equation}
	Moreover, if $T_0\leq t\leq \tau\leq T$, we also have
	\begin{equation}
		\label{EstXbarX0}
		\theta (t) |\widehat X(\tau,t,x)-\overline{X}(\tau,t,x)| \leq c \varsigma e^{-c_0 s \lambda}.
	\end{equation}
\end{lemma}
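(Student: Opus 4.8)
The plan is to prove Lemma \ref{Lemma-Close-Flows} by essentially the same Gronwall argument already used for Lemma \ref{SortieEps2}, but now keeping track of the extra smallness factor $e^{-c_0 s\lambda}$ coming from the weighted bound \eqref{Norm-L-infini}. First I would fix $(\tau,t)\in[0,T]^2$ and $x\in\R^2$ and, without loss of generality, treat the case $\tau\le t$ (the case $\tau\ge t$ being symmetric by reversing the direction of integration). Writing $\delta(t')\ov |\widehat X(t',t,x)-\overline X(t',t,x)|$ for $t'$ between $\tau$ and $t$, and using that both flows start from $x$ at time $t'=t$, I would integrate the defining ODEs \eqref{FlowXannexe} and \eqref{Def-FlowBAR} between $t'$ and $t$, add and subtract $\overline\y(s,\widehat X(s,t,x))$ inside the integrand, and use the Lipschitz bound $L=\|\nabla\overline\y\|_{L^\infty(0,T;{\bf L}^\infty)}$ on $\overline\y$ together with $|\widehat\u(s,\widehat X(s,t,x))|\le \|\widehat\u(s,\cdot)\|_{{\bf L}^\infty(\R^2)}$. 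This gives
$$
	\delta(t')\le \int_{t'}^{t}\|\widehat\u(s,\cdot)\|_{{\bf L}^\infty(\R^2)}\,{\rm d}s + L\int_{t'}^{t}\delta(s)\,{\rm d}s
	\le T^{1/2}\|\widehat\u\|_{L^2(0,T;{\bf L}^\infty(\R^2))} + L\int_{t'}^{t}\delta(s)\,{\rm d}s,
$$
and Gronwall's lemma yields $\delta(\tau)\le T^{1/2}e^{LT}\|\widehat\u\|_{L^2(0,T;{\bf L}^\infty(\R^2))}$, which is exactly \eqref{EstXbarXaux}. The point is now to bound $\|\widehat\u\|_{L^2(0,T;{\bf L}^\infty(\R^2))}$: since $\theta\ge 1$ on $[0,T)$ we have $\|\widehat\u\|_{L^2(0,T;{\bf L}^\infty(\R^2))}\le \|\theta\widehat\u\|_{L^2(0,T;{\bf L}^\infty(\R^2))}\le c\varsigma e^{-c_0 s\lambda}$ by \eqref{Norm-L-infini}. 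Absorbing $T^{1/2}e^{LT}$ into the constant $c$ (which is then independent of $s$ and $\lambda$ since $T$ and $L$ are fixed), this proves \eqref{EstXbarX}.

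For the refined estimate \eqref{EstXbarX0}, I would redo the Gronwall computation on the restricted time interval $[T_0,T]$, but starting from the improved weight control available there. When $T_0\le t\le\tau\le T$, all the intermediate times $s$ in the integrals above lie in $[t,\tau]\subset[T_0,T]$. On $[T_0,T_0]$... more precisely, since $\theta$ is decreasing on $[0,T_0]$ and then $\ge 1$ everywhere, for $s\ge T_0\ge t$ we have $\theta(t)\ge\theta(s)$ only when $s\le T_0$; to get a clean bound I would instead note $\theta(t)\le \theta(T_0)=1$ is false — rather the correct monotonicity is that $\theta$ is nonincreasing on $[0,T_0]$, so for $t\le T_0$ one has $\theta(t)\ge\theta(T_0)=1$, which is the wrong direction. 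The right way is: for $t\ge T_0$ the relevant integrands involve $\widehat\u(s,\cdot)$ with $s\ge t\ge T_0$, and on $[T_0,T-2T_1]$ one has $\theta\equiv 1$ while $\theta$ blows up only near $T$; so multiply the Gronwall inequality for $\delta$ through by $\theta(t)$ and use that $\theta(t)\le\theta(s)$ whenever $T_0\le t\le s$ (valid because $\theta$ is nondecreasing on $[T_0,T)$, being $1$ on $[T_0,T-2T_1]$, increasing on $[T-2T_1,T-T_1]$ and $(T-t)^{-m}$ on $[T-T_1,T)$). Hence $\theta(t)\|\widehat\u(s,\cdot)\|_{{\bf L}^\infty}\le\|\theta(s)\widehat\u(s,\cdot)\|_{{\bf L}^\infty}$ for all $s$ in the integration range, and the same Gronwall argument gives $\theta(t)\delta(\tau)\le T^{1/2}e^{LT}\|\theta\widehat\u\|_{L^2(0,T;{\bf L}^\infty(\R^2))}\le c\varsigma e^{-c_0 s\lambda}$, which is \eqref{EstXbarX0}.

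The main obstacle — really the only delicate point — is precisely the monotonicity bookkeeping for $\theta$ needed in \eqref{EstXbarX0}: one must be sure that for $T_0\le t\le s< T$ the inequality $\theta(t)\le\theta(s)$ holds, which requires knowing that $\theta$ is nondecreasing on the whole interval $[T_0,T)$. This follows directly from the defining properties \eqref{ThetaMu} ($\theta\equiv 1$ on $[T_0,T-2T_1]$, increasing on $[T-2T_1,T-T_1]$, and equal to $(T-t)^{-m}$ — hence increasing — on $[T-T_1,T)$). Once this is observed, everything else is the routine Gronwall estimate already carried out in the proof of Lemma \ref{SortieEps2}, and the gain of the factor $e^{-c_0 s\lambda}$ is a direct consequence of \eqref{Norm-L-infini}.
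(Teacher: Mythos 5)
Your proof is correct and follows essentially the same route as the paper: establish (or recall) the unweighted Gronwall estimate \eqref{EstXbarXaux}, observe that $\theta\ge 1$ on $[0,T)$ to deduce \eqref{EstXbarX} from \eqref{Norm-L-infini}, and then for \eqref{EstXbarX0} multiply through by $\theta(t)$ and push it inside the $L^2_t$-norm using that $\theta$ is nondecreasing on $[T_0,T)$, so that $\theta(t)\le\theta(s)$ whenever $T_0\le t\le s\le\tau$. The only difference from the paper is presentational: the paper cites \eqref{EstXbarXaux} directly rather than re-deriving it, and your exposition of the monotonicity bookkeeping contains a false start before you arrive at the correct direction; the final argument is the same as the authors'.
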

%%%%%%%%%%%%%%%%%%%%%%%%%%%%%%%%%%%%%%%%%%%%%%%%%%%%%%%%%%%%
\begin{proof}
	Estimate \eqref{EstXbarX} is an immediate consequence of \eqref{EstXbarXaux} and \eqref{Norm-L-infini}.  From \eqref{EstXbarXaux}, we also have
	$$
		\theta (t) |\widehat X(\tau,t,x)-\overline{X}(\tau,t,x)| \leq T^{1/2} e^{L T} \theta (t) \|\widehat {\bf u}\|_{L^2(t,\tau;{\bf L}^\infty(\R^2))},
	$$
	where $L=\|\nabla \overline {\bf y} \|_{L^\infty(0,T; {\bf L}^\infty(\R^2))}$. Using the fact that $\theta$ is increasing on $[T_0, T]$,
	\begin{equation}\nonumber
		\theta (t) |\widehat X(\tau,t,x)-\overline{X}(\tau,t,x)| \leq T^{1/2} e^{L T} \|\theta \widehat {\bf u}\|_{L^2(t,\tau;{\bf L}^\infty(\R^2))},
	\end{equation}
	for all $T_0\leq t\leq \tau\leq T$, which concludes the proof of Lemma \ref{Lemma-Close-Flows} by \eqref{Norm-L-infini}.
\end{proof}
%%%%%%%%%%%%%%%%%%%%%%%%%%%%%%%%%%%%%%%%%%%%%%%%%%%%%%%%%%%%
{\bf On the weight functions.} Here, we shall deeply use the fact that $\psi$ is assumed to solve the transport equation \eqref{Eq-Psi-Transport}, thus implying in particular that
\begin{equation}
	\label{IdentityPsi}
	\forall (t,\tau, x) \in [0,T]^2 \times \R^2,
	\quad \psi(t, \overline{X}(t, \tau, x)) = \psi(\tau, x).
\end{equation}
We then show the following lemma:

\begin{lemma}\label{Lemma-Phi-Vs-Weights}
	There exist $c_1>0$, $c_2>0$ and $c_3>0$ independent of $s$ and $\lambda$, and $s_0>1$ such that for all $s\geq s_0$, $\lambda\geq 1$, the following inequalities hold:
	 \begin{enumerate}
	 	\item For all $t \in  [0,T-2T_1]$, $\tau\in [0,t]$ and $x\in \mathbb{R}^2$,
			\begin{eqnarray}
				\varphi(t,x)-\varphi(\tau,\widehat X(\tau,t,x)) &\leq& c_1 \varsigma   e^{- c_2s \lambda  },
				\label{EstVarphi}
				\\
				\frac{\xi(\tau,\widehat X(\tau,t,x))}{\xi(t,x)} &\leq& 2 e^{c_1 \varsigma  e^{-c_2 s \lambda } }.
				\label{EstVarphi2}
			\end{eqnarray}
		\item For all $t \in  [T_0,T]$, $\tau\in [t,T]$ and $x\in \mathbb{R}^2$,
			\begin{eqnarray}
				\varphi(t,x)-\varphi(\tau,\widehat X(\tau,t,x)) &\leq& c_1 \varsigma  e^{- c_2s\lambda} - c_3 (\theta(\tau) - \theta(t)),
				\label{EstVarphi3}
				\\
				\frac{\xi(\tau,\widehat X(\tau,t,x))}{\xi(t,x)} &\leq& \frac{\theta(\tau)}{\theta(t)}e^{c_1 \varsigma   e^{- c_2s\lambda}}.
				\label{EstVarphi4}
			\end{eqnarray}
	\end{enumerate}
\end{lemma}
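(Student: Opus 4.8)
The plan is to exploit the transport identity \eqref{IdentityPsi} for $\psi$ along the exact flow $\overline X$, and then to transfer these identities to the perturbed flow $\widehat X$ using the closeness estimates of Lemma \ref{Lemma-Close-Flows}. The key observation is that $\varphi$ and $\xi$ are built from $\psi$ and $\theta$ by explicit formulas \eqref{Phi-Xi}, so once we control how $\psi$ and $\theta$ vary along $\widehat X$, we are done. Recall $\varphi(t,x) = \theta(t)(\lambda e^{6\lambda(m+1)} - e^{\lambda \psi(t,x)})$ and $\xi(t,x) = \theta(t) e^{\lambda \psi(t,x)}$.

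\textbf{Step 1: variation of $\psi$ along $\widehat X$.} Along the exact flow, \eqref{IdentityPsi} gives $\psi(\tau,\overline X(\tau,t,x)) = \psi(t,x)$. For the perturbed flow, write
$$
	\psi(\tau,\widehat X(\tau,t,x)) - \psi(t,x) = \psi(\tau,\widehat X(\tau,t,x)) - \psi(\tau,\overline X(\tau,t,x)),
$$
and since $\psi \in C^2(\overline{\Ocal_T})$ is Lipschitz in $x$, this is bounded in absolute value by $\|\nabla\psi\|_\infty |\widehat X(\tau,t,x) - \overline X(\tau,t,x)|$. By Lemma \ref{Lemma-Close-Flows}, for arbitrary $(\tau,t)$ this is $\leq c\varsigma e^{-c_0 s\lambda}$, and for $T_0\leq t\leq\tau\leq T$ we even have $\theta(t)|\psi(\tau,\widehat X(\tau,t,x)) - \psi(t,x)| \leq c\varsigma e^{-c_0 s\lambda}$. (For the first part, $\tau \leq t$; one applies the symmetric version of the flow estimate, obtained by running Gronwall backward — this is fully analogous to \eqref{EstXbarXaux}.)

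\textbf{Step 2: estimate on $\xi$.} From Step 1, $\xi(\tau,\widehat X(\tau,t,x))/\xi(t,x) = (\theta(\tau)/\theta(t)) \exp(\lambda[\psi(\tau,\widehat X(\tau,t,x)) - \psi(t,x)])$. For part (1), $t\in[0,T-2T_1]$ and $\tau\in[0,t]$, so $\theta$ is non-increasing on the relevant interval and in fact $\theta(\tau)/\theta(t) \leq 2$ since $\theta \in [1,2]$ on $[0,T-2T_1]$; combined with the bound $\lambda|\psi(\tau,\widehat X) - \psi(t,x)| \leq \lambda c\varsigma e^{-c_0 s\lambda} \leq c_1\varsigma e^{-c_2 s\lambda}$ (absorbing the polynomial factor $\lambda$ into the exponential by decreasing the rate from $c_0$ to some $c_2 < c_0$, for $s$ large), we obtain \eqref{EstVarphi2}. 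For part (2), $t\in[T_0,T]$ and $\tau\in[t,T]$, we keep the factor $\theta(\tau)/\theta(t)$ explicitly and use the refined estimate $\theta(t)|\psi(\tau,\widehat X) - \psi(t,x)| \leq c\varsigma e^{-c_0 s\lambda}$ from Step 1; since $\theta(t)\geq 1$ this still gives $\lambda|\psi(\tau,\widehat X)-\psi(t,x)| \leq c_1\varsigma e^{-c_2 s\lambda}$, yielding \eqref{EstVarphi4}.

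\textbf{Step 3: estimate on $\varphi$.} Write $\varphi(t,x) - \varphi(\tau,\widehat X(\tau,t,x))$ as
$$
	\theta(t)(A - e^{\lambda\psi(t,x)}) - \theta(\tau)(A - e^{\lambda\psi(\tau,\widehat X(\tau,t,x))}), \qquad A := \lambda e^{6\lambda(m+1)}.
$$
Add and subtract $\theta(\tau)(A - e^{\lambda\psi(t,x)})$: the result is $(\theta(t)-\theta(\tau))(A - e^{\lambda\psi(t,x)}) + \theta(\tau)(e^{\lambda\psi(\tau,\widehat X)} - e^{\lambda\psi(t,x)})$. The second term is controlled by $\theta(\tau) e^{\lambda\psi}\cdot\lambda|\psi(\tau,\widehat X) - \psi(t,x)| \leq \xi \cdot c_1\varsigma e^{-c_2 s\lambda}$, which using the bounds on $\xi$ from Step 2 and \eqref{Phi-bounds}-type controls is itself $\leq c_1\varsigma e^{-c_2' s\lambda}$ after possibly shrinking $c_2$ (the exponential decay beats the growth of $\xi \lesssim \theta(\tau)\lambda e^{6\lambda(m+1)}$, again for $s$ large enough, and on $[0,T-2T_1]$ or after factoring $\theta(\tau)$ in part (2)). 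For the first term, in part (1) we have $\tau\leq t\leq T-2T_1$ so $\theta(t)-\theta(\tau) \leq 0$ while $A - e^{\lambda\psi(t,x)} \geq 0$ by \eqref{Phi-bounds}, hence this term is $\leq 0$ and can be dropped — giving \eqref{EstVarphi}. In part (2), $\tau \geq t$ so $\theta(t) - \theta(\tau) \leq 0$ as well (since $\theta$ is increasing on $[T_0,T]$), and $(\theta(t)-\theta(\tau))(A - e^{\lambda\psi}) \leq -c_3(\theta(\tau)-\theta(t))$ with $c_3 = \inf(A - e^{\lambda\psi}) > 0$ uniformly — this is exactly the gain term in \eqref{EstVarphi3}.

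\textbf{Main obstacle.} The delicate point is bookkeeping the interplay between the polynomial-in-$\lambda$ prefactors (and the $\xi \sim \lambda e^{6\lambda(m+1)}$ growth) against the exponentially small flow-closeness $e^{-c_0 s\lambda}$ coming from \eqref{Norm-L-infini}: one must check that the decay rate $c_0$ genuinely dominates, i.e. that $c_0 > 6(m+1)$ is \emph{not} needed — rather, the extra factors are absorbed by taking $s$ large and replacing $c_0$ by a strictly smaller $c_2$. The other subtlety is that in part (1) the backward-in-time case $\tau \leq t$ requires the flow estimate run backward, which is routine but must be stated. Everything else is elementary manipulation of the explicit formulas for $\varphi$ and $\xi$.
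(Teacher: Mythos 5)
There is a genuine gap in Step~3 for part~(2). Your decomposition puts the blowing-up factor $\theta(\tau)$ in front of the exponential difference:
\begin{equation*}
\varphi(t,x)-\varphi(\tau,\widehat X) = (\theta(t)-\theta(\tau))\bigl(A - e^{\lambda\psi(t,x)}\bigr) + \theta(\tau)\bigl(e^{\lambda\psi(\tau,\widehat X)} - e^{\lambda\psi(t,x)}\bigr),
\end{equation*}
and you then claim the second term is $\leq c_1\varsigma e^{-c_2' s\lambda}$ by itself. This is false: the mean-value bound gives $\theta(\tau)\lambda e^{\lambda(6m+1)}\|\nabla\psi\|_\infty\,|\widehat X - \overline X|$, and the only flow estimate available (Lemma~\ref{Lemma-Close-Flows}, \eqref{EstXbarX0}) controls $\theta(t)|\widehat X-\overline X|$, not $\theta(\tau)|\widehat X-\overline X|$ --- and $\theta(\tau)/\theta(t)$ is \emph{unbounded} as $\tau\to T$. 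The exponential decay $e^{-c_0 s\lambda}$ beats only the $\lambda e^{\lambda(6m+1)}$ factors, which depend on the fixed parameters $s,\lambda$; it does nothing against $\theta(\tau)\to\infty$ at fixed $s,\lambda$. The parenthetical ``after factoring $\theta(\tau)$ in part (2)'' gestures at a fix but does not carry it out.

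The paper avoids this entirely by adding and subtracting $\theta(t)e^{\lambda\psi(\tau,\widehat X)}$ (not $\theta(\tau)(A-e^{\lambda\psi(t,x)})$), which yields
\begin{equation*}
\varphi(t,x)-\varphi(\tau,\widehat X) = \theta(t)\bigl(e^{\lambda\psi(\tau,\widehat X)} - e^{\lambda\psi(t,x)}\bigr) + (\theta(t)-\theta(\tau))\bigl(A - e^{\lambda\psi(\tau,\widehat X)}\bigr),
\end{equation*}
so the exponential-difference term carries $\theta(t)$ --- precisely the factor controlled by \eqref{EstXbarX0} --- and is bounded directly by $c_1\varsigma e^{-c_2 s\lambda}$. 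Your decomposition \emph{can} in fact be repaired by the further algebra $\theta(\tau)(e^{\lambda\psi(\tau,\widehat X)}-e^{\lambda\psi(t,x)}) = \theta(t)(e^{\lambda\psi(\tau,\widehat X)}-e^{\lambda\psi(t,x)}) + (\theta(\tau)-\theta(t))(e^{\lambda\psi(\tau,\widehat X)}-e^{\lambda\psi(t,x)})$, after which the second piece is absorbed into the gain term $-c_3(\theta(\tau)-\theta(t))$ for $s$ large; but this recombination is exactly what recovers the paper's decomposition, and it is the step your write-up omits. Steps~1 and~2, and all of part~(1) where $\theta\in[1,2]$, are correct as written.
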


\begin{proof}
	We focus on the proof of item {\it 2}, the first one being similar and easier because $\theta$ takes value in $[1,2]$ close to $t = 0$. Estimate \eqref{EstVarphi3} follows from the following computations: for $T_0 \leq t \leq \tau \leq T$,
	\begin{eqnarray*}
		\lefteqn{\varphi(t,x)-\varphi(\tau,\widehat X(\tau,t,x))}
		\\
		& = &\theta(t) \left(\lambda e^{6 \lambda (m+1)} - e^{\lambda \psi(t,x)} \right) - \theta (\tau)  \left(\lambda e^{6 \lambda (m+1)} - e^{\lambda \psi(\tau,\widehat X(\tau,t, x))} \right)
		\\
		& = & \theta(t) \left( e^{\lambda \psi(\tau,\widehat X(\tau,t, x))} - e^{\lambda \psi(t,x)}\right) + (\theta(t)-\theta(\tau))\left(\lambda e^{6 \lambda (m+1)}-e^{\lambda \psi(\tau,\widehat X(\tau,t,x))}\right)
		\\
		& \leq & \theta(t) \left( e^{\lambda \psi(\tau,\widehat X(\tau,t, x))} - e^{\lambda \psi(t,x)}\right)- c_3 (\theta(\tau) - \theta(t)),
	\end{eqnarray*}
	for some $c_3 >0$, where we used in the last estimate that $\theta$ is increasing on $[T_0, T]$. We then use \eqref{IdentityPsi} and \eqref{EstXbarX0}:
	\begin{multline*}
		|
			 \theta(t) \left( e^{\lambda \psi(\tau,\widehat X(\tau,t, x))} - e^{\lambda \psi(t,x)}\right)
		|
		=
		\theta(t) \left|e^{\lambda \psi(\tau,\widehat X(\tau,t,x))}- e^{\lambda \psi(\tau,\overline{X}(\tau,t,x))}\right|
		\\
		\leq c \theta(t) \lambda \|\nabla \psi \|_{\infty}e^{\lambda ({6m+1})} |\widehat X(\tau,t,x)-\overline{X}(\tau,t,x)|
		\leq c_1 \varsigma e^{-c_2 s \lambda},
	\end{multline*}
	for $s$ large enough, as announced in \eqref{EstVarphi3}. Next, by construction we have
	\begin{eqnarray}
		\frac{\xi(\tau,\widehat X(\tau,t,x))}{\xi(t,x)}
		& = &
		\frac{\theta(\tau)}{\theta(t)} e^{\lambda(\psi(\tau,\widehat X(\tau,t,x))- \psi(\tau,\overline{X}(\tau,t,x)))}
		\nonumber
		\\
		&\leq & \frac{\theta(\tau)}{\theta(t)} e^{\lambda \|\nabla \psi \|_{\infty}  |\widehat X(\tau,t,x)-\overline{X}(\tau,t,x)|},\label{EqxiXxi}
	\end{eqnarray}
	which immediately yields \eqref{EstVarphi4} by \eqref{EstXbarX0}.
\end{proof}
We immediately deduce from Lemma \ref{Lemma-Phi-Vs-Weights} the following:
\begin{proposition}
	\label{Prop-Aleph}
	Introducing the weight function
	\begin{equation}
		\label{Aleph}
		\aleph(t,x) \ov (\xi(t,x))^{-2} e^{s \varphi(t,x)},
	\end{equation}
	there exist $s_0\geq 1$ and $c>0$ independent of $s$ and $\lambda$ such that for all $\lambda \geq 1$, $s\geq s_0$, for all $(\tau,t, x) \in  [0,T]\times [0,T]\times \Omega$ satisfying $\tau\leq t\leq T-2T_1$ or $T_0\leq t\leq \tau$,
	\begin{equation}
		\aleph (t, x)\leq c \aleph (\tau, \widehat X(\tau,t,x)).
		\label{Est-Aleph-flow}
	\end{equation}
\end{proposition}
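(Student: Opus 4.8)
The plan is to obtain \eqref{Est-Aleph-flow} as an essentially elementary consequence of Lemma \ref{Lemma-Phi-Vs-Weights}. Writing out the ratio of the two weights,
\[
	\frac{\aleph(t,x)}{\aleph(\tau,\widehat X(\tau,t,x))}
	=
	\left(\frac{\xi(\tau,\widehat X(\tau,t,x))}{\xi(t,x)}\right)^{2}
	\exp\!\Big(s\big(\varphi(t,x)-\varphi(\tau,\widehat X(\tau,t,x))\big)\Big),
\]
so it suffices to bound this quantity by a constant independent of $s$ and $\lambda$ in each of the two regimes $\tau\le t\le T-2T_1$ and $T_0\le t\le\tau$, which are covered by item~1 and item~2 of Lemma \ref{Lemma-Phi-Vs-Weights} respectively.

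In the regime $\tau\le t\le T-2T_1$ I would simply insert \eqref{EstVarphi} and \eqref{EstVarphi2}, getting an upper bound of the form $4\,e^{2c_1\varsigma e^{-c_2 s\lambda}}\,e^{sc_1\varsigma e^{-c_2 s\lambda}}$. The only point to note is that, for $s\ge 1$ and $\lambda\ge 1$, both $e^{-c_2 s\lambda}\le e^{-c_2}$ and $s\,e^{-c_2 s\lambda}\le\sup_{r\ge 1}\big(r\,e^{-c_2 r}\big)<\infty$ are bounded, so this whole expression is bounded uniformly in $s$ and $\lambda$.

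The regime $T_0\le t\le\tau$ is the only one requiring a little care, since the factor $\theta(\tau)/\theta(t)$ appearing in \eqref{EstVarphi4} is not bounded as $\tau\to T$. First I would record that $\theta$ is non-decreasing on $[T_0,T)$ (it equals $1$ on $[T_0,T-2T_1]$ and is increasing afterwards), so that $\theta(\tau)-\theta(t)\ge 0$ and the term $-c_3(\theta(\tau)-\theta(t))$ in \eqref{EstVarphi3} only helps. Combining \eqref{EstVarphi3} and \eqref{EstVarphi4}, the ratio above is bounded by the uniformly bounded factor $e^{2c_1\varsigma e^{-c_2 s\lambda}}e^{sc_1\varsigma e^{-c_2 s\lambda}}$ times $\big(\theta(\tau)/\theta(t)\big)^{2}e^{-sc_3(\theta(\tau)-\theta(t))}$. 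Setting $r=\theta(\tau)/\theta(t)\ge 1$ and using $\theta(t)\ge 1$ to write $\theta(\tau)-\theta(t)=\theta(t)(r-1)\ge r-1$, this last factor is at most $r^{2}e^{-sc_3(r-1)}$; a one-line computation of the derivative in $r$ shows that for $s\ge 2/c_3$ the map $r\mapsto r^{2}e^{-sc_3(r-1)}$ is non-increasing on $[1,\infty)$, hence is bounded by its value $1$ at $r=1$. Choosing $s_0\ge\max(1,2/c_3)$ then yields \eqref{Est-Aleph-flow} with a constant $c$ independent of $s$ and $\lambda$.

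The main (and in practice the only) obstacle is the bookkeeping in this second regime: one must check that the blow-up of $\theta$ as $\tau\to T$ is reabsorbed by the exponential gain $e^{-sc_3(\theta(\tau)-\theta(t))}$ built into the weight through the monotonicity of $\theta$ in \eqref{ThetaMu}, and that the remaining $s$-dependent factors are harmless because $s\,e^{-c_2 s\lambda}$ stays bounded. All the genuinely substantial work is already contained in Lemma \ref{Lemma-Phi-Vs-Weights}, which itself rests on the transport identity \eqref{IdentityPsi} for $\psi$ and on the flow comparison of Lemma \ref{Lemma-Close-Flows}.
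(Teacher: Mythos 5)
Your proof is correct and follows essentially the same route as the paper: both cases reduce to Lemma \ref{Lemma-Phi-Vs-Weights}, the uniform boundedness of $s\,e^{-c_2 s\lambda}$ handles the first regime, and in the second regime the key point is the monotonicity of $x\mapsto x^2 e^{-c_3 s x}$ on $[1,\infty)$ for $s\ge 2/c_3$ combined with the monotonicity of $\theta$ on $[T_0,T]$. Your reformulation via $r=\theta(\tau)/\theta(t)$ and the inequality $\theta(\tau)-\theta(t)\ge r-1$ is a small detour, but it encodes exactly the same observation the paper uses (namely that $\theta^2(\tau)e^{-c_3 s\theta(\tau)}\le\theta^2(t)e^{-c_3 s\theta(t)}$).
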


\begin{proof}
	If $\tau\leq t\leq T-2T_1$ then \eqref{Est-Aleph-flow} follows immediately from \eqref{EstVarphi} and \eqref{EstVarphi2}.
	
	 If $T_0\leq t\leq \tau$ then \eqref{Est-Aleph-flow} follows from \eqref{EstVarphi3} and \eqref{EstVarphi4}:
	$$
		\aleph (t, x)\leq \left(\frac{\theta^2(\tau)e^{-c_3 s \theta(\tau)}}{\theta^2(t)e^{-c_3 s \theta(t)}}\right) e^{c_1 \varsigma (s+2) e^{-s\lambda c_2}} \aleph (\tau, \widehat X(\tau,t,x)).
	$$
	But, for $s\geq 2/c_3$, the function $x\mapsto x^2 e^{- c_3 s x}$ is decreasing on $[1,+\infty)$ and then, since $\theta$ is increasing on $[T_0, T]$,
	$
		\theta^2(\tau)e^{-c_3 s \theta(\tau)} \leq \theta^2(t)e^{-c_3 s \theta(t)}.
	$
\end{proof}

{\bf On the controlled trajectory $\rho$.} We now derive estimates on the controlled trajectory $\rho$ given by Section \ref{Sec-ControlledDensity}:
\begin{theorem}
	\label{Thm-Est-Rho}
	Let $\psi$, $\theta$, $\varphi$, $\xi$ are defined in \eqref{Psi}--\eqref{PsiGamma}--\eqref{ThetaMu}--\eqref{Phi-Xi} and assume \eqref{Assumption-T-0-1}, \eqref{Eq-Psi-Transport}. Further assume that $\widehat{\u}$ satisfies \eqref{Assumption-hat-u-L2-L2} and \eqref{Assumption-hat-u-L2-H2} with $s\geq s_0$, $\lambda\geq 1$ and $s_0$ large enough such that \eqref{Ass-On-U} and \eqref{Norm-L-infini} are satisfied.
	
	There exists $c>0$ independent of $s$, $\lambda$ and $\widehat{\u}$ such that the solution $\rho$ given by Section \ref{Sec-ControlledDensity} satisfies
	\begin{equation}
		\label{Est-Rho-L-2}
		\norm{\aleph \rho}_{L^2(\Omega_T)} \leq C \left( \norm{ \aleph \widehat{\u}}_{L^2(0,T;{\bf L}^2(\Omega))} + e^{s \varphi^*(0)} \norm{\rho_0}_{L^2(\Omega)} \right),
	\end{equation}
	where $\aleph$ is given by \eqref{Aleph}, and
	\begin{multline}
		\label{Est-Rho-L-infty}
		\norm{e^{s \lambda e^{ 6\lambda(m+1)} \theta(t)/2} \rho}_{L^\infty(\Omega_T)} \leq C \left( \norm{ e^{s \lambda e^{ 6\lambda(m+1)} \theta(t)/2} \widehat{\u}}_{ L^2(0,T; {\bf L}^\infty(\Omega))}\right.
		\\
		\left.+ e^{s \lambda e^{6\lambda(m+1)} } \norm{\rho_0}_{L^\infty(\Omega)} \right).
	\end{multline}
\end{theorem}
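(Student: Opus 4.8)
The plan is to exploit the explicit representation formulas \eqref{rho-f-explicit} and \eqref{rho-b-explicit} together with the gluing formula \eqref{def-rho}, and to transfer the weight $\aleph$ along the flow $\widehat X$ using Proposition \ref{Prop-Aleph}. First I would treat $\rho_b$, which by \eqref{Chi=0-T-2T1-T} is the only piece that matters on $[T-2T_1, T]$ and which is the genuinely global contribution. Fix $(t,x) \in \Omega_T$. By \eqref{rho-b-explicit}, $\rho_b(t,x)$ is an integral of $(\widehat\u\cdot\nabla\overline\sigma)(\tau,\widehat X(\tau,t,x))$ over $\tau \in [t_{\rm out}(t,x), t]$, so $t \leq \tau$ along the integration path; this is exactly the regime $T_0 \leq t \leq \tau$ of Proposition \ref{Prop-Aleph} once we also note that for the backward piece the relevant times are $t \geq T_0$ after $\chi$ has vanished (for $t < T_0$ one has $\chi = 1$ so $\rho = \rho_f$ there). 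Hence
$$
\aleph(t,x)|\rho_b(t,x)| \leq c\,\|\nabla\overline\sigma\|_\infty \int_{t_{\rm out}(t,x)}^{t} \aleph(\tau,\widehat X(\tau,t,x))\,|\widehat\u(\tau,\widehat X(\tau,t,x))|\,d\tau.
$$
Then I would square, use Cauchy--Schwarz in $\tau$ (the interval has length $\leq T$), integrate in $x$ over $\Omega$, and apply the change of variables $x \mapsto \widehat X(\tau,t,x)$ for each fixed $\tau$; since $\widehat\u \in L^2(0,T;{\bf H}^2)\hookrightarrow L^2(0,T;{\bf W}^{1,\infty})$ the flow is bi-Lipschitz with Jacobian bounded above and below uniformly (the bound depending only on $\|\widehat\u\|_{L^2(0,T;{\bf W}^{1,\infty})}$, hence on $\varsigma$), which produces $\|\aleph\widehat\u\|_{L^2(0,T;{\bf L}^2(\Omega))}^2$ after a final integration in $t$. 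Fubini is legitimate because all integrands are nonnegative.

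Next I would handle $\rho_f$, which by \eqref{rho-f-explicit} splits into two cases according to whether $x \in \Omega_{[0]}(t)$ or not. The ``else'' branch is an integral over $\tau \in [t_{\rm in}(t,x), t]$ with $\tau \leq t$, i.e. the regime $\tau \leq t \leq T-2T_1$ of Proposition \ref{Prop-Aleph} (recall $\rho = \rho_f$ only where $\chi = 1$, which by \eqref{ExprChi} forces the relevant times to lie before the particle leaves $\Omega$, and in particular $t \leq T-2T_1$ up to the support of $\chi$); this term is estimated exactly as for $\rho_b$. The branch $x \in \Omega_{[0]}(t)$ carries in addition the term $\rho_0(\widehat X(0,t,x))$; multiplying by $\aleph(t,x)$ and using $\aleph(t,x) \leq c\,\aleph(0,\widehat X(0,t,x)) = c\,(\xi(0,\cdot))^{-2}e^{s\varphi^*(0)}\circ\widehat X(0,t,x)$ — here I use \eqref{Est-Aleph-flow} with $\tau = 0$ and that $\varphi^*(0) = \max_{\Ocal}\varphi(0,\cdot) \geq \varphi(0,\cdot)$ while $\xi(0,\cdot)^{-2}$ is bounded — one gets, after the change of variables $x\mapsto \widehat X(0,t,x)$ and integration in $t$, a contribution bounded by $C e^{2s\varphi^*(0)}\|\rho_0\|_{L^2(\Omega)}^2$ (the extra bounded weight $\xi(0,\cdot)^{-2}$ absorbed into $C$). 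Putting the pieces together via \eqref{def-rho} and $0 \leq \chi \leq 1$ gives \eqref{Est-Rho-L-2}.

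Finally, \eqref{Est-Rho-L-infty} is the easier $L^\infty$ estimate and follows the same template but with the cruder weight. From \eqref{Phi-bounds} one has $e^{s\lambda e^{6\lambda(m+1)}\theta(t)/2} \leq e^{s\varphi(t,x)} \cdot e^{-s\varphi(t,x)}e^{s\lambda e^{6\lambda(m+1)}\theta(t)/2}$ with the last factor bounded; more directly, along the flow the quantity $s\lambda e^{6\lambda(m+1)}\theta(t)$ is essentially monotone in the same sense as $\theta$, so using $|\rho_f|, |\rho_b| \leq T\|\widehat\u\|_{L^\infty}\|\nabla\overline\sigma\|_\infty$ pointwise and transporting the scalar weight $e^{s\lambda e^{6\lambda(m+1)}\theta/2}$ along $\widehat X$ (it only grows by a factor controlled by \eqref{EstXbarX}--\eqref{EstXbarX0}, i.e. $e^{c\varsigma e^{-c_0 s\lambda}}$, since $\psi$ is bounded) yields the stated bound, the $\|\rho_0\|_{L^\infty}$ term coming from the $\rho_0\circ\widehat X(0,t,\cdot)$ piece bounded by $e^{s\lambda e^{6\lambda(m+1)}}\|\rho_0\|_{L^\infty(\Omega)}$ since $\theta \leq 2$.

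The main obstacle I anticipate is bookkeeping the precise regime of validity of Proposition \ref{Prop-Aleph} against the structure of the representation formulas: one must check carefully that wherever $\chi \neq 0$ (so $\rho_f$ contributes) the integration times stay in $[\,\cdot\,,T-2T_1]$ with $\tau \leq t$, and wherever $\chi \neq 1$ (so $\rho_b$ contributes) the times stay in $[T_0, T]$ with $t \leq \tau$ — this is where \eqref{ExprChi} and Lemma \ref{SortieEps2} are used — and to ensure the change of variables along $\widehat X$ has Jacobian controlled independently of $s,\lambda$, which is guaranteed by \eqref{Assumption-hat-u-L2-H2} and the Sobolev embedding ${\bf H}^2(\Omega)\hookrightarrow {\bf W}^{1,\infty}(\Omega)$ in dimension two.
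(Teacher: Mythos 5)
Your plan is essentially the paper's proof: use the explicit formulas \eqref{rho-f-explicit}--\eqref{rho-b-explicit}, transport the weight $\aleph$ along $\widehat X$ via Proposition \ref{Prop-Aleph} in the two regimes $\tau\le t\le T-2T_1$ (for $\rho_f$, where $\chi\neq 0$) and $T_0\le t\le\tau$ (for $\rho_b$, where $\chi\neq 1$), apply Cauchy--Schwarz in $\tau$, and change variables along the flow; the $L^\infty$ bound uses only the monotonicity of $t\mapsto\theta(t)$ on the two relevant subintervals, exactly as in the paper.

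There is, however, a concrete error in the change-of-variables step. You invoke the embedding ${\bf H}^2(\Omega)\hookrightarrow {\bf W}^{1,\infty}(\Omega)$ in dimension two to control the Jacobian of $x\mapsto\widehat X(\tau,t,x)$. That embedding is false: in 2D one only has $H^2\hookrightarrow W^{1,p}$ for all $p<\infty$ (and $H^2\hookrightarrow C^{0,\alpha}$, $\alpha<1$), while $W^{1,\infty}$ would require $H^s$ with $s>2$. So the asserted uniform bi-Lipschitz bound on the flow is not justified. The correct (and sharper) observation, which the paper uses, is that $\overline\y+\widehat\u$ is divergence free in $\Omega_T$, and on the domains of integration occurring in the representation formulas -- $\tau\in[t_{\rm in}(t,x),t]$, $\tau\in[0,t]$ for $x\in\Omega_{[0]}(t)$, and $\tau\in[t,t_{\rm out}(t,x)]$ -- the trajectory $\widehat X(\tau,t,x)$ stays inside $\Omega$, so that the Jacobian of $x\mapsto\widehat X(\tau,t,x)$ is identically $1$ on the relevant sets. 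This makes the change of variables exact and removes any need for a Lipschitz bound on $\nabla\widehat\u$. Once you replace the false embedding by this divergence-free argument, the rest of your proof (the handling of the $\rho_0\circ\widehat X(0,t,\cdot)$ term, the gluing by $\chi$ using \eqref{Chi=0-T-2T1-T} and \eqref{ExprChi}, and the $L^\infty$ estimate) goes through and coincides with the paper's.
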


\begin{proof}
	The proof of Theorem \ref{Thm-Est-Rho} follows from the precise description of $\rho_f$ and $\rho_b$ given in \eqref{rho-f-explicit}--\eqref{rho-b-explicit}.
	
	Let us begin with the proof of estimate \eqref{Est-Rho-L-infty}.
	On one hand, as $t\mapsto s \lambda e^{6\lambda(m+1)} \theta(t)$ is non-increasing on $(0,T-2T_1)$, from \eqref{rho-f-explicit} we get, for all $(t,x) \in (0,T-2T_1)\times \Omega$
	\begin{multline*}
		e^{s \lambda e^{6\lambda(m+1)} \theta(t)} |\rho_f(t,x)|^2 \leq 2 e^{ s \lambda e^{6\lambda(m+1)} \theta(t)}\|\rho_0\|_{L^\infty(\Omega)}^2
		\\
		 +
		 2\|\nabla \overline{\sigma}\|_{{\bf L}^\infty(\Omega_T)}^2 \int_{0}^{t} e^{s \lambda e^{6\lambda(m+1)} \theta(\tau)} \|\widehat {\bf u}(\tau,\cdot )\|_{{\bf L}^\infty(\Omega)}^2 {\rm d\tau}.
	\end{multline*}
	On the other hand, using that $t\mapsto s \lambda e^{6\lambda(m+1)} \theta(t)$ is non-decreasing on $(T_0,T)$, from \eqref{rho-b-explicit}, similarly, we have, for all $(t,x) \in (T_0,T) \times \Omega$,
	$$
		e^{s \lambda e^{6\lambda(m+1)} \theta(t)} |\rho_b(t,x)|^2 \leq
		 \|\nabla \overline{\sigma}\|_{{\bf L}^\infty(\Omega_T)}^2 \int_{t}^{T} e^{s \lambda e^{6\lambda(m+1)} \theta(\tau)} \|\widehat {\bf u}(\tau,\cdot )\|_{{\bf L}^\infty(\Omega)}^2 {\rm d\tau}.
	$$
	Together with the fact that the solution $\chi$ of \eqref{Eq-chi} takes value in $[0,1]$ on $\Omega_T$ and the properties \eqref{ExprChi}, these two estimates easily yield \eqref{Est-Rho-L-infty}.

	We then focus on the proof of \eqref{Est-Rho-L-2}, that mainly relies on the two following estimates: for all time $t \in (0,T-2T_1)$, we get
	\begin{equation}
		\label{est-rho-f-L-2}
			\int_{\Omega} |\rho_f(t)|^2\aleph ^2(t){\rm d}x
			\leq
			C\left(e^{2s\varphi^*(0)} \int_{\Omega} |\rho_0|^2{\rm d}x
			+ \iint_{\Omega_T} |{\bf \widehat u}|^2\aleph ^2 {\rm d}x  {\rm d\tau} \right),
	\end{equation}
	and for all time $t \in (T_0, T)$,
	\begin{equation}
		\label{est-rho-b-L-2}
		\int_{\Omega}|\rho_b(t)|^2\aleph ^2(t) {\rm d}x
		\leq c  \iint_{\Omega_T} |{\bf \widehat u}|^2\aleph ^2 {\rm d}x  {\rm d\tau}.
	\end{equation}
	Indeed, once estimates \eqref{est-rho-f-L-2}--\eqref{est-rho-b-L-2} are proved, we can bound the $L^2(\Omega_T)$-norm of $\aleph \rho$ by the sum of the $L^\infty((0,T-2T_1); L^2(\Omega))$-norm of $\rho_f$ and of the $L^\infty((T_0,T); L^2(\Omega))$-norm of $\rho_b$, and estimate \eqref{Est-Rho-L-2} immediately follows.
	
	Let us first present the proof of \eqref{est-rho-f-L-2}. We fix $t \in [0, T - 2T_1]$. From \eqref{rho-f-explicit} and \eqref{Est-Aleph-flow} we deduce that, for $x\in \Omega_{[0]}(t)$,
	\begin{multline*}
		 |\rho_f(t,x)|^2\aleph ^2(t,x)
		 \\
		 \leq
		 C\left(|\rho_0(\widehat X(0,t,x))|^2\aleph ^2(0,\widehat X(0,t,x))
		 +
		 \int_{0}^{t} |\widehat {\bf u}(\tau, \widehat X(\tau,t,x) )|^2\aleph ^2(\tau,\widehat X(\tau,t,x)) {\rm d\tau}\right),
	\end{multline*}
	whereas for $x\in \Omega\backslash \Omega_{[0]}(t)$,
	$$
		|\rho_f(t,x)|^2\aleph ^2(t,x)
		\leq
		C\int_{t_{\rm in}(t,x)}^{t} |\widehat {\bf u}(\tau, \widehat X(\tau,t,x) )|^2\aleph ^2(\tau,\widehat X(\tau,t,x)) {\rm d\tau}.
	$$
	Combining these two estimates, for all $t \in (0, T - 2T_1)$ we get:
	\begin{multline}\label{estRho-1}
		\int_{\Omega}|\rho_f(t,x)|^2\aleph ^2(t,x) {\rm d}x
		\leq C
		\int_{\Omega_{[0]}(t)} |\rho_0(\widehat X(0,t,x))|^2\aleph ^2(0,\widehat X(0,t,x)){\rm d}x
		\\
		+
		C \int_{0}^{t} \int_{\Omega}{\bf 1}_{[t_{\rm in}(t,x),t]}(\tau)|\widehat {\bf u}(\tau, \widehat X(\tau,t,x) )|^2\aleph ^2(\tau,\widehat X(\tau,t,x)) {\rm d}x{\rm d\tau} .
	\end{multline}
	Since $\overline{\bf y}+\widehat{\bf
u}$ is divergence free in $\Omega_T$, the Jacobian of $x\mapsto \widehat X(t,\tau, x)$ equals $1$ identically. Therefore,
	\begin{eqnarray*}
	\int_{\Omega_{[0]}(t)} |\rho_0(\widehat X(0,t,x))|^2\aleph ^2(0,\widehat X(0,t,x)){\rm d}x
	& = &
	\int_{\widehat X(0,t,\Omega_{[0]}(t))} |\rho_0(x)|^2\aleph ^2(0,x) {\rm d}x
	\\
	&\leq & \int_{\Omega} |\rho_0(x)|^2\aleph ^2(0,x){\rm d}x.
	\end{eqnarray*}
	Similarly, we get
	$$%\begin{equation}\label{estRho1}
		\int_{0}^{t} \int_{\Omega}{\bf 1}_{[t_{\rm in}(t,x),t]}(\tau)|\widehat {\bf u}(\tau, \widehat X(\tau,t,x) )|^2\aleph ^2(\tau,\widehat X(\tau,t,x)) {\rm d}x{\rm d\tau}
		% \\
		%&=\int_{0}^{t} \int_{X(\tau,t,\Omega)} {\bf 1}_{[t_{\rm in}(t,X(t,\tau,x)),t]}(\tau) |\widehat {\bf u}(\tau, x )|^2\aleph ^2(\tau,x) {\rm d}x {\rm d\tau}\\
		%&=\int_{Q_{\rm in}^t} |\widehat {\bf u}(\tau, x )|^2\aleph ^2(\tau,x) {\rm d\tau}{\rm d}x\\
		%&
		\leq \int_{0}^t \int_\Omega |\widehat {\bf u}(\tau, x )|^2\aleph ^2(\tau,x) {\rm d\tau}{\rm d}x
	$$
	Estimate \eqref{est-rho-f-L-2} then follows from \eqref{estRho-1}.
	
	The proof of \eqref{est-rho-b-L-2} is based on \eqref{rho-b-explicit} and follows the same lines. It is therefore left to the reader.
\end{proof}
%%%%%%%%%%%%%%%%%%%%%%%%%%%%%%%%%%%%%%%%%%%%%%%%%%%%%%%%%%%%%%%%%%%%%%%%%%%%%%%%%%%%%%%%%%%%%%%%%%%%%%%%%%%%%%%%%%%%%%%%%%%%%%%%%%%%%%%%%%%%%%%%%%%%%%%%%%%%%%%%%%%%%%%%%%%%%%%%%%%%%%%%%%%%%%%%%%%%%%%%%%%%%%%%%%%%%%%%%%%%%%%%%%%%%%%%%%%%%%%%%%%%%%%%%%%%%%%%%%%%%%%%%%%%%%%%%%%%%%%%%%%%%%%%%%%%%%%%%%%%%%%%%%%%%%%%%%%%%%%%%%%%%%%%%%%%%%%%%%%%%%%%
\section{Proof of Theorem \ref{Thm-Main}}\label{Sec-Proof-Thm-Main}

We are now in position to prove Theorem \ref{Thm-Main}. The idea is to construct suitable convex sets which are invariant by the mapping $\mathscr{F} = \mathscr{F}_{(\rho_0,\u_0)}$ in \eqref{DefF} and relatively compact for a topology making $\mathscr{F}$ continuous. In all this section, we assume the assumptions of Theorem \ref{Thm-Main}.

\subsection{Main steps of the proof of Theorem \ref{Thm-Main}}

In the introduction, we introduced formally a mapping $\mathscr{F}$. We are now in position to define it precisely.

In order to do this, the first step in the proof of Theorem \ref{Thm-Main} is to construct a weight function $\tilde \psi$ which is suitable for both Section \ref{Sec-Velocity} and Section \ref{Sec-Density}, i.e. suitable in the same time for controlling the velocity equation and the density equation. We claim the following result, proved in Section \ref{Subsec-Weight}:

\begin{lemma}
	\label{Lem-Weight-Function}
	Let $\Omega$ be a smooth bounded domain. Further assume the regularity condition \eqref{Conditions-TargetTrajectory} on $(\overline{\sigma}, \overline{\y})$, the geometric condition \eqref{OmegaEqOmegaOut} and condition \eqref{Cond-Gamma-c}.
	
	Then one can find a smooth ($C^2$) bounded domain $\mathcal{O}$ satisfying \eqref{ExtensionOfOmega} such that there exists a $C^2(\overline{\mathcal{O}_T})$-function $\tilde \psi$ satisfying the transport equation \eqref{Eq-Psi-Transport} and satisfying assumptions \eqref{Psi} to \eqref{Ass-Psi} for $\omega_T=[0,T]\times \omega$ and $\tilde \omega_T=[0,T]\times \tilde \omega$ { where $\omega$, $\Tilde\omega$ are two subdomains of $\Ocal\backslash \overline{\Omega}$ such that $\Tilde\omega\Subset \omega$}.
%	for
%	\begin{equation}
%		\label{omega-T-final}
%		\omega_T = \{ (t,x) \in \Ocal_T \hbox{ with } d(x, \Omega) \geq r\} \quad \hbox{($d$ is the Euclidean distance)}
%	\end{equation}
%	for some $r>0$, and the equation \eqref{Eq-Psi-Transport}.
\end{lemma}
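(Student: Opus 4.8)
The plan is to construct $\tilde\psi$ by first building a function on the cylinder $[0,T]\times\R^2$ that is constant along the flow $\overline X$ of $\overline\y$, and then selecting the enlarged domain $\mathcal O$ so that the required boundary and non-degeneracy conditions hold. Concretely, because $\tilde\psi$ must satisfy the transport equation $\partial_t\tilde\psi+\overline\y\cdot\nabla\tilde\psi=0$, it is entirely determined by its values at a single time slice, say $t=0$: indeed $\tilde\psi(t,x)=\psi_0(\overline X(0,t,x))$ for a function $\psi_0$ to be chosen on $\R^2$. So the problem reduces to finding $\psi_0\in C^2(\R^2)$ with appropriate properties, together with a domain $\mathcal O\supset\Omega$ with $\partial\mathcal O\cap\partial\Omega\supset\Gamma_0$, such that the transported function has the needed behaviour on $\partial\mathcal O_T$ and has non-vanishing gradient off a small set in $\mathcal O\setminus\overline\Omega$.

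First I would fix the geometry. Using condition \eqref{Cond-Gamma-c}, on $\Gamma_0$ the target velocity satisfies $\overline\y\cdot\n\ge\gamma>0$ on $[0,T]\times\overline{\Gamma_0}$, so near $\Gamma_0$ the flow $\overline X$ enters $\Omega$ transversally; one can therefore glue to $\Omega$, across $\Gamma_0$, a thin collar $\mathcal O\setminus\overline\Omega$ on which the extended velocity $\overline\y$ is still essentially transversal to the new boundary $\partial\mathcal O$, and choose $\mathcal O$ smooth with $\partial\mathcal O\cap\partial\Omega\supset\Gamma_0$ (the part of $\partial\Omega$ not in $\Gamma_0$ is shared with $\partial\mathcal O$ only if it already lies on $\partial\mathcal O$; one simply takes $\mathcal O$ to agree with $\Omega$ away from $\Gamma_0$). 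Shrinking the collar guarantees that, on the time interval $[0,T]$, flow trajectories issuing from $\partial\mathcal O$ near the added piece travel monotonically across the collar, which will be used to control $\partial_{\bf n}\tilde\psi$ and to keep $|\nabla\tilde\psi|$ bounded below there.

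Next I would build $\psi_0$. Take a defining function $d$ of $\mathcal O$, i.e. $d\in C^2$, $d>0$ in $\mathcal O$, $d=0$ and $\nabla d=-\,c\,\n$ ($c>0$) on $\partial\mathcal O$, and $d$ constant ($=0$) on each connected component of $\partial\mathcal O$ — this is automatic. Because $\overline\y$ is transversal to the added piece of $\partial\mathcal O$ pointing inward, $\nabla d$ and $\overline X$ interact so that the pullback $\tilde\psi(t,x)=\Psi\bigl(d(\overline X(0,t,x))\bigr)$ for a suitable increasing $C^2$ profile $\Psi:[0,\infty)\to[0,1]$, $\Psi(0)=0$, inherits: $\tilde\psi\in[0,1]$ on $\overline{\mathcal O_T}$; $\tilde\psi(t)_{|\partial\mathcal O}=0$ constant in $x$ for each $t$ (since $d=0$ there and the flow preserves that level set — here one uses that $\overline\y$ restricted to $[0,T]\times\partial\mathcal O$ on the portion coming from $\partial\Omega\setminus\Gamma_0$ is tangential and the added piece is handled by the collar construction); $\inf_{\mathcal O}\tilde\psi(t,\cdot)=\tilde\psi(t)_{|\partial\mathcal O}$; and $\partial_{\bf n}\tilde\psi\le0$ on $\Gamma_T$, since moving inward increases $d$ hence increases $\Psi\circ d$, hence the outward normal derivative is $\le0$. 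Finally, $|\nabla\tilde\psi|=|\Psi'|\,|\nabla d|\,|\nabla_x\overline X(0,t,x)|$, and since $\Psi'>0$, $|\nabla d|$ is bounded below except near the (finitely many) critical points of $d$ in $\mathcal O$, and $\nabla_x\overline X$ is invertible, the gradient is bounded below away from a neighbourhood of those critical points; by choosing the critical points of $d$ to lie in $\mathcal O\setminus\overline\Omega$ (a mild arrangement on the collar) one gets two subdomains $\tilde\omega\Subset\omega$ of $\mathcal O\setminus\overline\Omega$ and $\alpha>0$ with $\inf_{\mathcal O_T\setminus\tilde\omega_T}|\nabla\tilde\psi|\ge\alpha>0$, which is \eqref{Ass-Psi}.

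The main obstacle is reconciling the two boundary requirements simultaneously: the target velocity $\overline\y$ is genuinely transversal to $\partial\mathcal O$ on the added collar (forced by \eqref{Cond-Gamma-c}(ii)) yet we need $\tilde\psi(t)$ to be \emph{constant} on the whole of $\partial\mathcal O$ for every $t$, while also solving the transport equation everywhere in $\mathcal O_T$. The resolution is that one does not ask $\tilde\psi$ to be transported by $\overline\y$ \emph{and} constant on $\partial\mathcal O$ by coincidence; rather one first prescribes $\tilde\psi_{|t=0}=\Psi\circ d$ (constant on $\partial\mathcal O$ at $t=0$) and then \emph{defines} $\tilde\psi$ at later times by transport, checking a posteriori — using that $\overline X$ maps $\partial\mathcal O$-level sets of the initial weight to $\partial\mathcal O$-level sets only because the collar was chosen so that trajectories through $\partial\mathcal O$ stay, during $[0,T]$, in the region where $d\circ\overline X(0,\cdot,\cdot)$ stays equal to the boundary value, i.e. one needs the collar thin enough (equivalently $T$-dependent tuning of the collar width together with smoothing $\Psi$ near $0$). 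Carrying out this tuning carefully, so that all four bullet conditions in \eqref{Psi} hold exactly and $\tilde\psi\in C^2(\overline{\mathcal O_T})$, is where the real work lies; the rest is soft. Once $\tilde\psi$ is in hand, $\psi,\theta,\varphi,\xi$ are defined by the formulas \eqref{PsiGamma}, \eqref{ThetaMu}, \eqref{Phi-Xi}, and \eqref{Eq-Psi-Transport} holds for $\psi=\tilde\psi+6m$ since adding a constant does not affect the transport equation.
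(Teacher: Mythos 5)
There is a genuine gap in your proposal, and in fact the approach as stated cannot close. You define $\tilde\psi(t,x)=\Psi\bigl(d(\overline X(0,t,x))\bigr)$ by transporting an initial weight $\Psi\circ d$ along the flow of the target velocity. But if $\overline\y\cdot\n>0$ somewhere on $\partial\Ocal$ (which is forced, since $\Gamma_0\subset\partial\Ocal$ and $\overline\y\cdot\n\geq\gamma$ on $\Gamma_0$), backward trajectories $\tau\mapsto\overline X(\tau,t,x)$ leaving from $x\in\partial\Ocal$ at time $t>0$ may lie outside $\overline\Ocal$ before reaching $\tau=0$; the formula $\tilde\psi(t,x)=\psi_0(\overline X(0,t,x))$ is then not intrinsic to $\Ocal$ and certainly does not produce a value constant on $\partial\Ocal$. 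Your stated resolution (``tuning the collar thin enough so the flow stays on the boundary level set'') is precisely what cannot hold: transversality on $\Gamma_0$ is incompatible with the flow preserving $\partial\Ocal$. Relatedly, the critical points of your $\tilde\psi(t,\cdot)$ are the images $\overline X(t,0,\cdot)$ of the critical points of $d$, and these drift with the flow; nothing in your construction keeps them inside $\tilde\omega$ for all $t\in[0,T]$.

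The paper escapes both difficulties by exploiting a freedom you have not used: the transport equation \eqref{Eq-Psi-Transport} is only required in $\Omega_T$, not in $\Ocal_T$, so one is free to replace $\overline\y$ by a different $C^2$ field $\overline\y_e$ on $\Ocal\setminus\Omega$ (necessarily not divergence-free) chosen so that $\inf_{[0,T]\times\partial\Ocal}\overline\y_e\cdot\n>0$ on \emph{all} of $\partial\Ocal$, and so that $\overline\y_e\equiv 0$ on $\tilde\omega$. The uniform outflow makes the hyperbolic problem with data prescribed at $t=T$ (a Fursikov--Imanuvilov type spatial weight whose critical points lie in $\tilde\omega$) plus boundary data $\widehat\psi(t,\cdot)_{|\partial\Ocal}=t-T$ (constant in $x$) globally well-posed, so constancy of $\tilde\psi(t)$ on $\partial\Ocal$ is built in by fiat rather than inherited from a coincidence. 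The vanishing of $\overline\y_e$ on $\tilde\omega$ freezes the flow there and traps the critical points for all time. You would need to introduce both of these mechanisms — the modified outward-pointing extension $\overline\y_e$, and the freezing of the flow on $\tilde\omega$ — to make a transport-based construction work; without them the four conditions of \eqref{Psi} and condition \eqref{Ass-Psi} cannot be verified simultaneously.
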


Next, we take $T_0^*$, $T_1^*$ and $\varsigma >0$ given by Lemma \ref{SortieEps2} and fix $T_0 \in (0,T_0^*)$ and $T_1 \in (0,T_1^*)$. We then use the function $\psi$, $\theta$, $\varphi$ and $\xi$ given by \eqref{PsiGamma}, \eqref{ThetaMu}, \eqref{Phi-Xi} for $m\geq 5$, $s\geq s_0$, $\lambda\geq \lambda_0$, and the notations given in \eqref{Variants-Phi}--\eqref{Variants-Xi}. Moreover, we suppose that $s_0$, $\lambda_0$ are large enough given by Theorem \ref{Thm-Stokes-Controlled} and Theorem \ref{Thm-Est-Rho}. Now, we define the spaces ${\bf X}_{s,\lambda}$ and $Y_{s,\lambda}$ depending on positive parameters $s\geq s_0$ and $\lambda\geq \lambda_0$ as follows:
\begin{align}
	\label{Def-X-s-lambda}
	{\bf X}_{s, \lambda}
		\ov
	\{
		\u \in & \L^2(\Omega_T), \quad \hbox{ with } \div(\u ) = 0\;\mbox{ in }\Omega_T,
		\\
		& s^{1/4}  \xi^{1/m- 2} e^{s \varphi}\u \in \L^2(\Omega_T)	,
		\notag	
		\\
		&
		e^{3s\varphi^*/4} \u \in L^2(0,T; {\bf H}^2(\Omega))\cap H^1(0,T; \L^2(\Omega))
		\notag
	\},
\end{align}
endowed with the norm
$$
	\norm{\u}_{{\bf X}_{s,\lambda}}^2\ov
	\|e^{3s\varphi^*/4}\u\|^2_{{L}^2({\bf H}^2)\cap {H}^1({\bf L}^2)}+s^{1/2}  \norm{\xi^{1/m-2} e^{s \varphi} \u}_{\L^2(\Omega_T)}^2,
$$
and
\begin{equation}
	\label{Def-Y-s-lambda}
	{ Y}_{s, \lambda}
		\ov
	\{
		\rho \in  L^\infty(\Omega_T), \hbox{ with }
		\xi^{-2} e^{s \varphi} \rho \in L^2(\Omega_T)	
		\, \hbox{ and }
		e^{s\lambda e^{6\lambda(m+1)} \theta/2} \rho \in L^\infty(\Omega_T)
		\notag
	\},
\end{equation}
endowed with the norm
$$
	\norm{\rho}_{Y_{s,\lambda}}\ov
	\|\xi^{-2} e^{s\varphi}\rho\|_{L^2(\Omega_T)}+ \norm{e^{s\lambda e^{6\lambda(m+1)} \theta/2} \rho}_{L^\infty(\Omega_T)}.
$$
We also introduce the space ${\bf F}_{s,\lambda}$ defined by
\begin{multline*}
	{\bf F}_{s,\lambda} \ov \{ {\bf f} \in L^2(0,T; \L^2(\Omega)), \hbox{ with }\xi^{-2} {\bf f}e^{s \varphi} \in L^2(0,T; {\bf L}^2(\Omega))\}
	\\
	\hbox{endowed with the norm}
	\norm{\bf f}_{{\bf F}_{s,\lambda}} \ov \norm{ \xi^{-2} {\bf f}e^{s \varphi} }_{L^2(\L^2)}.
\end{multline*}
Note that, in the above definitions as well as in the following results, we keep the dependence in both parameters $\lambda$ and $s$ to be consistent with notations of Section \ref{Sec-Velocity}. However, only the dependence in $s$ will be needed in this section.

We then derive the following results.
\begin{theorem}[On the mapping $\mathscr{F}_1$]
	\label{Thm-Def-F-1}
	Fix $\rho_0 \in L^\infty(\Omega)$. For all $\widehat{\u}\in {\bf X}_{s, \lambda}$ with $\norm{\widehat \u}_{{\bf X}_{s, \lambda}} \leq \varsigma$, the construction in Section \ref{Sec-ControlledDensity} yields $\rho = \mathscr{F}_1(\widehat \u, \rho_0)$ solution of the control problem \eqref{EqTransport}. Besides, $\rho \in Y_{s, \lambda}$ and for some constant $C$ independent of $s\geq s_0$ and $\lambda \geq \lambda_0$,
	\begin{equation}
		\label{Estimate-F-1}
		\norm{ \rho}_{Y_{s, \lambda}} \leq C \left(\frac{1}{s^{1/4} }\norm{\widehat \u}_{{\bf X}_{s, \lambda}} +  e^{s \varphi^*(0)} \norm{\rho_0}_{L^\infty(\Omega)}\right).
	\end{equation}
	Furthermore, the application $\mathscr{F}_1$ satisfies the following compactness property: If $\widehat{\u}_n$ is a sequence of functions in ${\bf X}_{s, \lambda}$ with $\norm{\widehat \u_n}_{{\bf X}_{s, \lambda}} \leq \varsigma$ which weakly converges to some $\widehat\u$ in $X_{s,\lambda}$, the corresponding sequence $\rho_n =  \mathscr{F}_1(\widehat \u_n, \rho_0)$ strongly converges to $\mathscr{F}_1(\widehat \u, \rho_0)$ in all $L^q(\Omega_T)$ for $q \in [1, \infty)$.
\end{theorem}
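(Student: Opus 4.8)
The plan is to argue in three steps: first check that the bound $\norm{\widehat\u}_{{\bf X}_{s,\lambda}}\le\varsigma$ places $\widehat\u$ inside the framework of Section~\ref{Sec-ControlledDensity} and of Theorem~\ref{Thm-Est-Rho}; then deduce \eqref{Estimate-F-1} by comparing the weights occurring in Theorem~\ref{Thm-Est-Rho} with those defining $Y_{s,\lambda}$; and finally obtain the compactness statement through the stability of renormalized solutions of transport equations.

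\emph{Well-posedness of $\mathscr{F}_1$.} Since $\psi=\tilde\psi+6m\ge 6m$ one has $\xi=\theta e^{\lambda\psi}\ge1$, hence $\xi^{-2}\le\xi^{1/m-2}$ pointwise; therefore the second term of $\norm{\widehat\u}_{{\bf X}_{s,\lambda}}^2$ controls $\norm{\xi^{-2}e^{s\varphi}\widehat\u}_{L^2(0,T;{\bf L}^2(\Omega))}^2$, i.e.\ \eqref{Assumption-hat-u-L2-L2}, while the first term yields $\norm{e^{3s\varphi^*/4}\widehat\u}_{L^2(0,T;{\bf H}^2(\Omega))}\le\varsigma$, i.e.\ \eqref{Assumption-hat-u-L2-H2} (recall also $\div\widehat\u=0$). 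Extending $\widehat\u$ to $[0,T]\times\R^2$ by the continuous operator ${\bf E}$, using the two-dimensional embedding ${\bf H}^2(\Omega)\hookrightarrow{\bf L}^\infty(\Omega)$, the lower bound $\varphi^*\ge\tfrac34\theta\lambda e^{6\lambda(m+1)}$ from \eqref{Phi-bounds}, and the elementary inequality $\sup_{r\ge1}r e^{-ar}=e^{-a}$ for $a>1$, one obtains \eqref{Norm-L-infini} and a fortiori \eqref{Ass-On-U} provided $s_0$ is large enough. Hence the gluing construction of Section~\ref{Sec-ControlledDensity} applies: $\rho=\mathscr{F}_1(\widehat\u,\rho_0)$ given by \eqref{def-rho} solves the transport equation and the initial condition in \eqref{EqTransport}, and the terminal condition $\rho(T)=0$ holds because $\chi(T)=0$ by Lemma~\ref{SortieEps2}. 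Finally Theorem~\ref{Thm-Est-Rho} applies to $\rho$.

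\emph{The estimate \eqref{Estimate-F-1}.} I would insert \eqref{Est-Rho-L-2} and \eqref{Est-Rho-L-infty} into the definition of $\norm{\cdot}_{Y_{s,\lambda}}$. For the $L^2$ part, $\aleph\widehat\u=\xi^{-1/m}\bigl(\xi^{1/m-2}e^{s\varphi}\widehat\u\bigr)$ with $\xi^{-1/m}\le1$, so $\norm{\aleph\widehat\u}_{L^2(0,T;{\bf L}^2(\Omega))}\le s^{-1/4}\norm{\widehat\u}_{{\bf X}_{s,\lambda}}$, and $\norm{\rho_0}_{L^2(\Omega)}\le|\Omega|^{1/2}\norm{\rho_0}_{L^\infty(\Omega)}$. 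For the $L^\infty$ part, \eqref{Phi-bounds} gives $e^{s\lambda e^{6\lambda(m+1)}\theta/2}\le e^{-\frac{s}{16}\lambda e^{6\lambda(m+1)}}e^{3s\varphi^*/4}$ on $\Ocal_T$ (using $\theta\ge1$) and $e^{s\lambda e^{6\lambda(m+1)}}\le e^{s\varphi^*(0)}$ (using $\theta(0)=2$); combined with ${\bf H}^2(\Omega)\hookrightarrow{\bf L}^\infty(\Omega)$ and the choice of $s_0$ so large that $e^{-\frac{s}{16}\lambda e^{6\lambda(m+1)}}\le s^{-1/4}$, the $\widehat\u$-dependent term on the right of \eqref{Est-Rho-L-infty} is again bounded by $s^{-1/4}\norm{\widehat\u}_{{\bf X}_{s,\lambda}}$. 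Adding the two contributions yields \eqref{Estimate-F-1} with $C$ independent of $s$ and $\lambda$.

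\emph{Compactness.} Suppose $\widehat\u_n\rightharpoonup\widehat\u$ in ${\bf X}_{s,\lambda}$ with $\norm{\widehat\u_n}_{{\bf X}_{s,\lambda}}\le\varsigma$; by weak lower semicontinuity $\norm{\widehat\u}_{{\bf X}_{s,\lambda}}\le\varsigma$, so $\mathscr{F}_1(\widehat\u,\rho_0)$ is defined. Since $e^{3s\varphi^*/4}\widehat\u_n$ is bounded in $L^2(0,T;{\bf H}^2(\Omega))\cap H^1(0,T;{\bf L}^2(\Omega))$, the Aubin--Lions--Simon lemma gives a subsequence along which $\widehat\u_n\to\widehat\u$ strongly in $L^2(0,T;{\bf H}^{3/2}(\Omega))$, hence in $L^2(0,T;{\bf L}^\infty(\Omega))$ and in $L^2(0,T;{\bf H}^1(\Omega))$, and after extension in $L^2(0,T;{\bf L}^\infty(\R^2))$. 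Then the advecting fields $\overline\y+\widehat\u_n$ converge to $\overline\y+\widehat\u$ in $L^1(0,T;{\bf W}^{1,1}(\Omega))$ and remain divergence free, the sources $-\widehat\u_n\cdot\nabla\overline\sigma$ converge in $L^2(0,T;{\bf L}^2(\Omega))$, and $\rho_{f,n},\rho_{b,n},\chi_n$ are uniformly bounded in $L^\infty(\Omega_T)$ by the explicit formulas \eqref{rho-f-explicit}--\eqref{rho-b-explicit}--\eqref{ExprChi} together with $\int_0^T\norm{\widehat\u_n\cdot\nabla\overline\sigma}_{{\bf L}^\infty(\Omega)}\le C$. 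The stability of renormalized solutions of the transport initial--boundary value problem with respect to the velocity field --- \cite[Theorem~4]{BoyerFabrie-2007}, itself a consequence of \cite{Boyer-2005} --- then yields $\rho_{f,n}\to\rho_f$, $\rho_{b,n}\to\rho_b$ and $\chi_n\to\chi$ in $L^q(\Omega_T)$ for every $q\in[1,\infty)$, where $\rho_f,\rho_b,\chi$ are the objects built from $\widehat\u$. Passing to the limit in \eqref{def-rho} and using the uniform $L^\infty$ bounds to control the products, $\rho_n\to(1-\chi)\rho_b+\chi\rho_f=\mathscr{F}_1(\widehat\u,\rho_0)$ in $L^q(\Omega_T)$ for all $q\in[1,\infty)$; as the limit does not depend on the chosen subsequence, the whole sequence converges.

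The main obstacle is this last step: the controlled density lies only in $L^\infty(\Omega_T)$, and the inflow boundary condition is imposed on the $\widehat\u$-dependent set $\{(\overline\y+\widehat\u)\cdot\n<0\}$, so a direct passage to the limit in the characteristic formulas \eqref{rho-f-explicit}--\eqref{rho-b-explicit} would require control of grazing trajectories and of the composition of the merely bounded datum $\rho_0$ with the (volume-preserving) flows. This is precisely what the DiPerna--Lions type stability theory for renormalized solutions, invoked here through \cite[Theorem~4]{BoyerFabrie-2007}, is designed to circumvent; the first two steps are by comparison routine, resting only on Theorem~\ref{Thm-Est-Rho} and on the elementary comparisons between $\xi$, $\varphi$, $\varphi^*$ and $\theta$ coming from \eqref{Phi-bounds}.
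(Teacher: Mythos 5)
Your first two steps are correct and essentially reproduce the paper's argument: verifying \eqref{Assumption-hat-u-L2-L2}--\eqref{Assumption-hat-u-L2-H2} from $\norm{\widehat\u}_{{\bf X}_{s,\lambda}}\le\varsigma$, and deducing \eqref{Estimate-F-1} by inserting $\aleph\widehat\u=\xi^{-1/m}(\xi^{1/m-2}e^{s\varphi}\widehat\u)$ with $\xi\ge1$, together with the comparisons $\lambda e^{6\lambda(m+1)}\theta/2\le 3\varphi^*/4$ and $\lambda e^{6\lambda(m+1)}\le\varphi^*(0)$ from \eqref{Phi-bounds}. Your bookkeeping there is fine, and indeed more explicit than the paper's one-line justification.

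The gap is in the compactness step. You apply the stability result of \cite[Theorem~4]{BoyerFabrie-2007} \emph{directly} to the equations \eqref{Eq-rho-f-n}--\eqref{Eq-rho-b-n} for $\rho_{f,n},\rho_{b,n}$, which carry the $n$-dependent source $-\widehat\u_n\cdot\nabla\overline\sigma$. That theorem, however, concerns the \emph{homogeneous} transport equation (it is formulated for the density equation of non-homogeneous Navier--Stokes, where the unknown is advected without source), and the stability it provides is with respect to the velocity field with the boundary/initial data held fixed --- not with respect to a varying right-hand side. The paper circumvents this precisely by introducing the shifted unknown $\sigma_{f,n}:=\overline\sigma+\rho_{f,n}$, for which the inhomogeneous source cancels against $\partial_t\overline\sigma+\overline\y\cdot\nabla\overline\sigma$; \eqref{Eqsigmafn} is then a transport problem with fixed data on which \cite[Theorem~4]{BoyerFabrie-2007} applies, and $\rho_{f,n}=\sigma_{f,n}-\overline\sigma$ converges as a consequence. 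Your argument is salvaged the moment you insert this substitution, but without it the citation does not cover the statement you need. (Two minor remarks: the strong $L^2(0,T;{\bf L}^\infty)$ and $L^1(0,T;{\bf W}^{1,1})$ convergences you derive via Aubin--Lions are unnecessary for the theorem as used in the paper, which needs only strong $L^2(0,T;{\bf L}^2(\Omega))$ convergence of $\widehat\u_n$ together with the weak $L^2(0,T;{\bf H}^2)$ bound; and your observation that weak lower semicontinuity gives $\norm{\widehat\u}_{{\bf X}_{s,\lambda}}\le\varsigma$ so that the limit object is defined is correct and is left implicit in the paper.)
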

The proof of Theorem \ref{Thm-Def-F-1} is done in Section \ref{Sec-Proof-F-1}. Let us point out that the compactness property stated in Theorem \ref{Thm-Def-F-1} is of primary importance for our result and follows from \cite[Theorem 4]{BoyerFabrie-2007}.

We then focus on the study of the mapping $\mathscr{F}_2$:
\begin{theorem}[On the mapping $\mathscr{F}_2$]
	\label{Thm-Def-F-2}
	We can define a bounded linear mapping $\mathscr{F}_2: {\bf F}_{s, \lambda}\times {\bf V}^1_0(\Omega)\to {\bf X}_{s, \lambda}$ such that for all $\u_0 \in {\bf V}^1_0(\Omega)$ and ${\bf f} \in {\bf F}_{s, \lambda}$, $\u = \mathscr{F}_2 ({\bf f}, \u_0)$ solves the control problem \eqref{SystemparaboliquePF} and satisfies, for some constant $C>0$ independent of $s \geq s_0$ and $\lambda\geq \lambda_0$,
	\begin{equation}
		\label{Estimate-F-2}
		\norm{ \u}_{{\bf X}_{s, \lambda}} \leq C \left(\norm{\bf f}_{{\bf F}_{s,\lambda}} +  e^{\frac{5}{4} s \varphi^*(0)} \norm{\u_0}_{{\bf H}^1_0(\Omega)}\right).
	\end{equation}
\end{theorem}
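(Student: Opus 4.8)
The plan is to build $\mathscr{F}_2$ by transferring the problem to the extended domain $\Ocal$, applying the controllability result of Theorem~\ref{Thm-Stokes-Controlled}, and restricting back to $\Omega_T$. Given $({\bf f}, \u_0) \in {\bf F}_{s,\lambda} \times \V^1_0(\Omega)$, I first extend $\u_0$ by zero to $\Ocal$; as recalled in \eqref{CCPatu0}, this yields a divergence-free field of ${\bf H}^1_0(\Ocal)$ with $\norm{\u_0}_{{\bf H}^1_0(\Ocal)} = \norm{\u_0}_{{\bf H}^1_0(\Omega)}$. Extending ${\bf f}$ by zero gives $\f \in \L^2(\Ocal_T)$ supported in $\Omega$, and since $\f = {\bf f}$ on $\Omega_T$,
$$
\iint_{\Ocal_T} \xi^{-4} |\f|^2 e^{2s\varphi} = \iint_{\Omega_T} \xi^{-4} |{\bf f}|^2 e^{2s\varphi} = \norm{{\bf f}}_{{\bf F}_{s,\lambda}}^2 < \infty,
$$
so hypothesis \eqref{ConditionData-Stokes} holds. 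Theorem~\ref{Thm-Stokes-Controlled} then produces a control $\h \in \L^2(\Ocal_T)$ supported in $\omega_T \subset \Ocal \setminus \overline{\Omega}$ and a trajectory $\u \in \L^2(\Ocal_T)$ solving \eqref{StokesControl}--\eqref{ControlReq-u} and satisfying the estimate \eqref{Est-U-H-Duality-Thm}. Because $\h\, 1_{\Ocal\setminus\overline\Omega}$ vanishes on $\Omega_T$ and $\Gamma_0 \subset \partial\Ocal\cap\partial\Omega$, the restriction $\u|_{\Omega_T}$ is divergence free, solves the Stokes system with source ${\bf f}$, satisfies $\u = {\bf 0}$ on $(0,T)\times\Gamma_0$, $\u(0) = \u_0$ and $\u(T) = {\bf 0}$; hence it is a suitable solution of \eqref{SystemparaboliquePF}, and I set $\mathscr{F}_2({\bf f},\u_0) \ov \u|_{\Omega_T}$.

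Next I verify $\u|_{\Omega_T} \in {\bf X}_{s,\lambda}$ with the bound \eqref{Estimate-F-2}. The three membership conditions defining ${\bf X}_{s,\lambda}$ follow from \eqref{Est-U-H-Duality-Thm}: divergence-freeness is immediate; $s^{1/4}\xi^{1/m-2}e^{s\varphi}\u\in\L^2(\Omega_T)$ and $e^{3s\varphi^*/4}\u\in L^2(0,T;{\bf H}^2(\Omega))\cap H^1(0,T;\L^2(\Omega))$ come from the corresponding terms on the left-hand side of \eqref{Est-U-H-Duality-Thm}, using that restriction from $\Ocal$ to $\Omega$ is continuous on every Sobolev space (so $\norm{\cdot}_{{\bf H}^2(\Omega)} \le \norm{\cdot}_{{\bf H}^2(\Ocal)}$, etc.). For the norm, restriction gives $\norm{e^{3s\varphi^*/4}\u}^2_{L^2({\bf H}^2(\Omega))\cap H^1(\L^2(\Omega))} \le \norm{e^{3s\varphi^*/4}\u}^2_{L^2({\bf H}^2(\Ocal))\cap H^1(\L^2(\Ocal))}$, while, since $\lambda \ge \lambda_0 \ge 1$,
$$
s^{1/2}\norm{\xi^{1/m-2}e^{s\varphi}\u}_{\L^2(\Omega_T)}^2 = s^{1/2}\iint_{\Omega_T}\xi^{2/m-4}|\u|^2 e^{2s\varphi} \le s^{1/2}\lambda^{5/2}\iint_{\Ocal_T}\xi^{2/m-4}|\u|^2 e^{2s\varphi}.
$$
Adding these and invoking \eqref{Est-U-H-Duality-Thm} together with $\norm{\u_0}_{{\bf H}^1_0(\Ocal)} = \norm{\u_0}_{{\bf H}^1_0(\Omega)}$ and $\iint_{\Ocal_T}\xi^{-4}|\f|^2e^{2s\varphi} = \norm{{\bf f}}_{{\bf F}_{s,\lambda}}^2$ yields $\norm{\u|_{\Omega_T}}_{{\bf X}_{s,\lambda}}^2 \le C\big(\norm{{\bf f}}_{{\bf F}_{s,\lambda}}^2 + e^{\frac52 s\varphi^*(0)}\norm{\u_0}_{{\bf H}^1_0(\Omega)}^2\big)$; taking square roots and using $\sqrt{a+b}\le\sqrt a+\sqrt b$ and $e^{\frac52 s\varphi^*(0)/2} = e^{\frac54 s\varphi^*(0)}$ gives exactly \eqref{Estimate-F-2}.

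It remains to note that $\mathscr{F}_2$ is linear. The trajectory $\u$ is constructed, as in Section~\ref{Sec-Stokes-Controlled}, from the minimizer $(\V_T,\G)$ of the strictly convex quadratic functional $J_{St}$ of \eqref{Def-J-Stokes}, whose quadratic part does not depend on the data while $({\bf f},\u_0)$ enter only through the linear term $-\iint_{\Ocal_T}\f\cdot\v - \int_\Ocal \u_0\cdot\v(0,\cdot)$; hence $(\V_T,\G)$ depends linearly on $({\bf f},\u_0)$, and so do $(\u,\h)$ through \eqref{Identification-U-H} and $\u|_{\Omega_T}$ through the linear extension/restriction operations. The estimate just obtained shows this linear map is bounded. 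The only genuinely delicate points are bookkeeping: checking that the zero-extended data satisfy the hypotheses of Theorem~\ref{Thm-Stokes-Controlled}, that restriction to $\Omega$ preserves all the weighted norms, and that the powers of $s$, $\lambda$ and the weight exponents match between the norm of ${\bf X}_{s,\lambda}$ and the left-hand side of \eqref{Est-U-H-Duality-Thm}; each of these is immediate once written out.
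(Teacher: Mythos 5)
Your proof is correct and follows essentially the same route the paper takes: extend $({\bf f},\u_0)$ by zero to $\Ocal$, invoke Theorem~\ref{Thm-Stokes-Controlled}, restrict the controlled trajectory to $\Omega_T$, and read off the estimate from \eqref{Est-U-H-Duality-Thm} after dropping $\lambda^{5/2}\geq 1$. The paper dispatches the result in a few lines ("Theorem~\ref{Thm-Def-F-2} is a direct consequence of Theorem~\ref{Thm-Stokes-Controlled}\dots Estimate~\eqref{Estimate-F-2} is then a rewriting\dots"); your write-up supplies the same argument with the bookkeeping and the linearity discussion (the minimizer of the quadratic functional $J_{St}$ depends linearly on the data through its Euler--Lagrange equation) made explicit, which is a useful completion rather than a different approach.
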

Theorem \ref{Thm-Def-F-2} is a direct consequence of Theorem \ref{Thm-Stokes-Controlled}: the mapping $\mathscr{F}_2$ is obtained by restricting the controlled trajectory given by Theorem \ref{Thm-Stokes-Controlled} to $(0,T) \times \Omega$. Of course, this depends on the extension $\Ocal$ of $\Omega$, but this choice is done once for all. Estimate \eqref{Estimate-F-2} is then a rewriting of Theorem \ref{Thm-Stokes-Controlled} by taking into account that $\f$ and $\u_0$ are extended by zero outside $\Omega$.
%The proof of Theorem \ref{Thm-Def-F-2} is done in Section \ref{Sec-Proof-F-2}.

We are then able to derive the following properties on the mapping $\mathscr{F}$ in \eqref{DefF}, whose proof is postponed to Section \ref{Sec-Proof-F}:
\begin{theorem}
	\label{Thm-Prop-F}
	Let $\rho_0 \in L^\infty(\Omega)$ and $\u_0 \in {\bf V}^1_0(\Omega)$.
	
	Then for all $s \geq s_0$ and $\lambda \geq \lambda_0$ the mapping $\mathscr{F}$ in \eqref{DefF} is well-defined for all $\widehat{\u}\in {\bf X}_{s, \lambda}$ with $\norm{\widehat \u}_{{\bf X}_{s, \lambda}} \leq \varsigma$. Besides, for all $\widehat{\u}\in {\bf X}_{s, \lambda}$ with $\norm{\widehat \u}_{{\bf X}_{s, \lambda}} \leq \varsigma$, $\u = \mathscr{F}(\widehat\u)$ belongs to ${\bf X}_{s, \lambda}$, and satisfies, for some constant $C_0$ independent of $s$ and $\lambda$,
	\begin{multline}
		\norm{ \u}_{{\bf X}_{s, \lambda}} \leq
		C_0 \left( \frac{1}{s^{1/4} } \norm{\widehat \u}_{{\bf X}_{s, \lambda}} + \norm{\widehat \u}_{{\bf X}_{s, \lambda}}^2 \right.
		\\
		\left.
		+
		e^{s \varphi^*(0)} \norm{\rho_0}_{L^\infty(\Omega)}+ e^{2s \varphi^*(0)} \norm{\rho_0}_{L^\infty(\Omega)}^2 + e^{\frac{5}{4}s \varphi^*(0)} \norm{\u_0}_{{\bf H}^1_0(\Omega)}\right).
		\label{Estimate-f-Prop}
	\end{multline}
	Moreover, if $\widehat{\u}_n$ is a sequence of functions in ${\bf X}_{s, \lambda}$ with $\norm{\widehat \u_n}_{{\bf X}_{s, \lambda}} \leq \varsigma$ which weakly converges to some $\widehat\u$ in $\X_{s,\lambda}$, the corresponding sequence $\u_n =  \mathscr{F}(\widehat \u_n)$ strongly converges to $\u = \mathscr{F}(\widehat \u)$ in $L^2(0,T;\L^2(\Omega))$.
\end{theorem}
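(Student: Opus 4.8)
\textbf{Proof strategy for Theorem \ref{Thm-Prop-F}.}
The idea is to chain Theorems \ref{Thm-Def-F-1} and \ref{Thm-Def-F-2} through the quadratic nonlinearity $\f$ of \eqref{Def-f}. First I would check that, for $s_0$ large enough, every $\widehat\u \in {\bf X}_{s,\lambda}$ with $\norm{\widehat\u}_{{\bf X}_{s,\lambda}} \leq \varsigma$ fulfils the hypotheses of Theorem \ref{Thm-Def-F-1}: since $\xi \geq 1$ one has $\xi^{-2} \leq \xi^{1/m-2}$, so the weighted $L^2$ term of $\norm{\widehat\u}_{{\bf X}_{s,\lambda}}$ gives \eqref{Assumption-hat-u-L2-L2}, the weighted $L^2({\bf H}^2)$ term gives \eqref{Assumption-hat-u-L2-H2}, and the bounds \eqref{Ass-On-U}, \eqref{Norm-L-infini} follow from that same $L^2({\bf H}^2)$ term once one observes, using \eqref{Phi-bounds}, that $\theta(t) e^{-3s\varphi^*(t)/4} \leq C e^{-c_0 s \lambda}$. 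Theorem \ref{Thm-Def-F-1} then provides $\rho = \mathscr{F}_1(\widehat\u,\rho_0) \in Y_{s,\lambda}$ solving \eqref{EqTransport}, and, writing $\xi^{-2} = \xi^{-1/m}\xi^{1/m-2}$ with $\xi^{-1/m} \leq e^{-6\lambda}$, estimate \eqref{Estimate-F-1} reads $\norm{\rho}_{Y_{s,\lambda}} \leq C (s^{-1/4}\norm{\widehat\u}_{{\bf X}_{s,\lambda}} + e^{s\varphi^*(0)}\norm{\rho_0}_{L^\infty(\Omega)})$, which is where the contracting factor $s^{-1/4}$ originates.

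The core of the argument is the estimate of $\f(\rho,\widehat\u)$ in ${\bf F}_{s,\lambda}$, that is, a bound for $\norm{\xi^{-2}\f(\rho,\widehat\u) e^{s\varphi}}_{L^2(0,T;{\bf L}^2(\Omega))}$. The terms of $\f$ that are linear in $\rho$ with coefficients made of $\overline\sigma$, $\overline\y$ and their bounded derivatives are directly controlled by $\norm{\aleph\rho}_{L^2(\Omega_T)} \leq \norm{\rho}_{Y_{s,\lambda}}$, with $\aleph = \xi^{-2}e^{s\varphi}$ as in \eqref{Aleph} and $\xi^{-2}$ bounded. For the terms containing $\widehat\u$ I would split the exponential weight as $e^{s\varphi} = e^{\frac{s}{2}\lambda e^{6\lambda(m+1)}\theta}\, e^{s\varphi - \frac{s}{2}\lambda e^{6\lambda(m+1)}\theta}$, noting that by \eqref{Phi-bounds} one has $s\varphi - \frac{s}{2}\lambda e^{6\lambda(m+1)}\theta \leq \frac{s}{2}\varphi^* \leq \frac{3s}{4}\varphi^*$: the first factor is absorbed by the $L^\infty$ part of $\norm{\rho}_{Y_{s,\lambda}}$, while the remaining $e^{3s\varphi^*/4}$--weighted derivatives of $\widehat\u$ are controlled by $\norm{\widehat\u}_{{\bf X}_{s,\lambda}}$. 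For the genuinely quadratic convective terms (possibly multiplied by the $L^\infty$--bounded weighted density), in dimension two the embedding $H^1(0,T;{\bf L}^2(\Omega)) \cap L^2(0,T;{\bf H}^2(\Omega)) \hookrightarrow C([0,T];{\bf H}^1(\Omega))$ combined with the Ladyzhenskaya inequalities $\norm{\v}_{{\bf L}^4} \leq C\norm{\v}_{{\bf L}^2}^{1/2}\norm{\v}_{{\bf H}^1}^{1/2}$ and $\norm{\nabla\v}_{{\bf L}^4} \leq C\norm{\v}_{{\bf H}^1}^{1/2}\norm{\v}_{{\bf H}^2}^{1/2}$ gives $\norm{(\widehat\u\cdot\nabla)\widehat\u}_{L^2(0,T;{\bf L}^2(\Omega))} \leq C\norm{\widehat\u}_{{\bf X}_{s,\lambda}}^2$, and similarly with the weight inserted, which only improves the bound. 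Collecting all contributions one obtains $\norm{\f(\rho,\widehat\u)}_{{\bf F}_{s,\lambda}} \leq C (\norm{\rho}_{Y_{s,\lambda}}(1 + \norm{\widehat\u}_{{\bf X}_{s,\lambda}}) + \norm{\widehat\u}_{{\bf X}_{s,\lambda}}^2 )$; inserting the bound on $\norm{\rho}_{Y_{s,\lambda}}$, using $\norm{\widehat\u}_{{\bf X}_{s,\lambda}} \leq \varsigma$ and a Young inequality on the mixed terms $e^{s\varphi^*(0)}\norm{\rho_0}\,\norm{\widehat\u}_{{\bf X}_{s,\lambda}}$, this is $\leq$ the right-hand side of \eqref{Estimate-f-Prop} without its last term. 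Applying the bounded linear map $\mathscr{F}_2$ of Theorem \ref{Thm-Def-F-2} to $(\f(\rho,\widehat\u),\u_0)$ then yields $\u = \mathscr{F}(\widehat\u) = \mathscr{F}_2(\f(\rho,\widehat\u),\u_0) \in {\bf X}_{s,\lambda}$, which together with $\rho$ solves \eqref{EqUg}, and \eqref{Estimate-f-Prop} follows from \eqref{Estimate-F-2}.

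For the weak-to-strong continuity, suppose $\widehat\u_n \rightharpoonup \widehat\u$ in ${\bf X}_{s,\lambda}$ with $\norm{\widehat\u_n}_{{\bf X}_{s,\lambda}} \leq \varsigma$. The compactness statement of Theorem \ref{Thm-Def-F-1} gives $\rho_n = \mathscr{F}_1(\widehat\u_n,\rho_0) \to \rho = \mathscr{F}_1(\widehat\u,\rho_0)$ strongly in $L^q(\Omega_T)$ for all $q \in [1,\infty)$, with $(\rho_n)$ bounded in $L^\infty(\Omega_T)$. Since the weight $e^{3s\varphi^*/4}$ is bounded below on $\Omega_T$, the sequence $(\widehat\u_n)$ is bounded in $H^1(0,T;{\bf L}^2(\Omega)) \cap L^2(0,T;{\bf H}^2(\Omega))$, hence by Aubin--Lions converges strongly in $L^2(0,T;{\bf H}^{2-\delta}(\Omega))$ for every $\delta > 0$, and therefore, by two-dimensional Sobolev embeddings, in $L^2(0,T;{\bf L}^\infty(\Omega))$ with $\nabla\widehat\u_n \to \nabla\widehat\u$ in $L^2(0,T;{\bf L}^q(\Omega))$ for all $q < \infty$. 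These strong convergences let us pass to the limit in every term of $\f$, so $\f(\rho_n,\widehat\u_n) \to \f(\rho,\widehat\u)$ in the sense of distributions; together with the uniform ${\bf F}_{s,\lambda}$--bound from the previous step this gives weak convergence $\f(\rho_n,\widehat\u_n) \rightharpoonup \f(\rho,\widehat\u)$ in ${\bf F}_{s,\lambda}$. Since $\mathscr{F}_2$ is linear and bounded, $\u_n = \mathscr{F}(\widehat\u_n) \rightharpoonup \u = \mathscr{F}(\widehat\u)$ weakly in ${\bf X}_{s,\lambda}$; and since ${\bf X}_{s,\lambda}$ embeds compactly into $L^2(0,T;{\bf L}^2(\Omega))$ (again by Aubin--Lions, using the lower bound on the weight), this weak convergence becomes strong convergence in $L^2(0,T;{\bf L}^2(\Omega))$, as claimed.

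I expect the main obstacle to be the weighted nonlinear estimate of the second paragraph: one has to arrange the weights $\xi$, $\varphi$, $\theta$ and the powers of $s$ built into ${\bf X}_{s,\lambda}$, $Y_{s,\lambda}$ and ${\bf F}_{s,\lambda}$ so that the transport estimate \eqref{Estimate-F-1} feeds correctly into the Stokes estimate \eqref{Estimate-F-2}. The comparison \eqref{Phi-bounds} between $\varphi$, $\varphi^*$ and $\theta\lambda e^{6\lambda(m+1)}$ is the key mechanism, and the fact that the convective terms must be closed within the $L^2({\bf H}^2)\cap H^1({\bf L}^2)$ parabolic scale is precisely where the restriction to two space dimensions is used.
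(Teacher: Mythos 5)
Your proposal is correct and follows essentially the same route as the paper: you chain Theorems \ref{Thm-Def-F-1} and \ref{Thm-Def-F-2} by estimating $\f(\rho,\widehat\u)$ in ${\bf F}_{s,\lambda}$ term by term, you split the weight $e^{s\varphi}$ into a factor $e^{\frac{s}{2}\lambda e^{6\lambda(m+1)}\theta}$ absorbed by the $L^\infty$ part of $\norm{\rho}_{Y_{s,\lambda}}$ and an $e^{3s\varphi^*/4}$ factor absorbed by $\norm{\widehat\u}_{\X_{s,\lambda}}$ (precisely the comparison \eqref{Phi-bounds} used in the paper), you use $2$-d Ladyzhenskaya/Sobolev embeddings for the quadratic terms, and you establish the weak-to-strong continuity by combining the compactness statement of Theorem \ref{Thm-Def-F-1} with Aubin--Lions compactness of the parabolic scale and the boundedness of $\mathscr{F}_2$. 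The minor cosmetic differences (strong convergence in $L^2({\bf H}^{2-\delta})$ rather than the paper's $L^4(\L^4)$, your explicit mention of $\xi^{-1/m} \leq e^{-6\lambda}$) do not change the argument.
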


We may then conclude the proof of Theorem \ref{Thm-Main}. For $R \in (0, \varsigma)$, we introduce the closed convex set
$$
	\X_{s,\lambda}^{R} = \{ \u \in \X_{s, \lambda} \hbox{ with } \norm{\u}_{\X_{s,\lambda}}\leq R \}.
$$
We then choose $R$ small enough such that $C_0 R \leq 1/4$, where $C_0$ is the constant in \eqref{Estimate-f-Prop}, $\lambda = \lambda_0$ and $s\geq s_0$ large enough to guarantee $C_0\leq s^{1/4}/4$. We then get from \eqref{Estimate-f-Prop} that for all $\widehat\u \in \X_{s,\lambda_0}^R$, $\u = \mathscr{F}(\widehat\u)$ satisfies
$$
	\norm{ \u}_{{\bf X}_{s, \lambda_0}} \leq \frac{R}{2} + C_0 \left(e^{s \varphi^*(0)} \norm{\rho_0}_{L^\infty(\Omega)}+ e^{2s \varphi^*(0)} \norm{\rho_0}_{L^\infty(\Omega)}^2 + e^{\frac{5}{4} s \varphi^*(0)} \norm{\u_0}_{{\bf H}^1_0(\Omega)}\right).
$$
Thus, choosing $\varepsilon>0$ sufficiently small in \eqref{SmallDataIni}, we can guarantee that the mapping $\mathscr{F}$ maps $\X_{s,\lambda_0}^R$ to itself.

We then check that the set $\X_{s,\lambda_0}^R$ is compact in $L^2(0,T; \L^2(\Omega))$ as $H^1(0,T; \L^2(\Omega)) \cap L^2(0,T; {\bf H}^2(\Omega))$ is compactly embedded in $L^2(0,T; \L^2(\Omega))$ due to Rellich's compactness theorem and Aubin-Lions' theorem.

Besides, the mapping $\mathscr{F}$ is continuous on $\X_{s,\lambda_0}^R$ endowed with the $L^2(0,T; \L^2(\Omega))$-topology from Theorem \ref{Thm-Prop-F}. Indeed, if  $\widehat{\u}_n$  is a sequence of functions in $\X_{s,\lambda_0}^R$ which strongly converges to $\widehat \u$ in $L^2(0,T; \L^2(\Omega))$, it necessarily weakly converges in $\X_{s,\lambda_0}^R$. Thus, from the last item of Theorem \ref{Thm-Prop-F}, $\u_n = \mathscr{F}(\widehat\u_n)$ strongly converges to $\u$ in $L^2(0,T; \L^2(\Omega))$.

Schauder's fixed point theorem then implies the existence of a fixed point to the mapping $\mathscr{F}$, and concludes the proof of Theorem \ref{Thm-Main}.

\subsection{Proof of Lemma \ref{Lem-Weight-Function}}\label{Subsec-Weight}

	We do it in several steps.
	
	{\bf Construction of $\mathcal{O}$.}
	In a neighborhood of $\Gamma_c$, according to Assumption \eqref{Cond-Gamma-c}, there exists a $C^2$ extension $\mathcal{O}$ of $\Omega$ such that
	\begin{itemize}
		\item $\Omega \subset \mathcal{O}$;
		\item $ \Gamma_0 \subset \partial \Omega \cap \partial \mathcal{O}$ and for all $t \in (0,T)$ and $x \in \partial \Omega \cap \partial \mathcal{O}$, $\overline{\y}(t,x) \cdot \n \geq \gamma/2$;
		\item $\partial \mathcal{O} \cap \partial \Omega$ and $\mathcal{O} \setminus \overline{\Omega}$ have a finite number of connected components.
	\end{itemize}

{Let $\omega$, $\Tilde\omega$ be two subdomains of $\Ocal\backslash \overline{\Omega}$ such that $\Tilde\omega\Subset \omega$ and fix $d_0 = {\rm dist}(\tilde\omega, \Omega)$.}

%	
%	Let $\mathscr{C}$ a connected component of $\mathcal{O} \setminus \overline{\Omega}$ and choose the point $x_0$ in the definition of $\tilde\omega$ and $\omega$ given in the statement of the lemma such that $x_0\in \partial\mathscr{C}\cap \partial\Ocal$. It is clear that $\tilde\omega \Subset \omega\subset \mathscr{C}$. Thus, we choose a point $x_1 \in \tilde\omega$ and $R_1>0$ such that the open ball $B(x_1,R_1)$ satisfies $B(x_1,R_1)\Subset \tilde\omega$ and we fix $d_0 = {\rm dist}(\tilde\omega, \Omega)$.
	
	{\bf Construction of an extension $\overline {\bf y}_e$ of $\overline{\y}$ in $\mathcal{O}$.}
	We then construct an extension $\overline {\bf y}_e\in {\bf C}^2([0,T]\times \mathbb{R}^2)$ of $\overline {\bf y}$ outside $\Omega_T$ (i.e $\overline {\bf y}_e\equiv \overline {\bf y}$ in $\Omega_T$)
	satisfying
	\begin{align}
		\label{Hyp-y-e}
		\norm{\overline{\y}_e}_{{\bf C}^2([0,T]\times \overline{\Ocal})} <\infty,
		\qquad
		\inf_{[0,T]\times \partial\mathcal{O}} \overline{\bf y}_e\cdot {\bf n} >0,
		\\
		\label{Smallness-y-e}
		 \hbox{ and } \quad  \overline{\y}_e\equiv 0 \quad \mbox{ in }\quad (0,T)\times \Tilde\omega.
	\end{align}
	Before going into the detailed construction of $\overline{\y}_e$, let us remark that $\overline{\y}_e$ cannot be divergence free as it would not be compatible with the condition $ \inf_{[0,T]\times \partial\mathcal{O}} \overline{\bf y}_e\cdot {\bf n} >0$.
	\\
	In order to construct such extension $\overline{\y}_e$, we proceed as follows. First, we consider any extension of $\overline\y$ in ${\bf C}^2([0,T]\times \mathbb{R}^2)$. By continuity, there exists $d_1 >0$ such that for all $(t, x) \in (0,T) \times \partial\mathcal{O} \hbox{ with } d(x, \Omega) <d_1$, $\overline{\y}(t,x) \cdot {\bf n} \geq \gamma/3$. We also introduce a function ${\bf m}$ in ${\bf C}^2([0,T] \times \R^2)$ such that ${\bf m} \cdot {\bf n} = 1$ on the whole boundary $\partial \Ocal$ and ${\bf m}\equiv 0$ in $\Tilde\omega$, and a smooth non-negative cut-off function $\eta= \eta(x)$ taking value $1$ in $\overline{\Omega}$ and $0$ for all $x \in \Ocal$ with $d(x, \Omega) > \min\{d_0,d_1\}$, and we then consider
	$$
		\overline{\y}_e(t,x) = \eta(x) \overline\y(t,x) + (1- \eta(x)) {\bf m}(x).
	$$
	This function indeed belongs to ${\bf C}^2([0,T]\times \mathbb{R}^2)$. Besides,
	$$
			\inf_{[0,T]\times \partial\mathcal{O}} \overline{\bf y}_e\cdot {\bf n} \geq \min\left\{ \frac{\gamma}{3}, 1\right\},
	$$
	and \eqref{Smallness-y-e} is trivially satisfied as ${\bf m}\equiv 0$ and $\eta\equiv 0$ in $\Tilde\omega$.
	
	{\bf Construction of $\tilde \psi$.} We then construct a function $\widehat\psi_T= \widehat\psi_T(x)$ such that
	\begin{itemize}
		\item $\widehat\psi_T$ is a non-negative $C^2(\overline{\Ocal})$ function;
		\item The critical points of $\widehat\psi_T$ all belong to $\Tilde\omega$;
		\item $\widehat\psi_T$ satisfies the following conditions on the boundary $\partial\mathcal{O}$:
			\begin{equation}
				\label{Boundary-Conditions-Psi-T}
				\left\{	
					\begin{array}{l}
						\widehat\psi_T(x) = 0 \hbox{ on } \partial \Ocal,
						\\
						\overline{\y}_e(T,x) \cdot \nabla \widehat\psi_T(x) = -1  \hbox{ on } \partial \Ocal,
						\\
						\partial_t \overline{\y}_e(T,x) \cdot \nabla \widehat\psi_T(x) -(\overline{\y}_e(T,x) \cdot \nabla)^2 \widehat\psi_T(x) = 0  \hbox{ on } \partial \Ocal.
					\end{array}
				\right.
			\end{equation}
		\item $\inf_{\Ocal} \widehat \psi_T = (\widehat\psi_T)_{|\partial \Ocal} = 0$.
	\end{itemize}
	Note that such function exists according to the construction of Fursikov and Imanuvilov in \cite{FursikovImanuvilov} suitably modified to handle the conditions on the first and second order derivatives on the boundary of $\Ocal$. This can be done easily following the lines of \cite[Appendix III]{TWbook}.
	
	We then consider the solution $\widehat\psi$ of
	\begin{equation}
		\label{Hat-Psi-Eq}
			\left\{
				\begin{array}{ll}
					\partial_t \widehat\psi + \overline{\y}_e \cdot \nabla \widehat\psi = 0 \quad &\hbox{ in } \Ocal_T,
					\\
					\widehat\psi(t,x) = t- T\quad  & \hbox{ on } \Gamma_T,
					\\
					\widehat\psi(T ) = \widehat\psi_T & \hbox{ in } \Ocal.
				\end{array}
			\right.
	\end{equation}
	Note that this problem is well-posed as, by construction, $\overline{\y}_e(t,x) \cdot {\bf n}>0$ for all $(t,x) \in (0,T) \times \partial \Ocal$. We then want to check that
	\begin{itemize}
		\item $\partial_\n \widehat \psi(t,x) \leq 0$ for $(t,x) \in (0,T) \times \partial \Ocal$;
		\item $\widehat \psi$ belongs to $C^2([0,T]\times \overline{\Ocal})$;
		\item For all $t \in [0,T]$, the critical points of $\widehat \psi(t,\cdot)$ belong to $\Tilde\omega$;
		\item For all $t\in [0,T]$, $\inf_{\Ocal} \widehat \psi(t,\cdot) = \widehat \psi(t)_{|\partial \Ocal}$;
	\end{itemize}
	
	Indeed, providing these properties are true, one can choose $a >0$ and $b \in \R$ such that the function $\tilde \psi = a \widehat\psi + b$ is suitable for Lemma \ref{Lem-Weight-Function}.
	
	Using the equation \eqref{Hat-Psi-Eq} and the fact that tangential derivatives of $\widehat\psi$ vanish due to the boundary conditions, we get, for all $(t,x) \in (0,T)\times \partial \Ocal$,
	$$
		\overline{\y}_e(t,x) \cdot \n\,  \partial_{\bf n} \widehat\psi(t,x) = - \partial_t \widehat \psi(t,x) =-1.
	$$
	Using \eqref{Hyp-y-e}, we thus deduce that
\begin{equation}\label{DerNormaleNegative}
\forall (t,x)\in (0,T) \times \partial\Ocal,\quad  \partial_{\bf n} \widehat\psi(t,x)\leq \frac{-1}{\displaystyle \inf_{[0,T]\times \partial\Ocal}\overline{\y}_e(t,x) \cdot \n}<0.
\end{equation}

%	$\partial_\n \widehat \psi \leq 0$ on the boundary $(0,T) \times \partial\Ocal$.
	
	To describe more precisely the function $\widehat\psi$, we will introduce the flow $\overline{X}_e$ corresponding to $\overline{\y}_e$, i.e. the solution of
	\begin{equation}
	\label{Def-FlowBAR-e}
		\forall (t,\tau,x)\in [0,T]^2\times \mathbb{R}^2,\quad  \partial_t \overline{X}_e(t,\tau,x)=\overline {\bf y}_e(t,  \overline{X}_e(t,\tau,x)),\quad \overline{X}_e(\tau,\tau,x)=x.
	\end{equation}

	The fact that $\widehat\psi \in C^2([0,T] \times \overline{\Ocal})$ follows from the following lemma, whose proof is postponed to Appendix \ref{Appendix-Reg-Weight}:
	\begin{lemma}
		\label{Lem-Reg-Weight}
		Under the above assumptions, $\widehat\psi \in C^2([0,T] \times \overline{\Ocal})$.
	\end{lemma}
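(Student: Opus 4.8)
The plan is to represent $\widehat\psi$ explicitly along the characteristics of the $C^2$ flow $\overline{X}_e$ from \eqref{Def-FlowBAR-e} and to read off its regularity from that formula, the compatibility conditions \eqref{Boundary-Conditions-Psi-T} turning out to be exactly the matching conditions between the two regimes that occur. Since $\overline{\y}_e(t,x)\cdot\n(x)>0$ on $(0,T)\times\partial\Ocal$ by \eqref{Hyp-y-e}, every characteristic crosses $\partial\Ocal$ at most once and transversally, from inside to outside as time increases; in particular it cannot re-enter $\Ocal$, and backward characteristics issued from interior points stay in $\Ocal$ down to $t=0$, so no datum at $t=0$ is needed. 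Tracing the characteristic through $(t,x)\in\overline{\Ocal_T}$ forward in time, it either stays in $\Ocal$ up to time $T$, or leaves $\overline\Ocal$ at a first exit time $t_{\rm out}(t,x)\in[t,T]$; since $\widehat\psi$ is constant along characteristics of \eqref{Hat-Psi-Eq} and equals $\widehat\psi_T$ on $\{T\}\times\Ocal$ and $t-T$ on $(0,T)\times\partial\Ocal$, this yields
$$
\widehat\psi(t,x)=
\begin{cases}
\widehat\psi_T\bigl(\overline{X}_e(T,t,x)\bigr), &\text{if }\overline{X}_e(s,t,x)\in\Ocal\text{ for all }s\in[t,T],\\[3pt]
t_{\rm out}(t,x)-T, &\text{otherwise},
\end{cases}
$$
which is well defined and continuous on $\overline{\Ocal_T}$ (where the two formulas meet, on $\partial\Ocal$ at time $T$, both give $0$ because $\widehat\psi_T=0$ on $\partial\Ocal$).

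On the relatively open set $\mathcal{A}$ on which $\overline{X}_e(\cdot,t,x)$ stays in the open set $\Ocal$ throughout $[t,T]$, the first formula is a composition of $C^2$ maps, hence $\widehat\psi\in C^2$ there. On the relatively open set on which the characteristic exits strictly before $T$, I would fix a $C^2$ defining function $\Phi$ of $\Ocal$ near $\partial\Ocal$ (with $\Ocal=\{\Phi>0\}$, $|\nabla\Phi|>0$ on $\partial\Ocal$) and apply the implicit function theorem to $\Phi(\overline{X}_e(s,t,x))=0$: at the exit point the $s$-derivative of $\Phi(\overline{X}_e(s,t,x))$ equals $\nabla\Phi\cdot\overline{\y}_e$, which is a negative multiple of $\overline{\y}_e\cdot\n>0$, so $t_{\rm out}$ is $C^2$ locally and $\widehat\psi=t_{\rm out}-T\in C^2$ there (the same transversality underlies \eqref{DerNormaleNegative}).

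The main point, and the main obstacle, is $C^2$ regularity across the remaining interface $\mathcal{B}$ of characteristics that touch $\partial\Ocal$ exactly at time $T$. Fix $(t_0,x_0)\in\mathcal{B}$ and $y_0=\overline{X}_e(T,t_0,x_0)\in\partial\Ocal$. After extending $\overline{\y}_e$ slightly beyond $t=T$ in $C^2$ (harmless, as only times near $T$ are used), the crossing time $\tau(t,x)$ of the characteristic through $(t,x)$ with $\partial\Ocal$ and the crossing point $y(t,x)=\overline{X}_e(\tau(t,x),t,x)\in\partial\Ocal$ are $C^2$ on a full neighbourhood $\mathcal{V}$ of $(t_0,x_0)$, with $\tau(t_0,x_0)=T$ (same transversality as above), and the flow semigroup property gives, on $\mathcal{V}$,
$$
\widehat\psi(t,x)=G\bigl(\tau(t,x),y(t,x)\bigr),\qquad
G(\tau,y)=
\begin{cases}
\tau-T, &\tau\le T,\\[3pt]
\widehat\psi_T\bigl(\overline{X}_e(T,\tau,y)\bigr), &\tau\ge T,
\end{cases}
$$
with $y$ in a $C^2$ chart of $\partial\Ocal$ around $y_0$ (and $\widehat\psi_T$ extended $C^2$ near $\overline\Ocal$). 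It then suffices to show $G\in C^2$ near $(T,y_0)$, i.e. that its two branches agree to order $2$ along $\{\tau=T\}$. Using the flow identities $\overline{X}_e(T,T,y)=y$, $\partial_\tau\overline{X}_e(T,\tau,y)\big|_{\tau=T}=-\overline{\y}_e(T,y)$ and $\partial_\tau^2\overline{X}_e(T,\tau,y)\big|_{\tau=T}=(\overline{\y}_e\cdot\nabla)\overline{\y}_e(T,y)-\partial_t\overline{\y}_e(T,y)$, together with $(\overline{\y}_e\cdot\nabla)^2\widehat\psi_T=(D^2\widehat\psi_T\,\overline{\y}_e)\cdot\overline{\y}_e+\bigl((\overline{\y}_e\cdot\nabla)\overline{\y}_e\bigr)\cdot\nabla\widehat\psi_T$, a direct computation shows that the three requirements that $G$, $\partial_\tau G$ and $\partial_\tau^2 G$ be continuous across $\{\tau=T\}$ are exactly $\widehat\psi_T=0$, $\overline{\y}_e(T,\cdot)\cdot\nabla\widehat\psi_T=-1$ and $\partial_t\overline{\y}_e(T,\cdot)\cdot\nabla\widehat\psi_T-(\overline{\y}_e(T,\cdot)\cdot\nabla)^2\widehat\psi_T=0$ on $\partial\Ocal$, that is, precisely the three conditions in \eqref{Boundary-Conditions-Psi-T}; the mixed and pure $y$-derivatives match trivially because the $\tau\le T$ branch is $y$-independent while $G(T^{+},\cdot)\equiv0$ and $\partial_\tau G(T^{+},\cdot)\equiv1$ on $\partial\Ocal$. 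Combining this with the previous paragraph and the compactness of $\overline{\Ocal_T}$ gives $\widehat\psi\in C^2(\overline{\Ocal_T})$. The delicate part is this crossing-of-$\mathcal{B}$ analysis: isolating the right $C^2$ coordinates $(\tau,y)$ valid on both sides of $\mathcal{B}$ — which is what forces extending the flow past $t=T$ — and carrying out the order-$2$ matching so that the algebra produced by differentiating the flow twice reproduces \eqref{Boundary-Conditions-Psi-T}.
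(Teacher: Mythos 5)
Your proof is correct and proceeds from the same explicit along-characteristic representation of $\widehat\psi$ that the paper uses, but the execution differs in a useful way, especially for the $C^2$ matching, which the paper actually leaves to the reader. For the regularity of $t_{\rm out}$ away from the interface you invoke the implicit function theorem applied to $\Phi(\overline{X}_e(s,t,x))=0$; the paper instead propagates $\nabla t_{\rm out}$ (and later $\nabla\widehat\psi$) by solving the linear transport ODE \eqref{Eq-nabla t-in} / \eqref{Eq-hat-psi-CritPoints} along characteristics. For the matching across the interface $\Sigma_T$ (your $\mathcal{B}$), the paper shows $C^1$ continuity by observing that $\nabla\widehat\psi$ computed from either side solves the same ODE \eqref{Eqhatpsi1} with the same terminal data \eqref{Eqhatpsi2}, the second compatibility condition in \eqref{Boundary-Conditions-Psi-T} being what makes the two terminal data agree; $C^2$ is then dispatched with ``follows the same path and is left to the reader.'' Your approach instead straightens the interface: extending the flow past $t=T$, introducing the $C^2$ coordinates $(\tau,y)$ valid on both sides, and reducing the whole question to $C^2$ regularity of the one-variable-plus-boundary piecewise function $G(\tau,y)$ across $\{\tau=T\}$. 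The payoff is that the three conditions in \eqref{Boundary-Conditions-Psi-T} are exposed as \emph{exactly} the order-$0$, order-$1$ and order-$2$ $\tau$-matching conditions for $G$ (your computations of $\partial_\tau\overline{X}_e(T,\tau,y)\big|_{\tau=T}=-\overline{\y}_e$ and $\partial_\tau^2\overline{X}_e(T,\tau,y)\big|_{\tau=T}=(\overline{\y}_e\cdot\nabla)\overline{\y}_e-\partial_t\overline{\y}_e$ are correct, and the identity $(\overline{\y}_e\cdot\nabla)^2\widehat\psi_T=D^2\widehat\psi_T[\overline{\y}_e,\overline{\y}_e]+((\overline{\y}_e\cdot\nabla)\overline{\y}_e)\cdot\nabla\widehat\psi_T$ makes the third condition appear as claimed), while the tangential and mixed derivatives match automatically because both $G(T,\cdot)$ and $\partial_\tau G(T,\cdot)$ are constant in $y$. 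This is a more transparent and self-contained route to the $C^2$ statement than the paper's ODE-propagation argument, at the modest cost of introducing the auxiliary extension of $\overline{\y}_e$ past $t=T$ and a boundary chart; it also makes clear why the paper needs precisely three compatibility conditions and would need a fourth for $C^3$.
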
	
	
	We then have to check that the critical points of $\widehat \psi(t,\cdot)$ all belong to $\Tilde\omega$.
	
	We first remark that \eqref{DerNormaleNegative} implies that
%	, if $x_c$ is a critical point for $%\widehat \psi(t_c,\cdot)$ for some $t_c \in [0,T]$, from \eqref{Hat-Psi-Eq} we also have $\partial_t \widehat\psi (t_c,x_c) = 0$. Due to the boundary conditions on $\partial\Ocal$, see \eqref{Hat-Psi-Eq}, this means that
there is no critical point on the boundary $\partial \Ocal$. We then remark that $\nabla \widehat\psi$ solves the equation
	\begin{equation}
		\label{Eq-hat-psi-CritPoints}
		\partial_t \nabla \widehat \psi + (\overline{\y}_e \cdot \nabla) \nabla \widehat \psi + D\overline{\y}_e \nabla \widehat\psi = 0 \quad \hbox{ in } \Ocal_T.
	\end{equation}
	From the equation \eqref{Eq-hat-psi-CritPoints}, if the point $x_c$ is a critical point for $\widehat\psi(t_c, \cdot)$, then for all $t$ in a neighborhood around $t_c$, $\overline{X}_e(t, t_c, x_c)$ is a critical point for $\widehat\psi(t, \cdot)$. Note that this neighborhood actually correspond to the set $I_c$ of time $t \in [0,T]$ such that the trajectory $\tau \mapsto \overline{X}_e(\tau, t_c, x_c)$ stays in $\Ocal$ for $\tau$ between $t$ and $t_c$.
	
	Since there is no critical point on the boundary $\partial \Ocal$ and thanks to conditions \eqref{Hyp-y-e}, for all time $t_c \in [0,T]$, the critical points $x_c$ of $\widehat\psi(t_c, \cdot)$ are linked by a trajectory $\tau \mapsto \overline{X}_e(\tau, t_c, x_c)$ to a critical point $x_{c,T}$ of $\widehat\psi_T$, that is $x_c = \overline{X}_e(t_c, T, x_{c,T})$. By construction of $\widehat\psi_T$, $x_{c,T}$ necessarily belongs to $\Tilde\omega$. But, according to condition \eqref{Smallness-y-e}, as long as $\overline{X}_e(t,T, x_{c,T})\in \Tilde\omega$,
	$$
		\partial_t \overline{X}_e(t, T, x_{c,T}) =0,
	$$
so that $\overline{X}_e(t, T, x_{c,T})=x_{c,T}$ for all $t\in [0,T]$. This implies that the set of critical points of $\widehat\psi(t,\cdot)$ is invariant through the flow $\overline{X}_e$ and is then included in $\Tilde\omega$.
	%the set  $\{ x \in \mathcal{O} \hbox{ with } d(x,\Omega) > 2r -R_1\}$. Accordingly, for $\tilde r \in (r, 2r -R_1)$, setting $\tilde \omega_T = (0,T)\times \{ x \in \Ocal,\, d(x, \Omega) > \tilde r\}$, assumption \eqref{Ass-Psi} is satisfied, and we can take $\omega_T =  (0,T)\times \{ x \in \Ocal,\, d(x, \Omega) >  r\}$.

	We finally check the condition $ \inf_{\Ocal} \widehat \psi(t,\cdot) = \widehat \psi(t)_{|\partial \Ocal}$ for all $t \in [0,T]$ by contradiction. If this were wrong, there would exist $t \in [0,T]$ and $x_t\in \Ocal$ such that $x_t \in \hbox{Argmin}\widehat\psi(t,\cdot)$. Thus, $x_t$ would be a critical point, and as above, $\overline{X}_e(T,t,x_t)$ would belong to $\Ocal$ and be a critical point of $\widehat\psi_T$. Following, $\widehat\psi(t,x_t) = \widehat\psi_T(\overline{X}_e(T, t,x_t))$ would be larger than $0$ due to the assumption on $\widehat\psi_T$. But from the boundary conditions, it follows that $\inf_\Ocal \widehat\psi(t)$ cannot be strictly smaller than $ \widehat \psi(t)_{|\partial \Ocal}$, which is negative for all time $t\in [0,T)$.
	
\subsection{Proof of Theorem \ref{Thm-Def-F-1}}\label{Sec-Proof-F-1}

	According to Section \ref{Sec-Density}, the construction in Section \ref{Sec-ControlledDensity}  yields $\rho = \mathscr{F}_1(\widehat \u, \rho_0)$ solution of the control problem \eqref{EqTransport} for $\widehat\u$ satisfying \eqref{Ass-On-U}. This condition is indeed satisfied for $\widehat\u \in {\bf X}_{s, \lambda}$ with $\norm{\widehat \u}_{{\bf X}_{s, \lambda}} \leq \varsigma$, see \eqref{Assumption-hat-u-L2-L2}--\eqref{Assumption-hat-u-L2-H2}--\eqref{Norm-L-infini}.
	
	Theorem \ref{Thm-Est-Rho} immediately provides estimate \eqref{Estimate-F-1}, as $\lambda e^{6\lambda (m+1)} \theta/2 \leq 3\varphi^*/4$, see \eqref{Phi-bounds}.
	
	We then focus on the proof of the compactness property. According to the construction in Section \ref{Sec-ControlledDensity}, we introduce $\rho_{f,n}$ the solution of
	\begin{equation}\label{Eq-rho-f-n}
	 \left\{
	\begin{array}{rlll}
		\partial_t\rho_{f,n}+(\overline {\bf y}+\widehat {\bf u}_n)\cdot \nabla \rho_{f,n} &=&- \widehat {\bf u}_n\cdot \nabla\overline{\sigma }&\mbox{ in } \Omega_T,
		\\
	        	\rho_{f,n}(t,x) &=&0&\mbox{ for } t \in (0,T),\, x \in \partial \Omega,
		\\
		& &  &\hspace{-5ex} \hbox{ with }\left( \overline {\bf y}(t,x)+\widehat {\bf u}_n(t,x)\right)\cdot {\bf n}(x) <0,
		\\
        		\rho_{f,n}(0)&=&\rho_0\quad &\mbox{ in }\Omega,
	\end{array}
	\right.
	\end{equation}
	$\rho_{b,n}$ the solution of
	\begin{equation}\label{Eq-rho-b-n}
	 \left\{
	\begin{array}{rlll}
		\partial_t\rho_{b,n}+(\overline {\bf y}+\widehat {\bf u}_n)\cdot \nabla \rho_{b,n} &=&- \widehat {\bf u}_n\cdot \nabla\overline{\sigma }&\mbox{ in } \Omega_T,
		\\
	        \rho_{b,n}&=& 0&\mbox{ for } t \in (0,T),\, x \in \partial \Omega,
	        \\
	        & &  & \hspace{-5ex} \hbox{ with } \left( \overline {\bf y}(t,x)+\widehat {\bf u}_n(t,x)\right)\cdot {\bf n}(x) >0,
	        \\
	        \rho_{b,n}(T)&= & 0\quad &\mbox{ in }\Omega,
	\end{array}
	\right.
	\end{equation}
	and $\chi_n$ the solution of
	\begin{equation}\label{Eq-chi-n}
	 \left\{
	\begin{array}{rlll}
		\partial_t\chi_n+(\overline {\bf y}+\widehat {\bf u}_n)\cdot \nabla \chi_n &=& 0&\mbox{ in } \Omega_T,
		\\
	        \chi_n &= &1_{t \in (0,T_0)}(t)&\mbox{ for } t \in (0,T),\, x \in \partial \Omega,
	        \\
	        & & & \hspace{-5ex} \hbox{ with }\left( \overline {\bf y}(t,x)+\widehat {\bf u}_n(t,x)\right)\cdot {\bf n}(x) <0,
	        \\
	        \chi_n(0)&= & 1\quad &\mbox{ in }\Omega.
	\end{array}
	\right.
	\end{equation}
	Since $\widehat {\bf u}_n$ is a bounded sequence of $H^1(0,T; \L^2(\Omega))\cap L^2(0,T; {\bf H}^2(\Omega))$, which is compact in $L^2(0,T; \L^2(\Omega))$, up to a subsequence still denoted the same for simplicity, $\widehat {\bf u}_n$ converge to $\widehat{\u} $ weakly in $H^1(0,T; \L^2(\Omega))\cap L^2(0,T; {\bf H}^2(\Omega))$ and strongly in $L^2(0,T; \L^2(\Omega))$. Then \cite[Theorem 4]{BoyerFabrie-2007} applies and for all $q\in [1,+\infty)$ the sequence
$\chi_n$ strongly converges towards $\chi$ in $L^q(\Omega_T)$ solution of \eqref{Eq-chi}.

	Next, to pass to the limit in \eqref{Eq-rho-f-n}, we notice that $\sigma_{f,n}\ov \overline{\sigma}+\rho_{f,n}$ solves
	\begin{equation}
		\label{Eqsigmafn}
	 \left\{
		\begin{array}{rlll}
			\partial_t\sigma_{f,n}+(\overline {\bf y}+\widehat {\bf u}_n)\cdot \nabla \sigma_{f,n} &=& 0&\mbox{ in } \Omega_T,
			\\
			\sigma_{f,n}(t,x) &=& 0&\mbox{ for } t \in (0,T),\, x \in \partial \Omega,
			\\
			& & &  \hbox{ with }\left( \overline {\bf y}(t,x)+\widehat {\bf u}_n(t,x)\right)\cdot {\bf n}(x) <0,
			\\
		        \sigma_{f,n}(0)&=& \rho_0\quad &\mbox{ in }\Omega.
		\end{array}
	\right.
	\end{equation}
	Thus, by applying again \cite[Theorem 4]{BoyerFabrie-2007} we deduce that, for all $q\in [1,+\infty)$, the sequence $\sigma_{f,n}$ is strongly convergent in $L^q(\Omega_T)$ to the solution $\sigma_{f}$ of
	\begin{equation}\label{Eqsigmaf}
	 \left\{
		\begin{array}{rlll}
			\partial_t\sigma_{f}+(\overline {\bf y}+\widehat {\bf u})\cdot \nabla \sigma_{f} &=&0&\mbox{ in } \Omega_T,
			\\
			\sigma_{f}(t,x) &=&0&\mbox{ for } t \in (0,T),\, x \in \partial \Omega,
			\\
			& & &  \hbox{ with }\left( \overline {\bf y}(t,x)+\widehat {\bf u}(t,x)\right)\cdot {\bf n}(x) <0,
			\\
		        \sigma_{f}(0)&=& \rho_0\quad &\mbox{ in }\Omega.
		\end{array}
	\right.
	\end{equation}
	It follows that $\rho_{f,n}$ strongly converges in all $L^q (\Omega_T)$ for $q \in [1, \infty)$ to $\rho_f = \sigma_f - \overline{\sigma}$, which solves \eqref{Eq-rho-f} by construction.
	
	Of course, the same can be done to show that $\rho_{b,n}$ strongly converges in all $L^q(\Omega_T)$ for $q \in [1, \infty)$ to the solution $\rho_b$ of \eqref{Eq-rho-b}. Consequently, the sequence $\rho_n =  \mathscr{F}_1(\widehat \u_n, \rho_0)$ converges to  $\rho =  \mathscr{F}_1(\widehat \u, \rho_0)$ in $L^q(\Omega_T)$ for all $q \in [1, \infty)$.
	
%\subsection{Proof of Theorem \ref{Thm-Def-F-2}}\label{Sec-Proof-F-2}
%
%	As explained in Section \ref{Sec-Velocity}, the map $\mathscr{F}_2$ will be given by restricting the controlled trajectory given by Theorem \ref{Thm-Stokes-Controlled} to $(0,T) \times \Omega$. Of course, this depends on the extension $\Ocal$ of $\Omega$, but this choice is done once for all. Estimate \eqref{Estimate-F-2} is then a rewriting of Theorem \ref{Thm-Stokes-Controlled}.
	
%	Let now $\f_n$ be a sequence of ${\bf F}_{s, \lambda}$ which weakly converges to $\f$ in ${\bf F}_{s, \lambda}$. According to \eqref{Bound-V-obs}, each $\f_n$ yields a minimizer $({\bf V}_n, P_n)$ of the corresponding functional $J_{St,n}$ in \eqref{Def-J-Stokes}, and the sequence $({\bf V}_n, P_n)$ is bounded in $\X_{St,Obs}$ (recall \eqref{Def-X-St-Obs}). Therefore, $({\bf V}_n, P_n)$ weakly converges in $\X_{St,Obs}$ to some $(\bf V, P)$ in $\X_{St, Obs}$. Then, one can pass to the limit in \eqref{Weak-Formulation-Stokes} and derive that $({\bf V}, P)$ solves the Euler-Lagrange equation corresponding to $J_{St}$ for the source term $\f$. As $J_{St}$ is strictly convex, this implies that $({\bf V}, P)$ is the minimizer of $J_{St}$ corresponding to $\f$. According to formula \eqref{Identification-U-H}, $\u_n = \mathscr{F}_2 ({\bf f}_n, \u_0) $ weakly converge to $\u = \mathscr{F}_2 ({\bf f}, \u_0) $ in $\mathcal{D}'((0,T)\times \Omega)$. According to \eqref{Estimate-F-2}, 
 $\u_n$ and $\u$ are uniformly bounded in ${\bf X}_{s, \lambda}$, so the convergence of $\u_n$ to $\u$ actually is weak in ${\bf X}_{s, \lambda}$.

\subsection{Proof of Theorem \ref{Thm-Prop-F}}\label{Sec-Proof-F}

	Let $\rho_0 \in L^\infty(\Omega)$, $\u_0 \in \V^1_0(\Omega)$ and $\widehat\u \in {\bf X}_{s, \lambda}$ with $\norm{\widehat\u}_{{\bf X}_{s, \lambda}} \leq \varsigma$.
	
	According to Theorem \ref{Thm-Def-F-1}, $\rho = \mathscr{F}_1(\widehat\u,\rho_0)$ belongs to $Y_{s,\lambda}$ and is bounded in that space by \eqref{Estimate-F-1}. Thus, according to Theorem \ref{Thm-Def-F-2}, for $\mathscr{F}$ to be well-defined, we have to check that ${\bf f}(\rho, \widehat\u)$ given in \eqref{Def-f} belongs to ${\bf F}_{s,\lambda}$, and we will get estimates on $\u = \mathscr{F}(\widehat\u)$ from an estimate of ${\bf f}(\rho, \widehat\u)$ in ${\bf F}_{s,\lambda}$ according to \eqref{Estimate-F-2}. We thus estimate ${\bf f}(\rho, \widehat\u)$ in ${\bf F}_{s,\lambda}$ term by term from estimates on $\rho \in Y_{s,\lambda}$ and $\widehat\u \in \X_{s, \lambda}$.
	\\
	We easily check
	\begin{align*}
		\lefteqn{
		\norm{\xi^{-2} e^{s\varphi} \rho(\partial_t {\widehat \u}+(\overline {\bf y}+{\widehat\u})\cdot \nabla {\widehat\u} +\widehat\u \cdot \nabla \overline\y)}_{L^2(\L^2)}
		}
		\\
		\leq
		&	
		\norm{e^{s\lambda e^{6\lambda(m+1)}\theta/2} \rho}_{L^\infty}
		\norm{\xi^{-2} e^{s\varphi- s\lambda e^{6\lambda(m+1)}\theta /2} (\partial_t {\widehat \u}+((\overline {\bf y}+{\widehat\u})\cdot \nabla)\widehat\u +\widehat\u \cdot \nabla \overline\y)}_{L^2(\L^2)}
		\\
		\leq
		&
		C\norm{\rho}_{Y_{s,\lambda}} \norm{e^{3s\varphi^*/4} \widehat\u}_{L^2({\bf H}^2)\cap H^1(\L^2)}
		\norm{\xi^{-2} e^{s\varphi- s\lambda e^{6\lambda(m+1)}\theta /2- 3s\varphi^*/4}}_{L^\infty},
	\end{align*}
	where we used that $\overline {\bf y}+{\widehat\u}$ is bounded in $L^\infty(0,T; \L^4(\Omega))$ due to Sobolev's embedding as $\widehat\u $ belongs to $L^2(0,T;{\bf H}^2(\Omega))\cap H^1(0,T; \L^2(\Omega))$ and is of norm bounded by $\varsigma$, and that
	$$
		\norm{e^{3s\varphi^*/4} \nabla \widehat\u}_{L^2(\L^4)} \leq C\norm{e^{3s\varphi^*/4} \widehat\u}_{L^2({\bf H}^2)\cap H^1(\L^2)}.
	$$
	According to \eqref{Phi-bounds}, $s\varphi- s\lambda e^{6\lambda(m+1)}\theta/2- 3s\varphi^*/4 \leq - s \varphi/4$, and thus there exists some constant $C$ independent of $s$ and $\lambda$ such that
	$$
		\norm{\xi^{-2} e^{s\varphi- s\lambda e^{6\lambda(m+1)}\theta/2- 3s\varphi^*/4}}_{L^\infty} \leq C.
	$$
	Following,
	\begin{equation}
	\label{Estimate-term-f-1}
		\norm{\xi^{-2} e^{s\varphi} \rho(\partial_t {\widehat \u}+(\overline {\bf y}+{\widehat\u})\cdot \nabla{\widehat\u} )}_{L^2(\L^2)}
		\leq C\norm{\rho}_{Y_{s,\lambda}} \norm{ \widehat\u}_{\X_{s, \lambda}}.
	\end{equation}
	
	Next, we estimate $\overline {\sigma} (\widehat \u \cdot \nabla)\widehat \u$. Similarly as above, we write
	\begin{align}
		%\lefteqn{
		\norm{\xi^{-2} e^{s\varphi}  \overline {\sigma} \widehat \u \cdot \nabla\widehat \u}_{L^2(\L^2)}
%		}
%		\\
		\leq
		&
		C
		\norm{e^{3 s \varphi^*/4}\widehat\u}_{L^\infty(\L^4)}
		 \norm{e^{3 s \varphi^*/4}\nabla\widehat\u}_{L^2(\L^4)}
		 \norm{\xi^{-2} e^{s \varphi- 3s \varphi^*/2}}_{L^\infty}
		 \notag
		 \\
		\leq
		&
		C \norm{ \widehat\u}_{\X_{s, \lambda}}^2.
		\label{Estimate-term-f-2}
	\end{align}
	
	Last, we estimate $\rho (\partial_t \overline {\bf y}+(\overline {\bf y}\cdot \nabla)\overline {\bf y})$:
	\begin{equation}
		\norm{\xi^{-2} e^{s\varphi} \rho (\partial_t \overline {\bf y}+(\overline {\bf y}\cdot \nabla)\overline {\bf y})}_{L^2(\L^2)}
		\leq
		C \norm{\xi^{-2} e^{ s \varphi}\rho}_{L^2}
		\leq
		C \norm{\rho}_{Y_{s, \lambda}}.
		\label{Estimate-term-f-3}
	\end{equation}
	
	Putting estimates \eqref{Estimate-term-f-1}--\eqref{Estimate-term-f-3} together, we obtain:
	\begin{equation}
		\label{Estimate-f-rho-u}
		\norm{\f(\rho,\widehat\u)}_{{\bf F}_{s, \lambda}}
		=  \norm{\xi^{-2} e^{s\varphi} \f(\rho,\widehat\u)}_{L^2(\L^2)}
		\leq
		C  (\norm{\rho}_{Y_{s, \lambda}}+ \norm{\rho}_{Y_{s,\lambda}}^2 + \norm{ \widehat\u}_{\X_{s, \lambda}}^2).
	\end{equation}	
	Combined with estimates \eqref{Estimate-F-1} and \eqref{Estimate-F-2}, this yields the well-posedness of the mapping $\mathscr{F}$ for $\widehat\u \in \X_{s,\lambda}$ with $\norm{\widehat\u}_{\X_{s, \lambda}} \leq \varsigma$ and the estimate \eqref{Estimate-f-Prop}.
	
	We now focus on the last part of Theorem \ref{Thm-Prop-F}. Let $\widehat\u_n$ is a sequence of $\X_{s, \lambda}$ with $\norm{\widehat\u_n}_{\X_{s,\lambda}} \leq \varsigma$ which weakly converges to $\widehat\u$. Note that this weak convergence implies that $\norm{\widehat\u}_{\X_{s,\lambda}} \leq \varsigma$, so that $\mathscr{F}(\widehat\u)$ is well-defined.
	
	Besides that, according to Theorem \ref{Thm-Def-F-1}, the sequence $\rho_n = \mathscr{F}_1(\widehat\u_n, \rho_0)$ strongly converges in all $L^q(\Omega_T)$ with $q< \infty$ to $\rho = \mathscr{F}_1(\widehat\u, \rho_0)$ and the sequence $\rho_n$ is uniformly bounded in $Y_{s,\lambda}$.
	
	We then have to check that $\f(\rho_n, \widehat\u_n)$ weakly converges in ${\bf F}_{s, \lambda}$ to $\f(\rho, \widehat\u)$. But \eqref{Estimate-f-rho-u} shows that the sequence $\f(\rho_n, \widehat\u_n)$ is bounded in ${\bf F}_{s,\lambda}$, and thus we only need to prove that the sequence $\f(\rho_n, \widehat\u_n)$ weakly converges in $\mathcal{D}'(\Omega_T)$ to $\f(\rho, \widehat\u)$. To obtain this convergence result in $\mathcal{D}'(\Omega_T)$, as $\rho_n$ strongly converges to $\rho$ in all $L^q(\Omega_T)$ with $q <\infty$ and $\widehat\u_n$ weakly converges to $\widehat\u$ in $H^1(0,T; \L^2(\Omega))\cap L^2(0,T;{\bf H}^2(\Omega))$, we only have to focus on the convergence of the term $(\overline\sigma+\rho_n) \widehat\u_n \cdot \nabla \widehat\u_n$. But, using the compactness of $H^1(0,T; \L^2(\Omega))\cap L^2(0,T;{\bf H}^2(\Omega))$ in $L^4(0,T; \L^4(\Omega))$, we have the convergences
	\begin{align*}
		\overline\sigma + \rho_n & \underset{n\to\infty}\longrightarrow  \overline\sigma+\rho \quad  && \hbox{ strongly in } L^q(\Omega_T), \, q \in [1, \infty),
		\\
		\widehat\u_n  &\underset{n\to\infty}\longrightarrow  \widehat \u \quad && \hbox{ strongly in } L^4(0,T; \L^4(\Omega)),
		\\
		\nabla \widehat\u_n &\underset{n\to\infty}\longrightarrow  \nabla\widehat \u \quad && \hbox{ weakly in } L^2(0,T; \L^2(\Omega)),
	\end{align*}
	so that, choosing $q =4$ for instance, we obtain the weak convergence of $(\overline\sigma+\rho_n) \widehat\u_n \cdot \nabla \widehat\u_n$ to $(\overline\sigma+\rho) \widehat\u \cdot \nabla \widehat\u$.
	
	Following, $\f(\rho_n, \widehat\u_n)$ weakly converges in ${\bf F}_{s, \lambda}$ to $\f(\rho, \widehat\u)$ and, since $\mathscr{F}_2:{\bf F}_{s, \lambda} \times {\bf V}^1_0(\Omega)  \to {\bf X}_{s, \lambda}$ is a linear bounded operator, we obtain that $\u_n = \mathscr{F}(\widehat\u_n)=\mathscr{F}_2(\f(\rho_n, \widehat\u_n), \u_0)$ weakly converges to $\mathscr{F}_2(\f(\rho, \widehat\u) , \u_0) = \mathscr{F}(\widehat\u)=\u$ in $\X_{s, \lambda}$. Finally, as $\X_{s,\lambda}$ is compact in $L^2(0,T; \L^2(\Omega))$, $\u_n$ strongly converges to $u$ in $L^2(0,T; \L^2(\Omega))$.
	
%	Following, $\f(\rho_n, \widehat\u_n)$ weakly converges in ${\bf F}_{s, \lambda}$ to $\f(\rho, \widehat\u)$ and we can then apply Theorem \ref{Thm-Def-F-2}: the sequence $\u_n = \mathscr{F}(\widehat\u_n)$ weakly converges to $\u = \mathscr{F}(\widehat\u)$ in $\X_{s, \lambda}$.
	
%%%%%%%%%%%%%%%%%%%%%%%%%%%%%%%%%%%%%%%%%%%%%%%%%%%%%%%%%%%%%%%%%%%%%%%%%%%%%%%%%%%%%%
%%%%%%%%%%%%%%%%%%%%%%%%%%%%%%%%%%%%%%%%%%%%%%%%%%%%%%%%%%%%%%%%%%%%%%%%%%%%%%%%%%%%%%%%%%%%%%%%%%%%%%%%%%%%%%%%%%%%%%%%%%%%%%%%%%%%%%%%%%%%%%%%%%%%%%%%%%%%%%%%%%%%%%%%%%%%
\appendix
\section{Proofs of Theorems \ref{CarlemanThm} and \ref{Thm-Est-Y-Carl-Norms}}\label{Section-Proofs}
For simplicity, we make the proof of Theorems \ref{CarlemanThm} and \ref{Thm-Est-Y-Carl-Norms} for $\nu$ of equal to $1$.This can be done without loss of generality by replacing $\overline{\sigma}$ and $f$ by $\overline{\sigma}/\nu$ and $f/\nu$ if needed.

\subsection{Proof of Theorem \ref{CarlemanThm}}\label{Sec-Proof-Carleman-Heat}

	Let $z$ be a smooth function on $[0,T] \times \overline \Ocal$ satisfying $z = 0 $ on $(0,T) \times \partial \Ocal$ and set
	
	\begin{equation}
		\label{TheSourceTerm}
				f\ov - \overline \sigma \partial_t z - \Delta z, \quad  (t,x) \in (0,T) \times \Ocal,
	\end{equation}
	
	Set then
	\begin{equation}
		\label{ConjugateVariable}
		w  = e^{-s \varphi} z.
	\end{equation}
	%\Rem{Argh, ce $w$ n'a rien a voir avec le rotationnel !!! maybe note the rotational as $\omega$ }
	According to the definition of $\theta$ in \eqref{ThetaMu}, $w$ satisfies
	\begin{equation}
		\label{ConditionsOnV}
		w(T, x) = 0, \quad \nabla w(T, x) = 0, \quad x \in \Ocal,
	\end{equation}
	in addition to the conditions $w(t, x) = 0$ on $(0,T) \times \partial \Ocal$.
	
	Besides, with $f$ as in \eqref{TheSourceTerm}, $w$ satisfies
	\begin{align*}
		e^{-s \varphi} f
		&
		= e^{-s \varphi}\left( - \overline \sigma \partial_t z - \Delta z\right)
				= e^{-s \varphi}\left(- \overline \sigma \partial_t (e^{s \varphi} w) - \Delta (e^{s \varphi} w) \right) = P_\varphi w,
	\end{align*}
	where the operator $P_\varphi$ is given by
	\begin{equation}
	\label{ConjugateOperator}
		P_\varphi w =
		- \overline \sigma \partial_t w - s \overline \sigma \partial_t \varphi w - \Delta w - 2 s \nabla \varphi\cdot \nabla w - s^2 |\nabla \varphi|^2 w  - s \Delta \varphi w.
	\end{equation}
	
	We now set $P_1$, $P_2$ and $R$ the operators:
	\begin{eqnarray}
		P_1 w & = &
		- \overline \sigma \partial_t w  - 2 s \nabla \varphi\cdot \nabla w  + 2 s \lambda^2 |\nabla \psi|^2 \xi w,
		\label{P_1}
		\\
		P_2 w & = &
		 - \Delta w   - s \overline \sigma \partial_t \varphi w-  s^2 |\nabla \varphi|^2 w,
		\label{P_2}
		\\
		R w & = &
		s \lambda \Delta  \psi \xi w - s \lambda^2 |\nabla \psi|^2\xi w,
		\label{R}
	\end{eqnarray}
	so that
	$$
		P_\varphi = P_1 + P_2 + R.
	$$
	We then use that $P_1 w + P_2 w = f e^{-s \varphi} - Rw$ and then
	\begin{multline}
		\label{Square-Pi}
		\iint_{\Ocal_T} |P_1 w|^2
		+
		\iint_{\Ocal_T} |P_2 w|^2
		+
		2 \iint_{\Ocal_T} P_1 w P_2 w
		\\
		=
		\iint_{\Ocal_T} |f e^{- s\varphi} - R w|^2
		\leq 2 \iint_{\Ocal_T} |f|^2 e^{-2 s \varphi} + 2 \iint_{\Ocal_T} |R w|^2.
	\end{multline}
	The main part of the proof then consists in computing the scalar product of $P_1 w$ with $P_2 w$ and estimate it from below.

	{\bf Computations.} We write
	$$
		\iint_{\Ocal_T} P_1 w P_2 w = \sum_{i, j = 1}^3 I_{ij},
	$$
	where $I_{i,j}$ is the scalar product of the $i$-th term of $P_1 w$ with the $j$-th term of $P_2 w$.
	
	{\it Computation of $I_{11}$.}
	\begin{eqnarray}
		I_{11}
		& = &
		 \iint_{\Ocal_T} \overline \sigma \partial_t w \Delta w
		=
		- \iint_{\Ocal_T} \overline \sigma \partial_t \left( \frac{|\nabla w|^2}{2}\right) - \iint_{\Ocal_T} \partial_t w \nabla \overline \sigma \cdot \nabla w
		\notag\\
		& = &
		 \frac{1}{2} \int_\Ocal \overline \sigma(0) |\nabla w(0)|^2 + \frac{1}{2}\iint_{\Ocal_T} \partial_t \overline \sigma |\nabla w|^2 - \iint_{\Ocal_T} \partial_t w \nabla \overline \sigma \cdot \nabla w.
		\label{I-1-1}
	\end{eqnarray}
	
	{\it Computation of $I_{12}$.}
	\begin{align}
		I_{12}
		&=
		s \iint_{\Ocal_T} \overline \sigma^2 \partial_t w \partial_t \varphi w
		\notag\\
		&=
		- \frac{s}{2}\int_\Ocal \overline \sigma^2(0) \partial_t \varphi(0) |w(0)|^2   - \frac{s}{2} \iint_{\Ocal_T} \overline \sigma^2 \partial_{tt} \varphi |w|^2
		\label{I-1-2}
		- s \iint_{\Ocal_T} \overline \sigma \partial_t \overline \sigma \partial_{t} \varphi |w|^2.
	\end{align}

	{\it Computation of $I_{13}$.}
	\begin{eqnarray}
		I_{13}
		& = &
		s^2 \iint_{\Ocal_T} \overline \sigma \partial_t w  |\nabla \varphi|^2 w \notag
		\\
		& =&
		-\frac{s^2}{2}  \int_\Ocal  \overline \sigma(0) |\nabla \varphi(0)|^2|w(0)|^2 -\frac{s^2}{2} \iint_{\Ocal_T} \overline \sigma \partial_t\left( |\nabla \varphi|^2 \right) |w|^2
		\label{I-1-3}
		\\
		& & - \frac{s^2}{2} \iint_{\Ocal_T} \partial_t \overline \sigma |\nabla \varphi|^2 |w|^2.\notag
	\end{eqnarray}

	{\it Computation of $I_{21}$.}
	\begin{eqnarray}
		I_{21}
		& = &
		2 s \iint_{\Ocal_T} \nabla \varphi\cdot \nabla w  \Delta w
		\notag
		\\
		& = &
		2 s \int_{\Gamma_T} \partial_{\bf n} \varphi |\partial_{\bf n} w|^2  -2s  \iint_{\Ocal_T} \nabla \left( \nabla \varphi\cdot \nabla w\right) \cdot \nabla w
		\notag
		\\
		& = &
		2 s \int_{\Gamma_T} \partial_{\bf n} \varphi |\partial_{\bf n} w|^2  -2s  \iint_{\Ocal_T} D^2\varphi( \nabla w, \nabla w)  - s  \iint_{\Ocal_T} \nabla \varphi\cdot \nabla \left( |\nabla w|^2 \right)
		\notag
		\\
		& = &
		s \int_{\Gamma_T} \partial_{\bf n} \varphi |\partial_{\bf n} w|^2  -2s  \iint_{\Ocal_T} D^2\varphi( \nabla w, \nabla w) + s  \int_\Ocal \Delta \varphi  |\nabla w|^2.
		\label{I-2-1}
	\end{eqnarray}

	{\it Computation of $I_{22}$.}
	\begin{eqnarray}
		I_{22}
		& = &
		2 s^2 \iint_{\Ocal_T} \overline \sigma \nabla \varphi\cdot \nabla w  \partial_t \varphi w
		=
		- s^2 \iint_{\Ocal_T} \div (\overline \sigma \partial_t \varphi \nabla \varphi) |w|^2
		\notag\\
		& = &
		- s^2 \iint_{\Ocal_T} \overline \sigma \div ( \partial_t \varphi \nabla \varphi) |w|^2
		- s^2 \iint_{\Ocal_T} \nabla \overline \sigma \cdot \nabla \varphi  \partial_t \varphi |w|^2.
		\label{I-2-2}
	\end{eqnarray}

	{\it Computation of $I_{23}$.}
	\begin{equation}
		I_{23}
		 =
		2 s^3 \iint_{\Ocal_T} \nabla \varphi\cdot \nabla w  |\nabla \varphi|^2 w
		=
		- s^3 \iint_{\Ocal_T} \div\left( |\nabla \varphi|^2 \nabla \varphi\right) |w|^2.
		\label{I-2-3}
	\end{equation}	

	{\it Computation of $I_{31}$.}
	\begin{align}
		I_{31}
		&=
		 - 2 s \lambda^2 \iint_{\Ocal_T} |\nabla  \psi|^2 \xi w \Delta w
		\notag
		\\
		& =
		  2 s \lambda^2 \iint_{\Ocal_T} |\nabla  \psi|^2 \xi  |\nabla w|^2 + 2 s \lambda^2 \iint_{\Ocal_T} \nabla(|\nabla  \psi|^2 \xi ) w \cdot \nabla w.
		 \label{I-3-1}
	\end{align}

	{\it Computation of $I_{32}$.}
	\begin{equation}
		I_{32}
		=
		- 2 s^2 \lambda^2 \iint_{\Ocal_T} \overline \sigma |\nabla  \psi|^2 \xi \partial_t \varphi |w|^2.
		\label{I-3-2}
	\end{equation}

	{\it Computation of $I_{33}$.}
	\begin{equation}
		I_{33} = - 2 s^3 \lambda^2 \iint_{\Ocal_T} |\nabla  \psi|^2 \xi |\nabla \varphi|^2 |w|^2.
		\label{I-3-3}
	\end{equation}
	
	Combining the above computations \eqref{I-1-1}--\eqref{I-3-3}, we obtain the following:
	\begin{align}
		& \iint_{\Ocal_T} P_1 w P_2 w
		\notag
		\\
		=
		& \frac{1}{2} \int_\Ocal \overline \sigma(0) |\nabla w(0)|^2 +  \frac{1}{2} \int_\Ocal |w(0)|^2 \overline \sigma(0) \left(-s^2  |\nabla \varphi(0)|^2 - s \overline \sigma(0) \partial_t \varphi(0) \right)
		\label{I-t=0}
		 \\
		 & -2 s \iint_{\Ocal_T} D^2 \varphi(\nabla w, \nabla w) + s \iint_{\Ocal_T} (\Delta \varphi + 2 \lambda^2 |\nabla  \psi|^2 \xi )  |\nabla w|^2
		\label{I-Gradw}		
		\\
		& + \iint_{\Ocal_T} |w|^2
		\Bigg(
			s^3 \left( - \div (|\nabla \varphi|^2 \nabla \varphi)  - 2 \lambda^2 |\nabla  \psi|^2 \xi |\nabla \varphi|^2 \right)
		\label{I-w-s3}
		\\
		&\phantom{+ \int_0^T \int_\Ocal |w|}	+
			s^2 \overline \sigma \left( - \partial_t \left(|\nabla \varphi|^2\right)  - (\Delta \varphi+ 2\lambda^2 |\nabla  \psi|^2 \xi) \partial_t \varphi \right)
		\label{I-w-s2}
		\\
		&\phantom{+ \int_0^T \int_\Ocal |w|}	
			+
			s \overline{\sigma}^2\left(- \frac{1}{ 2} \partial_{tt} \varphi \right)
		\Bigg)
		\label{I-w-s}
		 \\
		 & +s \int_0^T \int_{\partial \Ocal} \partial_{\bf n} \varphi |\partial_{\bf n} w |^2 		
		 \label{I-boundary}
		 + I_{\mathcal{R}}.
	\end{align}
	where
	\begin{multline}
	I_{\mathcal{R}} =
	\frac{1}{2} \iint_{\Ocal_T} \partial_t \overline{\sigma} |\nabla w|^2 + 2 s \lambda^2 \iint_{\Ocal_T} \nabla(|\nabla  \psi|^2 \xi ) w \cdot \nabla w
	- s\iint_{\Ocal_T} \overline \sigma \partial_t \overline \sigma \partial_t \varphi |w|^2
		\label{Def-mathcalR}
	\\
	 - \frac{s^2}{2} \iint_{\Ocal_T} \partial_t \overline \sigma |\nabla \varphi|^2 |w|^2
	- s^2 \iint_{\Ocal_T} \nabla \overline \sigma \nabla \varphi \partial_t \varphi |w|^2
	- \iint_{\Ocal_T} \partial_t w \nabla \overline \sigma \cdot \nabla w.
	\end{multline}
	
	{\bf Positivity.} Our main goal now is to check that the coefficients in the above integrals are positive, except perhaps on the observation set $\omega_T$. At this step, we will strongly rely upon the choice of the weight function $\varphi$ in \eqref{Phi-Xi}, and on the formula
	\begin{equation}
	\label{ExpressDtPhi}
		\partial_t \varphi = \frac{\partial_t \theta}{\theta} \varphi - \lambda \partial_t  \psi \xi, \quad \partial_t \xi = \frac{\partial_t \theta}{\theta} \xi + \lambda \partial_t  \psi \xi.
	\end{equation}
	
	In the following, to simplify notations, we will denote by $C$ generic positive large constants that do not depend on $s$ or $\lambda$ and by $c$ generic positive small constants independent of $s$ and $\lambda$. The constants may change from line to line.

	{\it Positivity of the terms \eqref{I-t=0} at $t = 0$.} Explicit computations yield
	$$
		- \partial_t \varphi(0)
		= \frac{\mu}{T_0} ( \lambda e^{6\lambda(m+1)} - e^{\lambda \psi(0)}) +  2\lambda \partial_t \psi(0) e^{\lambda \psi(0)}
		 \geq c s \lambda^3 e^{\lambda (12m +2 )}
	$$
	whereas
	$$
		|\nabla \varphi(0)|^2 \leq C \lambda^2 |\xi(0)|^2 \leq C \lambda^2 e^{2\lambda (6m+1)}.
	$$
	Thus, with \eqref{Positivity}, for some $\lambda_1>0$, taking $\lambda \geq \lambda_1\geq 1$,
	\begin{equation}
		\label{Positivity-t=0-coeff}
		\inf_{\overline \Ocal} \left\{ -s^2 |\nabla \varphi(0)|^2 - s \overline \sigma(0) \partial_t \varphi(0)\right\} \geq c s^2 \lambda^3 e^{2 \lambda (6 m +1)},
	\end{equation}
	and, following,
	\begin{equation}
		\label{Positivity-t=0}
		 \frac{1}{2} \int_\Ocal \overline \sigma(0) |w(0)|^2 \left(-s^2 |\nabla \varphi(0)|^2 - s \overline \sigma(0)\partial_t \varphi(0) \right)
		  \geq cs^2 \lambda^3 e^{2 \lambda (6 m +1)} \int_\Ocal |w(0)|^2.
	\end{equation}
	{\it Positivity of the terms \eqref{I-Gradw} involving the gradient.} For $\eta \in \R^N$, we have
	\begin{multline}
		\label{Comp-Dw}
		- 2 s D^2 \varphi( \eta, \eta) + s (\Delta \varphi+ 2 \lambda^2 |\nabla  \psi|^2 \xi) |\eta|^2
		\\
		= 2 s \lambda^2 \xi |\nabla  \psi \cdot \eta|^2 + s \lambda^2 \xi |\nabla  \psi |^2 |\eta|^2 + 2 s \lambda \xi D^2  \psi(\eta, \eta)  - s \lambda \xi \Delta  \psi |\eta|^2.
	\end{multline}
	Using \eqref{Ass-Psi}, we get the existence of $\lambda_2 = \lambda_2(\alpha, \| D^2  \psi\|_\infty)\geq \lambda_1$ such that for all $\lambda \geq \lambda_2$ and $\eta \in \R^N$,
	\begin{equation}
		\label{Est-GradW-below-OutsideOcal}
		\forall (t,x) \in \Ocal_T \setminus \tilde \omega_T,
%		\\
		\quad
		- 2 s D^2 \varphi( \eta, \eta) + s (\Delta \varphi+ 2 \lambda^2 |\nabla  \psi|^2 \xi) |\eta|^2 \geq  c s \lambda^2 |\eta|^2 \xi,
	\end{equation}
	whereas there exists a positive constant $C= C(\alpha, \|D^2  \psi\|_\infty)$ such that
	\begin{equation*}
		%\label{Est-GradW-below-InsideOcal}
		\forall \eta \in \R^N,\, \forall (t,x) \in \tilde \omega_T,\\
		\quad
		 - 2 s D^2 \varphi( \eta, \eta) + s (\Delta \varphi+ 2 \lambda^2 |\nabla  \psi|^2 \xi) |\eta|^2 \geq c s \lambda^2 \xi |\eta|^2 - C s \lambda^2 \xi |\eta|^2.
	\end{equation*}
	Hence we obtain, for all $\lambda \geq \lambda_1$,
	\begin{multline}
		\label{Positivity-GradW}
		 -2 s \iint_{\Ocal_T} D^2 \varphi(\nabla w, \nabla w) + s \iint_{\Ocal_T} (\Delta \varphi+ 2 \lambda^2 |\nabla  \psi|^2 \xi) |\nabla w|^2
		\\
		\geq
		cs \lambda^2 \iint_{\Ocal_T}  \xi |\nabla w|^2  - C s \lambda^2 \iint_{\tilde \omega_T}  \xi |\nabla w|^2.
	\end{multline}

	{\it Positivity of the terms \eqref{I-w-s3} involving $w$ with scale $s^3$.} Using $\nabla \varphi = -\lambda \nabla \psi \xi$, we have
	\begin{align*}
		 - \div (|\nabla \varphi|^2 \nabla \varphi) &= {3\lambda^4} |\nabla \psi|^4 \xi^3 + \lambda^3 \xi^3 \div(|\nabla \psi|^2 \nabla \psi),
		\\
		  \lambda^2|\nabla  \psi|^2 \xi |\nabla \varphi|^2 &= \lambda^4 |\nabla \psi|^4 \xi^3.
	\end{align*}
	Hence
	\begin{equation}
		\label{Comp-w-s3}
		- \div (|\nabla \varphi|^2 \nabla \varphi)  - 2 \lambda^2|\nabla  \psi|^2 \xi |\nabla \varphi|^2
		=   \lambda^4  |\nabla  \psi|^4 \xi^3  +   \lambda^3 \xi^3\div(|\nabla \psi|^2 \nabla \psi).
	\end{equation}
	Using \eqref{Ass-Psi}, we thus get the existence of $\lambda_3 = \lambda_3(\alpha, \|D^2  \psi\|_\infty)\geq \lambda_2$ such that for $\lambda \geq \lambda_3$,
	\begin{equation}
		\label{Est-W-s3-below-OustideOcal}
		\forall (t,x) \in  \Ocal_T \setminus \omega_T,\quad
		- \div (|\nabla \varphi|^2 \nabla \varphi)  - 2 \lambda^2|\nabla  \psi|^2 \xi |\nabla \varphi|^2 \geq c \lambda^4 \xi^3.
	\end{equation}
	whereas there exists a positive constant $C = C (\alpha, \|D^2  \psi\|_\infty )$ such that
	\begin{equation}
		\label{Est-W-s3-below-InsideOcal}
		\forall (t,x) \in {\tilde \omega_T},\quad
				- \div (|\nabla \varphi|^2 \nabla \varphi)  - 2 \lambda^2|\nabla  \psi|^2 \xi |\nabla \varphi|^2 \geq c \lambda^4 \xi^3 - C \lambda^4 \xi^3.
	\end{equation}
	We thus obtain, for all $\lambda \geq \lambda_3$,
	\begin{multline}
		\label{Positivity-w-s3}
		s^3  \iint_{\Ocal_T} |w|^2 \left( - \div (|\nabla \varphi|^2 \nabla \varphi)  - 2 \lambda^2|\nabla  \psi|^2 \xi |\nabla \varphi|^2 \right)
		\\
		\geq
		c s^3 \lambda^4 \iint_{\Ocal_T}  \xi^3 |w|^2 - C s^3 \lambda^4 \iint_{{\tilde \omega_T}} \xi^3 | w|^2.
	\end{multline}

	{\it Terms \eqref{I-w-s2} involving $w$ in the scale $s^2$.} We have to estimate
	$$
		 - \partial_t \left( |\nabla \varphi|^2 \right) -( \Delta \varphi +2 \lambda^2|\nabla  \psi|^2 \xi) \partial_t \varphi.
	$$
	Explicit computations yield:
	\begin{eqnarray}
		\lefteqn{ - \partial_t \left( |\nabla \varphi|^2 \right) -( \Delta \varphi +2 \lambda^2|\nabla  \psi|^2 \xi) \partial_t \varphi} \notag
		 \\
		  &\qquad =&
		  -  \lambda^3 \xi^2 \partial_t  \psi |\nabla  \psi|^2  - 2 \lambda^2 \xi^2 \nabla  \psi \cdot \nabla \partial_t  \psi - \lambda^2 \xi^2 \partial_t  \psi \Delta  \psi
		  \label{coeff-w-s2-l1}
		   \\
		  & &+\frac{\partial_t \theta}{\theta} \left( - \lambda^2 \xi \varphi |\nabla  \psi|^2 + \lambda \xi \Delta  \psi \varphi -2 \lambda^2 \xi^2 |\nabla\psi|^2 \right). \label{coeff-w-s2-l2}
%		  \\
%		  & &+\frac{\partial_t \theta}{\theta} \left( - 2 \lambda^2 \xi^2 |\nabla  \psi|^2 - \lambda^2 \xi \varphi |\nabla  \psi|^2 + \lambda \xi \Delta  \psi \varphi \right) .
%		   \label{coeff-w-s2-l2}
	\end{eqnarray}
	Before going further, let us remark that, using $\xi \geq 1$, there exists a positive constant $C$, only depending on the $C^2$-norm of $\psi$ such that for all $\lambda \geq 1$, for all $(t,x) \in (0,T) \times \Ocal$,
	\begin{equation*}
		\left|-  \lambda^3 \xi^2 \partial_t  \psi |\nabla  \psi|^2  - 2 \lambda^2 \xi^2 \nabla  \psi \cdot \nabla \partial_t  \psi - \lambda^2 \xi^2 \partial_t  \psi \Delta  \psi
		-2 \lambda^2 \xi^2 |\nabla\psi|^2
		\right|
		\leq
		C \lambda^3 \xi^3.
	\end{equation*}
	This estimate is sufficient to handle the terms in \eqref{coeff-w-s2-l1}.
	
	We will then focus on the terms in \eqref{coeff-w-s2-l2}. First remark that on $(T_0, T- 2T_1)$, $\partial_t \theta \equiv 0$, so the term in \eqref{coeff-w-s2-l2} simply vanishes.
	
	On $(T-2T_1, T)$, we use the fact that there exists a constant $C>0$ such that
	$$
		\forall t \in (T- 2T_1, T), \qquad |\partial_t \theta| \leq C \theta^2.
	$$
	Hence there exists $C = C (\|\nabla  \psi\|_\infty, \| \Delta  \psi\|_\infty) $ such that for all $(t,x) \in (T-2 T_1, T)\times \Ocal$,
%	\begin{equation}
%		\left| \frac{\partial_t \theta}{\theta} \left( - 2 \lambda^2 \xi^2 |\nabla  \psi|^2 - \lambda^2 \xi \varphi |\nabla  \psi|^2 + \lambda \xi \Delta  \psi \varphi \right)\right|
%		\\
%		\leq
%		C \theta \lambda^2 \xi^2 + C \lambda^2 \theta \xi \varphi
%		\leq
%		C \lambda^3 \xi^3.
%		\label{coeff-w-s2-t=T}
%	\end{equation}
		\begin{equation}
		\left| \frac{\partial_t \theta}{\theta} \left( - \lambda^2 \xi \varphi |\nabla  \psi|^2 + \lambda \xi \Delta  \psi \varphi -2 \lambda^2 \xi^2 |\nabla\psi|^2\right)\right|
		\\
		\leq
		C \lambda^2 \theta \xi \varphi
		\leq
		C \lambda^3 \xi^3,
		\label{coeff-w-s2-t=T}
	\end{equation}
	where for the last inequality we have used $|\theta\varphi|\leq \lambda \xi^2$, which is a consequence of \eqref{Phi-bounds}.

	On $(0,T_0)$, we are going to use that $\partial_t \theta \leq 0$ and $\theta \in [1, 2]$ and thus the term in \eqref{coeff-w-s2-l2} has the good sign outside ${\tilde \omega_T}$. Indeed, using \eqref{Ass-Psi}, we can find $\lambda_4 = \lambda_4( \alpha, \| \Delta  \psi\|_\infty) \geq \lambda_3$ such that for all $\lambda \geq \lambda_4$, for all $(t,x) \in (0,T_0) \times \Ocal$ such that $(t,x) \notin {\tilde \omega_T}$,
%	\begin{equation*}
%		-\left( - 2 \lambda^2 \xi^2 |\nabla  \psi|^2 - \lambda^2 \xi \varphi |\nabla  \psi|^2 + \lambda \xi \Delta  \psi \varphi \right)
%		 \geq
%		c \lambda^2 \xi^2  +c \lambda^2  \xi \varphi
%		\geq
%		c \lambda^2 \xi \varphi,
%	\end{equation*}
	\begin{equation*}
		-\left( - \lambda^2 \xi \varphi |\nabla  \psi|^2 + \lambda \xi \Delta  \psi \varphi -2 \lambda^2 \xi^2 |\nabla\psi|^2 \right)
		 \geq
		c \lambda^2  \xi \varphi,
	\end{equation*}
	whereas it is bounded by $C\lambda^2 \xi \varphi$ everywhere in $\Ocal_T$. We thus derive, for all $\lambda \geq \lambda_4$,
	\begin{multline}
		\label{Positivity-w-s2}
		s^2 \iint_{\Ocal_T} |w|^2 \overline \sigma \left( - \partial_t \left( |\nabla \varphi|^2 \right) -( \Delta \varphi +2 \lambda^2|\nabla  \psi|^2 \xi) \partial_t \varphi\right)
		\geq
		c s^2 \lambda^2  \int_0^{T_0}\int_\Ocal |\partial_t \theta| \xi  \varphi  |w|^2
		\\
		 - C  s^2 \lambda^3 \iint_{\Ocal_T} \xi^3 |w|^2
		- C s^2 \lambda^2  \iint_{{\tilde \omega_T} \cap \{ t \in (0,T_0)\} }  |\partial_t \theta| \xi  \varphi |w|^2.
	\end{multline}
	
	{\it Term \eqref{I-w-s} involving $w$ in the scale $s$.} We have to estimate $-\partial_{tt} \varphi$.
	\begin{equation}
		 \partial_{tt} \varphi
		=    \frac{\partial_{tt} \theta}{\theta} \varphi - 2 \lambda \frac{\partial_t \theta}{\theta} \partial_t  \psi \xi - \lambda \partial_{tt}  \psi \xi -\lambda^2 (\partial_t  \psi)^2 \xi
		\label{Est-dtt-phi}
	\end{equation}
	Let us first remark that we immediately have
	$$
		\left| - \lambda \partial_{tt}  \psi \xi -\lambda^2 (\partial_t  \psi)^2 \xi\right| \leq C \lambda^2 \xi^3.
	$$
	
	For $t \in (0,T_0)$, we further have
	$$
		\forall t \in (0,T_0), \quad \left| \partial_{tt} \theta \right| \leq C s^2 \lambda^4 e^{\lambda (12m - 8)}, \quad | \partial_{t} \theta| \leq C s \lambda^2 e^{\lambda (6m-4)} ,
	$$
	so that, on $(0,T_0)$
	$$
		|\partial_{tt} \varphi| \leq C s^2 \lambda^5 e^{\lambda (12m-8)} e^{6\lambda (m+1)} + C s \lambda^3 e^{\lambda (6m-4)} \xi + C \lambda^2 \xi^3 \leq C s^2 \lambda^2 \xi^3.
	$$
	
	For $t \in (T-2T_1, T)$, we have
	$$
		\forall t \in (T-2T_1, T), \quad |\partial_{tt} \theta| \leq C \theta^3 \quad \hbox{ and } \quad |\partial_t \theta| \leq C \theta^2.
	$$
	Hence, using \eqref{Phi-bounds} and $\theta \varphi \leq \lambda \xi^2$, for some positive constant $C = C(\|\partial_t  \psi\|_\infty)$,
	\begin{equation*}
		\forall (t,x) \in (T-2T_1,T)\times \Ocal, \quad
		| \partial_{tt} \varphi| \leq  C \theta^2 \varphi + C \lambda \theta \xi + C \lambda^2 \xi^3
		\leq C \lambda^2 \xi^3.
	\end{equation*}
	
	Combining all these estimates, we get
	\begin{equation}
		\label{Positivity-w-s1}
		s \iint_{\Ocal_T} \overline{\sigma}^2 |w|^2 \left(- \frac{1}{ 2} \partial_{tt} \varphi \right)
		\geq
		- C s^3 \lambda^2 \iint_{\Ocal_T} \xi^3 |w|^2.
	\end{equation}

	{\it Positivity of the terms \eqref{I-w-s3}--\eqref{I-w-s2}--\eqref{I-w-s} involving $w$.} Here we combine the estimates in \eqref{Positivity-w-s3}, \eqref{Positivity-w-s2}, \eqref{Positivity-w-s1} in order to derive suitable estimates for the sum of the terms in \eqref{I-w-s3}--\eqref{I-w-s2}--\eqref{I-w-s}. To simplify notations, let us set $I_w$ the sum of the terms in \eqref{I-w-s3}--\eqref{I-w-s2}--\eqref{I-w-s}:
	\begin{multline}
		\label{Def-Iw}
		 I_w  \ov  \iint_{\Ocal_T} |w|^2
		\Bigg(
			s^3 \left( - \div (|\nabla \varphi|^2 \nabla \varphi)  - 2 \lambda^2 |\nabla  \psi|^2\xi |\nabla \varphi|^2 \right)
		\\
			+
			s^2 \overline{\sigma} \left( - \partial_t \left(|\nabla \varphi|^2\right)  -( \Delta \varphi + 2 \lambda^2 |\nabla  \psi|^2 \xi )\partial_t \varphi \right)
			+
			s \overline{\sigma}^2 \left(- \frac{1}{ 2} \partial_{tt} \varphi   \right)
		\Bigg).
	\end{multline}
	Putting together \eqref{Positivity-w-s3}, \eqref{Positivity-w-s2}, \eqref{Positivity-w-s1}, we deduce that
%	\begin{multline*}
%		I_w
%		\geq
%		c s^3 \lambda^4 \iint_{\Ocal_T}  \xi^3 |w|^2
%		+
%		c s^2 \lambda^2 \int_0^{T_0}\int_\Ocal |\partial_t \theta| \xi \varphi  |w|^2
%		- C s^3 \lambda^4 \iint_{{\tilde \omega_T}} \xi^3 |w|^2
%		\\
%		- C s^2 \lambda^2  \iint_{{\tilde \omega_T} \cap \{ t \in (0,T_0)\} }  |\partial_t \theta| \xi \varphi |w|^2
%		- C (s^2 \lambda^3+s^3 {\color{red}\lambda^2}) \iint_{\Ocal_T}  \xi^3 |w|^2
%	\end{multline*}
%	Thus,
	there exist $s_1 \geq 1$ and $\lambda_5\geq \lambda_4$ such that for $s \geq s_1$ and $\lambda \geq \lambda_5$,
	\begin{align}
		I_w
		\geq &
		c s^3 \lambda^4 \iint_{\Ocal_T}  \xi^3 |w|^2 	+ c s^2 \lambda^2 \int_0^{T_0}\int_\Ocal |\partial_t \theta| \xi \varphi |w|^2
		\\
		&
		- C s^3 \lambda^4 \iint_{{\tilde \omega_T}} \xi^3 |w|^2
		- C s^2 \lambda^2 \iint_{{\tilde \omega_T} \cap \{ t \in (0,T_0)\} }  |\partial_t \theta| \xi \varphi  |w|^2.
	\end{align}

	{\it Positivity of the boundary terms \eqref{I-boundary}.} Here, we only have to remark that $\partial_{\bf n} \varphi \geq 0$ since $\partial_{\bf n} \psi \leq 0$ by construction, see \eqref{Psi}.
	
	{\it A bound on $I_{\mathcal{R}}$ in \eqref{Def-mathcalR}}
	We also provide an upper bound on $I_{\mathcal{R}}$.
	
	First, we shall of course use the immediate estimate
	$$
		\frac{1}{2} \iint_{\Ocal_T} \partial_t \overline{\sigma} |\nabla w|^2 \leq C \iint_{\Ocal_T}  |\nabla w|^2.
	$$
	Using $\nabla(|\nabla  \psi|^2 \xi )\leq C \lambda \xi$, one easily checks that
	\begin{equation}
		\left| 2 s \lambda^2 \iint_{\Ocal_T} \nabla(|\nabla  \psi|^2 \xi ) w \cdot \nabla w
		\right| \leq 	
		C s^2 \lambda^4 \iint_{\Ocal_T} \xi^3 |w|^2 + C \lambda^2 \iint_{\Ocal_T} \xi |\nabla w|^2.
	\end{equation}
	
	Using \eqref{ExpressDtPhi}, we have
	$$
	|\partial_t \varphi| \leq
		\left\{
			\begin{array}{ll}
				 s \lambda^2 e^{\lambda (6m-4)}  \lambda e^{6 \lambda(m+1)} + C \lambda \xi &\text{ on } (0,T_0),
				 \\
				  C \lambda \xi  &\text{ on } (T_0, T - 2T_1),
				  \\
				  \theta \lambda e^{6 \lambda(m+1)} + C \lambda \xi & \text{ on } (T-2T_1,T),
			\end{array}
		\right.
	$$
	so that $|\partial_t \varphi|  \leq C s \lambda \xi^3$ everywhere. Hence
	\begin{equation}
		\left|
		s\iint_{\Ocal_T} \overline \sigma \partial_t \overline \sigma \partial_t \varphi |w|^2
		\right|
		\leq
		C s^2 \lambda \iint_{\Ocal_T} \xi^3 |w|^2.
	\end{equation}
	
Moreover, using $|\nabla \varphi| \leq C \lambda \xi$, \eqref{ExpressDtPhi} and $\theta \varphi\leq \lambda\xi^2$ we also obtain
	\begin{align*}
		& \left| \frac{s^2}{2} \iint_{\Ocal_T} \partial_t \overline \sigma |\nabla \varphi|^2 |w|^2 \right|
		\leq
		C s^2 \lambda^2 \iint_{\Ocal_T}  \xi^2 |w|^2,
		\\
		&\left|
			s^2 \iint_{\Ocal_T} \nabla \overline \sigma \nabla \varphi \partial_t \varphi |w|^2
		\right|
		\leq
			C s^2\lambda \int_0^{T_0}\int_\Ocal \xi \varphi |\partial_t \theta|   |w|^2
			+
			C s^2 \lambda^2 \iint_{\Ocal_T} \xi^3  |w|^2.
	\end{align*}

	Finally, we also have
	\begin{equation}
		\left| \iint_{\Ocal_T} \partial_t w \nabla \overline \sigma \cdot \nabla w\right|
		\leq
		C\frac{1}{s \lambda} \iint_{\Ocal_T} \frac{1}{\xi} |\partial_t w|^2 + C s \lambda \iint_{\Ocal_T} \xi |\nabla w|^2,
	\end{equation}
	and combining all the above estimates,
	\begin{multline}
	\label{Est-mathcal-R}
		|I_{\mathcal{R}}|
		\leq
		\frac{C}{s \lambda} \iint_{\Ocal_T} \frac{1}{\xi} |\partial_t w|^2
		+
		C s \lambda \iint_{\Ocal_T} \xi |\nabla w|^2
		\\
		+		
		C s^2\lambda  \int_0^{T_0}\int_\Ocal \xi |\partial_t \theta| \varphi  |w|^2
		+
		C s^2 \lambda^4 \iint_{\Ocal_T} \xi^3  |w|^2.
	\end{multline}
	
	{\it A lower bound for the cross-product $\iint P_1 w P_2 w$.} This step simply consists in putting together all the above estimates: for all $s \geq s_1$ and $\lambda \geq \lambda_5$,
	\begin{multline*}
		2 \iint_{\Ocal_T} P_1 w P_2 w
		\geq
		  \int_\Ocal |\nabla w(0)|^2 +c s^2 \lambda^3 e^{12 \lambda m + 2 } \int_\Ocal |w(0)|^2
		\\
		\begin{array}{l}
			 \ds + c s \lambda^2 \iint_{\Ocal_T}  \xi |\nabla w|^2  -  C s \lambda^2 \iint_{{\tilde \omega_T}}  \xi |\nabla w|^2
			\smallskip\\
			 \ds +c s^3 \lambda^4 \iint_{\Ocal_T}  \xi^3 |w|^2 	+  c s^2 \lambda^2 \int_0^{T_0}\int_\Ocal |\partial_t \theta| \xi \varphi |w|^2
			\smallskip\\
			 \ds -C s^3 \lambda^4 \iint_{{\tilde \omega_T}} \xi^3 |w|^2
			- C s^2 \lambda^2 \iint_{{\tilde \omega_T} \cap \{ t \in (0,T_0)\} }  |\partial_t \theta| \xi \varphi |w|^2 - |I_{\mathcal{R}}|.
		\end{array}
	\end{multline*}
	
	Thus, using \eqref{Est-mathcal-R}, for some $s_2 \geq s_1$ and $\lambda_6 \geq \lambda_5$, for all $s \geq s_2$ and $\lambda \geq \lambda_6$
	\begin{multline}
		\label{Est-Cross-Prod}
		2 \iint_{\Ocal_T} P_1 w P_2 w
		\geq
		 \int_\Ocal |\nabla w(0)|^2 +c s^2 \lambda^3 e^{12 \lambda m + 2 } \int_\Ocal |w(0)|^2
		\\
		\begin{array}{l}
			 \ds + c s \lambda^2 \iint_{\Ocal_T}  \xi |\nabla w|^2  -  C s \lambda^2 \iint_{{\tilde \omega_T}}  \xi |\nabla w|^2
			 \ds +c s^3 \lambda^4 \iint_{\Ocal_T}  \xi^3 |w|^2
			 	\smallskip\\
			\ds +	c s^2 \lambda^2 \int_0^{T_0}\int_\Ocal |\partial_t \theta| \xi \varphi |w|^2
			 -C s^3 \lambda^4 \iint_{{\tilde \omega_T}} \xi^3 |w|^2
			 			\smallskip\\
			 \ds
			- C s^2 \lambda^2 \iint_{{\tilde \omega_T} \cap \{ t \in (0,T_0)\} }  |\partial_t \theta| \xi \varphi |w|^2
			-\frac{C}{s \lambda} \int_0^T \int_\Ocal \frac{1}{\xi} |\partial_t w|^2.
		\end{array}
	\end{multline}
	
	{\bf Conclusion.}
		We first derive a Carleman estimate on $w$ with gradient observations, and then explains how to remove this term using a suitable multiplier.

	{\it A Carleman estimate on $w$ with gradient observations.} According to estimates \eqref{Square-Pi} and \eqref{Est-Cross-Prod}, for all $s \geq s_2$ and $\lambda \geq \lambda_6$,
	\begin{multline*}
		\iint_{\Ocal_T} \left( |P_1 w|^2 + |P_2 w|^2 \right)
		+  c \int_\Ocal |\nabla w(0)|^2 +c s^2 \lambda^3 e^{12 \lambda m + 2 } \int_\Ocal |w(0)|^2
		\\
		\begin{array}{l}
			 \ds + c s \lambda^2 \iint_{\Ocal_T} \ \xi |\nabla w|^2
			 \ds + cs^3 \lambda^4 \iint_{\Ocal_T}  \xi^3 |w|^2 	
			 +
			c s^2 \lambda^2 \int_0^{T_0}\int_\Ocal |\partial_t \theta| \varphi \xi  |w|^2
			\smallskip\\
			\ds  \leq C \iint_{\Ocal_T} |f|^2 e^{-2 s \varphi} + C \iint_{\Ocal_T} |Rw|^2
			+  C s \lambda^2 \iint_{{\tilde \omega_T}}  \xi |\nabla w|^2
			\smallskip
			\\
			\ds
			 + C s^3 \lambda^4 \iint_{{\tilde \omega_T}} \xi^3 |w|^2
			+ C s^2 \lambda^2 \iint_{{\tilde \omega_T} \cap \{ t \in (0,T_0)\} }  |\partial_t \theta| \xi \varphi |w|^2
			\ds +\frac{C}{s \lambda} \iint_{\Ocal_T} \frac{1}{\xi} |\partial_t w|^2.
		\end{array}
	\end{multline*}
	To handle the term $\| R w\|_{L^2}^2$, we recall that $Rw $ is given by \eqref{R}, hence
%	$$
%		|R w|^2 \leq s^2 |\Delta \varphi|^2 |w|^2 \leq C s^2 \lambda^4 \xi^2 |w|^2 \leq C s^2 \lambda^4 \xi^3 |w|^2,
%	$$
	$$
		 \iint_{\Ocal_T} |Rw|^2 \leq C s^2 \lambda^4 \iint_{\Ocal_T}  \xi^3 |w|^2.
	$$
	where $C = C(\|\nabla  \psi\|_\infty, \|\Delta  \psi\|_\infty)$ is a positive constant.
	%Hence,

	Also note that
	$$
		\frac{1}{s \lambda} \iint_{\Ocal_T} \frac{1}{\xi} |\partial_t w|^2
		\leq
		\frac{C}{s\lambda} \iint_{\Ocal_T} |P_1 w|^2 + C s \lambda \iint_{\Ocal_T} \xi |\nabla w|^2 + C s \lambda^3 \iint_{\Ocal_T} \xi^3 |w|^2.
	$$
	In particular, for some $s_3 \geq s_2$, for all $s \geq s_3 $ and $\lambda \geq \lambda_6$,
	\begin{multline}
		\label{Est-Complete}
		\iint_{\Ocal_T} \left( |P_1 w|^2 + |P_2 w|^2 \right)
		+  c \int_\Ocal |\nabla w(0)|^2 +c s^2 \lambda^3 e^{12 \lambda m + 2 } \int_\Ocal |w(0)|^2
		\\
		\begin{array}{l}
			 \ds + c s \lambda^2 \iint_{\Ocal_T} \ \xi |\nabla w|^2
			 \ds + cs^3 \lambda^4 \iint_{\Ocal_T}  \xi^3 |w|^2 	+
			c s^2 \lambda^2 \int_0^{T_0}\int_\Ocal |\partial_t \theta| \xi  \varphi |w|^2
			\smallskip\\
			\ds  \leq C \iint_{\Ocal_T} |f|^2 e^{-2 s \varphi}
			+  C s \lambda^2 \iint_{{\tilde \omega_T}}  \xi |\nabla w|^2
			\smallskip
			\\
			\ds
			 + C s^3 \lambda^4 \iint_{{\tilde \omega_T}} \xi^3 |w|^2
			+ C s^2 \lambda^2 \iint_{{\tilde \omega_T} \cap \{ t \in (0,T_0)\} }  |\partial_t \theta| \xi  \varphi |w|^2.
		\end{array}
	\end{multline}
	In \eqref{Est-Complete}, the observation is done on ${\tilde \omega_T}$ and concerns both $w$ and $\nabla w$. Below, we shall explain that this observation can be done only on $w$ provided we take an observation set slightly larger.

	{\it A Carleman estimate on $w$ without gradient observations.} Recall that ${\tilde \omega_T} \Subset \widehat{ \omega_T}$, then there exists a nonnegative smooth function $\eta = \eta(t,x)$ taking value in $[0,1]$ such that $\eta = 1$ on ${\tilde \omega_T}$, and $\eta = 0$ in $(0,T) \times \Ocal \setminus \widehat{ \omega_T}$. We then compute the scalar product of $P_2 w$ and $\eta s \lambda^2 \xi w$:
	\begin{multline*}
		\iint_{\Ocal_T} P_2 w ( \eta s \lambda^2 \xi w)
		= s \lambda^2 \iint_{\Ocal_T} \eta \xi |\nabla w|^2  - \frac{s \lambda^2}{2} \iint_{\Ocal_T} \Delta(\eta \xi) |w|^2
		\\
		 - s^2 \lambda^2 \iint_{\Ocal_T} \eta \overline \sigma \partial_t \varphi \xi |w|^2
		 - s^3 \lambda^2 \iint_{\Ocal_T} \eta |\nabla \varphi|^2 \xi |w|^2.
	\end{multline*}	
	In particular, using \eqref{ExpressDtPhi} and \eqref{Phi-bounds},
	\begin{multline*}
		s \lambda^2 \iint_{\Ocal_T} \eta \xi |\nabla w|^2 + c s^2 \lambda^2 \int_0^{T_0}\int_\Ocal \overline \sigma \eta |\partial_t \theta|  \xi \varphi |w|^2
		\\
		\leq \iint_{\Ocal_T} P_2 w ( \eta s \lambda^2 \xi w)  +  s \lambda^2  \int_{\Ocal_T} |\Delta (\eta \xi) | |w|^2
		\\
		+ s^2 \lambda^3 e^{6\lambda (m+1)} \int_{T-2T_1}^T\int_\Ocal \eta \overline \sigma |\partial_t \theta|  \xi |w|^2
		+s^2 \lambda^3 \iint_{\Ocal_T} \eta \overline \sigma | \partial_t  \psi| \xi^2 |w|^2
		\\
		+ s^3 \lambda^4 \iint_{\Ocal_T} \eta |\nabla  \psi|^2 \xi^3 |w|^2.
	\end{multline*}
	Of course, this implies that
	\begin{multline*}
		 s \lambda^2 \iint_{{\tilde \omega_T}}  \xi |\nabla w|^2 + s^2 \lambda^2 \iint_{{\tilde \omega_T} \cap \{ t \in (0,T_0)\} } \overline \sigma \eta |\partial_t \theta|  \xi \varphi |w|^2
		\\
		\leq \frac{1}{\sqrt{s}}  \iint_{\Ocal_T} |P_2 w|^2 + C s^{5/2} \lambda^4 \iint_{\Ocal_T}  \eta^2 \xi^2| w|^2
		  + 2 C s \lambda^2 \iint_{\Ocal_T} |\Delta (\eta \xi) | |w|^2
		  \\
		+ 2 CÂ s^2 \lambda^3 e^{6\lambda (m+1)}\int_{T-2T_1}^T\int_\Ocal \eta |\partial_t \theta|  \xi  |w|^2
		+ 2 C s^2 \lambda^3 \iint_{\Ocal_T} \eta | \partial_t  \psi| \xi^2 |w|^2
		\\
		+ 2 C s^3 \lambda^4 \iint_{\widehat {\omega_T}}  |\nabla  \psi|^2 \xi^3 |w|^2.
	\end{multline*}

	But there exists a constant $C = C(\|\eta\|_{L^\infty(C^2)},\| \nabla  \psi\|_\infty,  \| \Delta  \psi\|_\infty, \| \partial_t  \psi\|_\infty )$ such that
	$$
		 |\Delta (\eta \xi) | \leq C \lambda^2 \xi^2, \quad \sup_{[T-2T_1,T)} \left\{\frac{|\partial_t \theta|}{\theta^2} \right\} \leq C,
	$$	
	hence, using the fact that $\eta$ is supported on $\widehat{ \omega_T}$,
	\begin{multline*}
		s^{5/2} \lambda^4 \iint_{\Ocal_T}  \eta^2 \xi^2| w|^2
		+
		 s \lambda^2 \iint_{\Ocal_T} |\Delta (\eta \xi) | |w|^2
		  +
		  s^2 \lambda^3 \iint_{\Ocal_T} \eta | \partial_t  \psi| \xi^2 |w|^2
		 \\
		 \leq
		 C s^3 \lambda^4 \iint_{{ \widehat \omega_T}} \xi^3 |w|^2,
	\end{multline*}
	whereas
	\begin{eqnarray*}
		s^2 \lambda^3 e^{6\lambda (m+1)} \int_{T-2T_1}^T\int_\Ocal \eta|\partial_t \theta|  \xi  |w|^2
		 & \leq &
		C s^2 \lambda^3 e^{6\lambda (m+1)} \int_{T-2T_1}^T\int_\Ocal \eta \theta^2 \xi  |w|^2
		\\
		& \leq &
		C s^3 \lambda^4 \iint_{{ \widehat \omega_T}} \xi^3 |w|^2.
	\end{eqnarray*}
	
	Hence, by combining above estimates with \eqref{Est-Complete}, for some $s_4\geq s_3$ and $\lambda_7 \geq \lambda_6$, there exists a constant $C$ such that for all $s \geq s_4$ and $\lambda\geq \lambda_7$,
	\begin{multline}
		\label{Est-Complete-Obs-w}
%		\int_0^T \int_\Ocal \left( |P_1 w|^2 + \frac{1}{2} |P_2 w|^2 \right)
		  \int_\Ocal |\nabla w(0)|^2 + s^2 \lambda^3 e^{\lambda( 12 m+2) } \int_\Ocal |w(0)|^2 + s \lambda^2\iint_{\Ocal_T}  \xi |\nabla w|^2
		\\
			 \ds + s^3 \lambda^4 \iint_{\Ocal_T} \xi^3 |w|^2 	+s^2 \lambda^2 \int_0^{T_0}\int_\Ocal|\partial_t \theta| \xi \varphi |w|^2
		\\
			\ds  \leq C \iint_{\Ocal_T} |f|^2 e^{-2 s \varphi}
			 + C s^3 \lambda^4 \iint_{\widehat{\omega}_T} \xi^3 |w|^2.
	\end{multline}
	
	{\it Back to the function $z$.} We now go back to the function $z = w e^{s \varphi}$. For that, let us first remark that there exists a constant $C = C(\|\nabla  \psi\|_\infty)$ such that for all $(t,x) \in (0,T) \times \Ocal$,
	\begin{align*}
		& |z|^2 e^{-2 s \varphi} = |w|^2,
		\\
		& |\nabla z|^2 e^{-2 s \varphi} \leq 2 |\nabla w|^2 + 2 s^2 |\nabla \varphi|^2 |w|^2 \leq 2 |\nabla w|^2 + 2 C s^2 \lambda^2 \xi^2 |w|^2.
	\end{align*}
	
	We immediately deduce from \eqref{Est-Complete-Obs-w} that for all $s \geq s_4$ and $\lambda \geq \lambda_7$, for some positive constant $C$,
	\begin{multline}
		\label{Est-Complete-Obs-z}
%		\int_0^T \int_\Ocal \left( |P_1 w|^2 + \frac{1}{2} |P_2 w|^2 \right)
		  \int_\Ocal |\nabla z(0)|^2 e^{-2 s \varphi(0)} + s^2 \lambda^3 e^{\lambda( 12 m+2) } \int_\Ocal |z(0)|^2 e^{-2 s \varphi(0)} + s \lambda^2 \iint_{\Ocal_T}  \xi |\nabla z|^2  e^{-2 s \varphi}
		\\
			 \ds + s^3 \lambda^4\iint_{\Ocal_T} \xi^3 |z|^2 e^{-2 s \varphi} 	+  s^2 \lambda^2 \int_0^{T_0}\int_\Ocal|\partial_t \theta| \xi  \varphi |z|^2 e^{-2 s \varphi}
		\\
			\ds  \leq C \iint_{\Ocal_T} |f|^2 e^{-2 s \varphi}
			 + C s^3 \lambda^4 \iint_{\widehat{ \omega_T}} \xi^3 |z|^2 e^{-2 s \varphi}.
	\end{multline}
	We conclude the proof of Theorem \ref{CarlemanThm} by setting $s_0 =s_4$ and $\lambda_0 = \lambda_7$.

%%%%%%%%%%%%%%%%%%%%%%%%%%%%%%%%%%%%%%%%%%%%%%%%%%%%%%%%%%%%%%%%%%%%%%%%%%%%%%%%%%%%%%%%%%%%%%%%%%%%%%%%%%%%%%%%%%%%%%%%%%%%%%%%%%%%%%%%%%%%%%%%%%%%%

%%%%%%%%%%%%%%%%%%%%%%%%%%%%%%%%%%%%%%%%%%%%%%%%%%%%%%%%%%%%%%%%%%%%%%%%%%%%%%%%%%%%%%%%%%%%%%%%%%%%%%%%%%%%%%%%%%%%%%%%%%%%%%%%%%%%%%%%%%%%%%%%%%%%%

%%%%%%%%%%%%%%%%%%%%%%%%%%%%%%%%%%%%%%%%%%%%%%%%%%%%%%%%%%%%%%%%%%%%%%%%%%%%%%%%%%%%%%%%%%%%%%%%%%%%%%%%%%%%%%%%%%%%%%%%%%%%%%%%%%%%%%%%%%%%%%%%%%%%%

\subsection{Proof of Theorem \ref{Thm-Est-Y-Carl-Norms}}\label{Sec-Proof-Control-Heat}

We divide the proof in several steps.

{\bf A duality approach.}
To solve the control problem \eqref{Heat-control}--\eqref{Null-Control-Req}, we first rewrite the control problem under a weak form. Multiplying $y$ solution of \eqref{Heat-control} by smooth functions $z$ on $[0,T] \times \overline{\Ocal}$ such that $z = 0$ on $[0,T] \times \partial \Ocal$, we get:
\begin{equation}
	\label{Duality}
	\int_\Ocal \overline{\sigma}(T)y(T) z(T) + \iint_{\Ocal_T} y (- \overline \sigma \partial_t z - \Delta z)
		=  \iint_{\Ocal_T} f z  + \iint_{{ \widehat{\omega}_T}} h z.
\end{equation}

In particular, since $\overline{\sigma}(T)>0$, the null-controllability requirement \eqref{Null-Control-Req} is satisfied if and only if for all smooth functions $z$ on $[0,T] \times \overline{\Ocal}$ such that $z = 0$ on $[0,T] \times \partial \Ocal$
\begin{equation}
	\label{Duality-ControlReq}
	  \iint_{\Ocal_T} y (- \overline \sigma \partial_t z - \Delta z) \\
		= \iint_{\Ocal_T} f z + \iint_{{ \widehat{\omega}_T}} h z.
\end{equation}

The trick now is to introduce a functional $J$ whose Euler Lagrange equation coincide with \eqref{Duality-ControlReq}: For smooth functions $z$ on $[0,T] \times \overline{\Ocal}$ such that $z = 0$ on $[0,T] \times \partial \Ocal$, we define
\begin{equation}
	\label{Func-J}
	J(z) =  \frac{1}{2} \iint_{\Ocal_T} |(- \overline \sigma \partial_t - \Delta) z|^2 e^{-2 s \varphi}
		+  \frac{s^3 \lambda^4}{2} \iint_{{ \widehat{\omega}_T}} \xi^3 |z|^2 e^{-2 s \varphi}
		- \iint_{\Ocal_T} f z.
\end{equation}

But the set of smooth functions $z$ on $[0,T] \times \overline{\Ocal}$ such that $z = 0$ on $[0,T] \times \partial \Ocal$ is not a Banach space. We thus introduce
\begin{equation}
	\label{CarlemanSpace}
	X_{obs} = \overline{\{ z \in C^{\infty}([0,T] \times \overline\Ocal) \hbox{\, such that } z = 0 \hbox{ on } [0,T] \times \partial \Ocal \}}^{\norm{\cdot }_{obs}}
\end{equation}
where $\norm{\cdot}_{obs}$ is the Hilbert norm defined by
\begin{equation}
	\norm{z}_{obs}^2 =
		\iint_{\Ocal_T} |(- \overline \sigma \partial_t - \Delta) z|^2 e^{-2 s \varphi}
		+  s^3 \lambda^4 \iint_{{ \widehat{\omega}_T }} \xi^3 |z|^2 e^{-2 s \varphi}.
\end{equation}
The set $X_{obs}$ is then endowed with the Hilbert structure given by $\norm{\cdot}_{obs}$. Note that here we use the fact that $\norm{\cdot}_{obs}$ is a norm, which is a consequence of the Carleman estimate \eqref{CarlemanEst}. Also note that $X_{obs}$ and $\norm{\cdot}_{obs}$ strongly depends on $s$ and $\lambda$ and we shall follow these dependences carefully in the sequel.

The functional $J$ can be extended as a continuous functional on $X_{obs}$ provided \eqref{Conditions-f-F}. Indeed, due to \eqref{CarlemanEst}, we easily have, for some constant $C >0$ independent of $s$ and $\lambda$,
\begin{equation}
	\label{Continuity-J}
	\left|
		\iint_{\Ocal_T} f z
	\right|
	 \leq C \norm{z}_{obs}
	 \left(
	 \frac{1}{s^3\lambda^4 }\iint_{\Ocal_T} \xi^{-3} |f|^2 e^{2 s \varphi}
	\right)^{1/2}.
\end{equation}
It follows that, if condition \eqref{Conditions-f-F} is satisfied, the functional $J$ can be uniquely extended as a continuous functional (still denoted the same) on $X_{obs}$. Besides, \eqref{Continuity-J} also implies the coercivity of $J$ on $X_{obs}$. Since it is also strictly convex on $X_{obs}$ since $\norm{\cdot}_{obs}$ is an Hilbert norm, $J$ admits a unique minimizer $Z$ on $X_{obs}$.

Setting
\begin{equation}
	\label{Link-Y-Z}
	Y = (- \overline \sigma \partial_t - \Delta ) Z e^{-2 s \varphi} \quad \hbox{ and } \quad H = -s^3 \lambda^4 \xi^3 Z e^{-2s \varphi} 1_{\widehat{\omega}_T},
\end{equation}
writing the Euler Lagrange equation of $J$ at $Z$, for all smooth functions $z$ on $[0,T] \times \overline{\Ocal}$ such that $z = 0$ on $[0,T] \times \partial \Ocal$,
\begin{equation}
	\label{Duality-ControlReq-Y}
	0 =  \iint_{\Ocal_T} Y (- \overline \sigma \partial_t z - \Delta z) -  \iint_{\widehat{\omega}_T} H z
	- \iint_{\Ocal_T} f z,% - \int_0^T \int_\Ocal F \cdot \nabla z,
\end{equation}
which coincides with \eqref{Duality-ControlReq}.

In particular, \eqref{Duality-ControlReq-Y} holds  for all smooth functions $z$ on $[0,T] \times \overline{\Ocal}$ such that $z = 0$ on $[0,T] \times \partial \Ocal$ with $z(T) \equiv 0$, which implies that $Y$ solves the equation \eqref{Heat-control} with $h = H$ in the sense of transposition. By uniqueness of solutions in the sense of transposition, this is the solution of \eqref{Heat-control} in the classical sense. In particular, since $H \in L^2(\Ocal_T)$, $Y$ is $C([0,T];L^2(\Ocal))$. Then, using again \eqref{Duality-ControlReq-Y}, we remark that it coincides with \eqref{Duality-ControlReq}, hence $Y$ solves the control requirement \eqref{Null-Control-Req}.

Besides, using \eqref{Continuity-J} and the fact that $J(Z) \leq J(0) = 0$,
\begin{equation}
	\label{Estimate-Y-V}
	s^3 \lambda^4 \iint_{\Ocal_T} |Y|^2 e^{2 s \varphi} +  \iint_{\widehat{\omega}_T}\xi^{-3} |H|^2 e^{2 s \varphi}
	\leq
	 C\iint_{\Ocal_T} \xi^{-3} |f|^2 e^{2 s \varphi}.
\end{equation}

{\bf Estimates on $\nabla Y$.}
In the previous step, we found $(Y,H)$ satisfying the equations
\begin{equation}
	\label{Heat-control-Y}
	\left\lbrace
		\begin{array}{rlll}
			\partial_t (\overline \sigma Y) - \Delta Y & =& f + H {1}_{ \widehat{\omega}_T}, \quad & \hbox{ in } \Ocal_T,
			\\
			Y &=& 0, \quad & \hbox{ in } \Gamma_T,
			\\
			Y(0,\cdot) &=& 0, \quad & \hbox{ in }\Ocal,
			\\
			Y(T,\cdot) &=& 0, \quad & \hbox{ in }\Ocal.
		\end{array}
	\right.
\end{equation}
and the estimates \eqref{Estimate-Y-V}.

Our goal now is to obtain an estimate on $\nabla Y$. In order to do this, for $\varepsilon >0$, we introduce
$$
	\varphi_\varepsilon (t,x) \ov \theta_\varepsilon(t) \left( \lambda e^{6 \lambda (m+1)} - e^{\psi(t,x)} \right), \quad \xi_\varepsilon(t) \ov  \theta_\varepsilon(t)e^{\psi(t,x)}
$$
and $\theta_\varepsilon$ is given by:
$$
	\theta_{\varepsilon} \ov \theta_{\varepsilon}(t) \quad
	 \hbox{ such that }
		\left\{
			\begin{array}{l}
			\ds \forall t \in [0,T_0],\, \theta_{\varepsilon}(t) = 1+ \left( 1- \frac{t}{T_0} \right)^\mu,
			\smallskip
			\\
			\ds \forall t \in [T_0, T- 2T_1+\varepsilon], \, \theta_{\varepsilon}(t) = 1,
			\smallskip
			\\
			\ds \forall t \in [T-2T_1+ \varepsilon,T), \,  \theta_{\varepsilon}(t) = \theta(t - \varepsilon),
			\smallskip
			\\
			\ds \mu \hbox{ as in \eqref{Def-mu}}.
			\end{array}
		\right.
$$
We then multiply the equation \eqref{Heat-control-Y} by $\xi_\varepsilon^{-2} Y e^{2 s \varphi_\varepsilon}$:
\begin{multline*}
	 - \frac{1}{2} \iint_{\Ocal_T} |Y|^2 \partial_t \left( \overline \sigma \xi_\varepsilon^{-2} e^{2 s \varphi_\varepsilon} \right)
	 + \iint_{\Ocal_T} |Y|^2 \partial_t \overline \sigma \xi_\varepsilon^{-2} e^{2 s \varphi_\varepsilon}
	\\
	+
	\iint_{\Ocal_T} \xi_\varepsilon^{-2} |\nabla Y|^2 e^{2 s \varphi_\varepsilon} - \frac{1}{2} \iint_{\Ocal_T} |Y|^2 \Delta \left( \xi_\varepsilon^{-2} e^{2 s \varphi_\varepsilon} \right)
	\\
	\ds = \iint_{\Ocal_T} f \xi_\varepsilon^{-2} Y e^{2 s \varphi_\varepsilon}
	+ \iint_{\widehat{\omega}_T} H \xi_\varepsilon^{-2} Y e^{2 s \varphi_\varepsilon}.
\end{multline*}
Following, multiplying by $s \lambda^2$,
\begin{multline}
	\label{Est-GradY-1}
	s \lambda^2 \iint_{\Ocal_T} \xi_\varepsilon^{-2} |\nabla Y|^2 e^{2 s \varphi_\varepsilon}  - \frac{s \lambda^2}{2} \int_0^{T_0}\int_\Ocal \overline \sigma |Y|^2 \partial_t \left( \xi_\varepsilon^{-2} e^{2 s \varphi_\varepsilon} \right)
	\\
	=
	\frac{s \lambda^2}{2} \int_{T_0}^T\int_\Ocal \overline \sigma |Y|^2 \partial_t \left( \xi_\varepsilon^{-2} e^{2 s \varphi_\varepsilon} \right)
	+
	 \frac{s \lambda^2}{2} \iint_{\Ocal_T} |Y|^2 \Delta \left( \xi_\varepsilon^{-2} e^{2 s \varphi_\varepsilon} \right)
	\\
	+
	s \lambda^2  \iint_{\Ocal_T} f \xi_\varepsilon^{-2} e^{2 s \varphi_\varepsilon}  Y
	   +
	s \lambda^2 \iint_{\widehat{\omega}_T } H \xi_\varepsilon^{-2} Y e^{2 s \varphi_\varepsilon}
	%\\
	- \frac{s \lambda^2}{2} \iint_{\Ocal_T} |Y|^2 \partial_t \overline \sigma \xi_\varepsilon^{-2} e^{2 s \varphi_\varepsilon}.
\end{multline}
We then compute explicitly:
\begin{multline}
	\label{Term-Dt-t=0}
	 - e^{-2 s \varphi_\varepsilon}\partial_t \left( \xi_\varepsilon^{-2} e^{2 s \varphi_\varepsilon} \right)
	=
	2 s \lambda \xi_\varepsilon^{-1} \partial_t  \psi
- 2 s \xi_\varepsilon^{-2} \frac{\partial_t \theta_\varepsilon}{\theta_\varepsilon} \varphi_\varepsilon
+ 2 \frac{\partial_t \theta_\varepsilon}{\theta_\varepsilon} \xi_\varepsilon^{-2}
+ 2 \lambda \partial_t  \psi \xi_\varepsilon^{-2}.
\end{multline}
On $(0,T_0)$, we remove the dependence in $\varepsilon >0$ as $\theta_\varepsilon = \theta$ on $(0,T_0)$. Using \eqref{Phi-bounds}, $\partial_t \theta\leq 0$ and $\theta\in [1,2]$ in $[0,T_0]$ we have, for all $s\geq s_0$ and $t \in (0,T_0)$,
$$
	- 2 s \xi^{-2} \frac{\partial_t \theta}{\theta} \varphi
+  2 \frac{\partial_t \theta}{\theta} \xi^{-2}
\geq   c s |\partial_t \theta| \xi^{-2} \varphi,
$$
whereas
$$
	\left| 2 s \lambda \xi^{-1} \partial_t  \psi   + 2 \lambda \partial_t  \psi \xi^{-2} \right|	
	\leq
	C s \lambda \xi^{-1}.
$$
Hence
\begin{multline}
\label{Est-Dy-Dt-t=0}
	- \frac{s \lambda^2}{2} \int_0^{T_0}\int_\Ocal \overline \sigma |Y|^2 \partial_t \left( \xi_\varepsilon^{-2} e^{2 s \varphi_\varepsilon} \right)
	\geq
	c s^2 \lambda^2 \int_0^{T_0}\int_\Ocal \overline \sigma |\partial_t \theta| \xi^{-2} \varphi |Y|^2 e^{2 s \varphi}
	\\
	 - C s^2 \lambda^3 \int_0^{T_0}\int_\Ocal |Y|^2 e^{2 s \varphi}.
\end{multline}

On $(T_0, T)$, from the identity \eqref{Term-Dt-t=0}, using $|\partial_t \theta_\varepsilon| \leq C \theta_\varepsilon^2$, we derive
$$
	\left|
	 2 s \lambda \xi_\varepsilon^{-2} \partial_t  \psi  - 2 s \xi_\varepsilon^{-2} \frac{\partial_t \theta_\varepsilon}{\theta_\varepsilon} \varphi + 2 \frac{\partial_t \theta_\varepsilon}{\theta_\varepsilon} \xi_\varepsilon^{-2}  + 2 \lambda \partial_t  \psi \xi_\varepsilon^{-2}\right|
	 \leq C s \lambda
$$
We thus obtain
\begin{equation}
\label{Est-Dy-Dt-t=T}
	\left|\frac{s \lambda^2}{2} \int_{T_0}^T\int_\Ocal \overline \sigma |Y|^2 \partial_t \left( \xi_\varepsilon^{-2} e^{2 s \varphi_\varepsilon} \right) \right|
	\leq C s^2 \lambda^3 \int_{T_0}^T\int_\Ocal |Y|^2 e^{2 s \varphi_\varepsilon}.
\end{equation}

Straightforward computations yield
$
	\left| \Delta \left( \xi_\varepsilon^{-2} e^{2 s \varphi_\varepsilon} \right) \right|
	\leq
	C s^2 \lambda^2 e^{2s \varphi_\varepsilon},
$
from which we get
\begin{equation}
	\label{Est-Dy-Y-OK}
	\left| \frac{s \lambda^2}{2} \iint_{\Ocal_T} |Y|^2 \Delta \left( \xi_\varepsilon^{-2} e^{2 s \varphi_\varepsilon} \right) \right|
	\leq
	C s^3 \lambda^4 \iint_{\Ocal_T} |Y|^2  e^{2s \varphi_\varepsilon}.
\end{equation}

Using Cauchy-Schwarz estimates,
\begin{eqnarray}
	\lefteqn{\left|
	s \lambda^2  \iint_{\Ocal_T} \left((f  + H 1_{\widehat{\omega}_T})\xi_\varepsilon^{-2} e^{2 s \varphi_\varepsilon} \right) Y
	\right|}
		\label{Est-f-F-Y-OK}
	\\
	& \leq &
		s^3 \lambda^4 \iint_{\Ocal_T} |Y|^2 e^{2 s \varphi_\varepsilon}
		+
		 \frac{C}{s} \iint_{\Ocal_T} \xi_\varepsilon^{-4} |f|^2 e^{2 s \varphi_\varepsilon}
		 +
		 \frac{C}{s} \iint_{\widehat{\omega}_T} \xi_\varepsilon^{-4} |H|^2 e^{2 s \varphi_\varepsilon}.
	\notag
\end{eqnarray}

Since we obviously have
\begin{equation}
\label{Est-Dy-Dt-t=T-dtsigma}
	\left| s \lambda^2 \iint_{\Ocal_T} |Y|^2 \partial_t \overline \sigma \xi_\varepsilon^{-2} e^{2 s \varphi_\varepsilon} \right|
	\leq C s^3 \lambda^4 \iint_{\Ocal_T} |Y|^2 e^{2 s \varphi_\varepsilon},
\end{equation}
combining estimates \eqref{Est-Dy-Dt-t=0}--\eqref{Est-Dy-Dt-t=T}--\eqref{Est-Dy-Y-OK}--\eqref{Est-f-F-Y-OK}--\eqref{Est-Dy-Dt-t=T-dtsigma} and plugging \eqref{Est-GradY-1}, we obtain
\begin{multline*}
	s \lambda^2 \iint_{\Ocal_T} \xi_\varepsilon^{-2} |\nabla Y|^2 e^{2 s \varphi_\varepsilon}+ s^2 \lambda^2 \int_0^{T_0}\int_\Ocal \overline \sigma |\partial_t \theta| \xi^{-2}  |Y|^2 e^{2 s \varphi}
	\\
	\leq
	C s^3 \lambda^4 \iint_{\Ocal_T} |Y|^2 e^{2s \varphi_\varepsilon} + C \iint_{\widehat{\omega}_T} \xi_\varepsilon^{-3} |H|^2 e^{2s \varphi_\varepsilon}
	+
	C \iint_{\Ocal_T} \xi_\varepsilon^{-3} |f|^2 e^{2s \varphi_\varepsilon}.
\end{multline*}
Since the constant $C$ is independent of $\varepsilon >0$, we can pass to the limit $\varepsilon \to 0$, and using \eqref{Estimate-Y-V} and the fact that $\overline \sigma$ is bounded from below away from $0$, we get:
\begin{equation}
	s \lambda^2 \iint_{\Ocal_T} \xi^{-2} |\nabla Y|^2 e^{2 s \varphi}+ s^2 \lambda^2 \int_0^{T_0}\int_\Ocal |\partial_t \theta| \xi^{-2} \varphi |Y|^2 e^{2 s \varphi}
	\leq
	C \iint_{\Ocal_T} \xi^{-3} |f|^2 e^{2 s \varphi}.
	\label{Estimate-NablaY}
\end{equation}

{\bf Estimates on $\Delta Y$, $\partial_t Y$.}
Multiplying the equation \eqref{Heat-control-Y} by $-\xi_\varepsilon^{-4} \Delta Y e^{2 s \varphi_\varepsilon}/s$,
\begin{multline}
	\label{Identity-Delta-Y}
	-\frac{1}{2s} \iint_{\Ocal_T} \partial_t (\overline \sigma \xi_\varepsilon^{-4} e^{2 s \varphi_\varepsilon}) |\nabla Y|^2
	+
	\frac{1}{s}\iint_{\Ocal_T} \partial_t Y \nabla Y \cdot \nabla (\overline \sigma \xi_\varepsilon^{-4} e^{2 s \varphi_\varepsilon} )
	\\
	+
	\frac{1}{s}\iint_{\Ocal_T} \xi_\varepsilon^{-4} |\Delta Y|^2 e^{2 s \varphi_\varepsilon}
	=
	- \frac{1}{s}  \iint_{\Ocal_T} (f+V 1_{\widehat{\omega}_T}- \partial_t \overline \sigma Y) \xi_\varepsilon^{-4} \Delta Y e^{2 s \varphi_\varepsilon}.
\end{multline}

As in \eqref{Term-Dt-t=0}, we compute explicitly $- \partial_t (\xi_\varepsilon^{-4} e^{2 s \varphi_\varepsilon})$. Arguing as in \eqref{Est-Dy-Dt-t=0}, we get
\begin{multline}
	-\frac{1}{2s} \int_0^{T_0}\int_\Ocal \overline \sigma \partial_t (\xi_\varepsilon^{-4} e^{2 s \varphi_\varepsilon}) |\nabla Y|^2
	\geq
	c \int_0^{T_0}\int_\Ocal \overline \sigma |\partial_t \theta| \xi^{-4} \varphi |\nabla Y|^2e^{2 s \varphi}
	\\
	- C \lambda \int_0^{T_0}\int_\Ocal \xi^{-2} |\nabla Y|^2 e^{2 s \varphi}.
\end{multline}
Besides, arguing as in \eqref{Est-Dy-Dt-t=T}, we get
\begin{equation}
	\left|
	-\frac{1}{2s} \int_{T_0}^T\int_\Ocal \overline \sigma \partial_t (\xi_\varepsilon^{-4} e^{2 s \varphi_\varepsilon}) |\nabla Y|^2
	\right|
	\leq
	C s \lambda^2 \iint_{\Ocal_T} \xi_\varepsilon^{-2} |\nabla Y|^2 e^{2 s \varphi_\varepsilon}.
\end{equation}
One can also easily check that
\begin{equation}
	\left|
	-\frac{1}{2s} \iint_{\Ocal_T} \partial_t \overline \sigma  (\xi_\varepsilon^{-4} e^{2 s \varphi_\varepsilon}) |\nabla Y|^2
	\right|
	\leq
	C s \lambda^2 \iint_{\Ocal_T} \xi_\varepsilon^{-2} |\nabla Y|^2 e^{2 s \varphi_\varepsilon}.
\end{equation}
We then estimate the cross-term of \eqref{Identity-Delta-Y}:
\begin{multline}
	\left|
	\frac{1}{s}\iint_{\Ocal_T} \partial_t Y \nabla Y \cdot \nabla (\overline \sigma \xi_\varepsilon^{-4} e^{2 s \varphi_\varepsilon} )
	\right|
	\\
	\leq
	\frac{\overline \sigma_{\rm min}^2}{8  s} \iint_{\Ocal_T} \xi_\varepsilon^{-4} |\partial_t Y|^2 e^{2s \varphi_\varepsilon}
	+
	C s \lambda^2 \iint_{\Ocal_T}  \xi_\varepsilon^{-2} |\nabla Y|^2 e^{2 s \varphi_\varepsilon},
\end{multline}
where $\overline \sigma_{\rm min}\ov\min_{\overline{\Ocal_T}}\overline \sigma$. From the equation \eqref{Heat-control-Y},
\begin{equation}
	\label{DtY-ParEq}
	\partial_t Y = \frac{1}{\overline \sigma}\left( \Delta Y + f + H  1_{ \omega_T} - \partial_t \overline \sigma Y\right),
\end{equation}
and thus we deduce
\begin{multline}
	\frac{\overline \sigma_{\rm min}^2}{8  s}  \iint_{\Ocal_T} \xi_\varepsilon^{-4} |\partial_t Y|^2 e^{2s \varphi_\varepsilon}
	\leq
	\frac{1}{4s } \iint_{\Ocal_T} \xi_\varepsilon^{-4} |\Delta Y|^2 e^{2s \varphi_\varepsilon}
	+
	\frac{C}{s} \iint_{\Ocal_T}  |Y|^2 e^{2 s \varphi_\varepsilon}
	\\
	+
	\frac{C}{s} \iint_{\Ocal_T} \xi_\varepsilon^{-3} |f|^2   e^{2 s \varphi_\varepsilon}
	+
	\frac{C}{s} \iint_{\widehat{\omega_T } } \xi_\varepsilon^{-3} |H|^2  e^{2 s \varphi_\varepsilon}.
	\label{DtY-par-DeltaY}
\end{multline}
And of course,
\begin{multline}
	\left|
		- \frac{1}{s}  \iint_{\Ocal_T} (f+H 1_{ \omega_T}- \partial_t \overline \sigma Y) \xi_\varepsilon^{-4} \Delta Y e^{2 s \varphi_\varepsilon}
	\right|
	\leq
	\frac{1}{4s} \iint_{\Ocal_T} \xi_\varepsilon^{-4} |\Delta Y|^2 e^{2 s \varphi_\varepsilon}
	\\
	+ \frac{C}{s} \iint_{\Ocal_T}\xi_\varepsilon^{-3} |f|^2  e^{2 s \varphi_\varepsilon}
	+
	\frac{C}{s} \iint_{\widehat{\omega_T } }  \xi_\varepsilon^{-3} |H|^2  e^{2 s \varphi_\varepsilon}
	+ \frac{C}{s} \iint_{\Ocal_T} |Y|^2  e^{2 s \varphi_\varepsilon}.
\end{multline}
Combining all the above estimates, we get
\begin{multline*}
	\frac{1}{2s} \iint_{\Ocal_T} \xi_\varepsilon^{-4} |\Delta Y|^2 e^{2 s \varphi_\varepsilon}
	\leq
	C s \lambda^2  \iint_{\Ocal_T}  \xi_\varepsilon^{-2} |\nabla Y|^2 e^{2 s \varphi_\varepsilon}
	+
	C s^3 \lambda^4 \iint_{\Ocal_T} |Y|^2  e^{2 s \varphi_\varepsilon}
	\\
	+
	C  \iint_{\Ocal_T} \xi_\varepsilon^{-3} |f|^2  e^{2 s \varphi_\varepsilon}
	+ C \iint_{\widehat{\omega_T } }  \xi_\varepsilon^{-3} |H|^2 e^{2 s \varphi_\varepsilon}.
\end{multline*}
Since the constant $C$ does not depend on $\varepsilon >0$, we can pass to the limit $\varepsilon \to 0$:
\begin{multline*}
	\frac{1}{2s} \iint_{\Ocal_T} \xi^{-4} |\Delta Y|^2 e^{2 s \varphi}
	\leq
	C s \lambda^2  \iint_{\Ocal_T} \xi^{-2} |\nabla Y|^2 e^{2 s \varphi}
	+
	C s^3 \lambda^4 \iint_{\Ocal_T} |Y|^2 e^{2 s \varphi}
	\\
	+
	C  \iint_{\Ocal_T} \xi^{-3} |f|^2 e^{2 s \varphi}
	+
	C \iint_{\widehat{\omega_T }} \xi^{-3} |H|^2 e^{2 s \varphi}.
\end{multline*}
Using now estimates \eqref{Estimate-Y-V}, \eqref{Estimate-NablaY} and \eqref{DtY-par-DeltaY}, we get
\begin{equation}
	\label{Estimate-Delta-Dt-Y}
	\frac{1}{s} \iint_{\Ocal_T} \xi^{-4} (|\partial_t Y|^2 + |\Delta Y|^2) e^{2 s \varphi}
	\leq
	C  \iint_{\Ocal_T} \xi^{-3} |f|^2 e^{2 s \varphi}.
\end{equation}

{\bf Estimates on $\partial_{\bf n} Y$ in $L^2(\Gamma_T)$.}
Let $ \eta: \overline{\Ocal} \mapsto \R^N$ such that $\eta \in C^2(\overline{\Ocal};\R^N) $ and $\eta= \vec{n}$ on $\partial\Ocal$. Since $Y$ vanishes on $\Gamma_T$, we have the following identity: for all $\varepsilon >0$,
\begin{multline*}
	\frac{1}{2}\int_{\Gamma_T} \xi_\varepsilon^{-3} |\partial_{\bf n} Y|^2 e^{2s \varphi_\varepsilon} = \iint_{\Ocal_T} \xi_\varepsilon^{-3} \Delta Y  \eta \cdot \nabla Y e^{2 s \varphi_\varepsilon}
	\\
	  + \iint_{\Ocal_T} D\left(\xi_\varepsilon^{-3} \eta e^{2s \varphi_\varepsilon}\right) (\nabla Y, \nabla Y) - \frac{1}{2}  \iint_{\Ocal_T} \div (\eta \xi_\varepsilon^{-3} e^{2 s \varphi_\varepsilon} )  |\nabla Y|^2.
\end{multline*}
Hence
$$
	\lambda \int_{\Gamma_T} \xi_\varepsilon^{-3} |\partial_{\bf n} Y|^2 e^{2s \varphi_\varepsilon}
	\leq
	\frac{1}{s} \iint_{ \Ocal_T} \xi_\varepsilon^{-4} |\Delta Y|^2 e^{2s \varphi_\varepsilon}
	+
	C s \lambda^2 \iint_{\Ocal_T} \xi_\varepsilon^{-2} |\nabla Y|^2 e^{2s \varphi_\varepsilon}.
$$
Passing to the limit in $\varepsilon \to 0$ and using \eqref{Estimate-NablaY} and \eqref{Estimate-Delta-Dt-Y} we thus obtain
\begin{equation}
	\label{Estimate-Dnu-Y}
	\lambda \int_0^T\int_{\partial \Ocal} \xi^{-3} |\partial_{\bf n} Y|^2 e^{2s \varphi}
	\leq
	C  \iint_{\Ocal_T} \xi^{-3} |f|^2 e^{2 s \varphi}.
\end{equation}

{\bf Conclusion.} Estimates \eqref{Estimate-Y-V}, \eqref{Estimate-NablaY}, \eqref{Estimate-Delta-Dt-Y} and \eqref{Estimate-Dnu-Y} yield \eqref{Est-Y-Gal}.
%%%%%%%%%%%%%
%%%%%%%%%%%%%
%%%%%%%%%%%%%
\section{Regularity of the weight function}\label{Appendix-Reg-Weight}

\begin{proof}[Proof of Lemma \ref{Lem-Reg-Weight}]
	The first remark is that the flow $\overline{X}_e$ is ${\bf C}^2([0,T]\times[0,T]\times \R^2)$ since $\overline{\y}_e \in {\bf C}^2([0,T]\times \R^2)$.
	
	In order to study the regularity of $\widehat\psi$, we will introduce the function $t_{\rm out}= t_{\rm out}(t,x)$ defined for $(t,x) \in (0,T)\times \Ocal$ as the supremum of the time $\tau \in (t, T]$ for which $\forall t' \in (t,\tau), \, \overline{X}_e(t',t,x) \in \Ocal$. It is not difficult to check that this time $t_{\rm out}$ can also be characterized as the solution of
\begin{equation}
	\label{Eq-t-in}
  \left\{
		\begin{array}{rlll}
			\partial_t t_{\rm out}+\overline {\bf y}_e\cdot \nabla t_{\rm out} &=&0 &\mbox{ in } \Ocal_T,
			\\
			t_{\rm out}(t) &=&t&\mbox{ on } \Gamma_T,
			\\
		        t_{\rm out}(T)&=& T \quad &\mbox{ in }\Ocal.
		\end{array}
	\right.
	\end{equation}
	
	For convenience, we also set
	\begin{equation}\label{Defxin}
		x_{\rm out}(t,x) = \overline{X}_e(t_{\rm out}(t,x),t,x).
	\end{equation}
	We first prove that $t_{\rm out}$ is continuous in $\Ocal_T$. In order to do that, let us remark that $\overline{X}_e$ is ${\bf C}^2([0,T]\times[0,T]\times \R^2)$ and for all $(t, \tau) \in [0,T]^2$, $\overline{X}_e(t,\tau,\cdot)$ is a $C^2$ diffeomorphism of $\R^2$. In particular, $\Ocal_T$ can be decomposed into
	\begin{multline}
		\label{Decomp-Ocal-T}
		\Ocal_{T}  = \Ocal_{T,1} \cup  \Ocal_{T,2} \cup \Sigma_T,
		\\
		 \hbox{ with }
		\left\{\begin{array}{lcl}
			\Ocal_{T,1} & = &
			\{(t,x) \in (0,T) \times \Ocal,\,  x \in \overline{X}_e(t, T, \Ocal)\} ,
			\\
			\Ocal_{T,2} & = &
			\{(t,x) \in (0,T) \times \Ocal,\, x \in \overline{X}_e(t,T, \R^2 \setminus\overline{\Ocal})\},
			\\
			\Sigma_{T} & =	&\{(t,x) \in (0,T) \times \Ocal,\, x \in \overline{X}_e(t,T, \partial \Ocal)\}.
		\end{array}\right.
	\end{multline}
	In \eqref{Decomp-Ocal-T}, $\Ocal_{T,1}$ and $\Ocal_{T,2}$ are open sets whereas $\Sigma_T = \overline{\Ocal_{T,1}} \cap \overline{\Ocal_{T,2}}$ is closed and of dimension $2$.
	For $(t,x) \in \Ocal_{T,1} \cup \Sigma_T$, $t_{\rm out}(t,x) = T$ and $t_{\rm out}$ is thus continuous on $\overline{\Ocal_{T,1}}$. The continuity on $\overline{\Ocal_{T,2}}$ is more involved. If $(t,x) \in \overline{\Ocal_{T,2}}$, then $x_{\rm out}(t,x)$ belongs to $\partial\Ocal$. Due to the condition \eqref{Hyp-y-e}, for any $\varepsilon >0$, there exists a neighborhood $\mathscr{V}_\varepsilon$ of $(t_{\rm out}(t,x), x_{\rm out}(t,x))$ in $[0,T] \times \overline{\Ocal}$ such that $|t_{\rm out}(t',x')-t_{\rm out}(t,x)|< \varepsilon$ for all $(t',x') \in \mathscr{V}_\varepsilon$. In particular, for some $t_\varepsilon \in (0,T)$ close to $t_{\rm out}(t,x)$,  $\mathscr{V}_\varepsilon$ is a neighborhood of $(t_\varepsilon,\overline{X}_e(t_\varepsilon, t_{\rm out}(t,x), x_{\rm out}(t,x))) = (t_\varepsilon,\overline{X}_e(t_\varepsilon,t,x))$. Following, $\{ \overline{X}_e(t-t_\varepsilon +t',t',x'), \, (t',x') \in  \mathscr{V}_\varepsilon\}$ is a neighborhood of $(t,\overline{X}_e(t,t_\varepsilon, \overline{X}_e(t_\varepsilon,t,x) ))  = (t,x)$ on which $t_{\rm out}$ is at distance at most $\varepsilon$ of $t_{\rm out}(t,x)$.
	
	Thus, $t_{\rm out}$ is continuous in $\overline{\Ocal_T}$. As $\widehat\psi$ solution of \eqref{Hat-Psi-Eq} can be written as
	\begin{equation}
		\widehat\psi(t,x) =
			\left\{
				\begin{array}{ll}
					\widehat \psi_T(x_{\rm out}(t,x)) \quad & \hbox{ if } t_{\rm out} (t,x) = T,
					\\
					t_{\rm out}(t,x)-T \quad & \hbox{ if } t_{\rm out} (t,x) < T,
				\end{array}
			\right.
		\label{WideHat-Psi-Explicit}
	\end{equation}	
	the continuity of $\widehat\psi$ in $\overline{\Ocal_T}$ follows from the first compatibility condition in \eqref{Boundary-Conditions-Psi-T}. Also note that $\widehat\psi$ is obviously $C^2$ in $\Ocal_{T,1}$.
	
	We then focus on the $C^1$ regularity of $\widehat\psi$. In order to do this, we remark that $\nabla t_{\rm out}$ solves
	\begin{equation}
	\label{Eq-nabla t-in}
	 \left\{
		\begin{array}{rlll}
			\partial_t \nabla t_{\rm out}+(\overline {\bf y}_e\cdot \nabla)\nabla t_{\rm out} + D\overline\y_e \nabla t_{\rm out} &=&0 &\mbox{ in } \Ocal_T,
			\\
			\nabla t_{\rm out}(t,x) &=& \displaystyle - \frac{\n(x)}{\overline{\y}_e(t,x)\cdot \n(x)} &\mbox{ on } \Gamma_T,
			\\
		        \nabla t_{\rm out}(T)&=& 0 \quad &\mbox{ in }\Ocal.
		\end{array}
	\right.
	\end{equation}
	In particular, $\nabla t_{\rm out}$ can be computed for any $(t,x) \in \Ocal_{T,2}$ by solving for $\tau$ between $t$ and $t_{\rm out}(t,x)$ the ODE
	\begin{multline*}
		\frac{d}{d\tau} \left( \nabla t_{\rm out}(\tau,\overline{X}_e(\tau,t,x)) \right) = -D\overline{\y}_e(\tau, \overline{X}_e(\tau,t,x)) \nabla t_{\rm out}(\tau,\overline{X}_e(\tau,t,x)),
		\,  \tau \in (t,t_{\rm out}(t,x)),
		\\
		\hbox{with } \nabla t_{\rm out} (t_{\rm out}(t,x),x_{\rm out}(t,x)) = - \frac{\n(x_{\rm out}(t,x))}{\overline{\y}_e(t_{\rm out}(t,x),x_{\rm out}(t,x))\cdot \n(x_{\rm out}(t,x))}.		
	\end{multline*}
%		\begin{multline*}
%		\frac{d}{d\tau} \left( \left( \exp{\displaystyle \int_t^\tau D\overline{\y}_e(\tau, \overline{X}_e(\tau,t,x)) {\rm d}\tau}\right)\nabla t_{in}(\tau,\overline{X}_e(\tau,t,x)) \right) = 0,
%		\,  \tau \in (t,t_{in}(t,x)),
%		\\
%		\hbox{with } \nabla t_{in} (t_{in}(t,x),x_{in}(t,x)) = - \frac{\n(x_{in}(t,x))}{\overline{\y}_e(t_{in}(t,x),x_{in}(t,x))\cdot \n(x_{in}(t,x))}.		 
%	\end{multline*}

%which yields the expression:
%			\begin{multline*}
%		\nabla t_{in}(t,x)=-\left( \exp{\displaystyle \int_t^{t_{in}(t,x)} D\overline{\y}_e(\tau, \overline{X}_e(\tau,t,x)) {\rm d}\tau}\right)\frac{\n(x_{in}(t,x))}{\overline{\y}_e(t_{in}(t,x),x_{in}(t,x))\cdot \n(x_{in}(t,x))}.		
%	\end{multline*}
\par\noindent One then easily obtains that $\nabla t_{\rm out}$ is $C^0$ on $\Ocal_{T,2}$ and from the equation \eqref{Eq-t-in} we deduce that $t_{\rm out}$ is $C^1$ in $\Ocal_{T,2}$. From there, we derived immediately from \eqref{WideHat-Psi-Explicit} that $\widehat\psi$ is $C^1$ on $\Ocal_{T,2}$ and that it can be extended as a $C^1$ funtion on $\overline{\Ocal_{T,2}}$ as follows: $\nabla \widehat\psi$ can be computed for any $(t,x) \in \Sigma_T$ by solving for $\tau$ between $t$ and $T$ the ODE:
\begin{gather}
		\frac{d}{d\tau} \left( \nabla \widehat\psi(\tau,\overline{X}_e(\tau,t,x)) \right) = -D\overline{\y}_e(\tau, \overline{X}_e(\tau,t,x)) \nabla \widehat\psi(\tau,\overline{X}_e(\tau,t,x)),
		\,  \tau \in (t,T), \label{Eqhatpsi1}
		\\
	\displaystyle 	\hbox{with } \nabla \widehat\psi (T,\overline{X}_e(T,t,x)) = - \frac{\n(\overline{X}_e(T,t,x))}{\overline{\y}_e(T,\overline{X}_e(T,t,x))\cdot \n(\overline{X}_e(T,t,x))}.	\label{Eqhatpsi2}	
	\end{gather}
%		\begin{multline*}
%		\frac{d}{d\tau} \left( \left( \exp{\displaystyle \int_t^\tau D\overline{\y}_e(\tau, \overline{X}_e(\tau,t,x)) {\rm d}\tau}\right)\nabla t_{in}(\tau,\overline{X}_e(\tau,t,x)) \right) = 0,
%		\,  \tau \in (t,t_{in}(t,x)),
%		\\
%		\hbox{with } \nabla t_{in} (t_{in}(t,x),x_{in}(t,x)) = - \frac{\n(x_{in}(t,x))}{\overline{\y}_e(t_{in}(t,x),x_{in}(t,x))\cdot \n(x_{in}(t,x))}.		 
%	\end{multline*}
%$$
%\nabla\widehat\psi(t,x)=-\left( \exp{\displaystyle \int_t^{T} D\overline{\y}_e(\tau, \overline{X}_e(\tau,t,x)) {\rm d}\tau}\right)\frac{\n(x_{in}(t,x))}{\overline{\y}_e(T,x_{in}(t,x))\cdot \n(x_{in}(t,x))}.
%$$
%$\nabla\widehat\psi(t,x)$ as the solution at time $t$ of the ODE.
%	\begin{multline*}
%		\frac{d}{d\tau} \left( \nabla t_{in}(\tau,\overline{X}_e(\tau,t,x)) \right) = -D\overline{\y}_e(\tau, \overline{X}_e(\tau,t,x)) \nabla t_{in}(\tau,\overline{X}_e(\tau,t,x)),
%		\quad  \tau \in (t,T),
%		\\
%		\hbox{with } \nabla t_{in} (T,x_{in}(t,x)) = - \frac{\n(x_{in}(t,x))}{\overline{\y}_e(T,x_{in}(t,x))\cdot \n(x_{in}(t,x))}.	
%	\end{multline*}
	On the other hand, $\widehat \psi$ solves the equation \eqref{Eq-hat-psi-CritPoints}, and can be extended as a $C^1$ function on $\overline{\Ocal_{T,1}}$. For $(t,x) \in \Sigma_T$, this yields $\nabla\widehat\psi(t,x)$ as the solution of the ODE \eqref{Eqhatpsi1} with $\nabla \widehat\psi(T, \overline{X}_e(T,t,x))$ given. But, as $\widehat\psi(T)$ is constant on the boundary and satisfies the second compatibility condition in \eqref{Hyp-y-e}, we get again \eqref{Eqhatpsi2} for $(t,x) \in \Sigma_T$. Following, $\nabla \widehat\psi$ is continuous across $\Sigma_T$, hence on $\overline{\Ocal_T}$. Using the equation \eqref{Hat-Psi-Eq}, $\widehat\psi$ belongs to $C^1(\overline{\Ocal_T})$.
	
	The proof of the $C^2$ regularity follows the same path and is left to the reader.
\end{proof}

%%%%%%%%%%%%%%%%%%%%%%%%%%%%%%%%%%%%%%%%%%%%%%%%%%%%%%%%%%%%%%%%%%%%%%%%%%%%
%%%%%%%%%%%%%%%%%%%%%%%%%%%%%%%%%%%%%%%%%%%%%%%%%%%%%%%%%%%%%%%%%%%%%%%%%%%%
\bibliographystyle{plain}
%\bibliography{ma_biblio}
%\bibliography{/Users/ervedoza/Desktop/Biblio}

\end{document}